\title{The Signature Modulo $8$ \\ of \\ Fibre Bundles}
\author{Carmen Rovi}
\newcommand\phantomarrow[2]{%
  \setbox0=\hbox{$\displaystyle #1\to$}%
  \hbox to \wd0{%
    $#2\mapstochar
     \cleaders\hbox{$\mkern-1mu\relbar\mkern-3mu$}\hfill
     \mkern-7mu\rightarrow$}%
  \,}
 \sloppy \pagestyle{plain}
\newtheorem{theorem}{Theorem}[section]
\newtheorem*{theorem*}{Theorem}
\newtheorem*{conjecture*}{Conjecture}
\newtheorem*{theorem-non}{Theorem}
\newtheorem{lemma}[theorem]{Lemma}
\newtheorem{proposition}[theorem]{Proposition}
\newtheorem{corollary}[theorem]{Corollary}
\newtheorem{definition}[theorem]{Definition}
\newtheorem{example}[theorem]{Example}
\theoremstyle{definition}
\theoremstyle{remark}
\newtheorem{remark}[equation]{Remark}
\newcommand{\zz}{\mathbb Z}
\newcommand{\bb}[1]{\mathbb{#1}}
\newcommand{\mc}[1]{\mathcal{#1}}
\newcommand{\cP}{\mathcal P}
\newcommand{\pe}{\pi_1(E)}
\newcommand{\bbDA}{\mathbb{D}(\mathbb{A})}
\newcommand{\Fd}{F^{\textnormal{dual}}}
\newcommand{\wtE}{\widetilde{E}}
\newcommand{\wtB}{\widetilde{B}}
\newcommand{\wtX}{\widetilde{X}}
\newcommand{\wtF}{\widetilde{F}}
\newcommand{\T}{T\widehat{{\color{White}a}}}
\newcommand{\U}{U\widehat{{\color{White}a}}}
\def\:{\colon}
\def\geq{\geqslant}
\def\leq{\leqslant}
\def\emptyset{\varnothing}
\def\epsilon{\varepsilon}
\def\phi{\varphi}
\begin{document}

\pagenumbering{roman}

\maketitle

\declaration




\chapter*{Lay summary}


Topology studies the geometric properties of spaces that are preserved by continuous deformations. Manifolds are the main examples of  topological spaces, with the local properties of Euclidean space in an arbitrary dimension $n$.
They are the higher dimensional analogs of curves and surfaces. For example a circle is a one-dimensional manifold. Balloons and doughnuts are examples of two-dimensional manifolds. A balloon cannot be deformed continuously into a doughnut, so we see that there are essential topological differences between them.

An ``invariant" of a topological space is a number or an algebraic structure such that topologically equivalent spaces have the same invariant. For example the essential topological difference between the balloon and the doughnut is calculated by the ``Euler characteristic", which is $2$ for a balloon and $0$ for a doughnut.

In this thesis I  investigate the relation between three different but related invariants of manifolds with dimension divisible by $4$: the signature, the Brown-Kervaire invariant and the Arf invariant.

The signature invariant  takes values in the set $\{ \dots, -3, -2, -1, 0, 1, 2, 3, \dots \}$ of integers.
In this thesis we  focus on the signature invariant modulo $8$, that is its remainder after division by $8$.

The Brown-Kervaire invariant takes values in the set $\{0, 1, 2, 3, 4, 5, 6, 7 \}$.

The Arf invariant takes values  in the set $\{ 0, 1 \}$.

The main result of the thesis uses the Brown-Kervaire invariant to prove that for a manifold with signature divisible by $4$, the divisibility by $8$ is decided by the Arf invariant.

The thesis is entirely concerned with pure mathematics. However it is possible that it may have applications in mathematical physics, where the signature modulo $8$ plays a significant role.

\chapter*{Abstract}

This thesis is divided into three Parts, which are concerned with the residues modulo $4$ and $8$ of the signature $\sigma(M) \in \zz$ of an oriented $d$-dimensional geometric Poincar\'e complex $M^d$: the usual signature if $d=4k$, and $0$ otherwise.

In Part One we give a new chain complex proof of Morita's theorem (\hspace{-1pt}\cite[Theorem 1.1]{Morita})
 $$\sigma(M^{4k}) = \textnormal{BK}(H^{2k}(M;\zz_2),\lambda, q) \in \zz_8$$
with $\textnormal{BK}$  the Brown-Kervaire invariant, $\lambda:H^{2k}(M;\zz_2) \times H^{2k}(M;\zz_2) \to \zz_2$ the nonsingular intersection pairing and  $$q=\mc{P}_2:H^{2k}(M;\zz_2) \to H^{4k}(M;\zz_4)=\zz_4$$
the $\zz_4$-valued quadratic enhancement  of $\lambda$ defined by the Pontryagin square $\mc{P}_2$.  The mod $4$ signature is given by $\sigma(M) = q(v_{2k}(M)) \in \zz_4$ with $v_{2k}(M) \in H^{2k}(M; \zz_2)$ the Wu class.

When $\sigma(M) =0 \in \zz_4$ we identify $\sigma(M) \in \zz_8$ with a
$\zz_2$-valued Arf invariant, using the new construction of the maximal isotropic subquotient of $(H^{2k}(M; \zz_2), \lambda, q)$.

\noindent \textbf{Theorem \ref{4Arf-topology}} \textit{A $4k$-dimensional geometric Poincar\'e complex $M$ has signature $\sigma(M) =0 \in \zz_4$ if and only if
$L=\langle v_{2k}(M) \rangle \subset H^{2k}(M;\zz_2)$ is a sublagrangian of $(H^{2k}(M;\zz_2),\lambda,q)$, which we call the \textbf{Wu sublagrangian}. If such is the case, the maximal isotropic subquotient is a nonsingular symmetric form over $\zz_2$ with a $\zz_2$-valued enhancement
$$(W,\mu,h) = (L^{\perp}/L , [\lambda] , h), \textnormal{ with } h= [q]/2 \in \zz_2,$$
and the signature mod $8$ is given by
$$\sigma(M) = \textnormal{BK}(H^{2k}(M;\zz_2),\lambda,q) = 4\textnormal{Arf}(W,\mu,h) \in  4\zz_2 \subset \zz_8.$$ }
The proof is a direct application of an algebraic result, which we give in Proposition \ref{BK-and-4Arf}.

In Part Two we construct a chain complex model for the intersection form $H^{2k}(E)$ of the total space of a Poincar\'e duality fibration $F^{2m} \to E^{4k} \to B^{2n}$  with $m + n = 2k,$ following \cite{SurTransfer} and \cite{Korzen}.

In Part Three we apply the methods of Parts One and Two to obtain new results
on $\sigma(E)$  modulo $8$ for the total space $E$ of an Poincar\'e duality fibration $F^{2m} \to E^{4k} \to B^{2n}.$

By the results in \cite{Meyerpaper} and \cite{modfour} we know that the signature of a Poincar\'e duality  fibration
$F^{2m} \to E^{4k} \to B^{2n}$ is multiplicative mod $4$, that is,
$\sigma(E) \equiv \sigma(B) \sigma(F) \pmod{4} .$
Now $M = E \sqcup - (B \times F)$ (where $-$ reverses the orientation) has
$\sigma(M) = \sigma(E) - \sigma(B)\sigma(F) \in \zz$,
so that $\sigma(M) \equiv 0 \pmod{4}$, and Theorem \ref{4Arf-topology} can be applied to $M$.

\vspace{2pt}
\noindent \textbf{Theorem \ref{4Arf-general-fibration}} \textit{The signatures mod $8$ of the fibre, base and total space are related by
$$\sigma(E) - \sigma(B)\sigma(F) = 4 \textnormal{Arf}(W,\mu,h) \in 4\zz_2 \subset \zz_8$$
with $(W,\mu,h)$ the maximal isotropic subquotient of $(H^{2k}(M;\zz_2),\lambda,q)$ constructed in \ref{4Arf-topology} }

\vspace{2pt}
\noindent \textbf{Theorem \ref{mod8-theorem}} \textit{ If the action of $\pi_1(B)$ on $(H^m(F;\zz)/torsion) \otimes  \zz_4$ is trivial, then
the Arf invariant in \ref{4Arf-general-fibration} is trivial, and the signatures in $F^{2m} \to E^{4k} \to B^{2n}$ are multiplicative mod $8$,
$$\sigma(E) - \sigma(B)\sigma(F) = 0 \in \zz_8$$}
\vspace{-2pt}

Finally we study surface bundles $F^2 \to E^4 \to B^2$ which have  $\sigma(E)= 4\in \zz$, which provide examples of non-multiplicativity modulo $8$.  Such examples were first constructed by Endo. In Endo's example the action of $\pi_1(B)$ on $H^{1}(F; \zz_2)$ is non-trivial.
A Python module is used to find some further nontrivial examples of non-multiplicativity modulo $8$.

\chapter*{Acknowledgements}

My first acknowledgement is to thank my supervisor Andrew Ranicki.
Andrew is a very enthusiastic and generous supervisor and I am extremely grateful for all the help and advice that he has given me during my PhD.
His mathematical insights have been a great source of motivation for my work, and without his contributions and guidance this work would not have been possible.
I must emphasize that Andrew has been both friend and mentor to me, going beyond the call of duty as a supervisor.

\vspace{2pt}
During my time in Edinburgh I have been very lucky to be part of the Edinburgh Surgery Theory Study Group. It was wonderful to spend many hours learning about surgery theory with Chris, Dee, Patrick, Spiros and Filipp.

\vspace{2pt}
I would like to thank Michael Weiss for inviting me to M\"unster during the autumn of 2014. I am also grateful to the whole of the Leray seminar group for having made me feel so welcome during my stay in M\"unster.

\vspace{2pt}
I would like to thank Saul Schleimer for helping me understand the use of Dehn twists in Endo's paper and for suggesting that I use Python for computations of examples in chapter \ref{examples-chapter}.

\vspace{2pt}
I am also grateful to Andrew Korzeniewski for clarifying conversations and emails about the work in his thesis.

\vspace{2pt}
Many thanks  go to Michelle Bucher-Karlsson and Caterina Campagnolo for all the enjoyable hours doing mathematics with them  in Geneva, Les Diablerets and Edinburgh and for patiently listening to me talk about parts of my thesis when the work was still in progress. I look forward to more mathematical conversations with them.

\vspace{2pt}
Another gratifying mathematical experience took place during my second year, when  I became involved in a long term project to connect women with similar interests in topology. This project culminated in a conference
in Banff and the writing of joint papers in small groups. Even though I have not included the work from this research experience in my thesis, it was very satisfactory both personally and mathematically to have been part of the ``orbifolds team". 


\vspace{2pt}
I would also like to take this opportunity to thank Des Sheiham\textquotesingle s parents Ivan and Teresa for having funded high-quality recording equipment for the topology group.
This has been essential for the production of a collection of more than 200 videos on topology with a special focus on surgery theory, which I have recorded during my PhD.

\vspace{2pt}
On the non-mathematical side, I am deeply grateful to Ida Thompson for being such a good friend. My years in Edinburgh would certainly not have been so wonderful without her.

\vspace{2pt}
The few sentences that I can write in this page of acknowledgements to thank my parents and my sister Ana cannot do justice to my feelings for them. My parents have always encouraged me to pursue my dreams and have always supported me in important decisions in my life. Not being a mathematician himself, my father was able to shed light on the beauty of mathematics when I was a child. The pursuit of this beauty is the ultimate reason why I am writing this today.

\cleardoublepage \pagenumbering{arabic}

\tableofcontents

\setcounter{chapter}{-1}

\chapter{Introduction}\label{Foundations}

\section*{Overview}
The following chart presents the structure of the thesis showing the main topics that will be developed and how they relate to each other.

\begin{figure}[ht!]
\labellist
\small\hair 2pt
\pinlabel \small {\pageref{figure}} at 315 222
\endlabellist
\centering
\includegraphics[scale=0.9]{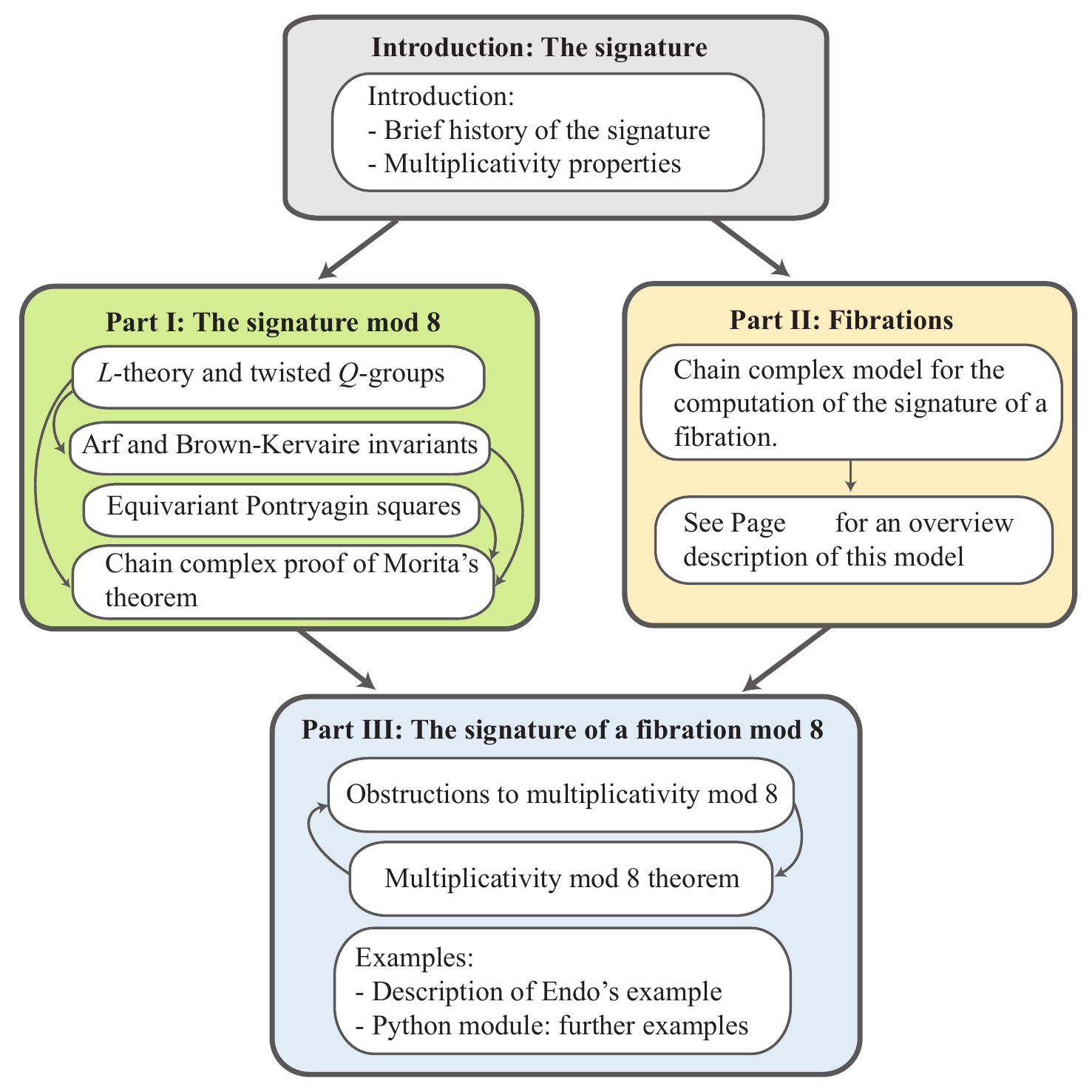}
\label{fig:cobo}
\end{figure}

\section{The signature: an introduction}

  The signature $\sigma(M) \in \mathbb{Z}$ of an oriented $n$-dimensional Poincar\'e complex $M^n$ is defined as follows,
\begin{definition}
\begin{itemize}
\item[(i)] If $n=4k$ then $\sigma(M)$ is defined to be the number of positive eigenvalues minus the number of negative eigenvalues of the non-singular symmetric intersection form $(H^{2k}(M; \mathbb{R}), \lambda)$, where
$$\lambda : H^{2k}(M; \mathbb{R}) \times H^{2k}(M; \mathbb{R}) \longrightarrow \mathbb{R} ; (u, v) \mapsto \left<u \cup v, [M] \right> $$
is the intersection pairing.
\item[(ii)] If $n \neq 4k$ then $\sigma(M) =0 \in \mathbb{Z}$.
\end{itemize}
\end{definition}

In general if we take the cartesian product of Poincar\'e complexes $X$ and $Y$ then the signature has multiplicative properties,
$$\sigma(X \times Y) - \sigma(X)  \sigma(Y)=0 \in \bb{Z}.$$

 Chern, Hirzebruch and
Serre \cite{HirzebruchSerreChern} were the first to consider the  problem of the multiplicativity of the signature of a fibre bundle  $F^{2m} \to E^{4k} \to B^{2n}$ of manifolds. They determined that if $\pi_1(B)$
acts trivially on the cohomology ring $H^*(F, \mathbb{Q})$, then
the signature is multiplicative,
$$\sigma(E) - \sigma(F)  \sigma(B) =0 \in \bb{Z}.$$

Kodaira \cite{Kodaira}, Hirzebruch \cite{Ramified-Hirz} and Atiyah \cite{Atiyah-cov} considered the situation when the action of the fundamental group on $H^*(F; \bb{Q})$  is non-trivial and independently constructed examples of fibre bundles  with non-multiplicative signature.

Meyer\cite{Meyerpaper}  proved that  $\sigma(E)=0 \in \zz_4$ for a surface bundle $F^2 \to E^4 \to B^2$. Later on,  Hambleton, Korzeniewski and Ranicki \cite{modfour} provided a high-dimensional version of this result. They proved that with $F\to E \to B$ a fibre bundle of closed, connected, compatibly oriented manifolds,
$$\sigma(E) - \sigma(F)\sigma(B)= 0\in \zz_4.$$

Here we identify the signature modulo $8$ with an Arf invariant.

\vspace{1pt}
\noindent \textbf{Theorem \ref{4Arf-general-fibration}} \textit{Let $F^{2m} \to E^{4k} \to B^{2n}$ be a   Poincar\'e duality fibration. With $(V, \lambda)= \left(H^{2k}(E, \zz_2), \lambda \right)$ and $(V', \lambda')= \left(H^{2k}(B \times F), \zz_2), \lambda' \right)$, the signatures mod $8$ of the fibre, base and total space are related by
$$
\sigma(E) - \sigma(B \times F) = 4 \textnormal{Arf}\left( L^{\perp}/L , [\lambda \oplus -\lambda'], \frac{\left[ \mc{P}_2 \oplus-\mc{P}'_2 \right]}{2} \right) \in 4\zz_2 \subset \zz_8,
$$
where
\begin{itemize}
\item[(i)] $\lambda: H^{2k}(E;\zz_2) \times H^{2k}(E;\zz_2) \to \zz_2$ and $\lambda': H^{2k}(B \times F;\zz_2) \times H^{2k}(B \times F;\zz_2) \to \zz_2$ are given by the cup product structures.
\item[(ii)]  $\mc{P}_2 : H^{2k}(E;\zz_2) \to \zz_4$ and $\mc{P}'_2 : H^{2k}(B\times F;\zz_2) \to \zz_4$ are the Pontryagin squares.
\item[(iii)] $L^{\perp} =\left\{(x, x') \in V \oplus V' \vert \lambda(x,x) = \lambda'(x', x') \in \zz_2 \right\}$,
\item [(iv)] $L = \langle v_{2k} \rangle \subset L^{\perp}$, with $v_{2k}=(v_{2k}(E), v_{2k}(B \times F)) \in V \oplus V'$ the Wu class of $E \sqcup -(B \times F).$
\end{itemize}
}
\vspace{3pt}

It is of interest to consider a weaker version of the trivial action condition of \cite{HirzebruchSerreChern}.
The following result was conjectured by Klaus and Teichner \cite{KT}:
\begin{conjecture*} \label{conjecture-KT} (\hspace{-1pt}\cite{KT})
Let $F^{2m} \to E \to B^{2n}$ be a fibration of oriented Poincar\'e complexes such that the action of $\pi_1(B)$ on $H^{m}(F, \bb{Z}_2)$ is trivial, then
$$\sigma(E) -\sigma(F) \sigma(B)  = 0 \in \zz_8.$$
\end{conjecture*}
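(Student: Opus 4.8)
The plan is to deduce the conjecture from Theorem~\ref{4Arf-general-fibration} in the same way that Theorem~\ref{mod8-theorem} is deduced from it. Writing $M=E\sqcup-(B\times F)$, so that $\sigma(E)-\sigma(F)\sigma(B)=\sigma(M)$ and (by \cite{modfour}) $\sigma(M)\equiv 0\pmod 4$, Theorem~\ref{4Arf-general-fibration} identifies $\sigma(M)\in\zz_8$ with $4\,\textnormal{Arf}(W,\mu,h)$, where $(W,\mu,h)=(L^\perp/L,[\lambda\oplus-\lambda'],[\mc{P}_2\oplus-\mc{P}'_2]/2)$ is the maximal isotropic subquotient of the $\zz_4$-quadratic form $(H^{2k}(M;\zz_2),\lambda,q)$ and $L=\langle v_{2k}\rangle$. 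So the whole problem is to prove that the hypothesis ``$\pi_1(B)$ acts trivially on $H^m(F;\zz_2)$'' forces $\textnormal{Arf}(W,\mu,h)=0\in\zz_2$.

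First I would feed in the surgery-transfer chain model of Part Two: the symmetric Poincar\'e complex of $E$ is, up to homotopy, the transfer along the fibre of the symmetric Poincar\'e complex of $B$, so that $H^{2k}(E;\zz_2)$ inherits the filtration of the twisted mod $2$ Serre spectral sequence, $\lambda$ is the induced Poincar\'e pairing, and $\mc{P}_2$ is compatible with the filtration. The classes of fibre degree $q\neq m$ pair hyperbolically with their Poincar\'e duals of fibre degree $2m-q$ and have vanishing mod $2$ cup-square in $H^{4k}(E;\zz_2)$ for degree reasons (the fundamental class lives in fibre degree $2m$); the same holds for $B\times F$, and one checks that the contribution of these off-diagonal pieces to $(W,\mu,h)$ is a sum of split metabolic planes, hence $0$ in the Arf invariant. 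This reduces the computation to the fibre degree $m$ part; when $\pi_1(B)$ acts trivially on $H^m(F;\zz_2)$ the relevant local system is constant, so the underlying $\zz_2$-symmetric form of that part of $W$ matches the one coming from $B\times F$ up to spectral-sequence differentials, which by Poincar\'e duality of the spectral sequence only modify it by metabolic subquotients. One is then left with the Lagrangian and enhancement analysis as in the proof of Theorem~\ref{mod8-theorem}: exhibit a ``diagonal'' Lagrangian $\Lambda\subset W$ and show $h|_\Lambda=0$.

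The hard part — and the reason this remains a conjecture rather than a theorem — is precisely this last step, because $h=[q]/2$ is built from the Pontryagin square $\mc{P}_2$, which is a genuine $\zz_4$-refinement of the cup-square and is \emph{not} determined by the mod $2$ cohomology ring of $E$: it depends on $H^{2k}(E;\zz_4)$, hence on the $2$-torsion of $H^*(F;\zz)$ and on the action of $\pi_1(B)$ on that integral cohomology, about which the hypothesis of the conjecture says nothing. This is exactly the gap with Theorem~\ref{mod8-theorem}, whose stronger hypothesis (trivial action on $(H^m(F;\zz)/\textnormal{torsion})\otimes\zz_4$) is just what is needed to pin $\mc{P}_2$ down on the diagonal. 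To close it one must show that this extra $\zz_4$-data cannot change the Arf invariant. The route I would try is to realise $\sigma(E)-\sigma(F)\sigma(B)\in\zz_8$ as the value of a generalised Meyer signature cocycle on the monodromy representation $\rho\colon\pi_1(B)\to\textnormal{Aut}(H^m(F;\zz),\textnormal{intersection form})$ produced by the Part Two transfer, refine Meyer's result that this cocycle takes values in $4\zz$, and prove that it vanishes on the level-$2$ congruence subgroup, i.e.\ on $\rho$ trivial mod $2$. Controlling the Pontryagin-square term of that cocycle on the level-$2$ subgroup — where one has only congruences mod $2$ but needs a conclusion mod $4$ — is the main obstacle, and seems to require a genuinely new ingredient: either a $\zz_4$-refinement of Meyer's cocycle, or a Bockstein spectral sequence argument comparing $H^{2k}(E;\zz_4)$ with $H^*\!\big(B;H^*(F;\zz_4)\big)$ under the mod $2$ triviality of the monodromy.
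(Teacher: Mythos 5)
You have not proved the statement, but that is the correct outcome: the statement you were given is the Klaus--Teichner \emph{conjecture}, and the paper itself offers no proof of it. The thesis only proves the neighbouring Theorem \ref{mod8-theorem}, whose hypothesis is triviality of the $\pi_1(B)$-action on $(H^m(F;\zz)/\textnormal{torsion})\otimes\zz_4$ rather than on $H^m(F;\zz_2)$, and it explicitly records (in the discussion of \cite{Korzen} in Chapter \ref{mod-eight-proof}, and in Chapter \ref{Foundations}) that the $\zz_2$-trivial case remains open, the earlier attempted proof in \cite{Korzen} having a gap. So there is no paper proof to compare your argument against, and your refusal to claim the conjecture is the honest position.

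Your diagnosis of \emph{why} it is open coincides with the paper's own. The reduction via $M=E\sqcup -(B\times F)$, $\sigma(M)\equiv 0 \pmod 4$, and Theorem \ref{4Arf-general-fibration} to showing $\textnormal{Arf}(W,\mu,h)=0$ is exactly the paper's framework, and the obstruction you isolate is the one the thesis names: under $\zz_2$-triviality, Lemma \ref{lemma-mod8-algebraic} identifies the mod $2$ forms $(H^{2k}(D;\zz_2),\Gamma_0)\cong(H^{2k}(D';\zz_2),\Gamma'_0)$, but the twisted and untwisted Pontryagin squares are then only known to differ by a linear map, $\mc{P}_2(x)-\mc{P}'_2(x)=2\lambda(x,t)$ (Proposition \ref{differ-linear-map}), which shifts the Brown--Kervaire invariant by $2q(t)$ and the Arf invariant by $h(t)$; nothing in the mod $2$ hypothesis forces $h(t)=0$. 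The $\zz_4$-triviality of Theorem \ref{mod8-theorem} is used precisely to make the two Pontryagin squares isomorphic (Lemma \ref{Z4-pont}), which is the step you correctly flag as unavailable here. Two caveats on your sketch: the filtration/spectral-sequence reduction to the fibre-degree-$m$ part is handled in the paper not by a Serre spectral sequence argument but by the transfer model $(C(\wtB),\phi)\otimes\bar{S}^m(A,\alpha,U)$ with $A=H^m(F;\zz)/\textnormal{torsion}$ (Proposition \ref{two-functors}), so your metabolic-contribution claims would need to be re-derived in that setting; and your proposed routes to closing the gap (a $\zz_4$-refined Meyer cocycle, or a Bockstein comparison) are speculation beyond anything in the thesis -- reasonable directions, but not arguments.
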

In \cite{Korzen} there is a tentative proof of this result for the case when both $m$ and $n$ are even. Unfortunately that proof had a gap which we discuss in chapter \ref{mod-eight-proof}.
Our Theorem \ref{mod8-theorem} differs from this conjecture in that the action of $\pi_1(B)$ is on $H^m(F, \zz)/torsion \otimes \zz_4$.

\vspace{4pt}

\noindent \textbf{Theorem \ref{mod8-theorem}}  \textit{Let $F^{2m} \to E \to B^{2n}$ be a fibration of oriented Poincar\'e complexes. If the action of $\pi_1(B)$ on $H^m(F, \bb{Z})/torsion \otimes \bb{Z}_4$ is trivial, then
$$\sigma(E)  -\sigma(F) \sigma(B) =0 \in \zz_8.$$
}
\section{Chapter outline}

\subsubsection{Part I}
In Part \ref{signature modulo 8} we shall investigate properties of the signature of a symmetric Poincar\'e complex modulo $8$. The material about hyperquadratic $L$-theory and twisted $Q$-groups presented in chapter \ref{hyperquad} is very relevant for this purpose.

In chapter \ref{Arf-BK} we give definitions of the Arf and Brown-Kervaire invariants and discuss how they relate to each other. The main result in this chapter is Proposition \ref{BK-and-4Arf} where we use a maximal isotropic subquotient construction to express a Brown-Kervaire invariant which is divisible by $4$ as an Arf invariant.

In chapter \ref{Pontryagin-squares chapter} we shall concentrate on the definition of classical and equivariant Pontryagin squares.

Making use of the theory about the Brown-Kervaire invariant presented in \cite{BanRan}, we provide in chapter \ref{Morita-chain-cx} new chain complex proofs of the results in \cite{Morita}. One advantage of using chain complexes and the results in \cite{BanRan} is that the original proofs in \cite{Morita} and specially that of  \cite[Thm 1.1]{Morita} are simplified considerably. Another advantage is that we will be able to apply them in the algebraic proof of Theorem \ref{mod8-theorem}.

\subsubsection{Part II}

In Part II we shall give the construction of an appropriate algebraic model of the chain complex of the total space $C(E)$ of a fibration $F \to E \to B$ in terms of the chain complex of the fibre $C(F)$ and the universal cover of the base $C(\wtB)$ that will allow us to compute the signature of a fibration. This model was developed in \cite{Korzen} and uses the idea of transfer maps in $L$-theory from \cite{SurTransfer}.

\subsubsection{Part III}
In Part III we show how the obstruction to multiplicativity of the signature  modulo $8$ of a fibration with no condition on the action of the fundamental group of the base is in general given by the Arf invariant. The precise statement is given in Theorem \ref{4Arf-general-fibration}.

In Part III we prove Theorem \ref{mod8-theorem} about multiplicativity of the signature of a fibration modulo $8$ when there is a $\zz_4$-trivial action, which has already been mentioned above. To prove this theorem, we first state its algebraic analogue.
All the material presented in parts I and II will be relevant in the proof. Note that the statement of Theorem \ref{mod8-theorem} includes the case when $m$ and $n$ are odd, so that base and fibre are of dimensions $4i+2$ and $4j+2$ respectively. In this case Theorem \ref{mod8-theorem} takes the following form:

\begin{corollary}
Let $F^{4i+2} \to E^{4k} \to B^{4j+2}$ be a fibration of oriented Poincar\'e complexes such that the action of $\pi_1(B)$ on $H^{2m+1}(F, \bb{Z})/torsion \otimes{\zz}_4$ is trivial, then
$$\sigma(E)  \equiv 0  \in \zz_8.$$
\end{corollary}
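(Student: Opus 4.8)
The plan is to obtain this as an immediate specialization of Theorem~\ref{mod8-theorem}; essentially all that is needed is index bookkeeping together with the elementary fact that a Poincar\'e complex whose dimension is not divisible by $4$ has vanishing signature. Writing the fibre and base dimensions in the form $2m$ and $2n$ used in Theorem~\ref{mod8-theorem}, the hypotheses $\dim F = 4i+2$ and $\dim B = 4j+2$ force $m = 2i+1$ and $n = 2j+1$ (both odd), with $k = i+j+1$. Under this relabelling the hypothesis that $\pi_1(B)$ acts trivially on $H^{2i+1}(F;\zz)/\textnormal{torsion} \otimes \zz_4$ is precisely the $\zz_4$-triviality hypothesis of Theorem~\ref{mod8-theorem} for the fibration $F^{2m} \to E^{4k} \to B^{2n}$, since $m = 2i+1$ is the middle dimension of the fibre.

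With the hypotheses matched, I would then invoke Theorem~\ref{mod8-theorem} to get $\sigma(E) - \sigma(F)\sigma(B) = 0 \in \zz_8$. It remains only to observe that $\sigma(F) = \sigma(B) = 0 \in \zz$: by the definition of the signature, $\sigma(M^d) = 0$ whenever $d \not\equiv 0 \pmod 4$, and here $\dim F = 4i+2$ and $\dim B = 4j+2$. Hence $\sigma(F)\sigma(B) = 0$ and the conclusion of Theorem~\ref{mod8-theorem} collapses to $\sigma(E) = 0 \in \zz_8$, which is the assertion.

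There is no real obstacle in the argument; the one point that must be checked honestly is that Theorem~\ref{mod8-theorem} is genuinely established with no parity restriction on $m$ and $n$, so that the present odd--odd case is covered rather than being implicitly assumed. Equivalently, one traces through the algebraic model of Part~II and the maximal isotropic subquotient $(W,\mu,h)$ of $M = E \sqcup -(B\times F)$ used in Theorem~\ref{4Arf-general-fibration}: when $m$ (hence also $n$) is odd the relevant forms are still those on the middle-dimensional $\zz_2$-cohomology, and the $\zz_4$-trivial action hypothesis still forces $\textnormal{Arf}(W,\mu,h) = 0$, so that $\sigma(E) - \sigma(B\times F) = 4\,\textnormal{Arf}(W,\mu,h) = 0 \in \zz_8$; since $\sigma(B\times F) = \sigma(B)\sigma(F) = 0$ this again yields $\sigma(E) = 0 \in \zz_8$.
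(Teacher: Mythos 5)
Your argument is correct and coincides with the paper's own treatment: the corollary is obtained there as an immediate specialization of Theorem \ref{mod8-theorem} to odd $m=2i+1$, $n=2j+1$ (the theorem is stated and proved with no parity restriction, the odd--odd case being Case 1 of the algebraic proof via Proposition \ref{algebraic-version-4Arf-obstruction-4i+2}), combined with the observation that $\sigma(F)=\sigma(B)=0$ since their dimensions are congruent to $2$ modulo $4$. Your reading of the hypothesis as $\zz_4$-triviality of the $\pi_1(B)$-action on the middle-dimensional cohomology of the fibre is also exactly how the paper intends it.
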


This is specially interesting because it includes the case of surface bundles which has been widely studied in the literature.

One major problem in the context of Theorem \ref{mod8-theorem} has been to find non-trivial examples that satisfy this theorem.
The surface bundle examples of Atiyah and Kodaira have signature equal to $8$ or a multiple of $8$.

Endo used Meyer's arguments from \cite{Meyerpaper} to construct a surface bundle with signature $-4$ which has as basis an orientable surface of genus 111 and as fibre an orientable surface of genus 3. The action of the fundamental group is not given explicitly in the paper. I  have written  down the action explicitly to confirm that in this example the action of $\pi_1(B)$ on $H^1(F, \bb{Z}_2)$ is non-trivial, as expected from  Theorem \ref{mod8-theorem}.
 Lefschetz fibrations are another important source of examples of surface bundles with non-trivial signature, see for example \cite{BDS}, \cite{Ozbagci}, \cite{Stipsicz}. Nevertheless these fibrations have singular fibres and the computation of the signature of the total space depends also on the blow-up of the singularities.
An interesting construction of surface bundles with signature $4$ is given in \cite{Endo-alt}.

Meyer \cite[section 9]{Meyer-thesis} constructed two examples of local coefficient systems with non-trivial signature. One of them has signature 4 and the other has signature 8.

The appendix contains a computer module using the Python programming language which calculates the signature of the total space of a surface bundle by giving the explicit action as input. This module is used in chapter \ref{examples-chapter} to construct new examples with non-trivial signatures modulo $8$.

\part{The signature modulo $8$} \label{signature modulo 8}

\chapter*{Introduction to Part I}
\addcontentsline{toc}{chapter}{Introduction to Part I}

This part of the thesis is concerned with results about the signature modulo $4$ and modulo $8$ of symmetric Poincar\'e chain complexes over $\zz$.
The signature modulo $8$ of a symmetric Poincar\'e complex can be expressed in general by a $\zz_8$-valued Brown-Kervaire invariant. When the signature is 0 mod $4$, we show that it can be expressed as $4$ times a $\zz_2$-valued Arf invariant modulo $8$.

 \cite{Morita} proved that for a $4k$-dimensional Poincar\'e space $X$, the signature mod $8$ is the Brown-Kervaire invariant of a $\zz_4$-valued quadratic enhancement of the symmetric form on $H^{2k}(X, \zz_2).$ In \cite{Morita} Morita also proved that the signature mod $4$ is given by the Pontryagin square evaluated on the Wu class $v_{2k} \in H^{2k}(X, \zz_2)$.
We shall prove both results using symmetric Poincar\'e complexes over $\zz$. One advantage of doing this is that the results are stronger since they refer to chain complexes and not only to spaces. Another advantage is that the proof is significantly simplified. We shall need chain complex versions of the results in \cite{Morita} in part III of the thesis.

The overview of chapters in this part of the thesis is as follows:

In chapter \ref{hyperquad} we shall start  by presenting background results about $L$-theory that will be widely used throughout the thesis. Here we present a brief overview of the symmetric construction and symmetric, quadratic and hyperquadratic $L$-theory.

In chapter \ref{Arf-BK} we recall the definitions of the Arf and Brown-Kervaire invariants and describe how they relate to each other when the Brown-Kervaire takes values $0$ or $4$ in $\zz_8.$

In chapter \ref{Pontryagin-squares chapter} we recall  the classical construction of Pontryagin squares and also the construction of the equivariant Pontryagin squares given in \cite{Korzen}. We shall extend the definition of equivariant Pontryagin squares given in \cite{Korzen} to the evaluation of the equivariant Pontryagin square on odd cohomology classes.

In chapter \ref{Morita-chain-cx} we give chain complex proofs of  the results in \cite{Morita} and discuss how divisibility of the signature by $4$ and by $8$ can be detected.

\chapter{Background} \label{hyperquad}

\section{Symmetric, quadratic and hyperquadratic structures} \label{symm}
In this section we shall present some background material which will be fundamental for all three parts of the thesis. The ideas presented in this chapter are a review of some fundamental aspects in \cite{atsI}, \cite{atsII}.

\subsection{The algebraic theory}


Let $R$ denote a ring with involution with unit $1$,
$$\overline{\color{White}{a}}: R \to R ; a \mapsto \overline{a}.$$
For $a, b \in R$, this satisfies
$$\overline{a + b} = \overline{a} + \overline{b}, ~~~ \overline{\overline{a}}= a, ~~~ \overline{ab}= \overline{b}.\overline{a}, ~~~ \overline{1} = 1.$$

$R$-modules are understood to be left $R$-modules, unless a right $R$-module action is specified.

\begin{definition}\label{definition1}

\begin{itemize}
\item[(i)] If $M$ is an $R$-module, then the \textbf{transpose} $R$-module is the right $R$-module denoted by $M^{t}$ with the same additive group and with
$$M^t \times R \to M^t ; (x, a) \mapsto \overline{a} x.$$
\item[(ii)] The {\bf dual} of the $R$-module is
$$M^* = \textnormal{Hom}_R(M, R)$$
with $R$ acting by
$$R \times M^*  \to M^* ;  (a, f) \mapsto (x \mapsto (f(x). \overline{a})).$$
\item[(iii)] The \textbf{slant map} is the $\zz$-module morphism defined for  $R$-modules $M$ and $N$ by
$$M^t \otimes_R N \to \textnormal{Hom}_R(M^*, N) ; ~~ x \otimes y \mapsto (f \mapsto \overline{f(x)}.y)$$
\end{itemize}
\end{definition}

\begin{proposition} The slant map is an isomorphism for f.g. projective $R$-modules $M$ and $N$.
In particular for $N=R$ the slant map is a natural $R$-module isomorphism
$$M \to M^{**} ; x \to (f \mapsto \overline{f(x)})$$
which identifies $M^{**}  \cong M.$
\end{proposition}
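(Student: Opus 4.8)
The plan is the standard one: reduce to the free module of rank one, extend by additivity, and finish f.g.\ projective modules by a retract argument; the statement about $M^{**}$ is then the special case $N=R$.

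First I would dispose of well-definedness and naturality. The assignment $(x,y)\mapsto(f\mapsto\overline{f(x)}\cdot y)$ is additive in $x$ and $y$ and is $R$-balanced over $R$: since $f$ is left $R$-linear, $f(\overline{a}x)=\overline{a}f(x)$, and then $\overline{\overline{a}f(x)}=\overline{f(x)}\,\overline{\overline{a}}=\overline{f(x)}\,a$, which matches the effect of moving $a\in R$ across the tensor product (recall the right action on $M^{t}$ is $x\cdot a=\overline{a}x$). Hence the map descends to $M^{t}\otimes_{R}N$. I would also check that the target is correct, i.e.\ that for fixed $x,y$ the map $M^{*}\to N$, $f\mapsto\overline{f(x)}y$, is left $R$-linear in $f$: with the action on $M^{*}$ from Definition \ref{definition1}(ii) this is the computation $\overline{(af)(x)}\,y=\overline{f(x)\overline{a}}\,y=a\,\overline{f(x)}\,y$. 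Naturality in $M$ and $N$ is immediate from the formula (the two contravariances coming from $(-)^{*}$ and $\textnormal{Hom}_{R}(-,N)$ cancel, so both sides are covariant additive functors of $M$).

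Next I would prove the slant map is an isomorphism for $M=R$ and every $N$. Here $R^{t}\otimes_{R}N\cong N$ via $1\otimes y\leftrightarrow y$, while $R^{*}=\textnormal{Hom}_{R}(R,R)$ is free of rank one on the functional $\textnormal{ev}_{1}$, so $\textnormal{Hom}_{R}(R^{*},N)\cong\textnormal{Hom}_{R}(R,N)\cong N$ by evaluation at $1$. Tracing the generator $1\otimes y$ through the slant map gives $f\mapsto\overline{f(1)}\,y$, which under these identifications is the element $y$ (up to the involution on $R$, which does not affect bijectivity); so the slant map is an isomorphism for $M=R$. To pass to f.g.\ projective $M$, note both $M\mapsto M^{t}\otimes_{R}N$ and $M\mapsto\textnormal{Hom}_{R}(M^{*},N)$ take finite direct sums in $M$ to finite direct sums (the tensor product is additive; $(-)^{*}$ and $\textnormal{Hom}_{R}(-,N)$ turn finite sums into finite products, which agree with sums), and the slant map is a natural transformation between them; hence it is an isomorphism for every f.g.\ free $M=R^{n}$. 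For $P$ f.g.\ projective, pick $Q$ with $P\oplus Q\cong R^{n}$; naturality along the inclusion $P\hookrightarrow R^{n}$ and the projection $R^{n}\twoheadrightarrow P$ exhibits the slant map for $P$ as a retract of the slant map for $R^{n}$, and a retract of an isomorphism is an isomorphism.

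Finally, for $N=R$: then $M^{t}\otimes_{R}R\cong M$ (as abelian groups, via $x\otimes 1\leftrightarrow x$) and $\textnormal{Hom}_{R}(M^{*},R)=M^{**}$ by definition, and the slant map composed with the first identification is exactly $x\mapsto(f\mapsto\overline{f(x)})$, hence an isomorphism by the previous step. I would also check this composite is a homomorphism of left $R$-modules, so that the identification $M^{**}\cong M$ is $R$-linear: $ax\mapsto(f\mapsto\overline{f(ax)})=(f\mapsto\overline{af(x)})=(f\mapsto\overline{f(x)}\,\overline{a})$, which is precisely $a$ acting on $f\mapsto\overline{f(x)}$ under the $R$-action on $M^{**}=\textnormal{Hom}_{R}(M^{*},R)$ given by Definition \ref{definition1}(ii). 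I do not expect any single step to be hard; the one place requiring care is the bookkeeping of left-versus-right actions and the involution — in particular that $M^{t}$ is used precisely so that $M^{t}\otimes_{R}N$ is balanced against the \emph{left} action on $N$ while the conjugation $\overline{f(x)}$ converts the a priori right-linearity in $f$ into left-linearity — and ensuring each of the identifications $R^{t}\otimes_{R}N\cong N$, $R^{*}\cong R$, $M^{t}\otimes_{R}R\cong M$ is compatible with these twisted structures.
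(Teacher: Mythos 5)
Your proof is correct: the well-definedness and linearity checks with the twisted actions are done right, the rank-one case, additivity over finite direct sums, and the retract argument for f.g.\ projectives are the standard reduction, and the identification of the $N=R$ case with $x\mapsto(f\mapsto\overline{f(x)})$, including the verification that it is left $R$-linear for the dual action of Definition \ref{definition1}(ii), is accurate. The paper itself states this proposition as background without giving a proof, so there is nothing to compare against; your argument is exactly the standard one the paper implicitly relies on.
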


\begin{proposition}
\begin{itemize}
\item[(i)] For any $R$-modules $M$, $N$ $\textnormal{Hom}_R(M, N^*)$ is the abelian group of sesquilinear pairings
$$\lambda: M \times N \to R ~ ; ~ (x, y) \mapsto \lambda(x, y),$$
such that for all $x, x' \in M$, $y, y' \in N$, $r, s \in R$
\begin{itemize}
\item[(a)] $\lambda(x+x', y) ~=~ \lambda(x, y) + \lambda(x', y) \in R$,
\item[(b)] $\lambda(x, y+y') ~ = ~ \lambda(x, y)+ \lambda(x, y') \in R$,
\item[(c)] $\lambda(rx, sy) ~=~ s\lambda(x, y)\overline{r} \in R.$
\end{itemize}
\item[(ii)] \textbf{Transposition} defines an isomorphism
$$T: \textnormal{Hom}_{R}(M, N^*) \to \textnormal{Hom}_R(N, M^*); \lambda \mapsto T\lambda,$$
with $T \lambda(x, y) = \overline{\lambda(y, x)} \in R.$
\end{itemize}
\end{proposition}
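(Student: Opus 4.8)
The plan is to prove (i) by directly unwinding the definitions of the dual module $N^*$ and of an $R$-module homomorphism, and then to deduce (ii) by checking that transposition preserves sesquilinearity and squares to the identity.

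For (i): given $\phi \in \textnormal{Hom}_R(M,N^*)$, I would set $\lambda(x,y) = \phi(x)(y) \in R$, and conversely, given a sesquilinear pairing $\lambda$, set $\phi(x) = \lambda(x,-)\colon N \to R$. These assignments are additive in $\lambda$ (resp.\ $\phi$) and visibly mutually inverse, so the only content is that they land in the advertised sets. First I would check $\phi(x) \in N^* = \textnormal{Hom}_R(N,R)$: this is exactly condition (b) together with the ``$s$-part'' of condition (c) taken with $r = 1$, which (using $\overline{1}=1$) gives $\lambda(x,sy) = s\lambda(x,y)$, i.e.\ $\phi(x)$ is a left $R$-module map. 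Then I would check that $x \mapsto \phi(x)$ is $R$-linear $M \to N^*$: additivity is condition (a), and the ``$r$-part'' $\lambda(rx,y) = \lambda(x,y)\overline{r}$ of (c) reads, once one recalls that the $R$-action on $N^*$ is the twisted one $(r\cdot f)(y) = f(y)\overline{r}$ from Definition \ref{definition1}(ii), precisely as $\phi(rx) = r\cdot\phi(x)$. Running the same computation backwards shows that $\lambda(x,y) := \phi(x)(y)$ satisfies (a)--(c) whenever $\phi \in \textnormal{Hom}_R(M,N^*)$. Since addition on both sides is pointwise, the resulting bijection is an isomorphism of abelian groups.

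For (ii): I would define $T\lambda\colon N \times M \to R$ by $T\lambda(y,x) = \overline{\lambda(x,y)}$. The one genuine computation is that $T\lambda$ is again sesquilinear: conditions (a) and (b) for $T\lambda$ come from (b) and (a) for $\lambda$ plus additivity of the involution, while (c) uses $\overline{ab} = \overline{b}\,\overline{a}$ and $\overline{\overline{a}} = a$, namely $T\lambda(ry,sx) = \overline{\lambda(sx,ry)} = \overline{r\,\lambda(x,y)\,\overline{s}} = s\,\overline{\lambda(x,y)}\,\overline{r} = s\,T\lambda(y,x)\,\overline{r}$. By (i), $T\lambda$ then corresponds to an element of $\textnormal{Hom}_R(N,M^*)$, and $T$ is clearly additive. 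Finally $\bigl(T(T\lambda)\bigr)(x,y) = \overline{\overline{\lambda(x,y)}} = \lambda(x,y)$, so the composite $\textnormal{Hom}_R(M,N^*) \to \textnormal{Hom}_R(N,M^*) \to \textnormal{Hom}_R(M,N^*)$ is the identity and symmetrically the other way; hence $T$ is an isomorphism with $T^{-1} = T$.

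I expect no real obstacle: the argument is bookkeeping. The only place demanding care is the involution — remembering that the module structure on $N^*$ is twisted by $\overline{(\ \cdot\ )}$, so that condition (c) simultaneously encodes the $R$-linearity of $\phi\colon M \to N^*$ and the fact that each value $\phi(x)$ genuinely lies in $N^*$, and handling the order-reversal $\overline{ab} = \overline{b}\,\overline{a}$ correctly in the sesquilinearity check for $T\lambda$.
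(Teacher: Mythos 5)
Your proof is correct, and the key subtlety — that condition (c) simultaneously encodes membership of $\phi(x)$ in $N^*=\textnormal{Hom}_R(N,R)$ (the $s$-part) and $R$-linearity of $x\mapsto\phi(x)$ with respect to the twisted action $(r\cdot f)(y)=f(y)\overline{r}$ of Definition \ref{definition1}(ii) (the $r$-part) — is handled exactly right, as is the order reversal $\overline{r\lambda(x,y)\overline{s}}=s\overline{\lambda(x,y)}\,\overline{r}$ in part (ii). The paper states this proposition without proof, as standard background from the algebraic theory of surgery, and your definition-unwinding argument is precisely the standard one it implicitly relies on.
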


\begin{definition}
A finite f.g. projective $R$-module chain complex
$$C: ~ \dots \to C_{r+1} \xrightarrow{d} C_{r} \xrightarrow{d}C_{r-1} \to \dots $$
is \textbf{$n$-dimensional} if each $C_r$ $(0 \leq r \leq n)$ is a finitely generated projective $R$-module and $C_r=0$  for $r < 0$ and $r > n.$
\end{definition}

For an $R$-module chain complex we write $C^r= (C_r)^* ~~ (r \in \zz).$

\begin{definition}
\begin{itemize}
\item[(i)] For an $R$-module chain complex $C$, the \textbf{dual} $R$-module chain complex $C^{-*}$ is
$$d_{C^{-*}} : (C^{-*})_r = C^{-r} \to (C^{-*})_{r-1} = C^{-r+1}. $$
\item[(ii)] The \textbf{$n$-dual} $R$-module chain complex $C^{n-*}$ is
$$d_{C^{n-*}} : (C^{n-*})_r = C^{n-r} \to (C^{n-*})_{r-1} = C^{n-r+1}. $$
\end{itemize}
\end{definition}

The $n$-fold suspension of the dual $S^nC^{-*}$ is isomorphic to the $n$-dual $C^{n-*}$.

If a chain complex $C$ is $n$-dimensional then its $n$-dual is also $n$-dimensional.

\begin{proposition} \label{slant-map} For finite f.g. projective $R$-module chain complexes $C$ and $D$, the \textbf{slant map} is an isomorphism
$$
\begin{array}{ccc}
C^t \otimes_R D & \to &\textnormal{Hom}_R(C^{-*}, D) \\
x \otimes y & \mapsto & (f \mapsto \overline{f(x)}y).
\end{array}
$$
\end{proposition}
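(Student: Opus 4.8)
The plan is to reduce the chain-complex statement to the module-level statement already recorded above, namely that for finitely generated projective $R$-modules $M$ and $N$ the slant map $M^t \otimes_R N \to \textnormal{Hom}_R(M^*, N)$ is an isomorphism. The key observation is that, in each fixed degree, both sides of the asserted isomorphism are \emph{finite} direct sums of the module-level gadgets, and the chain-level slant map is, in that degree, precisely the direct sum of the module-level slant maps. Since a finite direct sum of isomorphisms is an isomorphism, the slant map is a degreewise isomorphism, and a chain map which is an isomorphism in each degree is an isomorphism of chain complexes.

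Concretely, first I would unwind the definitions using the convention $C^r = (C_r)^*$. The dual complex $C^{-*}$ has $(C^{-*})_r = C^{-r} = (C_{-r})^*$, with differential the dual of $d_C$, so the Hom-complex has $n$-th term $\textnormal{Hom}_R(C^{-*},D)_n = \bigoplus_{r} \textnormal{Hom}_R((C_{-r})^*, D_{r+n}) = \bigoplus_{p} \textnormal{Hom}_R((C_p)^*, D_{n-p})$, the direct sum being finite because $C$ and $D$ are finite complexes. On the other side $(C^t \otimes_R D)_n = \bigoplus_{p} C_p^t \otimes_R D_{n-p}$. With respect to these decompositions the map $x \otimes y \mapsto (f \mapsto \overline{f(x)}\,y)$ restricts, on the $p$-th summand, to the module-level slant map $C_p^t \otimes_R D_{n-p} \to \textnormal{Hom}_R((C_p)^*, D_{n-p})$, which is an isomorphism by the preceding Proposition since $C_p$ and $D_{n-p}$ are finitely generated projective. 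Hence the slant map is bijective in every degree.

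It then remains to verify that the slant map commutes with the differentials: the differential on $C^t \otimes_R D$ is $d_C \otimes 1 \pm 1 \otimes d_D$, and the one on $\textnormal{Hom}_R(C^{-*}, D)$ sends $f \mapsto d_D \circ f \pm f \circ d_{C^{-*}}$, with the usual Koszul signs. This is a direct computation on elementary tensors $x \otimes y$, using that the differential on $C^{-*}$ is dual to that of $C$; I would present just this one check and note the rest is bookkeeping. Combining the degreewise bijectivity with the chain-map property gives the isomorphism of chain complexes.

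The only non-formal point — hence the main obstacle — is this bookkeeping: aligning the degree shifts and the Koszul signs in the two differentials so that the slant map is genuinely a chain map, and using the finiteness hypothesis exactly where it is needed (to ensure the relevant direct sums are finite, so that a degreewise isomorphism is an isomorphism of complexes, and to stay within finitely generated projective modules so the module-level result applies in each degree).
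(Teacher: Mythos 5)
Your proposal is correct: the degreewise reduction to the module-level slant isomorphism for f.g.\ projective modules (the preceding Proposition), together with the finiteness of the complexes making each degree a finite direct sum, plus the routine Koszul-sign check that the slant map commutes with the differentials, is exactly the standard argument. The paper itself records this statement without proof, treating it as standard (it goes back to Ranicki's algebraic theory of surgery), so there is nothing in the text for your argument to diverge from; your write-up supplies precisely the bookkeeping the paper omits.
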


 \begin{definition} \label{involution} Let $C$  be an $R$-module chain complex, where $R$ is a ring with involution. Let $\epsilon = \pm 1$. Define an involution on $C^t \otimes_{R} C$ by the \textbf{$\epsilon$-transposition} map
$$
\begin{array}{ccc}
T_{\epsilon} : C^t_p \otimes_R C_q & \to & C^t_q \otimes_R C_p \\
x \otimes y & \mapsto & (-1)^{pq} y \otimes \epsilon x,
\end{array}
$$
with $x \in C_p$ and $y \in C_q.$
 \end{definition}

$T_{\epsilon}:C^t\otimes_RC \to C^t\otimes_{R}C$ is an automorphism of the $\zz$-module chain complex such that $(T_{\epsilon})^2=1$, giving $C^t\otimes_RC$ a $\zz[\zz_2]$-module structure.

Using the identification via the slant map, we can see that for finite dimensional $C$ the transposition  map $T_{\epsilon}$ is an involution isomorphism
$$
\begin{array}{ccc}
T_{\epsilon} : \textnormal{Hom}_R(C^p, C_q) & \to & \textnormal{Hom}_R(C^q, C_p) \\
\phi & \mapsto & (-1)^{pq} \epsilon \phi^*.
\end{array}
$$

Mostly $\epsilon = 1$ for us, but we shall need $\epsilon = -1$ in Chapter \ref{Pontryagin-squares chapter}.

Note that the transposition involution induces a $\bb{Z}_2$-action on $C^t \otimes_R C.$



\begin{lemma}
A cycle $f \in \textnormal{Hom}_R(C, D)_n$ is a chain map (up to sign)
$$f : S^nC \to D. $$
\end{lemma}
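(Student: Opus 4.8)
The plan is to unwind the definitions of the Hom-complex and of the suspension, and to observe that the cycle condition becomes the chain-map condition after a shift of indices; the only point requiring care is the tracking of signs, which is precisely what the phrase ``up to sign'' in the statement is meant to absorb. Recall that for $R$-module chain complexes $C,D$ the Hom-complex $\textnormal{Hom}_R(C,D)$ has $n$-chains $\textnormal{Hom}_R(C,D)_n = \prod_{p}\textnormal{Hom}_R(C_p,D_{p+n})$, so an element $f$ of degree $n$ is a family $f = (f_p\colon C_p \to D_{p+n})_{p\in\zz}$, with differential $d(f) = d_D\circ f - (-1)^n f\circ d_C$, that is, $d(f)_p = d_D\circ f_p - (-1)^n f_{p-1}\circ d_C \colon C_p \to D_{p+n-1}$.

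First I would spell out the condition that $f$ be an $n$-cycle, $d(f)=0$: this is the family of identities
$$d_D\circ f_p \;=\; (-1)^n\, f_{p-1}\circ d_C \qquad (p\in\zz).$$
Next I would recall that the $n$-fold suspension $S^nC$ has chain groups $(S^nC)_r = C_{r-n}$ and differential $d_{S^nC} = (-1)^n d_C$, so that a chain map $g\colon S^nC \to D$ is exactly a family $g = (g_r\colon C_{r-n}\to D_r)_{r\in\zz}$ satisfying
$$d_D\circ g_r \;=\; g_{r-1}\circ d_{S^nC} \;=\; (-1)^n\, g_{r-1}\circ d_C .$$
Comparing the two displays, the reindexing $g_r := f_{r-n}$ (which is legitimate, since $f_{r-n}\colon C_{r-n}\to D_r$) turns the cycle identity, with $p=r-n$, into the chain-map identity verbatim. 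Hence every $n$-cycle $f\in\textnormal{Hom}_R(C,D)_n$ determines a chain map $f\colon S^nC\to D$, and conversely.

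I expect no genuine obstacle here: the argument is entirely formal, and the qualifier ``up to sign'' merely records that with a different fixed choice of sign conventions for the suspension differential or for the Hom-differential one reaches the same conclusion after multiplying each $f_p$ by a degree-dependent unit $\pm 1$. It is worth stating nonetheless, since this identification — together with its refinement, that boundaries of the Hom-complex correspond to null-homotopic chain maps — is what later allows one to pass freely between the symmetric and quadratic structures on $C$ and the chain maps $C^{n-*}\to C$ that they induce.
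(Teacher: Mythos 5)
Your argument is correct: it is the routine unwinding of the Hom-complex differential and the suspension, with the reindexing $g_r = f_{r-n}$ turning the cycle condition into the chain-map condition, the residual degree-dependent sign being exactly what ``up to sign'' absorbs. The paper states this lemma without proof as a standard fact, and your verification is precisely the argument that is being taken for granted there.
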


Note that $H_n(\textnormal{Hom}_R(C, D)) = H_0(\textnormal{Hom}_R(S^nC, D))$ is the $\zz$-module of chain homotopy classes of $R$-module chain maps $S^nC \to D.$

$H_n(C^t \otimes_R D)$ is the $\bb{Z}$-module of chain homotopy classes of chain maps $C^{n-*} \to D$, since using the slant map we can identify
$$H_n(C^t \otimes_R C) = H_n( \textnormal{Hom}_R(C^{-*}, C)) = H_0(\textnormal{Hom}_R(C^{n-*}, C)).$$

A cycle $\phi \in \textnormal{Hom}_R(C^{-*}, C)_n = (C^t \otimes_R C)_n$ is a chain map $\phi : C^{n-*} \to C.$

\vspace{5pt}

Let $W$ be the standard free $\bb{Z}[\bb{Z}_2]$-module resolution of $\bb{Z}$,

$$W: \dots \bb{Z}[\bb{Z}_2] \xrightarrow{1+T} \bb{Z}[\bb{Z}_2] \xrightarrow{1-T}\bb{Z}[\bb{Z}_2] \xrightarrow{1+T} \bb{Z}[\bb{Z}_2]  \to 0$$
and let $\widehat{W}$ be the complete resolution,
$$\widehat{W}: \dots \bb{Z}[\bb{Z}_2] \xrightarrow{1+T} \bb{Z}[\bb{Z}_2] \xrightarrow{1-T}\bb{Z}[\bb{Z}_2] \xrightarrow{1+T} \bb{Z}[\bb{Z}_2]  \to \dots $$

We  know from Definition \ref{involution} that the transposition involution $T_{\epsilon}$ provides a $\bb{Z}_2$-action on $C^t \otimes_R C$, so we can define the $\bb{Z}$-module chain complexes
$$
\begin{array}{ccl}
W^\% C & =& \textnormal{Hom}_{\bb{Z}[\bb{Z}_2]}(W, C^t \otimes_R C) \\
W_\% C & = & W \otimes_{\zz[\zz_2]}(C^t \otimes_R C) \\
\widehat{W}^\% C & = &\textnormal{Hom}_{\bb{Z}[\bb{Z}_2]}(\widehat{W}, C^t \otimes_R C) \\
\end{array}
$$
In this context the chain complex $C$ is finite-dimensional, so we can use the slant map from Definition \ref{slant-map} to identify the tensor product $C^t \otimes_R C$ in the above expressions with $\textnormal{Hom}_R(C^{-*}, C)$.

\begin{definition} \label{structures}
\begin{itemize}
\item[(i)] An $n$-dimensional \textbf{$\epsilon$-symmetric structure} on a finite dimensional $R$-module chain complex $C$ is a cycle
$$\phi \in (W^{\%}C)_n, $$
that is, a collection of morphisms
$\left\{\phi_s \in \textnormal{Hom}_R(C^{n-r+s}, C_r) \vert r\in \bb{Z}, s \in \bb{Z} \right\}$
such that
$$
\begin{array}{r}
d \phi_s + (-1)^r \phi_s d^* + (-1)^{n+s-1}(\phi_{s-1}+ (-1)^sT_{\epsilon} \phi_{s-1})=0 :  \\
C^{n-r+s-1} \to C_r,
\end{array}
$$
with $s \geq 0$ and $\phi_{-1} =0$.

\item[(ii)] An $n$-dimensional \textbf{$\epsilon$-quadratic structure} of a finite dimensional $R$-module chain complex $C$ is a cycle
$$\psi \in (W_{\%}C)_n$$
that is, a collection of morphisms $\{ \psi_s : C^{n-r-s} \to C_{r} \vert \hspace{10pt}s \geq 0\}$
such that
$$
\begin{array}{r}
d \psi_s + (-1)^r\psi_s d^* + (-1)^{n-s-1}( \psi_{s+1} +(-1)^{s+1} T_{\epsilon}\psi_{s+1}) =0:  \\
C^{n-r-s-1} \to C_r
\end{array}
$$
with $s\geq 0$.
\item[(iii)]

An $n$-dimensional \textbf{$\epsilon$-hyperquadratic structure} on a finite dimensional $R$-module chain complex is a cycle
$$\theta \in (\widehat{W}^{\%} C)_n$$
that is, a collection of morphisms
$\{ \theta_s  \in  \textnormal{Hom}_R(C^{n-r+s}, C_r) \vert r \in \zz, s\in \zz \}$
such that,
$$
\begin{array}{r}
d \theta_s + (-1)^r \theta_s d^* + (-1)^{n+s-1}(\theta_{s-1} + (-1)^s T_{\epsilon}\theta_{s-1})=0 : \\
 C^{n-r+s-1} \to C_r
\end{array}$$
with $s \in \zz.$
\end{itemize}
\end{definition}

In definition \ref{structures} each $\phi_s$ is a chain homotopy between $\phi_{s-1}$ and $T_{\epsilon}(\phi_{s-1}),$ each $\psi_s$ is a chain homotopy between $\psi_{s-1}$ and $T_{\epsilon}(\psi_{s-1}),$ and each $\theta_s$ is a chain homotopy between $\theta_{s-1}$ and $T_{\epsilon}(\theta_{s-1}).$

\begin{definition} \begin{itemize}
\item[(i)] Two $n$-dimensional $\epsilon$-symmetric structures are equivalent if they differ by the boundary of a chain in $(W^{\%}C)_{n+1}.$
\item[(ii)] Two $n$-dimensional $\epsilon$-quadratic structures are equivalent if they differ by the boundary of a chain in $(W_{\%}C)_{n+1}.$
\item[(iii)] Two $n$-dimensional $\epsilon$-hyperquadratic structures are equivalent if they differ by the boundary of a chain in $(\widehat{W}^{\%}C)_{n+1}.$
\end{itemize}
\end{definition}

\begin{definition}
\begin{itemize}
\item[(i)] $Q^n(C, \epsilon)$ is the abelian group of equivalence classes of $n$-dimensional symmetric structures on $C$.
\item[(ii)] $Q_n(C, \epsilon)$ is the abelian group of equivalence classes of $n$-dimensional quadratic structures on $C$.
\item[(iii)] $\widehat{Q}^n(C,\epsilon)$ is the abelian group of equivalence classes of $n$-dimensional hyperquadratic structures on $C$
\end{itemize}
\end{definition}

Note that the following isomorphisms hold
$$
\begin{array}{ccl}
Q^n(C, \epsilon) & \cong & H_n(W^{\%} C) = H_n(\textnormal{Hom}_{\bb{Z}[\bb{Z}_2]}(W, C^t \otimes_R C)) = H^n(\bb{Z}_2 ; C^t \otimes_{R} C) \\
Q_n(C, \epsilon) & \cong & H_n(W_\% C) = H_n(W \otimes_{\zz[\zz_2]}(C^t \otimes_R C)) = H_n(\zz_2 ; C^t \otimes C)
\\
\widehat{Q}^n(C, \epsilon) & \cong & H_n(\widehat{W}^\% C) =  H_n(\textnormal{Hom}_{\bb{Z}[\bb{Z}_2]}(\widehat{W}, C^t \otimes_R C)) = \widehat{H}^n(\zz_2 ; C^t \otimes_R C).
\end{array}
$$

There is a long exact sequence of $Q$-groups,
$$\dots \to Q_n(C,\epsilon) \xrightarrow{1+T_{\epsilon}} Q^n(C, \epsilon) \xrightarrow{J} \widehat{Q}^{n}(C,\epsilon) \xrightarrow{H} Q_{n-1}(C, \epsilon) \to \dots $$
with
$$
\begin{array}{cl}
1+T_{\epsilon} : &(W_{\%} C)_n \to W^{\%}(C)_n \\
& \{\psi_s \in (C^t \otimes_R C)_{n-s} \vert s \geq 0 \} \mapsto \{((1+T_{\epsilon}) \psi)_s = \left\{
	\begin{array}{ll}
		(1+T_{\epsilon}) \psi_0  & \mbox{if } s = 0 \\
		0 & \mbox{if } s \geq 1
	\end{array}
\right.\} \\
J : &(W^{\%} C)_n \to  \widehat{W}^{\%}(C)_n \\
& \{\phi_s \in (C^t \otimes_R C)_{n+s} \vert s \geq 0 \} \mapsto \{(J \phi)_s = \left\{
	\begin{array}{ll}
		\phi_s  & \mbox{if } s \geq 0 \\
		0 & \mbox{if } s \leq -1
	\end{array}
\right.\} \\
H : &(\widehat{W}^{\%} C)_n \to  W^{\%}(C)_{n-1} \\
& \{\theta_s \in (C^t \otimes_R C)_{n+s} \vert s \in \zz \} \mapsto \{(H \theta)_s = \theta_{-s-1} \vert s \geq 0 \}. \\

\end{array}
$$




\begin{definition}
An $n$-dimensional  
$\epsilon$-symmetric  complex $(C, \phi)$ is \textbf{ Poincar\'e} if $\phi_0: C^{n
-*}\to C$ is a chain equivalence.
\end{definition}
The symmetrization chain map is
$$1+T_{\epsilon}: W_{\%}(C) \to W^{\%}(C)$$
with $\phi_s = (1+T_{\epsilon}) \psi_0$ if $s=0$ and $\phi_s=0$ if $s \geq 1.$
\begin{definition}
An $n$-dimensional $\epsilon$-quadratic complex $(C, \psi)$ is \textbf{Poincar\'e} if the symmetrization $(1+T)\psi_0$ is a chain equivalence.
\end{definition}


\begin{definition}\label{algebraic-Wu}
The \textit{Wu classes} of the $\epsilon$-symmetric structure of a chain complex over a ring with involution $R$ are defined by the function
$$v_r(\phi): H^{n-r}(C) =H_0(\textnormal{Hom}_R(C, S^{n-r}R)) \to  Q^n(S^{n-r}R, \epsilon) ; x \mapsto (x \otimes x)(\phi_{n-2r}).$$
 \end{definition}

The following are the definitions of $\epsilon$-symmetric, $\epsilon$-quadratic and $\epsilon$-(symmetric, quadratic) Poincar\'e pairs, which we shall need later on. The definitions that I include here are quoted from \cite{algpoinc}. A relevant concept in these definitions is that of algebraic mapping cone $\mc{C}(f)$ of $f: C \rightarrow D$, which is defined by
$$d_{\mc{C}(f)}=\left(\begin{array}{cc} d_D & (-1)^{r-1}f \\ 0 & d_C \end{array} \right): \mc{C}(f)_r = D_r \oplus C_{r-1} \to \mc{C}(f)_{r-1} = D_{r-1} \oplus C_{r-2}.$$

\begin{definition}
An $(n+1)$-dimensional \textbf{$\epsilon$-symmetric} Poincar\'e pair over $R$ $$(f : C \to D, (\delta \phi, \phi))$$ consists of
\begin{itemize}
\item an $n$-dimensional $R$-module chain complex $C$,
\item an $(n+1)$-dimensional $R$-module chain complex $D$,
\item a chain map $f : C \to D,$
\item a cycle $(\delta \phi, \phi) \in C(f^{\%} : W^{\%} C \to W^{\%}D)_{n+1} = (W^{\%}D)_{n+1} \oplus  (W^{\%}C)_{n}$
\end{itemize}
such that the $R$-module chain map $D^{n+1-*} \to \mc{C}(f)$ defined by
$$(\delta \phi, \phi)_0 = \left(\begin{array}{c} \delta \phi_0 \\ \phi_0 f^* \end{array} \right) : D^{n+1-r} \to \mc{C}(f)_r = D_r \oplus C_{r-1}$$
is a chain equivalence.
\end{definition}

\begin{definition}
An $(n+1)$-dimensional \textbf{$\epsilon$-quadratic} Poincar\'e pair over $R$
$$(f : C \to D, (\delta \psi, \psi))$$ consists of
\begin{itemize}
\item an $n$-dimensional $R$-module chain complex $C$,
\item an $(n+1)$-dimensional $R$-module chain complex $D$,
\item a chain map $f : C \to D,$
\item a cycle $(\delta \psi, \psi) \in C(f_{\%} : W_{\%} C \to W_{\%}D)_{n+1} = (W_{\%}D)_{n+1} \oplus  (W_{\%}C)_{n}$
\end{itemize}
such that the $R$-module chain map $D^{n+1-*} \to \mc{C}(f)$ defined by
$$(1+T_{\epsilon})(\delta \psi, \psi)_0 = \left(\begin{array}{c} (1+T_{\epsilon})\delta \psi_0 \\ (1+T_{\epsilon})\psi_0 f^* \end{array} \right) : D^{n+1-r} \to \mc{C}(f)_r = D_r \oplus C_{r-1}$$
is a chain equivalence.
\end{definition}

\begin{definition}
An $(n+1)$-dimensional \textbf{$\epsilon$-(symmetric, quadratic)} Poincar\'e pair over $R$
$$(f : C \to D, (\delta \phi, \psi))$$ consists of
\begin{itemize}
\item an $n$-dimensional $R$-module chain complex $C$,
\item an $(n+1)$-dimensional $R$-module chain complex $D$,
\item a chain map $f : C \to D,$
\item a cycle $(\delta \phi, \psi) \in C((1+T_{\epsilon})f_{\%} : W_{\%} C \to W^{\%}D)_{n+1} = (W^{\%}D)_{n+1} \oplus  (W_{\%}C)_{n}$
\end{itemize}
such that the $R$-module chain map $D^{n+1-*} \to \mc{C}(f)$ defined by
$$(\delta \phi, (1+T_{\epsilon})\psi)_0 = \left(\begin{array}{c} \delta \phi_0 \\ (1+T_{\epsilon})\psi_0 f^* \end{array} \right) : D^{n+1-r} \to \mc{C}(f)_r = D_r \oplus C_{r-1}$$
is a chain equivalence.
\end{definition}

\subsection{A geometric application: the symmetric construction}

In the geometric case $X$ will be a connected space with fundamental group $\pi_1(X) = \pi$ and $\wtX$ its universal cover.
The ring will be $R= \zz[\pi]$ and the involution is defined by
$$\overline{\color{White}{a}}~:~R \to R ; ~~ \sum_{g \in \pi} n_g g \mapsto \sum_{g \in \pi} n_g g^{-1}.$$
We set $\epsilon =1$, and write $T_{\epsilon}=T$ and $Q^{n}(C, \epsilon)= Q^n(C).$

The symmetric construction is described in detail in \cite{atsII}. Here we will give a brief summary of the construction together with the definition of symmetric $L$-theory, which is  also described in detail in \cite{atsII}. In the geometric version, the $\epsilon$ from the previous section is $1$ and is left out of the notation of the $Q$-groups.

The diagonal map $\Delta : X \to X \times X$ was used by Lefschetz on the level of homology to identify the Poincar\'e duality isomorphisms of an oriented manifold with the intersection numbers of submanifolds. The Alexander-Whitney-Steenrod symmetric construction $\Delta:C(X)\to W^{\%}C(X)$ captures cup products, the Steenrod squares and the Pontryagin squares, and is the key to the symmetry properties of Poincar\'e duality. (There are analogues for primes $p \neq 2$ but only $p=2$ concerns us here.)

Given a CW complex $X$, let $C(X)$ be its singular chain complex of free $\zz$-modules.

\begin{theorem} (Eilenberg-Zilber)
Let $X$ and $Y$ be topological spaces and let $X \times Y$ be the product space of both. There exists a natural chain homotopy equivalence
$$ EZ_0 : C(X \times Y) \simeq C(X) \otimes_{\zz} C(Y)$$
with natural higher chain homotopies
$$EZ_i : C(X \times Y)_r \to (C(X) \otimes_{\zz} C(Y))_{r+i} ~~~ (i \geq 1)$$
such that
$$  \partial EZ_{i+1} + (-1)^{i} EZ_{i+1}\partial ~= ~  EZ_i ~T + (-1)^{i+1}T~ EZ_i : C(X \times Y)_r \to (C(X) \otimes_{\zz} C(Y))_{r+i} ~~ (i \geq 0),$$
where the transposition isomorphisms are
$$T: X \times Y \to Y \times X; (x, y) \mapsto (y, x)$$
and
$$T= T_1: C(X) \otimes_{\zz} C(Y) \to C(Y) \otimes_{\zz} C(X) ; x \otimes y \mapsto \pm y \otimes x$$
\end{theorem}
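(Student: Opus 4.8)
The plan is to prove this by the method of acyclic models, first producing $EZ_0$ and then the higher chain homotopies $EZ_i$, $i\geq 1$, by induction on $i$. Regard $F(X,Y)=C(X\times Y)$ and $G(X,Y)=C(X)\otimes_{\zz}C(Y)$ as functors from $\mathbf{Top}\times\mathbf{Top}$ to non-negatively graded $\zz$-module chain complexes, each equipped with the natural transformation induced by interchanging the two arguments. The models are the pairs of standard simplices $(\Delta^p,\Delta^q)$. The functor $F$ is free with respect to these models: a singular $n$-simplex $\Delta^n\to X\times Y$ is precisely a pair $(u,v)$ with $u\colon\Delta^n\to X$ and $v\colon\Delta^n\to Y$, so $C_n(X\times Y)$ is free on the images of the single generator $\delta_n=(\id,\id)\in C_n(\Delta^n\times\Delta^n)$ under the maps $(u\times v)_*$. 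The functor $G$ is acyclic on the models: since $\Delta^p$ and $\Delta^q$ are contractible, $C(\Delta^p)\otimes_{\zz}C(\Delta^q)$ is chain homotopy equivalent to $\zz$ concentrated in degree $0$, so $H_n(G(\Delta^p,\Delta^q))=0$ for $n>0$; the same holds for $F$, and both $F$ and $G$ are also free with respect to the models.

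First I would construct $EZ_0$. Since $F$ is free with respect to the models, $G$ is acyclic on the models, and $H_0F\cong H_0G$ naturally (both compute $\zz[\pi_0(X)\times\pi_0(Y)]$), the fundamental lemma of acyclic models produces a natural chain map $EZ_0\colon F\to G$ inducing this isomorphism on $H_0$, unique up to natural chain homotopy. Running the same argument in the other direction gives a natural map $\nabla\colon G\to F$ (the shuffle map), and uniqueness up to natural homotopy of natural self-maps of $F$ and of $G$ lifting the identity on $H_0$ shows $EZ_0\nabla\simeq\id$ and $\nabla EZ_0\simeq\id$; hence $EZ_0$ is a natural chain homotopy equivalence. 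One may instead take $EZ_0$ to be the explicit Alexander--Whitney map and invoke acyclic models only for the higher data.

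Next I would build the $EZ_i$ inductively. Assume $EZ_0,\dots,EZ_i$ have been chosen so that the displayed relations of index $<i$ hold, and set
$$R_i \;=\; EZ_iT+(-1)^{i+1}T\,EZ_i,$$
a natural degree-$i$ homomorphism $F\to G$ (with arguments swapped). In the chain complex of natural transformations $F\to G$, with differential $D$ given by $Df=\partial f-(-1)^{|f|}f\partial$, the sought $EZ_{i+1}$ is exactly a natural element of degree $i+1$ with $D\,EZ_{i+1}=R_i$. To apply acyclic models one must check that $R_i$ is a cycle, i.e.\ $DR_i=0$; this is a direct sign computation that uses the relations for $EZ_{i-1}$ and $EZ_i$ (which give $D\,EZ_i=R_{i-1}$), together with $\partial^2=0$, $T^2=\id$ and $\partial T=T\partial$. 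Once $R_i$ is known to be a natural cycle on the free functor $F$ landing in $G$, which is acyclic on the models in positive degrees, acyclic models supplies a natural $EZ_{i+1}$ with $D\,EZ_{i+1}=R_i$, completing the induction. This is, of course, Steenrod's construction of the higher diagonal approximations underlying the $\smile_i$-products, transported to the Eilenberg--Zilber setting.

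The main obstacle is the inductive verification that $R_i$ is a cycle: the various sign conventions --- those in Definition \ref{involution} and in the displayed relations, and the Koszul signs hidden in $T$ --- must be tracked with care, and one has to exhibit the cancellation cleanly enough to conclude $DR_i=0$. Everything else --- fixing the category of models, checking freeness and acyclicity, and checking that the lifts furnished by acyclic models can be taken natural and are unique up to natural homotopy --- is routine once the framework is set up.
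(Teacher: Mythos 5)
Your acyclic-models argument is correct and is essentially the proof the paper relies on: the thesis gives no argument of its own here, simply citing Eilenberg--Zilber (1953), whose content is precisely this acyclic-models construction of $EZ_0$ together with the Steenrod-style inductive construction of the higher homotopies $EZ_i$. The inductive cycle check you flag does go through ($D R_i = R_{i-1}T + (-1)^{i+1} T R_{i-1} = 0$ using $T^2=\id$ on both the space and tensor sides and $D\,EZ_i = R_{i-1}$), with the base case $i=0$ covered by the uniqueness-up-to-natural-homotopy clause you already invoke for $EZ_0$.
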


Note that $EZ_{i+1} : EZ_{i} \simeq T(EZ_i)$ is a chain homotopy, and that  for $X=Y$ the collection of $\{ EZ_i \}$
defines a natural chain map
$$EZ ~ : ~ C(X \times X) \to \textnormal{Hom}_{\zz[\zz_2]}(W, C(X) \otimes_{\zz} C(X)).$$

\begin{proof} See \cite{Eilenberg-Zilber}.
\end{proof}
The diagonal maps on the chain complex level are induced by the diagonal map of a space
\begin{align*}
\Delta : X  \to X \times X ; x  \mapsto (x,x).
\end{align*}
This map has a $\pi_1(X)-$equivariant generalisation for the universal cover $\widetilde{X}$ of $X$.
Writing $\pi = \pi_1(X)$ and $\pi \times \widetilde{X} \to \widetilde{X}, (g, \widetilde{x}) \mapsto g \widetilde{x} $, the generalisation is given by
$$\widetilde{\Delta} / \pi : \widetilde{X} / \pi = X \to (\widetilde{X} \times \widetilde{X}) / \pi =(\widetilde{X} \times \widetilde{X}) / \left((x, y) \sim (gx, gy) \right) = (\widetilde{X} \times_{\pi} \widetilde{X}),   $$
so that
\begin{displaymath}
 \xymatrix{ & \widetilde{X} \times_{\pi} \widetilde{X} \ar[dd]  \\
X \ar[ur]^{\widetilde{\Delta}/ \pi  } \ar[dr]_{\Delta}& \\
 & X \times X}
\end{displaymath}






From the diagonal map $X \to \wtX \times_{\pi} \wtX$ we get a $\zz[\pi]$-module chain map
$$\Delta_* : C(X) \to C( \wtX \times_{\pi} \wtX). $$
Composing this map with the Eilenberg-Zilber map we obtain a chain equivalence
$$EZ \circ \Delta_* :C(X) \to C( \wtX \times_{\pi} \wtX) \to C(\wtX)^t \otimes_{\bb{Z}[\pi]} C(\wtX),$$
In general we take $\zz[\pi]$-modules to be left $\zz[\pi]$-modules. The superscript $t$ denotes the involution $\bar{g} = g^{-1}$ on $\zz[\pi]$
being used to define the transposition isomorphism which sends the left $\zz[\pi]$-module $C(\widetilde{X})$ to a right $\zz[\pi]$-module $C(\widetilde{X})^{t}$ in order to form the tensor product,
$$\Delta_0: C(\wtX) \to C(\wtX)^t \otimes_{\bb{Z}} C(\wtX). $$
Tensoring with $\bb{Z}$ on the left we get
$$\Delta_0 : \bb{Z} \otimes_{\bb{Z}[\pi]} C(\wtX) \to \bb{Z} \otimes_{\bb{Z}[\pi]}( C(\wtX)^t \otimes_{\bb{Z}} C(\wtX)).$$
This is equivalent to a natural chain map
$$ \Delta_0 : C(X) \to C(\wtX)^t \otimes_{\bb{Z}[\pi]} C(\wtX)$$
which is the chain complex version of the map $X \to \wtX \times_{\pi} \wtX$ from the diagram  above.
This also has natural higher chain homotopies
$$ \Delta_i : C(X) \to (C(\widetilde{X})^t \otimes_{\bb{Z}[\pi]} C(\widetilde{X}))_{r+i} ~~ (i \geq 1)$$
such that
$$\partial \Delta_{i+1} + (-1)^i \Delta_{i+1} \partial = (T+ (-1)^{i+1} ) \Delta_{i}: C(X)_r \to (C(\tilde{X})^t \otimes_{\zz[\pi]} C(\tilde{X}) )_{r+i}.$$

The map $\Delta_i : C(X) \to (C(\widetilde{X})^t \otimes_{\bb{Z}[\pi]} C(\widetilde{X}))_{r+i} ~~ (i \geq 1)$ is a chain homotopy between  $\Delta_{i-1}$ and $T \Delta_{i-1}$, constructed by acyclic model theory.
The symmetric construction is the natural chain map
$$
\phi_X= \{ \Delta_i \vert i \geq 0 \} : C(X) \to W^{\%}C(\wtX) = \textnormal{Hom}_{\bb{Z}[\bb{Z}_2]} (W, C(\wtX)^t \otimes_{\bb{Z}[\pi]} C(\wtX))
$$
which induces group morphisms in
\begin{equation}\label{symmetric construction}
\phi_X : H_n(X) \to H_n( W^{\%}C(\wtX)) = Q^n(C(\wtX)).
\end{equation}


\section{Symmetric, quadratic and hyperquadratic $L$-theory}
In the remainder of this chapter we shall give a summarized presentation of symmetric, quadratic and hyperquadratic $L$-theory. Here we only need to consider $\epsilon=1,$
although in the general theory $\epsilon  = \pm 1$ is also allowed.
The origin of  these theories is the classical Witt group of symmetric bilinear forms over a field $k$, which were first introduced in \cite{Witt}.  Quadratic Witt groups are the surgery obstruction groups introduced by Wall in \cite{Wall-book} (First published in 1970 and re-edited in 1999). Witt groups over a ring with involution were introduced in \cite{atsI}.

We shall be dealing with left $R$-modules, where $R$ is a ring with involution.
$\textnormal{Hom}_R(A, B)$ denotes the additive group of $R$-module morphisms $f: A \to B.$

As in Definition \ref{definition1}, the \textbf{dual $R$-module} of a module $V$ is $V^*  \cong \textnormal{Hom}_R(V, R)$ with $R$ acting by
$$R \times \textnormal{Hom}_R(V, R) \to \textnormal{Hom}_R (V, R) ; ~ (v, r) \mapsto (x \mapsto f(x). \overline{v})$$

$(V, \lambda)$ is a finitely generated free $R$-module $V$ with a morphism $\lambda: V \to V^*=\textnormal{Hom}_R(V, R)$.

The morphism $\lambda: V \to V^*$ can be expressed as pairing $\lambda: V \times V \to R$.

The form is $\epsilon$-symmetric if $\epsilon\overline{\lambda(x, y)} = \lambda(y, x) \in R$ for all $x, y \in V$. The form is symmetric if $\epsilon=1$ in this expression.

The form is \textbf{nonsingular} if $\lambda$ is an isomorphism and \textbf{nondegenerate} if $\lambda$ is injective.

 A \textbf{Lagrangian subspace} of a symmetric form $(V, \lambda)$ is a subspace $L$ such that $\lambda(L, L)=0$ and $\textnormal{dim}(L)= (\frac{1}{2})\textnormal{dim}(V)$.
A form which admits a Lagrangian is called \textbf{metabolic}.

Two nonsingular symmetric bilinear forms $(V,\lambda),(V',\lambda')$ are \textbf{Witt equivalent} if there exists an isomorphism
$(V,\lambda) \oplus (U,\mu) \cong (V',\lambda') \oplus (U',\mu')$ with $(U,\mu),(U',\mu')$ forms which admit Lagrangians.

The \textbf{Witt group $L^0(R)$} over a ring with involution $R$ is the abelian group of Witt equivalence classes of nonsingular symmetric bilinear forms  over $R$, with the group operation corresponding to the orthogonal direct sum of forms.
A nonsingular symmetric form $(V, \lambda)$ over a ring $R$, with $V$ a f.g free $R$-module and $\lambda: V \times V \to R$, represents $0$ in the Witt group $L^0(R)$ if it contains a Lagrangian subspace.

\vspace{5pt}

Witt groups can also be defined for skew-symmetric forms, and for quadratic forms over a ring with involution $R$.
$L^0(R)$ is the Witt group of non-singular symmetric forms over $R$. $L_0(R)$ is the Witt group of non-singular quadratic forms over $R$.

\subsection{Symmetric $L$-groups $L^*(R)$}

Take $R$ to be a ring with involution.
Then the symmetric $L$-group $L^n(R)$ is the abelian group of cobordism classes of $n$-dimensional symmetric complexes $(C, \phi)$ over $R$. Addition is given by direct sum.

The computation of the symmetric $L$-groups of $\bb{Z}$ which is given in the following definition will be particularly important for the rest of the thesis.
\begin{proposition} (\hspace{-1pt}\cite{atsI}) The symmetric $L$-groups of $\bb{Z}$ are $4$-periodic
$$
L^n(\bb{Z}) = \left\{
	\begin{array}{ll}
		\bb{Z} \textnormal{ (signature) } & \mbox{if } n \equiv 0 \pmod{4} \\
		\bb{Z}_2 \textnormal{ (de Rham invariant) } & \mbox{if } n \equiv 1 \pmod{4} \\
             0  & \mbox{if } n \equiv 2 \pmod{4} \\
		0 & \mbox{if } n \equiv 3 \pmod{4} \\
	\end{array}
\right.
 $$
for $n \geq 0.$
\end{proposition}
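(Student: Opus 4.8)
The proof I would give proceeds by \emph{algebraic surgery below the middle dimension}, the main computational device of \cite{atsI}. Using that $\zz$ has global dimension $1$, any $n$-dimensional symmetric Poincaré complex $(C,\phi)$ over $\zz$ is cobordant to one which is ``highly connected'', i.e.\ concentrated in the degrees near $n/2$. This identifies $L^{2i}(\zz)$ with the Witt group of nonsingular $(-1)^i$-symmetric bilinear forms over $\zz$, and $L^{2i+1}(\zz)$ with the Witt group of nonsingular $(-1)^i$-symmetric formations over $\zz$ (a split form together with an ordered pair of lagrangians). Since the sign $(-1)^i$ depends only on the residue of $n$ modulo $4$, this reduction already establishes the $4$-periodicity, so it remains to evaluate four Witt groups.

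For $n\equiv 0$: $L^0(\zz)$ is the Witt group of nonsingular symmetric bilinear forms over $\zz$, as recalled above. The signature defines a surjection $L^0(\zz)\to\zz$ (realised by $\langle 1\rangle$), and it is injective: the classification of indefinite unimodular integral forms, together with the fact that $\langle 1\rangle\oplus\langle -1\rangle$, the even hyperbolic plane $H_+$, and $E_8\oplus(-E_8)$ are all metabolic, shows that a form of signature $0$ becomes metabolic after stabilisation. Hence $L^0(\zz)=\zz$, detected by the signature (which agrees with the geometric signature via the symmetric construction). For $n\equiv 2$: $L^2(\zz)$ is the Witt group of nonsingular skew-symmetric forms over $\zz$; by the symplectic basis theorem over the PID $\zz$ every such form is hyperbolic, hence metabolic, so $L^2(\zz)=0$.

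For $n\equiv 1$: $L^1(\zz)$ is the Witt group of symmetric formations over $\zz$. A generator is furnished by the $1$-dimensional symmetric Poincaré complex $C=(\zz\xrightarrow{\,2\,}\zz)$ with $\phi_0$ the identity, for which $H_0(C)=\zz_2$ carries the nonsingular linking pairing $(x,y)\mapsto xy/2\in\qq/\zz$; the Witt class of this pairing is the \emph{de Rham invariant} in $\zz_2$, and one checks it detects $L^1(\zz)$ isomorphically. For $n\equiv 3$: $L^3(\zz)$ is the Witt group of skew-symmetric formations over $\zz$; invoking the structure theory of formations (every skew-symmetric formation over $\zz$ is stably isomorphic to a ``graph'' formation, which is trivial) gives $L^3(\zz)=0$.

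The step I expect to be the main obstacle is the case $n\equiv 1$: one must identify precisely which Witt group of linking forms computes $L^1(\zz)$, since the unrestricted Witt group of symmetric linking forms on finite abelian groups is much larger than $\zz_2$, so the extra data imposed by the Poincaré condition — equivalently, the passage to formations rather than bare linking forms — must be tracked carefully through the surgery-below-the-middle reduction. An alternative route, which relocates but does not remove this difficulty, is to combine the classical quadratic $L$-groups $L_*(\zz)=(\zz,0,\zz_2,0)$ with a computation of the hyperquadratic groups $\widehat{L}^*(\zz)$ through the exact sequence $\cdots\to L_n(\zz)\xrightarrow{1+T}L^n(\zz)\xrightarrow{J}\widehat{L}^n(\zz)\xrightarrow{H}L_{n-1}(\zz)\to\cdots$ (the global analogue of the $Q$-group sequence recalled above), using that $1+T\colon L_0(\zz)\to L^0(\zz)$ is multiplication by $8$ because the generator $E_8$ of $L_0(\zz)$ has signature $8$.
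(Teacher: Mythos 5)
This proposition is quoted in the thesis as background from \cite{atsI}; no proof is given in the paper itself, so there is no in-text argument to compare against. Your sketch follows the same route as the cited source: reduce symmetric Poincar\'e complexes over $\zz$ to forms (in even dimensions) and formations (in odd dimensions), observe that only the sign $(-1)^i$ matters so that $4$-periodicity follows, and then evaluate the four Witt groups, with $L^0(\zz)=\zz$ by the classical Witt-group-of-$\zz$ argument and $L^2(\zz)=0$ by the symplectic basis theorem; both of those cases are argued correctly.

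Two cautions on the remaining steps. First, the opening reduction is not a formal consequence of ``$\zz$ has global dimension $1$'': surgery below the middle dimension is obstructed in symmetric $L$-theory over general rings -- this is exactly why $L^*(R)$ fails to be $4$-periodic in general, as the thesis itself remarks -- and what you are really invoking is the theorem of \cite{atsI} that over a Dedekind ring every symmetric Poincar\'e complex is cobordant to a highly connected one (equivalently, that the skew-suspension maps $\bar{S}\colon L^n(R,\epsilon)\to L^{n+2}(R,-\epsilon)$ are isomorphisms there); that theorem is the substantive input, and it simultaneously yields the periodicity and the identification with forms and formations. Second, as you note yourself, the case $n\equiv 1$ carries the real content: the unrestricted Witt group of symmetric linking forms on finite abelian groups is large, and it is only the subquotient arising from Poincar\'e formations -- detected by the de Rham invariant, with your complex $\zz\xrightarrow{2}\zz$ as generator -- that equals $\zz_2$; likewise the assertion that every skew-symmetric formation over $\zz$ is stably a graph formation is precisely the statement $L^3(\zz)=0$ and requires the same linking-form analysis rather than a quotable ``structure theory''. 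With those steps supplied from \cite{atsI} (or \cite{exactseqRan}), your outline is complete and agrees with the cited computation; as written it is a correct skeleton with the acknowledged gaps concentrated exactly where the original proof does its work.
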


We shall also deal with the $L$-groups of chain complexes over $\bb{Z}[\pi_1(X)]$. The symmetric signature of Mishchenko \cite{Mishchenko} is defined for an $n$-dimensional geometric Poincar\'e complex $X$  by the symmetric Poincar\'e cobordism class
$$\sigma^*(X) = (C(\wtX), \phi_X) \in L^n(\bb{Z}[\pi_1(X)]).$$

\subsection{Quadratic $L$-groups $L_*(R)$}

The cobordism group of $n$-dimensional quadratic complexes over a ring with involution $R$ is denoted by $L_n(R).$ These groups are the surgery obstruction groups of Wall. We shall give no further details about them here apart from the computation over $\zz,$ and refer the reader to \cite{ Ker-Mil,   Wall-book, atsI, atsII} for an extensive treatment.
\begin{proposition}

It is proved in \cite{atsII} that the quadratic $L$-groups are $4$-periodic, that is, there is an isomorphism
$$L_n(R) \xrightarrow{\cong} L_{n+4}(R).$$
With the definitions in \cite{atsI, atsII},  the symmetric  $L$-groups are in general not $4$-periodic, although there are morphisms
$$\bar{S}^2 : L^n(R) \to L^{n+4}(R) \textnormal{~(double skew-suspension)~}$$
and
$$1+T : L_n(R) \to L^n(R) \textnormal{~(symmetrization)~} $$
which are isomorphisms modulo $8$-torsion for all $n \geq 0$.

 The quadratic $L$-groups over $\bb{Z}$ are given by
$$
L_n(\bb{Z}) =
\left\{
	\begin{array}{ll}
		\bb{Z} \textnormal{ (signature/8) } & \mbox{if } n \equiv 0 \pmod{4} \\
		0 & \mbox{if } n \equiv 1 \pmod{4} \\
             \bb{Z}_2  \textnormal{ (Arf invariant) }  & \mbox{if } n \equiv 2 \pmod{4} \\
		0& \mbox{if } n \equiv 3 \pmod{4} \\
	\end{array}
\right.
$$
\end{proposition}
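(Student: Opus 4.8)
The plan is to reduce all three assertions to classical Witt-group computations by using the identification, due to Ranicki \cite{atsI,atsII}, of the quadratic $L$-groups with cobordism groups of forms and formations. The $4$-periodicity isomorphism $L_n(R)\cong L_{n+4}(R)$ follows from the fact that the skew-suspension on quadratic structures $\bar S\colon Q_n(C,\epsilon)\to Q_{n+2}(SC,-\epsilon)$ induces an \emph{isomorphism} $\bar S\colon L_n(R,\epsilon)\to L_{n+2}(R,-\epsilon)$; iterating gives $\bar S^2\colon L_n(R)\xrightarrow{\cong}L_{n+4}(R)$. This is exactly the point at which quadratic $L$-theory behaves better than symmetric $L$-theory: in the symmetric case $\bar S$ is only a split monomorphism, so $\bar S^2\colon L^n(R)\to L^{n+4}(R)$ need not be surjective. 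To obtain the ``isomorphism modulo $8$-torsion'' statements for $\bar S^2$ and for the symmetrisation $1+T\colon L_n(R)\to L^n(R)$, one inserts these maps into the exact sequences relating $L_*$, $L^*$ and the hyperquadratic groups $\widehat L^*(R)$, and uses that $\widehat L^*(R)$ is of exponent $8$. For the present purpose I would simply quote these facts from \cite{atsI,atsII}; the point of the proposition is the explicit table over $\zz$.

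For the table, by algebraic surgery below the middle dimension every $n$-dimensional quadratic Poincar\'e complex over $\zz$ is cobordant to a highly connected one, hence up to chain equivalence to a complex concentrated in the middle dimension(s): for $n=2k$ a nonsingular $(-1)^k$-quadratic form $(\zz^r,\psi)$ on $H_k$, and for $n=2k+1$ a nonsingular $(-1)^k$-quadratic formation. Cobordism of such objects translates into Witt equivalence of forms, respectively stable isomorphism of formations, so that $L_{2k}(\zz)$ is the Witt group of nonsingular $(-1)^k$-quadratic forms over $\zz$ and $L_{2k+1}(\zz)$ the group of stable isomorphism classes of nonsingular $(-1)^k$-quadratic formations over $\zz$. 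By the $2$-periodicity above it suffices to treat $k=0$ and $k=1$.

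The computations are then classical. For $k=0$: a nonsingular $(+1)$-quadratic form over $\zz$ is, equivalently, a nonsingular even symmetric bilinear form over $\zz$ (the $(+1)$-quadratic datum amounts to evenness of the symmetrisation); its signature is divisible by $8$, two such forms are Witt equivalent iff they have equal signature, and the $E_8$-form generates the Witt group with signature $8$, so $L_0(\zz)\cong\zz$ via $\sigma/8$; moreover $1+T\colon L_0(\zz)\to L^0(\zz)\cong\zz$ sends the generator to the class of $E_8$, i.e.\ is multiplication by $8$, consistent with the periodicity claim. For $k=1$: a nonsingular $(-1)$-quadratic form over $\zz$ equips the mod $2$ reduction of its (hyperbolic) skew intersection form with a $\zz_2$-valued quadratic refinement $q$, and a Lagrangian of the $(-1)$-quadratic form is a Lagrangian of the skew form on which $q$ vanishes, so the Witt class is detected exactly by the Arf invariant and the rank $2$ form of Arf invariant $1$ generates; hence $L_2(\zz)\cong\zz_2$ via the Arf invariant. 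Finally, over the PID $\zz$ with trivial involution every nonsingular $\pm$-quadratic formation is stably isomorphic to a boundary, so $L_1(\zz)=L_3(\zz)=0$.

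The substantive content --- the main obstacle --- is the reduction of the second paragraph: establishing, as in \cite{atsI,atsII}, that the cobordism group of $n$-dimensional quadratic Poincar\'e complexes over $\zz$ really is the Witt group of forms for $n$ even and of formations for $n$ odd, with algebraic cobordism matching Witt equivalence (and, behind this, that algebraic surgery below the middle dimension can always be carried out). Once this machinery is in place the remaining Witt-group computations over $\zz$ are the classical results of Kervaire--Milnor and Wall \cite{Ker-Mil,Wall-book} and present no difficulty.
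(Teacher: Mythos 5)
This proposition appears in the thesis purely as quoted background, its ``proof'' being the citations to Ranicki's algebraic theory of surgery together with the classical Witt-group computations over $\zz$; your proposal follows exactly that route (skew-suspension $4$-periodicity for quadratic $L$-groups and the mod-$8$-torsion statements quoted from \cite{atsI,atsII}, then surgery below the middle dimension to reduce $L_{2k}(\zz)$ to Witt groups of even symmetric forms detected by $\sigma/8$ and of $(-1)$-quadratic forms detected by the Arf invariant, and $L_{2k+1}(\zz)=0$ via formations as in \cite{Ker-Mil,Wall-book}), so it is correct and essentially the same approach. The one imprecise aside is the claim that the symmetric skew-suspension is a split monomorphism: what the references actually provide is an exact sequence whose remaining terms have exponent $8$, i.e.\ exactly the ``isomorphism modulo $8$-torsion'' statement, but since you quote these facts rather than derive anything from the split-mono remark, nothing in your argument depends on it.
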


\subsection{The hyperquadratic $L$-groups $\widehat{L}^*(R)$ }

The symmetric, quadratic and hyperquadratic $L$-groups are related by an exact sequence
$$ \dots \longrightarrow L_n(R) \overset{1+T}{\longrightarrow} L^n(R) \overset{J}{\longrightarrow} \widehat{L}^n(R) \overset{\partial}{\longrightarrow} L_{n-1}(R) \longrightarrow \dots,$$
where $T$ is the involution and $1+T$ is the symmetrization map.
The symmetrization maps $1+T: L_{*}(R) \to L^*(R)$ are isomorphisms modulo $8$ torsion, so that the hyperquadratic $L$-groups $\widehat{L}^*(R)$ are $8$-torsion groups.

Recall that
$L^0(Z)$ is the Witt group of nonsingular symmetric forms over $\zz$ and is isomorphic to $\zz$, the isomorphism given by the signature, and $\widehat{L}^0(Z) \cong \zz_8$ with the isomorphism given by the hyperquadratic signature.
The map $J: L^0(\bb{Z}) \longrightarrow \widehat{L}^0(\bb{Z})$ sends a form $(F, \phi)$ to $\phi (v, v)$, where $v \in F$ is a characteristic element, that is an element such that
for any $u \in F$ it holds that
$$\phi(u, u) = \phi(u, v) \in \bb{Z}_2.$$

The  map $\partial: \widehat{L}^{n}(R) \to L^{n}(R)$ is the boundary map. This boundary is described great detail for low dimensions in \cite[page 546]{BanRan}.

The hyperquadratic $L$-groups can be interpreted as the \textbf{relative group} in this exact sequence. This interpretation gives rise to definition \ref{hyper}.

\begin{definition}\label{hyper} The hyperquadratic $L$-groups $\widehat{L}^n(R)$ are the cobordism groups of $n$-dimensional (symmetric, quadratic) Poincar\'e pairs $(f: C \to D, (\delta \phi, \psi))$ over a ring with involution $R$ such that the $R$-module chain map
$$(\delta \phi_0, (1+T) \psi_0): D^{n-*} \rightarrow \mc{C}(f) $$
is a chain equivalence.
\end{definition}

\begin{remark} As we shall see, the $\widehat{L}$-groups can also be defined as cobordism groups of algebraic normal complexes $(C, \phi, \gamma, \theta)$. We will come back to this definition after introducing chain bundle structures.
\end{remark}

In general the symmetric and hyperquadratic $L$-groups $L^*(R)$ and $\widehat{L}^*(R)$ are not $4$-periodic. Nevertheless, they are $4$-periodic in their simply-connected version, that is when $R= \bb{Z}$.
The computation of the various simply-connected $L$-groups is given in Proposition 4.3.1 in \cite{exactseqRan}.
\begin{proposition}
 The hyperquadratic $L$-groups over $\bb{Z}$ are given by
$$
\widehat{L}^n(\bb{Z}) =
\left\{
	\begin{array}{ll}
		\bb{Z}_8 \textnormal{ (signature$\pmod{8}$) } & \mbox{if } n \equiv 0 \pmod{4} \\
		\bb{Z}_2 \textnormal{ (de Rham invariant) } & \mbox{if } n \equiv 1 \pmod{4} \\
             0  & \mbox{if } n \equiv 2 \pmod{4} \\
		\bb{Z}_2  \textnormal{ (Arf invariant) }& \mbox{if } n \equiv 3 \pmod{4} \\
	\end{array}
\right.
$$
\end{proposition}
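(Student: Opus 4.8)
The plan is to read off $\widehat{L}^n(\zz)$ from the long exact sequence
$$\cdots \to L_n(\zz) \xrightarrow{1+T} L^n(\zz) \xrightarrow{J} \widehat{L}^n(\zz) \xrightarrow{\partial} L_{n-1}(\zz) \xrightarrow{1+T} L^{n-1}(\zz) \to \cdots$$
recorded above, feeding in the computations of $L^*(\zz)$ and $L_*(\zz)$ from the two preceding Propositions. The only input not contained in those two tables is the behaviour of the symmetrization map on quadratic $L$-theory. In degrees $n \equiv 1, 2, 3 \pmod 4$ the map $1+T\colon L_n(\zz) \to L^n(\zz)$ is forced to vanish, since its source or its target is $0$. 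The one genuinely arithmetic fact is that in degree $n \equiv 0 \pmod 4$ the symmetrization $1+T\colon L_0(\zz) = \zz \to L^0(\zz) = \zz$ is multiplication by $8$: symmetrization does not change the signature, and under the identifications ``signature/$8$'' and ``signature'' of the two copies of $\zz$ this is precisely $\times 8$; equivalently the generator of $L_0(\zz)$ symmetrizes to the $E_8$-form of signature $8$, the point being van der Blij's theorem that a nonsingular quadratic form over $\zz$ has signature divisible by $8$.

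With this in place I would chase the sequence through the four residue classes. For $n \equiv 0$, since $L_{n-1}(\zz) = 0$ the sequence reduces to $\zz \xrightarrow{\times 8} \zz \xrightarrow{J} \widehat{L}^n(\zz) \to 0$, so $J$ is onto with kernel $8\zz$ and $\widehat{L}^n(\zz) \cong \zz_8$, the invariant being the image under $J$ of the signature, i.e.\ the signature mod $8$ (the hyperquadratic signature). For $n \equiv 1$, $L_n(\zz) = 0$ and the next symmetrization map $L_{n-1}(\zz) = \zz \xrightarrow{\times 8} \zz = L^{n-1}(\zz)$ is injective, so in $0 \to L^n(\zz) \xrightarrow{J} \widehat{L}^n(\zz) \xrightarrow{\partial} \zz \xrightarrow{\times 8} \zz$ the map $\partial$ vanishes and $J\colon L^1(\zz) = \zz_2 \xrightarrow{\cong} \widehat{L}^n(\zz)$, detected by the de Rham invariant. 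For $n \equiv 2$, both $L^n(\zz)$ and $L_{n-1}(\zz)$ vanish, so $\widehat{L}^n(\zz)$ is squeezed between two zeros and is $0$. For $n \equiv 3$, $L^n(\zz) = 0$ while $L_{n-1}(\zz) = L_2(\zz) = \zz_2$ and the subsequent $1+T\colon \zz_2 \to L^{n-1}(\zz) = 0$ is zero, so $0 \to \widehat{L}^n(\zz) \xrightarrow{\partial} \zz_2 \to 0$ gives $\widehat{L}^n(\zz) \cong \zz_2$, detected by the Arf invariant of the $2$-dimensional quadratic Poincar\'e complex obtained by applying $\partial$.

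Two finishing remarks. First, the generic chase above is literally valid for $n$ large and extends to all $n \geq 0$ (indeed all $n \in \zz$) using the $4$-periodicity of $L_*(\zz)$ together with the $4$-periodicity of $L^*(\zz)$ recalled above: the five lemma applied to the comparison of the exact sequence with its degree-$4$ shift yields the $4$-periodicity of $\widehat{L}^*(\zz)$, compatibly with the periodicity of the named invariants. Second, I expect the main obstacle to be not the diagram chase but the precise identification of $1+T$ in degree $0$ as $\times 8$ and, alongside it, the bookkeeping needed to certify that the transported generators really are the hyperquadratic signature, the de Rham invariant and the Arf invariant (as opposed to abstract generators of $\zz_8$ and $\zz_2$); once these are settled everything else is formal.
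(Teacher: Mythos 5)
Your proof is correct, and it is essentially the standard derivation: the thesis itself gives no argument for this proposition, simply quoting the computation from Ranicki's \emph{Exact sequences in the algebraic theory of surgery} (Proposition 4.3.1), whose underlying proof is exactly the chase you perform through $L_n(\zz) \xrightarrow{1+T} L^n(\zz) \to \widehat{L}^n(\zz) \to L_{n-1}(\zz)$ using $4$-periodicity. Your identification of the one nontrivial ingredient --- that $1+T\colon L_0(\zz)\to L^0(\zz)$ is multiplication by $8$ via the $E_8$-form and van der Blij's theorem, so that $\widehat{L}^{4k}(\zz)\cong\zz_8$ is the signature mod $8$ --- together with the degeneration of the sequence in the other residue classes, matches that argument.
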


Other computations of $L$-groups that will be relevant in chapter 2 are the algebraic $L$-groups of $R= \zz_2$,

\begin{align*}
L_n(\bb{Z}_2) = &
\left\{
	\begin{array}{ll}
		\bb{Z}_2 \textnormal{ (Arf invariant) } & \mbox{if } n \equiv 0 \pmod{2} \\
             0  & \mbox{if } n \equiv 1 \pmod{2} \\
			\end{array}
\right. \\
L^n(\bb{Z}_2) = &
\left\{
	\begin{array}{ll}
		\bb{Z}_2 \textnormal{ (rank$\pmod{2}$) } & \mbox{if } n \equiv 0 \pmod{2} \\
             0  & \mbox{if } n \equiv 1 \pmod{2} \\
			\end{array}
\right. \\
\widehat{L}^n(\zz_2) = & \zz_2.
\end{align*}
with $1+T =0 : L_n(\zz_2) \to L^n(\zz_2).$

\section{Normal complexes}

An $n$-dimensional geometric Poincar\'e complex $X$  is a space with the homotopy type of a finite $CW$ complex which satisfies Poincar\'e duality, that is, it has a fundamental class $[X] \in H_n(X)$ such there is an isomorphism given by the cap product,
$$[X] \cap - : H^* (\wtX) \to H_{n-*}(\wtX),$$
where $\wtX$ is the universal cover of $X.$
Geometric Poincar\'e complexes were defined by Wall in \cite{pcx-wall}.

Let $G(j)$ denote the set of homotopy equivalences of $S^{j-1}$ with the compact-open topology. $BG(j)$ classifies homotopy $S^{j-1}$-bundles and $BSG(j)$ classifies oriented homotopy $S^{j-1}$-bundles. (These bundles are also called ``spherical fibrations").

If $X$ is an $n$-dimensional geometric Poincar\'e complex, a regular neighbourhood $N$ of an embedding $X \subset R^{n+j}$ ($j$ large) determines the Spivak normal fibration
$$\nu_X ~:~ (D^j,S^{j-1}) \to (N,\partial N) \to X$$
which has a classifying map
$$\nu_X : X \to BSG(j)$$
See \cite{Spivak}.

The geometric Poincar\'e complex $X$ has a Spivak normal structure
$$(\nu_X: X \to BSG(j), \rho_X : S^{n+j} \to T(\nu_X)),$$
where $T(\nu_X)= N / \partial N$ denotes the Thom space of $\nu_X.$

An $n$-dimensional \textit{normal} complex $X$ is a space with the homotopy type of a finite $CW$ complex together with a spherical fibration $\nu_X : X \to BSG(j)$ and a map $\rho_X : S^{n+j} \to T(\nu_X)$.
The map $\rho_X$ determines a stable homotopy class
$$\rho_X \in \pi_{n+j}^s(T(\nu_X)) = \lim_{\underset{k}{\longrightarrow}} \pi_{n+j+k}(\Sigma^k T(\nu_X)).$$
The Hurewicz image of $\rho_X \in \pi_{n+j}^s(T(\nu_X))$ is called the fundamental class $[X] \in H_n(X)\cong H_{n+j}(T(\nu_X))$, but the cap product does not necessarily give an isomorphism.

Normal complexes were defined by Quinn in \cite{Quinn-normal}.

A $4k$-dimensional normal complex $X$ has both a signature $\sigma^*(X) \in \zz$,
which is the signature of the symmetric form
$$
\begin{array}{rl}
H^{2k}(X) / torsion \times H^{2k}(X) / torsion & \to \zz; \\
(x, y) & \mapsto \langle x \cup y, [X] \rangle,
\end{array}
$$
and a normal signature $\widehat{\sigma}^*(X) \in \widehat{L}^{4k}(\zz)=\zz_8$. Both these signatures  are homotopy invariants. If $X$ is Poincar\'e the normal signature is the mod $8$ reduction of the signature. In general this is not case, see \cite{Mod8}.

\subsection{Chain bundles and algebraic normal complexes}\label{chain-bundles}

As mentioned before, the $\widehat{L}$-groups can also be defined as the cobordism groups of normal complexes $(C, \phi, \gamma, \theta)$.
In the first place we need to define what is meant by a chain bundle structure on a symmetric complex. References giving a detailed exposition of the algebraic theory of chain complexes and chain bundle theory are \cite{atsI}, \cite{exactseqRan}, \cite{bluebook}, \cite{algpoinc}, \cite{Weiss-Ker}. Here we will summarize the main definitions.


\begin{definition}
A \textbf{chain bundle} over a finite dimensional chain complex $C$ is a $0$-dimensional hyperquadratic structure
$$\gamma \in (\widehat{W}^{\%} C^{0-*})_0. $$
\end{definition}

\begin{definition} An algebraic \textbf{normal structure}  $(\gamma, \theta)$ on an $n$-dimensional symmetric complex $(C, \phi)$ is a chain bundle $\gamma \in (\widehat{W}^{\%}C^{0-*})$ together with an equivalence of $n$-dimensional hyperquadratic structures on $C$,
$$\theta : J(\phi) \to (\widehat{\phi}_0)^{\%}(S^n \gamma)$$
as defined by a chain $\theta \in (\widehat{W}^{\%}C)_{n+1}$ such that
$$J(\phi) - (\widehat{\phi}_0)^{\%}(S^n \gamma) = d \theta \in (\widehat{W}^{\%}C)_{n}$$
\end{definition}
\begin{definition} An algebraic normal complex $(C, \phi, \gamma, \theta)$ is a symmetric complex $(C, \phi)$  together with a chain bundle structure $(\gamma, \theta),$ where $(\gamma, \theta)$ are as defined above.
\end{definition}

Each $n$-dimensional symmetric \emph{Poincar\'e} complex $(C, \phi)$ has a unique equivalence class of normal structures $(\gamma, \theta)$. The image of the equivalence class of symmetric structures $[\phi] \in Q^n(C)$ under the composition
$$Q^n(C) \xrightarrow{J} \widehat{Q}^n(C) \xrightarrow{((\widehat{\phi}_0)^{\%})^{-1}} \widehat{Q}^n(C^{n-*}) \xrightarrow{(S^n)^{-1}} \widehat{Q}^0(C^{0-*})$$
is the equivalence class of bundles $[\gamma] \in  \widehat{Q}^0(C^{0-*}).$

The Wu classes of the chain bundle $(C, \gamma)$ over a ring with involution $R$ are $R$-module morphisms,
\begin{align*}
v_r(\gamma) : H_r(C) & \rightarrow  \widehat{H}^r(\bb{Z}_2, R) \\
(x : S^rR \to C) & \mapsto \gamma_{-2r}(x)(x).
\end{align*}

\subsection{(Symmetric, quadratic) pairs and normal complexes}
The  one-one correspondence between $n$-dimensional algebraic normal complexes and the homotopy classes of $n$-dimensional (symmetric, quadratic) Poincar\'e  pairs is carefully described in \cite[section 7]{algpoinc}

An $n$-dimensional normal complex $(C, \phi, \gamma, \theta)$ determines an $n$-dimensional (symmetric, quadratic) Poincar\'e pair $(\partial C \to C^{n-*}, (\delta \phi, \psi))$ with $\partial C = \mathcal{C}(\phi_0 : C^{n-*} \to C)_{*+1}  $
Conversely, an $n$-dimensional (symmetric, quadratic) Poincar\'e pair $(f: C \to D, (\delta \phi, \psi))$ determines an $n$-dimensional algebraic normal complex $(\mathcal{C}(f), \phi, \gamma, \theta)$ with $\gamma \in \widehat{Q}^0(\mathcal{C}(f)^{-*})$. \\



From this one-one correspondence it can be deduced that there are two alternative definitions of the hyperquadratic $L$-groups over a ring with involution $R$.

\begin{definition} Let $R$ be a ring with involution. The hyperquadratic $L$-group $\widehat{L}^n(R)$ can be defined in one two equivalent ways,
\begin{itemize}
\item[(i)] the cobordism group of $n$-dimensional algebraic normal complexes $(C, \phi, \gamma, \theta)$ over $R$,
\item[(ii)] the cobordism group of $n$-dimensional (symmetric, quadratic) Poincar\'e pairs over $R$.
\end{itemize}
\end{definition}

\subsection{Classifying chain bundle over a ring $R$ with involution}
Chain bundles  $\gamma$ over a chain complex $(C, \gamma)$ over a ring with involution $R$ are classified by the $R$-module chain maps $f: C \rightarrow B(\infty)$ and  $\chi: \gamma \rightarrow \beta(\infty)$. Here $(B(\infty), \beta(\infty))$ is the universal chain bundle and thus has the property that the Wu classes give $R$-module isomorphisms,
$$v_r(\beta(\infty)): H_r(B(\infty)) \xrightarrow{\cong} \widehat{H}^r(\bb{Z}_2, R).$$

The universal chain bundle over a ring $R$ has the property that
$$H_0(\textnormal{Hom}_{\bb{Z}}(C, B(\infty))) \to \widehat{Q}^0(C^{-*}) $$
is an isomorphism for any finite dimensional chain complex $C$.

In particular if $R = \bb{Z}$, then the universal chain bundle $(B(\infty), \beta(\infty))$ is defined by

$$
d_{B(\infty)} =
\left\{
	\begin{array}{ll}
		2 & \mbox{if } r \textnormal{ is odd} \\
		0 & \mbox{if } r  \textnormal{ is even} \\
          	\end{array}
\right. : B(\infty)_r = \bb{Z} \to B(\infty)_{r-1} = \bb{Z},
$$
That is, the universal chain bundle over $\bb{Z}$ is given by
$$B(\infty): \dots \rightarrow B_{2k+2} = \bb{Z} \xrightarrow{0} B_{2k+2} = \bb{Z} \xrightarrow{2}B_{2k} = \bb{Z} \xrightarrow{0} B_{2k-1} = \bb{Z} \rightarrow \dots$$
and
$$
\beta(\infty)_s =
\left\{
	\begin{array}{ll}
		1 & \mbox{if } 2r = s \\
		0 &  \textnormal{otherwise} \\
          	\end{array}
\right. : B(\infty)_{r-s} = \bb{Z} \to B(\infty)^{-r} = \bb{Z}.
$$

\begin{definition} A \textbf{normal $(B, \beta)$-structure}, $(\gamma, \theta, f, \chi)$, on an $n$-dimensional symmetric complex $(C, \phi)$ is  a normal structure $(\gamma, \theta)$ together with a chain bundle map,
$$(f, \chi): (C, \gamma) \longrightarrow (B, \beta).$$
\end{definition}
Note that in the previous definition $(B, \beta)$ is not assumed to be the universal chain bundle.

\begin{definition} (\hspace{-1pt}\cite{Weiss-Ker}) The \textbf{$\langle B, \beta \rangle$-structure $L$-groups $L\left<B, \beta \right>^n (R)$} are the cobordism groups of $n$-dimensional $\langle B, \beta \rangle$-normal complexes $(C, \phi, \gamma, \theta, f, \chi)$ over $R$.
\end{definition}

The quadratic $L$-groups and  the $(B, \beta)$-structure $L$-groups fit into an exact sequence
$$ \dots \to L_n(R) \to L\left<B, \beta \right>^n(R) \to \widehat{L}\left<B, \beta \right>^n(R) \to L_{n-1}(R) \to \dots$$
\subsection{Twisted $Q$-groups}

\begin{definition}
An \textbf{$n$-dimensional symmetric structure $(\phi, \theta)$} on a chain bundle $(C, \gamma)$ is an $n$-dimensional symmetric structure $\phi \in (W^{\%}C)_n$ together with an equivalence of $n$-dimensional hyperquadratic structures on $C$,
$$\theta : J(\phi) \to (\widehat{\phi}_0)^{\%}(S^n \gamma)$$
as defined by a chain $\theta \in (\widehat{W}^{\%}C)_{n+1}$ such that
$$J(\phi) - (\widehat{\phi}_0)^{\%}(S^n \gamma) = d \theta \in (\widehat{W}^{\%}C)_{n}$$
\end{definition}

An equivalence of $n$-dimensional symmetric structures on $(C, \gamma)$ is defined by an equivalence of symmetric structures together with an equivalence of hyperquadratic structures on $C$.

\begin{definition} (\hspace{-1pt}\cite[page 24]{algpoinc})
The \textbf{twisted quadratic $Q$-group $Q_n(C, \gamma)$} is the abelian group of equivalence classes of $n$-dimensional symmetric structures on a chain bundle $(C, \gamma)$.
\end{definition}
The twisted quadratic $Q$-groups fit into an exact sequence
$$
\begin{array}{rcccccccc} \dots \widehat{Q}^{n+1}(C)  & \longrightarrow & Q_n(C, \gamma) & \overset{N_{\gamma}}{\longrightarrow} & Q^n(C) & \overset{J_{\gamma}}{\longrightarrow} & \widehat{Q}^n(C) & \overset{H_{\gamma}}{\longrightarrow} Q_{n-1}(C, \gamma) & \longrightarrow \dots \\
            &                 & (\phi, \theta) & \longmapsto & \phi & \longmapsto & {\tiny J(\phi)- (\phi_0)^{\%}(S^n \gamma)} & & \\
\theta & \longmapsto &(0, \theta) &   &&  & \\
\end{array}
$$
In the untwisted case, that is with $\gamma = 0$, we have the usual exact sequence of $Q$-groups,
$$ \dots \longrightarrow Q_n(C) \overset{1+T}{\longrightarrow} Q^n(C) \overset{J}{\longrightarrow} \widehat{Q}^n(C) \overset{H}{\longrightarrow} Q_{n-1}(C) \longrightarrow \dots $$

\begin{definition} The class $(\phi, \theta) \in Q_n(\mathcal{C}(f), \gamma)$ is the \textbf{algebraic normal invariant} of \\$(f: C \to D, (\delta \phi, \psi))$.
\end{definition}

\begin{example} An $n$-dimensional normal space $(X, \nu_X, \rho_X)$ determines a cycle for a chain bundle $\gamma(\nu)$ and a cycle for an element $(\phi, \theta)$, hence defining an algebraic normal complex $(C(X), \phi, \gamma(\nu_X), \theta(X))$.
An oriented spherical fibration $\nu_X : X \to BSG$ determines an equivalence class of chain bundles $(C(X), \gamma(\nu))$ over $\zz$.

The Wu classes of the chain bundle $v_{r}(\gamma (\nu)) \in H_{r}(X ; \mathbb{Z}_2) \to \mathbb{Z}_2$ correspond to the usual Wu classes $v_{r}(\nu) \in H_{r}(X ; \mathbb{Z}_2)$, and the symmetric construction extends to the following commutative diagram,
$$
\xymatrix{
& \rho_X \in \pi_{n+j}(M(\nu)) \ar[d] \ar[r]^{\hspace{-15pt}Hurewicz} &\dot{H}_{n+j}(M(\nu)) =  H_{n}(X)\ar[d]^{\textnormal{\textit{symmetric construction}}}  &  \\
& (\phi, \theta) = \rho_{\gamma} \in Q_{n}(C(X), \gamma(\nu)) \ar[r] & \hspace{5pt} Q^{n}(C(X)) \\
}
$$
\end{example}

\begin{proposition}\label{Proposition: Weiss} (\hspace{-1pt}\cite[Proposition 6.6]{Weiss-Ker})
The homomorphism
$$\widehat{L} \langle B(\infty), \beta(\infty) \rangle^n(R) \overset{\cong}{\longrightarrow} Q_n(B(\infty), \beta(\infty)) $$
is an isomorphism for $n \geq 0$.
\end{proposition}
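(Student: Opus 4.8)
The plan is to realize the isomorphism as the pushforward of a symmetric-structure-on-a-chain-bundle along the classifying chain bundle map, and to invert it using the universal property of $(B(\infty),\beta(\infty))$.

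First I would construct the homomorphism $\Psi\colon \widehat{L}\langle B(\infty),\beta(\infty)\rangle^n(R)\to Q_n(B(\infty),\beta(\infty))$. Given an $n$-dimensional $\langle B(\infty),\beta(\infty)\rangle$-normal complex $(C,\phi,\gamma,\theta,f,\chi)$, the chain bundle map $(f,\chi)\colon (C,\gamma)\to(B(\infty),\beta(\infty))$ lets one push the data $(\phi,\theta)$ forward. Put $f_{\%}\phi=(f\otimes f)(\phi)\in (W^{\%}B(\infty))_n$, and combine the pushforward of $\theta$ with the term coming from $\chi$ — which witnesses an equivalence of chain bundles $\gamma\simeq f^{\%}\beta(\infty)$ over $C$ — to obtain a chain $\Theta\in (\widehat{W}^{\%}B(\infty))_{n+1}$. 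Naturality of $J$ and of the twisting operation $\gamma\mapsto(\widehat{\phi}_0)^{\%}(S^n\gamma)$, together with exactly the defining condition of a chain bundle map, then yields the identity $J(f_{\%}\phi)-(\widehat{(f_{\%}\phi)_0})^{\%}(S^n\beta(\infty))=d\Theta$, so that $(f_{\%}\phi,\Theta)$ is an $n$-dimensional symmetric structure on the chain bundle $(B(\infty),\beta(\infty))$, i.e.\ a class in $Q_n(B(\infty),\beta(\infty))$. Finally an $(n+1)$-dimensional $\langle B(\infty),\beta(\infty)\rangle$-normal pair pushes forward to an equivalence with the zero structure, so $\Psi$ is well defined on cobordism classes and additive.

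Next I would prove surjectivity using universality. A class in $Q_n(B(\infty),\beta(\infty))$ has a representative $(\Phi,\Theta)$ involving only finitely many component maps, so up to equivalence it is supported in a bounded range of degrees. Choose a finite-dimensional finitely generated free $R$-module chain complex $C$ — a sufficiently large two-sided truncation of $B(\infty)$ will do, since normal complexes are not required to be Poincar\'e — together with a chain map $f\colon C\to B(\infty)$ through which this representative factors. The universal property recalled above, that the natural map $H_0(\textnormal{Hom}_{\zz}(C,B(\infty)))\to\widehat{Q}^0(C^{-*})$ is an isomorphism, equips $C$ with the chain bundle $\gamma=f^{\%}\beta(\infty)$ and with the canonical $\chi$ making $(f,\chi)$ a chain bundle map; restricting the factored structure gives $(\phi,\theta)$ and hence an $n$-dimensional $\langle B(\infty),\beta(\infty)\rangle$-normal complex $(C,\phi,\gamma,\theta,f,\chi)$ with $\Psi[C,\phi,\gamma,\theta,f,\chi]=[\Phi,\Theta]$. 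For injectivity, suppose $\Psi[C,\phi,\gamma,\theta,f,\chi]=0$, so $(f_{\%}\phi,\Theta)$ is equivalent to the zero structure in $Q_n(B(\infty),\beta(\infty))$. I would feed this nullbounding chain together with $f$ into the algebraic Thom/mapping-cylinder construction to build an $(n+1)$-dimensional $\langle B(\infty),\beta(\infty)\rangle$-normal pair with boundary $(C,\phi,\gamma,\theta,f,\chi)$: concretely, the pair has underlying chain map $C\to\mc{C}(f)$ (after truncating $B(\infty)$ around the image of $f$), with relative structure assembled from the nullhomotopy and from $\chi$, via the one--one correspondence between normal complexes and (symmetric,\,quadratic) pairs. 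Alternatively one may first note that forgetting the $(B(\infty),\beta(\infty))$-structure is an isomorphism onto $\widehat{L}^n(R)$ — universality makes the structure on any normal complex unique up to equivalence — and then identify $\Psi$ with the map that classifies the chain bundle and pushes the algebraic normal invariant, for which injectivity says precisely that a normal cobordism, once classified, becomes an equivalence in $Q_n(B(\infty),\beta(\infty))$.

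The step I expect to be the main obstacle is the bookkeeping in the first paragraph: checking cleanly that the chain-bundle-map datum $(f,\chi)$ is, after pushing forward, exactly the extra hyperquadratic information needed to upgrade the symmetric structure $f_{\%}\phi$ to a symmetric structure on the chain bundle $(B(\infty),\beta(\infty))$, and organizing the truncation of the two-sided-infinite complex $B(\infty)$ so that the pushforwards of the $\widehat{W}$-indexed families $\theta$ and $\chi$ are all carried out on genuinely finite-dimensional complexes without changing the relevant $Q$- and $\widehat{Q}$-classes. Once that naturality is in place, surjectivity and injectivity follow from the universal property essentially formally.
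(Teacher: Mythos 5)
Your forward map is exactly the paper's: push $(\phi,\theta)$ forward along the classifying chain bundle map to get $(f,\chi)_{\%}(\phi,\theta)\in Q_n(B(\infty),\beta(\infty))$. Where you genuinely diverge is in how invertibility is obtained. The paper writes down an explicit section $Q_n(B(\infty),\beta(\infty))\to\widehat{L}\langle B(\infty),\beta(\infty)\rangle^n(R)$, $[(\phi,\theta)]\mapsto (B(\infty),\phi,\beta(\infty),\theta,\Id,\Id)$; one composite is then the identity on the nose, and the other composite is the identity because the classifying map, being a normal map, furnishes a cobordism from $(C,\phi,\gamma,\theta,g,\chi)$ to its pushforward supported on $B(\infty)$. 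You instead prove surjectivity and injectivity separately: surjectivity by factoring a representative through a finite truncation of $B(\infty)$ carrying the pulled-back bundle, and injectivity by converting a nullhomotopy of the pushed structure into a bounding normal pair. What your route buys is that it respects the finite-dimensionality requirement on underlying complexes, which the paper's tautological section quietly ignores ($B(\infty)$ is unbounded); what it costs is that the opening claim of your surjectivity step, that a class of $Q_n(B(\infty),\beta(\infty))$ ``involves only finitely many component maps,'' is not automatic for an unbounded complex (a cycle in $W^{\%}$ or $\widehat{W}^{\%}$ of $B(\infty)$ may have unboundedly many nonzero components), so the finite-approximation statement --- the same one the thesis invokes when identifying $Q_{4k}(B(k,1),\beta(k,1))\cong Q_{4k}(B(\infty),\beta(\infty))$ --- must be proved or cited rather than assumed. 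One concrete correction in the injectivity step: the bounding pair cannot have underlying map $C\to\mc{C}(f)$, since there is no such chain map to the mapping cone; take instead the inclusion of $C$ into the algebraic mapping cylinder of $f$ (equivalently the map $f:C\to B'$ to the truncation itself), with the relative structure supplied by the nullhomotopy --- this is precisely the mechanism the paper compresses into the assertion that $(g,\chi)$ ``is a normal map between them,'' there used to show that the round trip on $\widehat{L}\langle B,\beta\rangle^n(R)$ is the identity.
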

\begin{proof}

An element $(C, \phi, \gamma, \theta, g, \chi) \in \widehat{L}\langle B, \beta \rangle^n (R)$ is an $n$-dimensional algebraic normal complex, i.e. an $n$-dimensional symmetric complex $(C, \phi)$ with a normal $(B, \beta)$-structure $(\gamma, \theta)$, where
$$\gamma \in (\widehat{W}^{\%} C^{0-*})_0 $$ is a chain bundle, and
$$\theta \in (\widehat{W}^{\%} C)_{n+1}  \textnormal{}$$
is and equivalence of $n$-dimensional hyperquadratic structures on $C$,
$$\theta: J(\phi) \rightarrow (\widehat{\phi}_0)^{\%}(S^n \gamma), $$
and $(g, \chi)$ is a chain bundle map, such that $g: C \rightarrow B$ and $\chi : \gamma \rightarrow \beta$. \\
This chain bundle map $$(g, \chi) : (C, \gamma) \rightarrow (B, \beta)$$
induces a map in the twisted $Q$-groups
$$(g, \chi)_{\%} : Q_n(C, \gamma) \rightarrow Q_n (B, \beta).$$

The map $\widehat{L}\langle B, \beta \rangle^n (R) \rightarrow Q_n (B, \beta) $ sends $(C, \phi, \gamma, \theta, g, \chi)$ to $(g, \chi)_{\%} (\phi, \theta)$.\\
\\
Now suppose that $[(\phi, \theta)] \in Q_n (B, \beta) $ so that,
\begin{align*}
Q_n (B, \beta) \longrightarrow & \widehat{L}\langle B, \beta \rangle^n (R) \\
[(\phi, \theta)] \longmapsto  & (B, \phi, \gamma, \theta, Id, Id).
\end{align*}

Consequently the following composition of maps gives the identity,
$$
\xymatrix{
Q_n (B, \beta) \ar[r] \ar@/^2pc/[rr]^{Id} &
\widehat{L}\langle B, \beta \rangle^n (R) \ar[r] &
Q_n (B, \beta)  &
}
$$
\vspace{-15pt}
$$\hspace{80pt}[(\phi, \theta)] \longmapsto (B, \phi, \gamma, \theta, Id, Id) \mapsto (Id, Id)_{\%}(\phi, \theta) = (\phi, \theta). \hspace{20pt}
$$

Furthermore we can also consider the composite,
$$\widehat{L}\langle B, \beta \rangle^n (R) \longrightarrow
Q_n (B, \beta) \longrightarrow \widehat{L}\langle B, \beta \rangle^n (R). $$
Start by considering an element in $\widehat{L}\langle B, \beta \rangle^n (R) $, say $(C, \phi, \gamma, \theta, g, \chi) \in \widehat{L}\langle B, \beta \rangle^n (R)$, so that,
\begin{align*}
\widehat{L}\langle B, \beta \rangle^n (R) & \longrightarrow Q_n (B, \beta) \\
 (C, \phi, \gamma, \theta, g, \chi) & \longmapsto (g, \chi)_{\%}(\phi, \theta).
\end{align*}
\
Composition of maps gives,
\begin{align*}
\widehat{L}\langle B, \beta \rangle^n (R) & \longrightarrow
Q_n (B, \beta)  \longrightarrow \widehat{L}\langle B, \beta \rangle^n (R) \\
(C, \phi, \gamma, \theta, g, \chi) & \mapsto (g, \chi)_{\%}(\phi, \theta) \mapsto (B, g(\phi), \gamma, g(\theta), Id, Id).
\end{align*}

Here the map $g: C \rightarrow B$ can be suitably chosen so that $(C, \phi, \gamma, \theta, g, \chi)$ and $(B, g(\phi), \gamma, g(\theta), Id, Id)$ are cobordant, because $(g, \chi)$ is a normal map between them. The fact that they belong to the same cobordism class means that they represent the same element in the group $\widehat{L}\langle B, \beta \rangle^n (R)$,
and hence the composition $\widehat{L}\langle B, \beta \rangle^n (R) \longrightarrow Q_n (B, \beta)  \longrightarrow \widehat{L}\langle B, \beta \rangle^n (R)$ is again the identity.
\end{proof}

The symmetric $(B, \beta)$-structure $\widehat{L}$-groups $\widehat{L}\langle B, \beta \rangle^n (\bb{Z})$ are the cobordism groups of $n$-dimensional $(B, \beta)$-normal complexes over $\bb{Z}$.
For the universal chain bundle $(B(\infty), \beta(\infty))$ over $\bb{Z}$,
$$\widehat{L}\langle B, \beta \rangle^n (\bb{Z}) = \widehat{L}^n (\bb{Z}).$$
Hence there is an isomorphism,
$$ \widehat{L}^n (\bb{Z}) \cong Q_n(B, \beta),$$
where $(B, \beta)$ is the universal chain bundle over $\zz$.

For $n \geq 0$, there is \textbf{a $(v_{n+1}=0)-$universal chain bundle} over a ring with involution $R$ $(B\langle n+1 \rangle, \beta \langle n+1 \rangle)$ which is defined by the following properties,
\begin{itemize}
\item[(i)] The map $v_r(\beta \langle n+1 \rangle) : H_r(B\langle n+1 \rangle) \to \widehat{H}^r(\zz_2; R)$ is an isomorphism for $r \neq n+1,$
\item[(ii)] $H_{n+1}(B \langle n+1 \rangle)= 0.$
\end{itemize}

\begin{definition}\label{v_n+1}(\hspace{-1pt}\cite{algpoinc})
The $(v_{n+1}=0)$-symmetric $L$-groups of ring with involution $R$ are defined as
$$L\langle v_{n+1}\rangle^m (R)= L\langle B\langle n+1 \rangle, \beta \langle n+1 \rangle \rangle^m(R) \textnormal{ with } m \geq 0.$$
\end{definition}

\subsection{Twisted $Q$-groups and the mod $8$ signature of a normal complex}\label{Twisted-Q-groups}

In this section we discuss an expression for the mod $8$ signature in terms of the symmetric and hyperquadratic structures (See Theorem \ref{sign-theta}).
We shall need this theorem in the proof of Proposition \ref{signature-Psq}. First we shall give some definitions that will be relevant for the proof of \ref{sign-theta}.

\begin{definition} (\hspace{-1pt}\cite{Mod8}) \label{chain bundle} With $k \geq 0$ and $m \geq 1$, the chain bundle over $\bb{Z}$
$$(B(k, m), \beta(k, m)) = \mc{C}(d, \chi)$$
is the algebraic mapping cone $\mc{C}(d, \chi)$ of the chain bundle map
$$(d, \chi): (B(k, m)_{2k+1}, 0) \to (B(k,m)_{2k}, \delta),$$
where both $B(k, m)_{2k+1}$ and $B(k, m)_{2k}$ are regarded as chain complexes concentrated in degree $2k$, and
$$
\begin{array}{cc}
 d = 2m :B(k, m)_{2k+1}= \zz & \to B(k, m)_{2k}=\zz \\
\chi= 2m^2: B(k, m)_{2k+1} = \zz  & \to B(k, m)^{2k}= \zz
\end{array}
$$

\end{definition}

\begin{definition} The universal bundle over $\bb{Z}$ can be expressed as
$$(B(\infty), \beta(\infty)) = \sum^{\infty}_ {k=0} (B(k, 1), \beta(k, 1)),$$
with $k \geq 0$.
This is a consequence of the definitions of  chain bundle and universal chain bundle.
\end{definition}

In the proof of Proposition \ref{signature-Psq} we shall also need the following result which is proved in \cite{BanRan}.

\begin{theorem} \label{theta-and-Z8} (\hspace{-1pt}\cite[Proposition 52, Corollary 61]{BanRan})
The $4k$-dimensional $Q$-group of $(B(k, 1), \beta(k, 1))$ is given by the isomorphism
\begin{align*}
Q_{4k}(B(k, 1), \beta(k, 1))& \xrightarrow{\cong}  \bb{Z}_8 \\
(\phi, \theta) & \mapsto \phi_0(1,1) + 2 \phi_1(1, 1) + 4 \theta_{-2}(1,1).
\end{align*}
\end{theorem}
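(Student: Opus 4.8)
The plan is to compute the group $Q_{4k}(B(k,1),\beta(k,1))$ completely by hand, exploiting the fact that $C:=B(k,1)$ is the minimal chain complex $C_{2k+1}=\zz\xrightarrow{2}C_{2k}=\zz$ concentrated in two adjacent degrees, so that all of the relevant $Q$-theoretic data in degree $4k$ consists of finitely many homomorphisms $\zz\to\zz$. First I would record the untwisted groups: by the K\"unneth theorem $C^t\otimes_\zz C$ has homology $\zz_2$ concentrated in degrees $4k$ and $4k+1$ (the extra $\zz_2$ coming from $\mathrm{Tor}(\zz_2,\zz_2)$), and the $\zz_2$-action induced by the $\epsilon$-transposition $T_\epsilon$ (here $\epsilon=+1$) is trivial on both; running the $\zz_2$-(hyper)cohomology spectral sequence then identifies $Q^{4k}(C)$, $Q^{4k+1}(C)$, $\widehat Q^{4k}(C)$ and $\widehat Q^{4k+1}(C)$ as small finite $2$-groups, for which I would fix explicit representatives. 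At the chain level these are parametrised by $\phi_0(1,1)$, $\phi_1(1,1)$ and, on the hyperquadratic side, by the surviving component written $\theta_{-2}(1,1)$ in the statement; the higher symmetric components $\phi_s$ ($s\geq 2$) are forced to vanish because $T_\epsilon=-\mathrm{id}$ on $\mathrm{Hom}_\zz(C^{2k+1},C_{2k+1})$ kills them.

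Next I would feed these computations into the exact sequence of twisted $Q$-groups
$$\widehat Q^{4k+1}(C)\xrightarrow{H_\gamma}Q_{4k}(C,\gamma)\xrightarrow{N_\gamma}Q^{4k}(C)\xrightarrow{J_\gamma}\widehat Q^{4k}(C)$$
with $\gamma=\beta(k,1)$. Since $N_\gamma$ sends $(\phi,\theta)$ to $\phi$ and $J_\gamma$ sends $\phi$ to $J(\phi)-(\phi_0)^\%(S^{4k}\gamma)$, the heart of the matter is the evaluation of the twist term $(\phi_0)^\%(S^{4k}\beta(k,1))$ on the generator of $C$; using the explicit description of $\beta(k,1)$ as the summand of the universal chain bundle $\beta(\infty)$ over $\zz$ supported around degree $2k$, this determines $J_\gamma$ completely, hence $\ker J_\gamma=\im N_\gamma$ and, together with $\im H_\gamma$, the order of $Q_{4k}(C,\gamma)$, which turns out to be $8$. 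Exhibiting a single class of order $8$ — for example the one with $\phi_0(1,1)=1$, whose $4$-fold multiple is not a coboundary but whose $8$-fold multiple is — shows that this group is cyclic, so isomorphic to $\zz_8$.

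Finally I would check that $(\phi,\theta)\mapsto\phi_0(1,1)+2\phi_1(1,1)+4\theta_{-2}(1,1)$ realises this isomorphism. Being a sum of evaluations it is additive, so the only substantive point is that it descends to equivalence classes, i.e.\ that it is unchanged when $(\phi,\theta)$ is replaced by $(\phi+\partial\chi,\theta+\partial\xi)$ for a symmetric chain $\chi\in(W^\%C)_{4k+1}$ and a hyperquadratic chain $\xi$. Carrying this out one sees that a symmetric coboundary moves $\phi_0(1,1)$ through $2\zz$ and $\phi_1(1,1)$ by an amount that only registers after the weight $2$, while a hyperquadratic coboundary moves $\theta_{-2}(1,1)$ through $2\zz$ — so the three summands carry into one another with base $2$ and the total is well defined modulo $8$. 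Comparison with the order-$8$ class above gives surjectivity, and the order count gives bijectivity. The step I expect to be the main obstacle is precisely the twist evaluation $(\phi_0)^\%(S^{4k}\beta(k,1))$ with all signs on the complete resolution $\widehat W$ tracked correctly: this is where the coefficients $2$ and $4$ are produced, and where the group stops being a sum of $\zz_2$'s (which is all the untwisted theory would give) and becomes cyclic of order $8$; equivalently, it is the verification that the three-stage extension assembled from $H_*(C^t\otimes_\zz C)=\zz_2$ in two consecutive degrees, twisted by $\beta(k,1)$, is non-split at each stage.
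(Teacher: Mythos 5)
The paper does not actually prove this statement: Theorem \ref{theta-and-Z8} is quoted verbatim from Banagl--Ranicki \cite[Proposition 52, Corollary 61]{BanRan}, and the thesis only uses it (in Theorem \ref{sign-theta} and Section \ref{obstructions}), so there is no internal proof to compare against. Your sketch is, in outline, the computation that the cited source carries out: compute the (hyper)symmetric $Q$-groups of the two-term complex $C=B(k,1)$ from $H_*(C^t\otimes_\zz C)=\zz_2$ in degrees $4k,4k+1$, feed them into the exact sequence $\widehat Q^{4k+1}(C)\to Q_{4k}(C,\gamma)\to Q^{4k}(C)\xrightarrow{J_\gamma}\widehat Q^{4k}(C)$, show $J_\gamma=0$ and that $\widehat Q^{4k+1}(C)=\zz_2$ injects, and resolve the resulting extension as $\zz_8$ with the evaluation $(\phi,\theta)\mapsto\phi_0(1,1)+2\phi_1(1,1)+4\theta_{-2}(1,1)$. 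This is consistent with the untwisted and twisted computations the thesis records later (Theorem \ref{Q-groups computation} and the Proposition following it), so the route is sound.

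Two cautions about the details as you have framed them. First, you attribute the failure of the group to be a sum of $\zz_2$'s entirely to the twist evaluation $(\phi_0)^{\%}(S^{4k}\beta(k,1))$; in fact the untwisted group $Q^{4k}(B(k,1))\cong\zz_4$ via $\phi\mapsto\phi_0(1,1)+2\phi_1(1,1)$ is already a nontrivial extension, with no bundle in sight, and the role of $\beta(k,1)$ is confined to the exactness argument (vanishing of $J_\gamma$ and of $Q^{4k+1}(C)\to\widehat Q^{4k+1}(C)$, hence injectivity of $H_\gamma$) and to the further non-split extension of $\zz_4$ by $\zz_2$ producing the coefficient $4$ on $\theta_{-2}$. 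So the "main obstacle" you defer is really two separate extension/exactness verifications, one of which is independent of $\gamma$. Second, the assertion that the components $\phi_s$ for $s\geq 2$ "are forced to vanish because $T_\epsilon=-\mathrm{id}$" is not quite right: for $s\geq 3$ they vanish for degree reasons, while $\phi_2\in\mathrm{Hom}_\zz(C^{2k+1},C_{2k+1})\cong\zz$ need not vanish identically; it simply does not enter the invariant and can be normalised away in the equivalence class. Neither point undermines the strategy, but both would need to be stated correctly in a written-out proof, and the well-definedness computation you outline (coboundaries shifting $\phi_0(1,1)$ by $2\zz$, $\theta_{-2}(1,1)$ by $2\zz$, etc.) is exactly where those corrected statements get used.
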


The proof of Proposition \ref{signature-Psq} will be a direct consequence of the following theorem.

\begin{theorem}  \label{sign-theta} (\hspace{-1pt}\cite{Mod8})
The mod $8$ signature of a $4k$-dimensional normal complex over $\bb{Z}$, $(C, \phi, \gamma, \theta)$, is given by
$$\widehat{\sigma}(C, \phi, \gamma, \theta) = \phi_0(v, v) + 2 \phi_1(v, u) + 4 \theta_{-2}(u, u) \in Q_{4k}(B(k, 1), \beta(k, 1)) =  \bb{Z}_8 $$
with $(u,v) \in H_0(\textnormal{Hom}_{\bb{Z}}(C, B(k,1)))$
$$
\xymatrix{
C_{2k+1} \ar[r]^{\hspace{-25pt}u} \ar[d]^{d} & B(k, 1)_{2k+1}= \bb{Z}  \ar[d]^2\\
C_{2k} \ar[r]^{\hspace{-25pt}v}  & B(k, 1)_{2k}= \bb{Z}}
$$
 a chain map representing the $2k$-th Wu class $v_{2k} \in H^{2k}(C ; \bb{Z}_2).$
\end{theorem}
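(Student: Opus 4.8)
The plan is to reduce the statement to the algebraic computation already available in Theorem \ref{theta-and-Z8}. The normal signature $\widehat{\sigma}(C,\phi,\gamma,\theta)$ lives in $\widehat{L}^{4k}(\zz) \cong Q_{4k}(B(\infty),\beta(\infty))$, and by the splitting $(B(\infty),\beta(\infty)) = \sum_{k \geq 0}(B(k,1),\beta(k,1))$ together with the fact that only the summand $(B(k,1),\beta(k,1))$ contributes in degree $4k$, it suffices to identify the image of $(\phi,\theta)$ under the composite that classifies the chain bundle, and then apply the isomorphism $Q_{4k}(B(k,1),\beta(k,1)) \xrightarrow{\cong} \zz_8$ from Theorem \ref{theta-and-Z8}, which sends $(\psi,\vartheta) \mapsto \psi_0(1,1) + 2\psi_1(1,1) + 4\vartheta_{-2}(1,1)$.

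First I would recall that for the normal complex $(C,\phi,\gamma,\theta)$ the equivalence class of the chain bundle $[\gamma] \in \widehat{Q}^0(C^{0-*})$ is classified by a chain bundle map $(g,\chi)\colon (C,\gamma) \to (B(\infty),\beta(\infty))$, unique up to the relevant equivalence, and that by Proposition \ref{Proposition: Weiss} the induced map $\widehat{L}\langle B(\infty),\beta(\infty)\rangle^{4k}(\zz) \to Q_{4k}(B(\infty),\beta(\infty))$ is an isomorphism, under which $\widehat{\sigma}(C,\phi,\gamma,\theta) = (g,\chi)_{\%}(\phi,\theta)$. The key point is then to extract the component of $g$ landing in the degree-$2k$ part of the summand $(B(k,1),\beta(k,1))$: this is precisely a chain map $(u,v)\in H_0(\mathrm{Hom}_{\zz}(C,B(k,1)))$ fitting into the square displayed in the statement, namely $2u = vd$, and the condition that $v\colon C_{2k} \to \zz$ reduces mod $2$ to a cocycle representing the Wu class $v_{2k}\in H^{2k}(C;\zz_2)$ follows from the defining property of the universal chain bundle, namely that its Wu classes $v_r(\beta(\infty))$ are isomorphisms $H_r(B(\infty)) \to \widehat{H}^r(\zz_2;\zz)$, combined with the naturality of Wu classes under the chain bundle map $(g,\chi)$ and the identification $v_r(\gamma(\nu)) = v_r(\nu)$ recalled in the Example.

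Next I would compute $(g,\chi)_{\%}(\phi,\theta)$ explicitly. Pushing forward the symmetric structure $\phi$ along $g$ produces, in the degree-$4k$ part of $(B(k,1),\beta(k,1))$, the morphisms whose evaluations on the generator $1$ are $g(\phi_0) = \phi_0(v,v)$ and $g(\phi_1) = \phi_1(v,u)$ — here one must be careful about which copy of $C$ in each tensor factor is hit by the components $u$ (in degree $2k+1$) versus $v$ (in degree $2k$) of $g$, and the degree bookkeeping in the complex $B(k,1)$ which is concentrated in degrees $2k$ and $2k+1$ forces exactly the pairing $\phi_1(v,u)$ rather than $\phi_1(u,v)$ or $\phi_1(v,v)$. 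Similarly, pushing forward the hyperquadratic refinement $\theta$ gives $g(\theta)_{-2} = \theta_{-2}(u,u)$, where now both tensor factors must be hit by the degree-$(2k+1)$ component $u$ of $g$ because of the degree shift $(\widehat{W}^{\%}C)_{4k+1}$ built into $\theta$. Feeding $(g(\phi), g(\theta))$ into the isomorphism of Theorem \ref{theta-and-Z8} then yields
$$\widehat{\sigma}(C,\phi,\gamma,\theta) = \phi_0(v,v) + 2\phi_1(v,u) + 4\theta_{-2}(u,u) \in \zz_8,$$
as claimed.

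The main obstacle I expect is the bookkeeping in the previous paragraph: verifying that the pushed-forward structures land in the correct degrees of $(B(k,1),\beta(k,1))$ and pair up the components $u$ and $v$ of $g$ in exactly the combinations $(v,v)$, $(v,u)$, $(u,u)$ — and no others — requires unwinding the definitions of $W^{\%}$ and $\widehat{W}^{\%}$, the $\epsilon$-transposition $T$, and the functoriality of the $(-)_{\%}$ construction, and checking that all the cross-terms involving the "wrong" degree combinations vanish because $B(k,1)$ is concentrated in degrees $2k$ and $2k+1$ only. A secondary point requiring care is confirming that $v$ genuinely represents the Wu class $v_{2k}$ and not merely some cohomology class classifying $\gamma$; this uses that on the universal chain bundle the Wu class map is an isomorphism, so the classifying map $g$ is forced to carry the Wu-class information, together with the identification of the chain-bundle Wu classes with the topological ones from the symmetric construction.
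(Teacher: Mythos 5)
Your proposal is correct and follows essentially the same route as the paper's own proof: classify $(C,\gamma)$ by the universal chain bundle, identify the degree-$4k$ contribution with the summand $(B(k,1),\beta(k,1))$ via the Wu class chain map $(u,v)$, and evaluate the pushforward $((u,v),\chi)^{\%}(\phi,\theta)=\phi_0(v,v)+2\phi_1(v,u)+4\theta_{-2}(u,u)$ under the isomorphism of Theorem \ref{theta-and-Z8}. The extra bookkeeping you flag is exactly what the paper compresses into the displayed induced map on twisted $Q$-groups.
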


\begin{proof}

The chain bundle $(C, \gamma)$ is classified by a chain bundle map
$$(f, \chi): (C, \gamma) \to (B(\infty), \beta(\infty)) $$
such that
$${\small
\xymatrix{
\txt{$C$} \ar[dr]_f  \ar@{->}[rr]^{v_{2k}(\gamma)}
&
&\txt{$B(k, 1)$}\\
&\txt { $B(\infty) = \sum\limits_{k=0}^{\infty} B(k, 1)$}\ar[ur]}}
$$

The inclusion$$(B(k, 1), \beta(k, 1)) \to (B(\infty), \beta(\infty)) $$ is a chain equivalence so it induces an isomorphism in the twisted $Q$-groups,
\begin{equation}\label{iso-Q}
Q_{4k}(B(k, 1), \beta(k, 1)) \xrightarrow{\cong} Q_{4k}(B(\infty), \beta(\infty)). \end{equation}
In \cite[Proposition 6.6]{Weiss-Ker} and \cite[Proposition 46]{BanRan} there is described an isomorphism
$$Q_{4k}(B, \beta) \xrightarrow{\cong} \widehat{L}^{4k}\langle B, \beta \rangle(\bb{Z}),$$
and for the universal chain bundle $(B(\infty), \beta(\infty))$ over $\bb{Z}$, the symmetric $(B, \beta)$ structure $\widehat{L}$-groups $\widehat{L}^{4k}\langle B(\infty), \beta(\infty) \rangle (\bb{Z})$ are just the hyperquadratic $L$-groups,
$$\widehat{L}^{4k}\langle B(\infty), \beta(\infty) \rangle (\bb{Z}) = \widehat{L}^{4k}(\bb{Z}),$$
so using the isomorphism in \eqref{iso-Q} we have that
$$Q_{4k}(B(k, 1), \beta(k, 1)) \xrightarrow{\cong}  Q_{4k}(B(\infty), \beta(\infty)) \xrightarrow{\cong} \widehat{L}^{4k}(\bb{Z})= \bb{Z}_8.$$
Hence an element $\widehat{\sigma}(C, \phi, \gamma, \theta) \in \widehat{L}^{4k}(\bb{Z})$ also represents an element in $Q_{4k}(B(k, 1), \beta(k, 1)) \xrightarrow{\cong}  \bb{Z}_8$,
$$\widehat{\sigma}(C, \phi, \gamma, \theta) \in Q_{4k}(B(k, 1), \beta(k, 1)) \xrightarrow{\cong}  \bb{Z}_8.$$


By Theorem \ref{theta-and-Z8} we know that the $4k$-dimensional $Q$-group of $(B(k, 1), \beta(k, 1))$ is given by the isomorphism
\begin{align*}
Q_{4k}(B(k, 1), \beta(k, 1))& \xrightarrow{\cong}  \bb{Z}_8 \\
(\phi, \theta) & \mapsto \phi_0(1,1) + 2 \phi_1(1, 1) + 4 \theta_{-2}(1,1)
\end{align*}
and the Wu class $v_{2k}(\gamma) : C \to B(k, 1)$ can be represented as $(u, v) \in H^{2k}(C, \bb{Z}_2)$ with $v \in C^{2k}$ and $u \in \textnormal{Ker}(d^* : C^{2k} \to C^{2k+1})$ such that
$$d^*(v) = 2u \in C^{2k+1}.$$
So there is an induced map,
$$\begin{array}{ccl}
Q_{4k}(C, \gamma ) & \xrightarrow{\left((u, v), \chi \right)^\%} &  Q_{4k}(B(k, 1), \beta(k, 1))  \\
(\phi, \theta) & \longmapsto & \left((u, v), \chi \right)^\%(\phi, \theta) = \phi_0 (v, v) + 2 \phi_1(v, u) + 4 \theta_{-2}(u, u),
\end{array} $$
so that,
$$\widehat{\sigma}(C, \phi, \gamma, \theta) = \phi_0(v, v) + 2 \phi_1(v, u) + 4 \theta_{-2}(u, u) \in Q_{4k}(B(k, 1), \beta(k, 1)) =  \bb{Z}_8. $$

\end{proof}

\chapter{Arf and Brown-Kervaire invariants in algebra}\label{Arf-BK}

\section{Quadratic enhancements}

In this chapter $(V, \lambda)$ will denote a nonsingular symmetric bilinear form over $\zz_2.$ $(V, \lambda)$ is \textbf{isotropic} if  $\lambda(x,x) = 0 \in \zz_2$ for all $x \in V,$ and \textbf{anisotropic} otherwise.
There are two indecomposable forms over $\zz_2$,
$$P=(\zz_2, 1) \textnormal{\hspace{3pt} and \hspace{3pt} } H =\left(\zz_2 \oplus \zz_2, \left(
\begin{matrix}
  0& 1 \\
  1 & 0 \\
   \end{matrix}
\right) \right),$$
\begin{itemize}
\item [(i)] $P$ is anisotropic, $H$ is isotropic, so $P \oplus P \ncong H.$
\item[(ii)] $H$ has complementary lagrangians $\zz_2 \oplus 0$ and $0 \oplus \zz_2.$
\item[(iii)]  The form $P \oplus P \cong \left(\zz_2 \oplus \zz_2, \left( \begin{matrix}
  0& 1 \\
  1 & 1 \\
   \end{matrix}
\right) \right)$ has a lagrangian $\{(x,x ) \in \zz_2 \oplus \zz_2 \vert x \in \zz_2\}$ without a lagrangian complement.
\end{itemize}

The following result is well known.
\begin{proposition} \label{form-decomposition} 
Every $(V, \lambda)$ decomposes as an orthogonal direct sum of copies of
$$P=(\zz_2, 1) \textnormal{\hspace{3pt} and \hspace{3pt} } H =\left(\zz_2 \oplus \zz_2, \left(
\begin{matrix}
  0& 1 \\
  1 & 0 \\
   \end{matrix}
\right) \right),$$
that is,
$$(V, \lambda) = \bigoplus_{p} P \oplus \bigoplus_{k}H.$$

 \end{proposition}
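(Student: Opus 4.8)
The plan is to argue by induction on $\dim_{\zz_2} V$, peeling off at each stage either a copy of $P$ or a copy of $H$ as an orthogonal direct summand. The base case $V = 0$ is vacuous. The elementary input I would isolate first is the \emph{orthogonal splitting lemma}: if $W \subseteq V$ is a subspace on which $\lambda$ restricts to a nonsingular form, then $V = W \oplus W^{\perp}$ as an orthogonal direct sum and $(W^{\perp}, \lambda\vert_{W^{\perp}})$ is again nonsingular. This is pure linear algebra over the field $\zz_2$: the composite $V \xrightarrow{\lambda} V^{*} \to W^{*}$ is surjective, with kernel $W^{\perp}$, so $\dim W^{\perp} = \dim V - \dim W$; since $\lambda\vert_{W}$ is nonsingular its radical is trivial, so $W \cap W^{\perp} = 0$ and hence $V = W \oplus W^{\perp}$; finally any $v \in W^{\perp}$ orthogonal to all of $W^{\perp}$ is orthogonal to all of $V = W \oplus W^{\perp}$, hence $v = 0$ by nonsingularity of $\lambda$ on $V$.

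For the inductive step I would use the dichotomy isotropic/anisotropic recalled just above the statement. If $(V, \lambda)$ is anisotropic, pick $x \in V$ with $\lambda(x, x) = 1$; then $\langle x \rangle \cong P$ carries a nonsingular form, so by the splitting lemma $V = \langle x \rangle \oplus \langle x \rangle^{\perp}$ with $\langle x \rangle^{\perp}$ nonsingular of dimension $\dim V - 1$, and the inductive hypothesis applies to $\langle x \rangle^{\perp}$. If $(V, \lambda)$ is isotropic and nonzero, choose any $x \neq 0$; nonsingularity gives $y$ with $\lambda(x, y) = 1$, and from $\lambda(x, x) = \lambda(y, y) = 0$ we see $x$ and $y$ are linearly independent, so $W = \langle x, y \rangle$ has Gram matrix $\left(\begin{smallmatrix} 0 & 1 \\ 1 & 0 \end{smallmatrix}\right)$, i.e. $(W, \lambda\vert_{W}) \cong H$, which is nonsingular. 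Again $V = W \oplus W^{\perp}$ with $W^{\perp}$ nonsingular of smaller dimension, and induction finishes the argument; assembling the summands produced along the way yields $(V, \lambda) \cong \bigoplus_{p} P \oplus \bigoplus_{k} H$.

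I do not expect a genuine obstacle here, as everything reduces to linear algebra over a field. The only steps deserving care are the orthogonal splitting lemma — which is precisely where nonsingularity of $\lambda$ is used — and, in the isotropic case, the fact that a hyperbolic pair $x, y$ always exists and that the restriction of $\lambda$ to any subspace of an everywhere-isotropic form stays everywhere isotropic (so the two cases of the induction remain internally consistent, although for the statement one only needs $W^{\perp}$ to stay nonsingular). If desired one could also add the remark that the decomposition is not unique, since over $\zz_2$ one has $P \oplus P \oplus P \cong P \oplus H$; but this is not needed for the proposition.
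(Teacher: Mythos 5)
Your proof is correct and follows essentially the same route as the paper: greedily splitting off an anisotropic vector as a copy of $P$ via the orthogonal complement, and otherwise extracting a hyperbolic pair giving $H$, exactly as the paper does (the paper defers the hyperbolic step to its Proposition on $\zz_2$-enhancements). Your version merely makes explicit the orthogonal splitting lemma and the induction that the paper's terse "repeat the process" leaves implicit.
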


\begin{proof}
For every non-zero element $x \in V$,  either $\lambda(x,x)=0 \in \zz_2$ or $\lambda(x,x)= 1 \in \zz_2$.
If $\lambda(x,x)=1 \in \zz_2$ have $(V,\lambda) = (\langle x \rangle,1) \oplus (\langle x \rangle^{\perp}/\langle x \rangle,[\lambda])$. We can then repeat the process of splitting off anisotropic subspaces (i.e with $\lambda(a, a)=1\in \zz_2$) until we are left with an isotropic subspace, that is, a subspace with $\lambda(b,b)=0 \in \zz_2$.
\end{proof}

\begin{proposition}
With $(V, \lambda)$ as in the previous proposition, there is an isomorphism in symmetric $L$-theory given by,
\begin{align*}
L^0(\zz_2) \longrightarrow & \zz_2 \\
(V, \lambda) \longmapsto & \hspace{2pt} p \hspace{-5pt}\pmod{2}.
\end{align*}
In particular the symmetric forms $P \oplus P$  and $H$ are both Witt equivalent to $0$.
\end{proposition}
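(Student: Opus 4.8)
The plan is to identify the quantity $p \bmod 2$ with the rank of $(V,\lambda)$ modulo $2$ --- consistent with the computation $L^0(\zz_2)=\zz_2$ (rank$\pmod 2$) recorded earlier in this chapter --- and then to check the three things one needs: that the assignment descends to Witt classes, that it is additive, and that it is bijective.

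First I would note that by Proposition~\ref{form-decomposition} every nonsingular symmetric form over $\zz_2$ is isomorphic to $\bigoplus_p P \oplus \bigoplus_k H$, whence $\rk(V) = p + 2k \equiv p \pmod 2$; in particular $p \bmod 2$ depends only on the isomorphism class of $(V,\lambda)$, even though the decomposition itself need not be unique. To see that it depends only on the Witt class, I would use the fact that a metabolic form has a Lagrangian $L$ with $\dim L = \frac{1}{2}\dim V$, which forces $\dim V$ to be even; hence rank modulo $2$ is unchanged under adding or cancelling metabolic summands, so $(V,\lambda) \mapsto \rk(V) \bmod 2$ is well defined on $L^0(\zz_2)$. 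It is a homomorphism because $\rk(V \oplus V') = \rk(V) + \rk(V')$, and it is surjective since $P = (\zz_2,1)$ maps to $1$.

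For injectivity, suppose $\rk(V)$ is even, so that in the decomposition $(V,\lambda) = \bigoplus_p P \oplus \bigoplus_k H$ the integer $p$ is even and $(V,\lambda) \cong (P \oplus P)^{\oplus p/2} \oplus H^{\oplus k}$. Now $H$ is metabolic with Lagrangian $\zz_2 \oplus 0$, and $P \oplus P$ is metabolic with Lagrangian $\{(x,x) : x \in \zz_2\}$, as recorded in item~(iii) preceding the statement. Since an orthogonal direct sum of metabolic forms is again metabolic (the direct sum of the Lagrangians is a Lagrangian), $(V,\lambda)$ is metabolic and therefore represents $0 \in L^0(\zz_2)$. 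Thus the kernel is trivial and the map is an isomorphism. The ``in particular'' assertion is just the special case of the two building blocks: $P \oplus P$ and $H$ each admit a Lagrangian, hence both are Witt equivalent to $0$.

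I expect the only point requiring genuine care to be the reduction to Witt classes --- confirming that rank modulo $2$ really is a Witt invariant --- which rests on the observation that a metabolic form has even dimension; once that is in hand, everything else is a direct computation with the normal form supplied by Proposition~\ref{form-decomposition}.
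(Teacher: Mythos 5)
Your proof is correct, and since the paper states this proposition without giving a proof (it is presented as well known, resting on Proposition \ref{form-decomposition}), your argument supplies exactly the intended one: identifying $p \bmod 2$ with $\dim V \bmod 2$, noting that metabolic forms are even-dimensional so the invariant descends to Witt classes, and killing the kernel by exhibiting the Lagrangians $\zz_2 \oplus 0 \subset H$ and the diagonal $\{(x,x)\} \subset P \oplus P$. This is also consistent with the paper's earlier computation $L^0(\zz_2) \cong \zz_2$ detected by rank modulo $2$, so there is nothing to add.
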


\begin{definition} \label{quadratic enhancement}
Let $V$ be a $\bb{Z}_2$-vector space and $\lambda : V \times V \to \bb{Z}_2$ a non-singular symmetric bilinear form.
\begin{itemize}
\item[(i)] A \textbf{$\bb{Z}_2$-valued quadratic enhancement} of $\lambda$ is a function  $h: V \to \bb{Z}_2$ such that for all $x, y \in V$
\begin{equation}h(x + y) = h(x) + h (y) +  \lambda (x , y)  \in \bb{Z}_2.\end{equation}

\item[(ii)] A \textbf{$\bb{Z}_4$-valued quadratic enhancement} of $\lambda$ is a function $q : V \to \bb{Z}_4$ such that for all $x, y \in V$
\begin{equation}\label{Z4-enhancement}
q(x + y) = q (x) + q (y) + i \lambda (x, y) \in \bb{Z}_4
\end{equation}
where $i = 2 : \bb{Z}_2  \to \bb{Z}_4$ and $x, y \in V$.
\end{itemize}
\end{definition}

\begin{remark}
The quadratic enhancements $h$ and $q$ cannot be recovered uniquely from the  $\bb{Z}_2$-valued symmetric bilinear pairing $\lambda : V \times V \to \bb{Z}_2$. Let $j: \zz_4 \to \zz_2$ denote mod $2$ reduction and let $q : V \to \bb{Z}_4$ be a $\zz_4$-valued enhancement of the bilinear pairing $\lambda$. Then the composition $jq$ is the mod $2$ reduction of $q$ and is given by
$$j q(x) = \lambda(x,x) \in \zz_2.$$
(This is a consequence of Equation \ref{Z4-enhancement}).

For every $\lambda$ there exists a non-unique $\zz_4$-enhancement $q$, but there may not exist a $\zz_2$-enhancement $h$. Furthermore every $h$ determines a $q$ by the identity $q=2h$.

If $q(V) \subseteq 2 \zz_2 \subset \zz_4$, we can define the $\zz_2$-enhancement $h:V\to \zz_2; ~x \mapsto q(x)/2$ such that $2h=q.$
\end{remark}

We now discuss what are the possible $\zz_2$ and $\zz_4$-valued 	quadratic enhancements of such a nonsingular symmetric bilinear form $(V, \lambda)$.

\section{$\zz_2$-enhancements $h$ of $(V, \lambda)$ and the Arf invariant}
The following result is well-known.
\begin{proposition}\label{iff-lambda-0}
Let $(V, \lambda)$ be a non-singular symmetric form over $\zz_2$; then the following conditions are equivalent
\begin{itemize}
\item[(i)] $(V, \lambda)$ admits a $\zz_2$-valued quadratic enhancement $h: V \to \zz_2,$
\item[(ii)] $\lambda(x, x)= 0\in \zz_2$ for all $x \in V,$ (i.e. $(V, \lambda)$ is isotropic)
\item[(iii)]  $\lambda$ can be split into an orthogonal sum of hyperbolics,
$ \left(
\begin{matrix}
  0& 1 \\
  1 & 0 \\
   \end{matrix}
\right),
$ that is,  $(V, \lambda) = \bigoplus \limits_{k} H.$
\end{itemize}
\end{proposition}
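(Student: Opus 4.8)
The plan is to prove the chain of equivalences $(i) \Rightarrow (ii) \Rightarrow (iii) \Rightarrow (i)$, exploiting the orthogonal decomposition from Proposition \ref{form-decomposition}.

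First I would show $(i) \Rightarrow (ii)$. Suppose $h \colon V \to \zz_2$ is a quadratic enhancement of $\lambda$. Setting $y = x$ in the defining identity $h(x+y) = h(x) + h(y) + \lambda(x,y)$ gives $h(0) = 2h(x) + \lambda(x,x) = \lambda(x,x) \in \zz_2$. Since $h(0) = h(0+0) = 2h(0) + \lambda(0,0) = 0$, we conclude $\lambda(x,x) = 0$ for all $x \in V$, so $(V,\lambda)$ is isotropic.

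Next, $(ii) \Rightarrow (iii)$ follows directly from the proof of Proposition \ref{form-decomposition}: the only place an anisotropic summand $P = (\zz_2, 1)$ is split off is when there exists $x$ with $\lambda(x,x) = 1$. If $(V,\lambda)$ is isotropic no such $x$ exists, so the decomposition $(V,\lambda) = \bigoplus_p P \oplus \bigoplus_k H$ has $p = 0$, i.e. $(V,\lambda) = \bigoplus_k H$. (Alternatively one checks directly that a nonzero isotropic form contains a hyperbolic summand: pick $x \neq 0$, use nonsingularity to find $y$ with $\lambda(x,y) = 1$, and note $\langle x, y\rangle \cong H$ since $\lambda(x,x) = \lambda(y,y) = 0$; then split and induct on $\dim V$.)

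Finally, for $(iii) \Rightarrow (i)$ it suffices to exhibit a $\zz_2$-enhancement on each hyperbolic summand $H$ and take the orthogonal sum, since if $h$ enhances $(V,\lambda)$ and $h'$ enhances $(V',\lambda')$ then $h \oplus h'$ defined by $(h\oplus h')(x,x') = h(x) + h'(x')$ enhances $(V \oplus V', \lambda \oplus \lambda')$ — this is immediate from bilinearity of $\lambda \oplus \lambda'$ vanishing on the cross terms. On $H = (\zz_2 \oplus \zz_2, \left(\begin{smallmatrix} 0 & 1 \\ 1 & 0\end{smallmatrix}\right))$ with basis $e, f$, define $h(e) = h(f) = 0$, $h(e+f) = 1$, $h(0) = 0$; one verifies the enhancement identity on the (finitely many) pairs of elements, the only nontrivial case being $h(e + f) = h(e) + h(f) + \lambda(e,f) = 0 + 0 + 1 = 1$. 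This completes the cycle of implications.

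I do not expect any serious obstacle here; the statement is classical. The one point requiring mild care is making sure the decomposition in $(ii) \Rightarrow (iii)$ really produces \emph{no} copies of $P$ — this is where isotropy is used essentially, and it is cleanest to cite the proof of Proposition \ref{form-decomposition} directly rather than re-run the induction.
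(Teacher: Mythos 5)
Your proof is correct and follows essentially the same route as the paper: $(i)\Rightarrow(ii)$ by setting $y=x$ in the enhancement identity, $(ii)\Rightarrow(iii)$ by splitting off hyperbolic planes $\langle x,y\rangle$ with $\lambda(x,y)=1$ and inducting, and $(iii)\Rightarrow(i)$ by putting the standard enhancement ($h^{0,0}$) on each hyperbolic summand. One small caveat: deducing $(ii)\Rightarrow(iii)$ purely by citing Proposition \ref{form-decomposition} is mildly circular in the paper's own development, since the proof of that proposition only splits off anisotropic vectors and leaves the hyperbolic decomposition of the isotropic remainder to exactly the argument needed here — so your parenthetical direct induction (which is the paper's proof) is the one to rely on.
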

\begin{proof}
$(i) \Rightarrow (ii)$ If $(V, \lambda)$ admits a quadratic enhancement $h$, this means that $h(x+y) = h(x)+h(y) + \lambda(x, y) \in \zz_2$, setting $y=x$, we have that $\lambda(x,x)= 2h(x)=0 \in \zz_2.$

$(ii) \Rightarrow (iii)$ For each $x \in V$ there exists a $y \in V$ with $\lambda(x,y)=1$. On the pair $\{x, y\}$, the form is hyperbolic so  the pair has an orthogonal complement in $V$. That is, $V = \langle x \rangle^{\perp} / \langle x \rangle \oplus \left(\zz_2 \oplus \zz_2, \left(
\begin{matrix}
  0& 1 \\
  1 & 0 \\
   \end{matrix}
\right) \right)$,
We can then repeat the process of splitting off hyperbolics until the whole of $V$ has been expressed as a direct sum of hyperbolics.
$(iii) \Rightarrow (i)$ Clear from the definition of $h: V \to \zz_2.$
\end{proof}

A $\zz_2$-quadratic enhancement $h$ of $H$ will take value $0$ when evaluated on 0,  $h(0)=0,$ and can take values either $1$ or $0$ when evaluated on $x$ and $y$, where $\{x, y \}$ form a basis for $ \zz_2 \oplus \zz_2$. So we can form four $\zz_2$-enhancements on $H$, which we shall denote as $h^{0,0}$, $h^{0,1}$, $h^{1, 0}$ and $h^{1,1}$ according to their values on $x$ and $y$.

From the definition of a quadratic enhancement (Definition \ref{quadratic enhancement}) we know that the equation $h(x+y)-h(x)-h(y) = \lambda(x, y) \in \zz_2$ must be satisfied and  $\lambda(x, y)=1$.

A change of basis from $\{x, y \}$ to $\{x, x+y\}$ establishes an isomorphism from $h^{0,0}$ to $h^{0,1}$. Similarly a change of basis from $\{x, y \}$ to $\{x+y, y \}$ establishes an isomorphism from $h^{0,0}$ to $h^{1, 0}$. So there are two isomorphism classes of $\zz_2$-quadratic enhancements on $H,$

\begin{minipage}[c]{0.4\linewidth}
\vspace{-10pt}
\begin{equation*} \label{values-of-h}
h^{0,0} = \left\{ \begin{array}{rr}x & \mapsto 0 \\
y & \mapsto 0 \\
x+y & \mapsto 1
\end{array} \right.,
\end{equation*}

\end{minipage}
\begin{minipage}[c]{0.30\linewidth}

$$
h^{1, 1} =\left\{ \begin{array}{rr}x & \mapsto 1 \\
y & \mapsto 1 \\
x+y & \mapsto 1
\end{array} \right..
$$
\vspace{10pt}
\end{minipage}

\begin{proposition} (\hspace{-1pt}\cite{Browder})
Every nonsingular $\zz_2$-quadratic form $(V, \lambda, h)$ can be expressed \textbf{non-uniquely} as
$$(V, \lambda, h) =  \left(\bigoplus_{n} h^{0,0} \oplus \bigoplus_{k} h^{1, 1}\right).$$
\end{proposition}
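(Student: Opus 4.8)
The plan is to prove the decomposition by induction on $\dim_{\zz_2} V$, peeling off one copy of $h^{0,0}$ or $h^{1,1}$ at each stage, and then to account for the non-uniqueness via the relation $h^{1,1}\oplus h^{1,1}\cong h^{0,0}\oplus h^{0,0}$.

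For the base case $V=0$ there is nothing to prove. For the inductive step, first note that since $(V,\lambda)$ admits the $\zz_2$-valued enhancement $h$, Proposition~\ref{iff-lambda-0} gives $\lambda(x,x)=0\in\zz_2$ for all $x\in V$. Choose $x\neq 0$; by nonsingularity of $\lambda$ there is $y\in V$ with $\lambda(x,y)=1$, and necessarily $y\notin\langle x\rangle$, so $U:=\langle x,y\rangle$ is $2$-dimensional. Since $\lambda(x,x)=\lambda(y,y)=0$ and $\lambda(x,y)=1$, the restriction $\lambda|_U$ is the hyperbolic form $H$, in particular nonsingular; hence $U\cap U^{\perp}=0$ and
$$(V,\lambda)=(U,\lambda|_U)\oplus(U^{\perp},\lambda|_{U^{\perp}})$$
orthogonally, with $(U^{\perp},\lambda|_{U^{\perp}})$ again nonsingular and $h|_{U^{\perp}}$ an enhancement of $\lambda|_{U^{\perp}}$.

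Next I would identify $(U,\lambda|_U,h|_U)$: it is one of the four enhancements $h^{a,b}$ of $H$, where $a=h(x)$ and $b=h(y)$. The change-of-basis computations recorded just before the statement (replacing $\{x,y\}$ by $\{x,x+y\}$, respectively by $\{x+y,y\}$) show $h^{0,1}\cong h^{1,0}\cong h^{0,0}$, while $h^{1,1}\not\cong h^{0,0}$ since the cardinality of $\{v\in U : h(v)=0\}$ --- an isomorphism invariant --- equals $3$ for $h^{0,0}$ but $1$ for $h^{1,1}$. Thus $h|_U$ is isomorphic to $h^{0,0}$ or to $h^{1,1}$. Applying the induction hypothesis to $(U^{\perp},\lambda|_{U^{\perp}},h|_{U^{\perp}})$ and adjoining this summand completes the induction, proving the existence of the asserted decomposition.

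Finally, for the non-uniqueness I would verify by an explicit change of basis that $h^{1,1}\oplus h^{1,1}\cong h^{0,0}\oplus h^{0,0}$ (equivalently, both are the rank $4$ enhancement with vanishing Arf invariant). Consequently, whenever $k\geq 2$ one may trade two $h^{1,1}$-summands for two $h^{0,0}$-summands, so that only the total rank $2(n+k)$ and the parity of $k$ --- which will turn out to be the Arf invariant --- are determined by $(V,\lambda,h)$. The step I expect to be the main obstacle is this last bookkeeping, together with confirming that the split-off plane $(U,\lambda|_U,h|_U)$ is literally one of the two standard models rather than something requiring further normalization; the orthogonal splitting $V=U\oplus U^{\perp}$, which rests on $\lambda|_U$ being nonsingular, is precisely what makes the argument go through cleanly.
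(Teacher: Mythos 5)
Your proof is correct and follows essentially the same route as the paper: the existence part is the splitting-off-hyperbolic-planes argument already used in Proposition \ref{iff-lambda-0}, combined with the classification of enhancements on $H$ into $h^{0,0}$ and $h^{1,1}$, while the non-uniqueness rests on the change-of-basis isomorphism $h^{1,1}\oplus h^{1,1}\cong h^{0,0}\oplus h^{0,0}$, exactly as in the paper's proof. Your version merely makes explicit the induction that the paper leaves implicit, carrying $h$ along at each splitting step.
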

\begin{proof}
The decomposition is \textbf{non-unique} because there is an isomorphism
$$h^{1,1} \oplus h^{1,1} \cong h^{0,0} \oplus h^{0,0}.$$
This isomorphism is achieved by a change of basis.
Note that the number of copies of $h^{1,1}$ counted modulo $2$ is unique.
\end{proof}

\subsection{The Arf invariant for $(V, \lambda, h)$}
Let $V$ be a $\bb{Z}_2$-vector space and $\lambda$ a non-singular symmetric bilinear form
$$\lambda: V \otimes V \to \bb{Z}_2,$$
and let $h: V \to \bb{Z}_2$ be a $\bb{Z}_2$-valued quadratic enhancement of this bilinear form which satisfies the following property,
$$h(x+y) = h(x) + h(y) +\lambda (x, y) \in \bb{Z}_2, $$
as in the previous section.
The Arf invariant was first defined in \cite{Arf} as follows,
\begin{definition} With a symplectic basis $\left\{ e_1, \dots, e_k, \bar{e}_1, \dots, \bar{e}_k   \right\}$ for $V$, the Arf invariant is defined as
$$\textnormal{Arf}(h) = \sum\limits_{j=1}^{k} h(e_j)h(\bar{e}_j) \in \bb{Z}_2.$$
\end{definition}

\begin{theorem} (\hspace{-1pt}\cite{Arf}) \label{Arf-L}
\begin{itemize}
\item[(i)] Two non-singular $\zz_2$-valued quadratic forms on a $\zz_2$-vector space V of finite dimension are Witt equivalent if and only if they have the same Arf invariant.
\item[(ii)] There is an isomorphism in quadratic $L$-theory,
\begin{align*}
L_0(\zz_2) \xrightarrow{\textnormal{Arf}} & \zz_2 \\
(V, \lambda, h) \longmapsto & \hspace{2pt} n \hspace{-5pt}\pmod{2},
\end{align*}
where $n$ is the number of copies of $h^{1,1}$  in $(V, \lambda, h) = \bigoplus \limits_n h^{1,1}\oplus \bigoplus \limits_k h^{0,0}.$
\end{itemize}
\end{theorem}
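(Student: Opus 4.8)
The plan is to reduce both statements to four ingredients: (a) the Arf invariant is independent of the choice of symplectic basis, hence a well-defined invariant of $(V,\lambda,h)$; (b) it is additive under orthogonal direct sum, $\textnormal{Arf}(h \oplus h') = \textnormal{Arf}(h) + \textnormal{Arf}(h')$; (c) it vanishes on any $(V,\lambda,h)$ admitting a quadratic Lagrangian; and (d) the two base computations $\textnormal{Arf}(h^{0,0}) = 0$ and $\textnormal{Arf}(h^{1,1}) = 1$, each immediate from the definition once a symplectic basis $\{x,y\}$ of $H$ is used. Granting these, together with the decomposition of \cite{Browder} expressing every nonsingular $(V,\lambda,h)$ as $\bigoplus_n h^{1,1} \oplus \bigoplus_k h^{0,0}$, both parts follow quickly.

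For (a), which is the only point requiring real work, I would argue by a counting argument. Note first that by Proposition \ref{iff-lambda-0} the underlying form $(V,\lambda)$ is isotropic, so $V$ splits off hyperbolic planes freely. By induction on $\dim_{\zz_2} V = 2k$ one shows that the cardinality of $\{v \in V : h(v) = 0\}$ equals $2^{2k-1} + 2^{k-1}$ when $\textnormal{Arf}(h) = 0$ and $2^{2k-1} - 2^{k-1}$ when $\textnormal{Arf}(h) = 1$; the inductive step splits off an orthogonal hyperbolic plane, notes that the restriction of $h$ to it is one of the four enhancements of $H$ (each isomorphic to $h^{0,0}$ or $h^{1,1}$), and convolves the $0$-counts. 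Since the left-hand cardinality is patently basis-free, so is $\textnormal{Arf}(h)$. Additivity (b) is then immediate by concatenating symplectic bases, and (c) follows because a basis of a quadratic Lagrangian $L$ extends to a symplectic basis $\{e_j,\bar e_j\}$ with every $e_j \in L$, whence all $h(e_j) = 0$ and the Arf sum vanishes; in particular $h^{0,0}$, which has Lagrangian $\langle x \rangle$, is metabolic.

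For (i): if $(V,\lambda,h) \oplus (U,\mu,g) \cong (V',\lambda',h') \oplus (U',\mu',g')$ with $U,U'$ admitting Lagrangians, then applying $\textnormal{Arf}$ and using (b) and (c) gives $\textnormal{Arf}(h) = \textnormal{Arf}(h')$. Conversely, write each form (non-uniquely) as $\bigoplus_n h^{1,1} \oplus \bigoplus_k h^{0,0}$; by additivity $n \equiv \textnormal{Arf}(h) \pmod{2}$, and the isomorphism $h^{1,1} \oplus h^{1,1} \cong h^{0,0} \oplus h^{0,0}$ of the preceding proposition lets me trade pairs of $h^{1,1}$ summands for $h^{0,0}$ summands. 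Hence $(V,\lambda,h)$ is isomorphic to a sum of $h^{0,0}$'s if $\textnormal{Arf}(h) = 0$ and to $h^{1,1}$ plus a sum of $h^{0,0}$'s if $\textnormal{Arf}(h) = 1$; as the $h^{0,0}$ summands are metabolic, any two forms with equal Arf invariant are Witt equivalent.

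For (ii): by (i) and (b), $\textnormal{Arf}$ descends to a homomorphism $L_0(\zz_2) \to \zz_2$; it is onto since $\textnormal{Arf}(h^{1,1}) = 1$, and injective since $\textnormal{Arf}(h) = 0$ forces an even number of $h^{1,1}$ summands, hence a form isomorphic to a sum of metabolic $h^{0,0}$'s, i.e. the zero class. Thus $\textnormal{Arf}$ is an isomorphism $L_0(\zz_2) \xrightarrow{\cong} \zz_2$, and the description $(V,\lambda,h) \mapsto n \bmod 2$ is just the additivity formula read off the decomposition. The main obstacle is step (a): the basis-independence of $\textnormal{Arf}(h)$; once that is in place the rest is formal manipulation of the two generators $h^{0,0}$ and $h^{1,1}$.
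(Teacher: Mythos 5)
Your proof is correct. Note that the paper itself offers no argument for this statement --- it is quoted from Arf's original paper \cite{Arf} --- so there is no internal proof to compare against; your write-up supplies the missing details along the classical route. The one genuinely non-formal step, basis-independence of $\textnormal{Arf}$, you settle by the ``democratic'' counting argument: the zero-set of $h$ has $2^{2k-1}+(-1)^{\textnormal{Arf}(h)}2^{k-1}$ elements, which is precisely the observation the paper alludes to right after the theorem ($\textnormal{Arf}(h)=1$ if and only if $h$ sends a majority of elements to $1$). Two small points of care in your sketch: in the inductive step the hyperbolic plane you split off should be the one spanned by a pair $\{e_j,\bar e_j\}$ of the chosen symplectic basis, so that the count is first established for the basis-dependent Arf sum and only afterwards read off as basis-free (splitting off an arbitrary hyperbolic plane would not immediately relate to that sum); and in the Witt-equivalence arguments the Lagrangians of the split summands must be quadratic Lagrangians, i.e.\ subspaces on which $h$ vanishes, which is indeed how you use them in (c) and in the forward direction of (i). With those readings, your ingredients (a)--(d), together with the decomposition $\bigoplus_n h^{1,1}\oplus\bigoplus_k h^{0,0}$ and the relation $h^{1,1}\oplus h^{1,1}\cong h^{0,0}\oplus h^{0,0}$ already recorded in the paper, give both (i) and (ii) exactly as you say.
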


The Arf invariant counts the number of copies of $h^{1,1}$ modulo $2$, and $\textnormal{Arf}(h) =1$ if $h$ sends a majority of elements to $1$.

Later on we will make use of the Brown-Kervaire invariant, which we now define. The definition of the Brown-Kervaire invariant was formulated in \cite{Brown} and was  used in \cite{BanRan}. This invariant has an important relationship with the signature \cite{Morita}, which we shall discuss in section \ref{BK-and-L} and in chapter \ref{Morita-chain-cx}.

\section{$\zz_4$-enhancements $q$ of $(V, \lambda)$ and the Brown-Kervaire invariant}
\begin{proposition}\label{always-q}
Every symmetric bilinear form $(V, \lambda)$ over $\zz_2$ admits a $\zz_4$-valued quadratic enhancement $q: V \to \zz_4.$
\end{proposition}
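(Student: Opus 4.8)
The plan is to construct a $\zz_4$-valued enhancement $q$ explicitly by choosing a basis of $V$, defining $q$ on basis elements, and extending by the enhancement formula \eqref{Z4-enhancement}; the only thing to check is that this is well-defined, i.e. consistent across the quadratic relation. First I would fix any $\zz_2$-basis $\{e_1,\dots,e_r\}$ of $V$ and set $q(e_i)=\lambda(e_i,e_i)\in\{0,1\}\subset\zz_4$. For a general element $v=\sum_{i\in S}e_i$ (indexing by the support $S\subseteq\{1,\dots,r\}$) I would then \emph{define}
$$q\Big(\sum_{i\in S}e_i\Big) = \sum_{i\in S}\lambda(e_i,e_i) + 2\sum_{\substack{i<j\\ i,j\in S}}\lambda(e_i,e_j)\in\zz_4,$$
where the first sum is taken as an honest sum of the integers $0,1$ (not reduced mod $2$ before reducing mod $4$) and likewise for the second. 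The claim is that this $q$ satisfies \eqref{Z4-enhancement}.

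The key computation is the verification of the identity $q(x+y)=q(x)+q(y)+2\lambda(x,y)\in\zz_4$ for $x=\sum_{i\in S}e_i$, $y=\sum_{j\in T}e_j$. Writing $x+y=\sum_{k\in S\triangle T}e_k$ and expanding all three quantities via the defining formula, one reduces to a purely combinatorial identity about the symmetric difference $S\triangle T$, $S\cap T$, etc.; the terms coming from indices in $S\cap T$ must be tracked carefully because $e_i+e_i=0$ in $V$ but contributes $2\lambda(e_i,e_i)$ worth of "diagonal" correction. Concretely, the discrepancy $q(x)+q(y)-q(x+y)$ collects into $2$ times an integer which one checks equals $2\lambda(x,y)\bmod 4$, using $\lambda(x,y)=\sum_{i\in S,\,j\in T}\lambda(e_i,e_j)$ and the bilinearity and symmetry of $\lambda$. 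This is a finite calculation with no conceptual obstruction.

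The main (and essentially only) obstacle is bookkeeping: one must be disciplined about \emph{not} reducing mod $2$ prematurely, since the whole point of the $\zz_4$-enhancement is that it remembers more than $\lambda$ does (cf.\ the Remark following Definition \ref{quadratic enhancement}, where $jq(x)=\lambda(x,x)$). An alternative, slicker route avoiding the combinatorics would be to invoke \eqref{symmetric construction} and the fact that every symmetric form over $\zz_2$ arises from the symmetric construction, so that the Pontryagin square $\mc{P}_2$ furnishes the desired enhancement; but since the Pontryagin square is only developed in Chapter \ref{Pontryagin-squares chapter}, the elementary basis-dependent construction above is preferable here and is self-contained. Either way the existence statement follows; uniqueness is explicitly disclaimed in the Remark, so nothing further is needed.
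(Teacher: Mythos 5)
Your construction is correct and is in substance the paper's own proof: the paper simply lifts $\lambda$ to a symmetric form $\Lambda$ over $\zz$ and sets $q(x)=\Lambda(x,x)\bmod 4$, and your basis formula is exactly $\Lambda(\tilde x,\tilde x)\bmod 4$ for the lift with entries $\lambda(e_i,e_j)\in\{0,1\}$. The verification you sketch does go through (the corrections coming from $S\cap T$ contribute multiples of $4$, and $2\Lambda(\tilde x,\tilde y)\bmod 4$ depends only on $\lambda(x,y)$), so nothing essential is missing.
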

\begin{proof} Every $q$ is constructed by lifting the $\zz_2$-valued symmetric bilinear form $\lambda$ to a symmetric form $\Lambda$ over $\zz$ and setting $q(x)= \Lambda(x,x) / 4 \in \zz_4$. Clearly such a lift is non-unique since adding even integers to the diagonal entries in $\Lambda$ does not change $\lambda$ but does change $q$.
\end{proof}

We shall now discuss what  the possible $\zz_4$-enhancements of both $H$ and $P$ are.

$H$ has four enhancements depending on the values that $q$ takes on $(x, y) \in \zz_2 \oplus \zz_2$. We shall denote these as $q^{0,0}$, $q^{0,2}$, $q^{2,0}$ and $q^{2,2}$.

From the definition of a $\zz_4$-quadratic enhancement (Definition \ref{quadratic enhancement}) we know that the equation $q(x+y)-q(x)-q(y) = 2\lambda(x, y) \in \zz_4$ must be satisfied and  $\lambda(x, y)=1.$ A change of basis from $\{x, y \}$ to $\{x, x+y\}$ establishes an isomorphism from $q^{0,0}$ to $q^{0,2}$. Similarly a change of basis from $\{x, y \}$ to $\{x+y, y \}$ establishes an isomorphism from $q^{0,0}$ to $q^{2, 0}$. So there are two isomorphism classes of $\zz_4$-quadratic enhancements on $H,$ (see \cite[Lemma 7]{Pink}),

\begin{minipage}[c]{0.4\linewidth}
\vspace{-10pt}
\begin{equation*} \label{values-of-h}
q^{0,0} =\left\{ \begin{array}{rr}x & \mapsto 0 \\
y & \mapsto 0 \\
x+y & \mapsto 2
\end{array} \right. ,
\end{equation*}

\end{minipage}
\begin{minipage}[c]{0.30\linewidth}

$$
q^{2,2} =\left\{ \begin{array}{rr}x & \mapsto 2 \\
y & \mapsto 2 \\
x+y & \mapsto 2
\end{array} \right..
$$
\vspace{10pt}
\end{minipage}

There are two isomorphism classes of quadratic forms on $P= (\zz_2, 1)$. These are given by $q(x)=1$ or by $q(x)=-1$ and are denoted in \cite{Kirby} as $P_{1}$ and $P_{-1}$ respectively

\begin{proposition}\label{decomposition}
Every $\zz_4$-valued quadratic form over $\zz_2$ $(V, \lambda, q)$ can be expressed non-uniquely as
\begin{equation*}
  (V, \lambda, q) =  \left(\bigoplus_{p_+} P_{1} \oplus  \bigoplus_{p_-} P_{-1}\right) \oplus    \left(\bigoplus_{m} q^{0,0} \oplus \bigoplus_{n} q^{2, 2} \right).
\end{equation*}
\end{proposition}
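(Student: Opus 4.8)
The plan is to bootstrap from the decomposition of the underlying symmetric bilinear form given in Proposition \ref{form-decomposition} and then analyse the possible enhancements summand by summand. First I would invoke Proposition \ref{form-decomposition} to write $(V,\lambda)$ as an \emph{internal} orthogonal direct sum $V=\bigoplus_{p}\langle x_p\rangle\oplus\bigoplus_{k}H_k$ of copies of $P=(\zz_2,1)$ and of $H$. It is worth noting that each splitting step $(V,\lambda)=(\langle x\rangle,1)\oplus(\langle x\rangle^{\perp},[\lambda])$ used in the proof of Proposition \ref{form-decomposition} really is a genuine internal orthogonal sum: since $\lambda(x,x)=1$ we have $\langle x\rangle\cap\langle x\rangle^{\perp}=0$, so $\langle x\rangle^{\perp}/\langle x\rangle=\langle x\rangle^{\perp}$; and the isotropic remainder left at the end splits off as $\bigoplus H$ by Proposition \ref{iff-lambda-0}.

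The key observation is that the enhancement $q$ respects any orthogonal splitting of $(V,\lambda)$. If $V=A\oplus B$ with $\lambda(A,B)=0$, then for $a\in A$, $b\in B$ the defining identity in Definition \ref{quadratic enhancement}(ii) gives $q(a+b)=q(a)+q(b)+2\lambda(a,b)=q(a)+q(b)$, so $q=q|_A\oplus q|_B$, with $q|_A$ and $q|_B$ again $\zz_4$-valued enhancements of $\lambda|_A$ and $\lambda|_B$. Applying this repeatedly to the decomposition of the previous paragraph reduces the problem to classifying the enhancements of $P$ and of $H$. On $P=(\zz_2,1)$, the remark following Definition \ref{quadratic enhancement} gives $jq(x)=\lambda(x,x)=1$, so $q(x)\in\{1,3\}=\{1,-1\}\subset\zz_4$ and $q|_{\langle x_p\rangle}$ is either $P_1$ or $P_{-1}$. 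On $H$, the enhancement is one of $q^{0,0},q^{0,2},q^{2,0},q^{2,2}$, and the change-of-basis isomorphisms recorded just before the statement identify $q^{0,2}\cong q^{2,0}\cong q^{0,0}$; hence up to isomorphism $q|_{H_k}$ is $q^{0,0}$ or $q^{2,2}$. Collecting the summands yields the asserted decomposition.

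Finally I would address the non-uniqueness. Part of it is inherited from the non-uniqueness of Proposition \ref{form-decomposition} itself (for instance $P\oplus P\oplus P\cong P\oplus H$, since an anisotropic rank-$3$ form over $\zz_2$ contains an isotropic vector), and part of it is an enhancement-level phenomenon, with relations such as $P_1\oplus P_1\oplus P_1\cong P_{-1}\oplus q^{2,2}$, verified by an explicit change of basis exactly as in the relation $h^{1,1}\oplus h^{1,1}\cong h^{0,0}\oplus h^{0,0}$ used in the $\zz_2$-case. I would remark that what is genuinely invariant is the rank of $V$ together with the Brown-Kervaire invariant, so that only certain combinations of the multiplicities $p_+,p_-,m,n$ are constrained, not the individual numbers.

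The proof is essentially routine, and I expect the only real care needed is in the second paragraph: one must check that an orthogonal splitting of the symmetric form genuinely lifts to a splitting of its quadratic refinement, which hinges precisely on the vanishing of the cross term $2\lambda(a,b)$; and one must be honest that the four building blocks $P_1,P_{-1},q^{0,0},q^{2,2}$ are exhaustive only after the basis changes collapsing $q^{0,2},q^{2,0}$ to $q^{0,0}$ have been applied.
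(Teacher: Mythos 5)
Your proof is correct, but it takes a different route from the thesis: the thesis does not argue the decomposition at all, it simply cites Pinkall \cite[pages 428, 429]{Pink} and only stresses the isomorphism relations responsible for non-uniqueness, whereas you give a self-contained argument by first splitting $(V,\lambda)$ into copies of $P$ and $H$ via Proposition \ref{form-decomposition}, observing that a $\zz_4$-enhancement is automatically additive over orthogonal splittings (the cross term $2\lambda(a,b)$ vanishes, exactly the point you flag), and then classifying the restricted enhancements on $P$ (forced into $\{1,-1\}$ by $jq(x)=\lambda(x,x)=1$) and on $H$ (the four possibilities collapsing to $q^{0,0}$ and $q^{2,2}$ under the change of basis recorded before the statement). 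This buys a proof readable inside the thesis without consulting Pinkall, at the cost of re-deriving what the citation packages; the relations you invoke for non-uniqueness ($P\oplus P\oplus P\cong P\oplus H$ at the symmetric level, $q^{2,2}\oplus P_{-1}\cong P_1\oplus P_1\oplus P_1$ at the enhanced level) are exactly the ones the thesis lists. One small caution on your closing remark: rank and the Brown--Kervaire invariant are not a complete set of invariants, since the parity of the underlying symmetric form is also preserved (e.g.\ $q^{0,0}$ and $P_1\oplus P_{-1}$ have equal rank and $\textnormal{BK}=0$ but non-isomorphic underlying forms $H$ and $P\oplus P$); this does not affect the existence statement you are proving, but the constraint on the multiplicities $p_+,p_-,m,n$ involves the type of $\lambda$ as well.
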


\begin{proof}
The proof of this proposition can be found in \cite[pages 428, 429]{Pink}. We stress here that the decomposition is non-unique as there are certain isomorphism relations:
\begin{itemize}
\item $q^{0,0} \oplus q^{0,0} \cong q^{2,2} \oplus q^{2,2},$
\item $P_1 \oplus P_1 \oplus P_1 \oplus P_1 \cong  P_{-1} \oplus P_{-1} \oplus P_{-1} \oplus P_{-1},$
\item $q^{0,0} \oplus P_1 \cong P_1 \oplus P_1 \oplus P_{-1},$
\item $q^{2,2} \oplus P_1 \cong  P_{-1} \oplus P_{-1} \oplus P_{-1},$
\item $q^{2,2} \oplus P_{-1} \cong P_1 \oplus P_1 \oplus P_1. $
\end{itemize}
These isomorphisms are achieved by choosing appropriate changes of basis in each case. They are proved in \cite{Pink}.
\end{proof}

We shall also be interested in the classification of $\zz_4$-enhancements up to Witt equivalence.

The forms which admit a lagrangian are referred to in \cite{Matsumoto} as \textit{split} and denoted by $S_i$.

\begin{remark}  (\hspace{-1pt}\cite[page 130]{Matsumoto})
Note that by Definition \ref{Witt-equivalence} two $\zz_4$-enhanced forms $(V, \lambda, q)$ and $(V', \lambda', q')$ belong to the same Witt class if
$$(V, \lambda,q) \oplus S_1 \cong (V', \lambda', q') \oplus S_2,$$
where $S_1$ and $S_2$ are split.
 \end{remark}

\begin{definition} \label{Witt-equivalence}(\hspace{-1pt}\cite{algpoinc}) The group $L\langle v_{1} \rangle^0(\zz_2)$ is the abelian group of Witt equivalence classes of nonsingular $\zz_4$-valued enhancements of nonsingular symmetric forms over $\zz_2.$
\end{definition}
\begin{remark} In the literature, the Witt group of nonsingular  $\zz_4$-valued enhancements of nonsingular symmetric forms over $\zz_2$ is frequently denoted by $W_{\zz_4}(\zz_2).$
\end{remark}

A non-singular quadratic form $(V,\lambda, q)$ represents $0$ in the Witt group of $\zz_4$-enhancements of a nonsingular $\zz_2$-symmetric bilinear form $L\langle v_1 \rangle^0(\zz_2)$, if it contains a subspace $L$ such that $q(L)=0$, $\lambda(L, L)=0$ and $\textnormal{dim}(L)= (\frac{1}{2})\textnormal{dim}(V)$. We call such a subspace $L$ a lagrangian.

\begin{example} \label{split} (\hspace{-1pt}\cite[page 130]{Matsumoto}) $q^{0,0}$ admits a lagrangian hence  $\left(\zz_2\oplus \zz_2, \left(
\begin{matrix}
  0& 1 \\
  1 & 0 \\
   \end{matrix}
\right), q^{0,0}\right)$ is $0$ in the Witt group $L\langle v_1 \rangle^0(\zz_2)$.

Similarly $P_{1} \oplus P_{-1}$  admits a lagrangian, $L= (1, 1)$, so $(\zz_2\oplus \zz_2, 1 \oplus 1, P_1 \oplus P_{-1} )$ is also $0$ in the Witt group.

\end{example}

We shall denote the Witt class of a $\zz_4$-enhanced form $(V, \lambda, q)$ by a bracket notation $[V, \lambda, q].$
\begin{proposition}
The Witt group of $\zz_4$-valued enhancements of a nonsingular $\zz_2$-valued symmetric form $\lambda$ is generated by the Witt class of $[P_{1}]$.
\end{proposition}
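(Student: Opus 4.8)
The plan is to show that every Witt class $[V,\lambda,q]$ in $L\langle v_1\rangle^0(\zz_2)$ is an integer multiple of $[P_1]$, by combining the decomposition of $\zz_4$-enhanced forms (Proposition \ref{decomposition}) with the splitting examples (Example \ref{split}). First I would observe that by Proposition \ref{decomposition} any nonsingular $\zz_4$-valued quadratic form over $\zz_2$ decomposes (non-uniquely) as
$$(V,\lambda,q) = \left(\bigoplus_{p_+}P_1 \oplus \bigoplus_{p_-}P_{-1}\right)\oplus\left(\bigoplus_m q^{0,0}\oplus\bigoplus_n q^{2,2}\right),$$
so in the Witt group we get $[V,\lambda,q] = p_+[P_1] + p_-[P_{-1}] + m[q^{0,0}] + n[q^{2,2}]$.

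Next I would kill the summands that are zero in the Witt group. By Example \ref{split}, $[q^{0,0}]=0$ since $q^{0,0}$ admits a lagrangian, and $[P_1\oplus P_{-1}]=0$, so $[P_{-1}] = -[P_1]$ in $L\langle v_1\rangle^0(\zz_2)$. It remains to express $[q^{2,2}]$ in terms of $[P_1]$: using the isomorphism $q^{2,2}\oplus P_1 \cong P_{-1}\oplus P_{-1}\oplus P_{-1}$ from Proposition \ref{decomposition}, we get $[q^{2,2}] + [P_1] = 3[P_{-1}] = -3[P_1]$, hence $[q^{2,2}] = -4[P_1]$. Substituting everything back,
$$[V,\lambda,q] = p_+[P_1] - p_-[P_1] + 0 - 4n[P_1] = (p_+ - p_- - 4n)[P_1],$$
which is an integer multiple of $[P_1]$. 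Therefore $[P_1]$ generates $L\langle v_1\rangle^0(\zz_2)$.

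The only subtle point — and the step I would be most careful about — is making sure the Witt-group relations are being applied to the bracket classes consistently, i.e. that from a module isomorphism $A\oplus B \cong C$ one may write $[A]+[B]=[C]$, and that split forms genuinely represent $0$. Both of these are immediate from Definition \ref{Witt-equivalence} and the remark following it (two enhanced forms are Witt equivalent when they become isomorphic after adding split forms, and a form admitting a lagrangian is split), so there is no real obstacle here; the argument is essentially a bookkeeping consequence of Proposition \ref{decomposition} and Example \ref{split}. (One could alternatively phrase the conclusion as identifying $L\langle v_1\rangle^0(\zz_2)\cong\zz_8$ with $[P_1]$ a generator, using the known order of $[P_1]$, but for the stated proposition it suffices to exhibit $[P_1]$ as a generating element.)
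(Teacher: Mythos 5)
Your argument is correct, but it takes a genuinely different route from the paper: the paper's proof of this proposition consists of a citation of \cite[page 130]{Matsumoto}, with the group structure only emerging later through the Brown--Kervaire isomorphism $L\langle v_1 \rangle^0(\zz_2) \cong \zz_8$ of Theorem \ref{iso-BK}, under which $[P_1] \mapsto 1$ is visibly a generator. You instead give a self-contained bookkeeping derivation from the results already stated in the chapter: decompose via Proposition \ref{decomposition}, kill $[q^{0,0}]$ and $[P_1 \oplus P_{-1}]$ using Example \ref{split} and Definition \ref{Witt-equivalence}, and use the isomorphism $q^{2,2} \oplus P_1 \cong P_{-1} \oplus P_{-1} \oplus P_{-1}$ to obtain $[q^{2,2}] = -4[P_1]$, so that every Witt class is an integer multiple of $[P_1]$. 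This buys self-containedness and makes the proposition independent of the external reference; what it does not give (and you correctly flag this) is the order of $[P_1]$, i.e.\ the identification of the group with $\zz_8$, which requires the Brown--Kervaire (Gauss sum) computation of Theorem \ref{iso-BK}. One small observation: your relation $[q^{2,2}] = -4[P_1]$ and the paper's Example \ref{4P1}, which gives $[q^{2,2}] = 4[P_1]$ via the isomorphism $q^{2,2} \oplus P_{-1} \cong P_1 \oplus P_1 \oplus P_1$, are consistent precisely because $8[P_1]=0$; either relation suffices for generation, but taken together they already force the order of $[P_1]$ to divide $8$, which is a pleasant bonus of your computation.
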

\begin{proof}
See \cite[page 130]{Matsumoto}. This will be discussed further in section \ref{Arf&Brown}.

\end{proof}

\begin{example} \label{4P1} (\hspace{-1pt}\cite{Matsumoto})
There is a Witt equivalence
$$[q^{2,2}] =  [4P_{1}] \in L\langle v_1 \rangle^0(\zz_2)$$
These two forms are Witt equivalent because there is an isomorphism of forms
$$q^{2,2} \oplus P_{-1} \cong P_1 \oplus P_1 \oplus P_1  $$
and therefore
$$q^{2,2} \oplus (P_{-1} \oplus P_1) \cong P_1 \oplus P_1 \oplus P_1  \oplus P_1.$$
 $(P_{-1} \oplus P_1)$ admits a lagrangian as was explained in example \ref{split}, hence by Definition \ref{Witt-equivalence} the forms $q^{2,2}$ and $P_1 \oplus P_1 \oplus P_1  \oplus P_1$ are Witt equivalent.
\end{example}

\subsection{The Brown Kervaire invariant for $(V, \lambda, q)$}

Let $V$ be a $\bb{Z}_2$ vector space, $\lambda : V \otimes V \to \bb{Z}_2$ a non-singular symmetric pairing and let $q : V \to \bb{Z}_4$ be a quadratic enhancement of the symmetric form so that
$$q(x +y) = q(x) + q(y)+ i \lambda (x, y) \in \bb{Z}_4,$$
where $i = 2 : \bb{Z}_2  \to \bb{Z}_4$ and $x, y \in V$.
\begin{definition} \label{Gauss-sum-formula} (\hspace{-1pt}\cite{Brown})
The Brown-Kervaire $\textnormal{BK}(V, \lambda, q)$ invariant is defined using a Gauss sum,
$$\sum_{x \in V} i ^{q(x)} = \sqrt{2}^{\textnormal{dim}V} e^{2 \pi i \textnormal{BK}(V,  \lambda, q) / 8},$$
with $i^2=-1$ and $x \in V$.
\end{definition}

\begin{theorem}\label{iso-BK} (\hspace{-1pt}\cite{Brown, Pink, GM}) Two non-singular $\zz_4$-valued quadratic forms on a $\zz_2$-vector space V of finite dimension are Witt equivalent if and only if they have the same Brown-Kervaire invariant. There is an isomorphism in $L$-theory given by
\begin{align*}
L\langle v_1 \rangle^0(\zz_2) & \xrightarrow{\textnormal{BK}} \zz_8 \\
(V, \lambda, q) \mapsto & 4n + p_+ - p_-\pmod{8},
\end{align*}
where $n$, $p_+$ and $p_-$ are the numbers of summands of $q^{2,2}$, $P_1$ and $P_{-1}$ respectively in the decomposition in Proposition \ref{decomposition}.
\end{theorem}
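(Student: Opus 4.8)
The plan is to read off the Brown--Kervaire invariant directly from the Gauss sum of Definition \ref{Gauss-sum-formula}, exploiting that Gauss sums are multiplicative under orthogonal direct sums, and then to run this through the structure theory of Proposition \ref{decomposition}.

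First I would record multiplicativity: for nonsingular $\zz_4$-enhanced forms $(V,\lambda,q)$, $(V',\lambda',q')$ one has
$$\sum_{(x,x')\in V\oplus V'} i^{(q\oplus q')(x,x')} \;=\; \Bigl(\sum_{x\in V} i^{q(x)}\Bigr)\Bigl(\sum_{x'\in V'} i^{q'(x')}\Bigr),$$
whence $\textnormal{BK}(V\oplus V',\lambda\oplus\lambda',q\oplus q') = \textnormal{BK}(V,\lambda,q)+\textnormal{BK}(V',\lambda',q') \in \zz_8$. A direct evaluation gives the Gauss sums $1+i$, $1-i$, $2$, $-2$ for $P_1$, $P_{-1}$, $q^{0,0}$, $q^{2,2}$ respectively; their moduli $\sqrt2,\sqrt2,2,2$ agree with $\sqrt2^{\dim}$ and their arguments are $\tfrac{\pi}{4}, -\tfrac{\pi}{4}, 0, \pi$, so $\textnormal{BK}(P_1)=1$, $\textnormal{BK}(P_{-1})=-1$, $\textnormal{BK}(q^{0,0})=0$, $\textnormal{BK}(q^{2,2})=4$. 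Combining this with the decomposition of Proposition \ref{decomposition} and multiplicativity shows at once that every nonsingular $(V,\lambda,q)$ has Gauss sum of modulus $\sqrt2^{\dim V}$ (so that $\textnormal{BK}$ is well defined by Definition \ref{Gauss-sum-formula}) and that $\textnormal{BK}(V,\lambda,q)=4n+p_+-p_- \in \zz_8$ for any such decomposition; in particular this residue does not depend on the chosen decomposition.

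Next I would show $\textnormal{BK}$ descends to a homomorphism $L\langle v_1\rangle^0(\zz_2)\to\zz_8$, which by the additivity just established reduces to proving that every split form has vanishing Brown--Kervaire invariant. Given a lagrangian $L\subseteq V$ (so $\lambda(L,L)=0$, $q(L)=0$ and $\dim L=\tfrac12\dim V$), I would choose coset representatives and use $q(x+l)=q(x)+2\lambda(x,l)$ for $l\in L$ to factor
$$\sum_{x\in V} i^{q(x)} \;=\; \sum_{\bar x\in V/L} i^{q(x)}\Bigl(\sum_{l\in L}(-1)^{\lambda(x,l)}\Bigr);$$
since $\lambda$ is nonsingular and $L=L^{\perp}$, the inner sum equals $|L|$ when $x\in L$ and $0$ otherwise, so the whole sum is $|L|=\sqrt2^{\dim V}$, i.e.\ $\textnormal{BK}=0$. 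Together with additivity this gives the ``only if'' half of the equivalence.

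Finally I would identify the Witt group. Example \ref{split} yields $[q^{0,0}]=0$ and $[P_{-1}]=-[P_1]$, Example \ref{4P1} yields $[q^{2,2}]=4[P_1]$, and the isomorphism $P_1^{\oplus 4}\cong P_{-1}^{\oplus 4}$ of Proposition \ref{decomposition} gives $8[P_1]=0$; since $L\langle v_1\rangle^0(\zz_2)$ is generated by $[P_1]$ (as recalled above), it is cyclic of order dividing $8$. Because $\textnormal{BK}([P_1])=1$ has order $8$ in $\zz_8$, the class $[P_1]$ has order exactly $8$ and $\textnormal{BK}\colon L\langle v_1\rangle^0(\zz_2)\xrightarrow{\ \cong\ }\zz_8$, the explicit formula $4n+p_+-p_-$ being the computation of the second paragraph; injectivity of $\textnormal{BK}$ supplies the ``if'' direction. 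I expect the only genuinely substantive step to be the vanishing of $\textnormal{BK}$ on split forms --- once that lagrangian Gauss-sum computation is in place, everything else is bookkeeping resting on Proposition \ref{decomposition} and Examples \ref{split} and \ref{4P1}.
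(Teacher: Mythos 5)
Your proposal is correct, but it is worth noting that the paper does not actually prove this theorem internally: its ``proof'' is a citation, deferring the Witt-equivalence criterion and the identification with $\zz_8$ to Brown, the decomposition bookkeeping to Pinkall, the formula $4n+p_+-p_-$ to Guillou--Marin, and the $L$-theoretic reading to \cite{algpoinc}. What you have written is a self-contained version of the standard Gauss-sum argument that those references carry out: additivity of $\textnormal{BK}$ from multiplicativity of Gauss sums, the explicit values $1,-1,0,4$ on $P_1$, $P_{-1}$, $q^{0,0}$, $q^{2,2}$, vanishing on split forms via the lagrangian coset summation (using $q(x+l)=q(x)+2\lambda(x,l)$ and $L=L^{\perp}$, which is exactly the substantive input the paper outsources to \cite{Brown}), and then the group-theoretic finish: the relations $[q^{0,0}]=0$, $[P_{-1}]=-[P_1]$, $[q^{2,2}]=4[P_1]$, $8[P_1]=0$ show the Witt group is cyclic generated by $[P_1]$, and since $\textnormal{BK}([P_1])=1$ has order $8$ the map is an isomorphism, which simultaneously gives both directions of the Witt-equivalence criterion and the stated formula. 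Your argument also quietly supplies a point the paper's definition leaves implicit, namely that the Gauss sum of a nonsingular form really has modulus $\sqrt{2}^{\dim V}$ (you get it from Proposition \ref{decomposition} and multiplicativity, and independently for split forms from the lagrangian computation), so $\textnormal{BK}$ is well defined. In short: same mathematical route as the cited literature, but packaged as a complete proof where the thesis only assembles references; the one step you flag as substantive, the vanishing on split forms, is indeed the crux, and your computation of it is correct.
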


\begin{proof} It was proved in \cite{Brown} that  two non-singular $\zz_4$-valued quadratic forms on a $\zz_2$-vector space $V$ of finite dimension are Witt equivalent if and only if they have the same Brown-Kervaire invariant. Brown also identified the group with $\zz_8$. The $L$-theory interpretation is discussed in detail in \cite{algpoinc}. The formula to compute the Brown-Kervaire invariant of a form in terms of the numbers of summands of $q^{2,2}$, $P_1$ and $P_{-1}$ is proved in \cite{GM}.
\end{proof}

The Brown-Kervaire invariant as an $L$-theory invariant will be discussed further in section \ref{BK-and-L} and in chapter \ref{Morita-chain-cx} together with its relation with the signature and hyperquadratic signature.

\section{Relation between the Arf and Brown-Kervaire invariants} \label{Arf&Brown}

\begin{theorem} (\hspace{-1pt}\cite[Theorem 1.20 (vii)]{Brown})
Let $h$ be a $\zz_2$-valued nonsingular quadratic enhancement of a nonsingular symmetric bilinear $\zz_2$-valued form $\lambda$.
If $q = 2h$, then
$$BK(V, \lambda, q) = 4(\textnormal{Arf}(V, \lambda,  h)) \in \zz_8.$$
\end{theorem}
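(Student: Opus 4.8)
The plan is to exploit the structure theory already developed. By Proposition~\ref{decomposition} combined with Theorem~\ref{iso-BK}, the Brown-Kervaire invariant of $(V,\lambda,q)$ with $q=2h$ is determined by (and computable from) the decomposition into $P_1$, $P_{-1}$, $q^{0,0}$ and $q^{2,2}$ summands, so it suffices to understand what the hypothesis $q=2h$ forces. First I would observe that if $q=2h$ for a $\zz_2$-valued enhancement $h$, then $q$ takes values only in $2\zz_2\subset\zz_4$; in particular $q(x)\ne\pm1$ for all $x$, so no $P_1$ or $P_{-1}$ summand can occur (these require an element with $q=\pm 1$). Hence $p_+=p_-=0$ and, more precisely, $(V,\lambda,q)$ is a direct sum of copies of $q^{0,0}$ and $q^{2,2}$ only. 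By Theorem~\ref{iso-BK} this gives $BK(V,\lambda,q)=4n\in\zz_8$, where $n$ is the number of $q^{2,2}$ summands mod~$2$.

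Next I would identify $n$ with $\mathrm{Arf}(V,\lambda,h)$. Since $q=2h$, the enhancement $h$ is recovered as $h(x)=q(x)/2$, and the orthogonal decomposition of $(V,\lambda,q)$ into $q^{0,0}$'s and $q^{2,2}$'s descends to an orthogonal decomposition of $(V,\lambda,h)$ into copies of $h^{0,0}$ and $h^{1,1}$: one checks directly that dividing $q^{0,0}$ by $2$ gives $h^{0,0}$ and dividing $q^{2,2}$ by $2$ gives $h^{1,1}$ (compare the explicit tables for these forms in the excerpt). By Theorem~\ref{Arf-L}(ii), $\mathrm{Arf}(V,\lambda,h)$ equals the number of $h^{1,1}$ summands mod~$2$, which is exactly the same $n$. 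Therefore
$$BK(V,\lambda,q)=4n=4\,\mathrm{Arf}(V,\lambda,h)\in\zz_8.$$

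Alternatively, and perhaps more cleanly, I would argue directly from the Gauss sum in Definition~\ref{Gauss-sum-formula}: with $q=2h$ we have $i^{q(x)}=i^{2h(x)}=(-1)^{h(x)}$, so
$$\sum_{x\in V} i^{q(x)} \;=\; \sum_{x\in V}(-1)^{h(x)},$$
and the right-hand side is the classical Gauss sum computing the Arf invariant, namely $\sum_{x\in V}(-1)^{h(x)}=\pm\sqrt{2}^{\,\dim V}$ with sign $(-1)^{\mathrm{Arf}(V,\lambda,h)}$. Comparing with $\sqrt{2}^{\dim V}e^{2\pi i\,BK/8}$ forces $e^{2\pi i\,BK/8}=(-1)^{\mathrm{Arf}}=e^{2\pi i\cdot 4\mathrm{Arf}/8}$, hence $BK\equiv 4\,\mathrm{Arf}\pmod 8$.

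The main obstacle is the bookkeeping in either approach: in the structural argument, verifying that the decomposition into $q^{0,0}/q^{2,2}$ is compatible with division by~$2$ and that no anisotropic $P_{\pm 1}$ pieces sneak in (using that $q$ is $2\zz_2$-valued); in the Gauss-sum argument, knowing that $\sum_{x\in V}(-1)^{h(x)}$ is nonzero and equals $(-1)^{\mathrm{Arf}}\sqrt 2^{\dim V}$ — this is the classical evaluation of a quadratic Gauss sum over $\zz_2$, which itself can be proved by the same inductive splitting-off of hyperbolic planes $h^{0,0}$ used throughout this chapter. I would present the Gauss-sum argument as the main proof since it is shortest, and remark that the structural argument gives an independent check.
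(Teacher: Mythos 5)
Your proof is correct. Note that the paper itself does not prove this statement at all: it is quoted directly from Brown (Theorem 1.20(vii) of \cite{Brown}) and used as a black box, so there is no internal argument to compare against; what you have written actually fills in a proof that the thesis omits. Both of your routes are sound. The Gauss-sum computation is the most direct and is closest in spirit to Brown's own treatment, since Definition \ref{Gauss-sum-formula} already defines $\textnormal{BK}$ by the sum $\sum_{x\in V} i^{q(x)}$; with $q=2h$ this becomes $\sum_{x\in V}(-1)^{h(x)} = (-1)^{\textnormal{Arf}(V,\lambda,h)}\sqrt{2}^{\,\dim V}$, which one verifies on $h^{0,0}$ and $h^{1,1}$ and extends by multiplicativity over orthogonal sums, exactly as you indicate. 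The structural route is also complete: $q=2h$ forces $q(V)\subseteq 2\zz_2$, so $\lambda(x,x)=0$ for all $x$ and no $P_{\pm 1}$ summand can occur in any decomposition as in Proposition \ref{decomposition}; then Theorem \ref{iso-BK} gives $\textnormal{BK}=4n$, while dividing $q^{0,0}$ and $q^{2,2}$ by $2$ gives $h^{0,0}$ and $h^{1,1}$ respectively (and $h$ is uniquely determined by $q$ since multiplication by $2$ is injective on $\zz_2$), so Theorem \ref{Arf-L}(ii) identifies $n \bmod 2$ with $\textnormal{Arf}(V,\lambda,h)$. This second argument is essentially the same bookkeeping the thesis later uses in Proposition \ref{relations-q-h} and Proposition \ref{BK-and-4Arf}, so it integrates well with the surrounding material, whereas the Gauss-sum argument has the advantage of being independent of the classification of enhancements.
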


A Brown-Kervaire invariant that is $0 \in\zz_4$ can always be expressed as an Arf invariant. This will be proved in Proposition \ref{BK-and-4Arf}. Before we state Proposition \ref{BK-and-4Arf}, we shall discuss the relationship between $\zz_2$- and $\zz_4$-valued enhancements of a symmetric form $(V, \lambda)$ over $\zz_2.$
\begin{proposition} \label{relations-q-h}
Let $V$ be a $\bb{Z}_2$ vector space and $\lambda : V \otimes V\to \bb{Z}_2$ a nonsingular symmetric bilinear form over $\zz_2$.
\begin{itemize}
\item[(i)] There exists a $\zz_2$-enhancement $h$ if and only if $\lambda(x,x)=0 \in \zz_2.$
\item[(ii)] There always exists a $\zz_4$-enhancement $q$.
\item[(iii)] For every $\zz_2$-enhancement $h$ there exists a $\zz_4$-enhancement $q$ given by $q = 2h.$
\item[(iv)] \label{Witt-q-4-(iv)} Every $(V, \lambda, q)$ is given by $(V, \lambda, q)= \bigoplus \limits_{m} q^{0,0} \oplus \bigoplus \limits_{n} q^{2, 2}\oplus \bigoplus \limits_{p_+} P_{1} \oplus  \bigoplus \limits_{p_-} P_{-1}.$
If $$4n+p_+-p_- = 0 \textnormal{ or } 4 \in \zz_8,$$
 then $(V, \lambda, q)$ is Witt equivalent to $(V', \lambda', 2h')$ in $L\langle v_1 \rangle^0 (\zz_2),$ where
\begin{align*}
(V', \lambda', 2h') & = \bigoplus_m 2h^{0,0} \oplus \bigoplus_n 2h^{1,1} \\
& =\bigoplus_m q^{0,0} \oplus \bigoplus_n q^{2,2}
\end{align*}
\end{itemize}


\end{proposition}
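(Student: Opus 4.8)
The plan is to prove the four assertions in order, using the decomposition results and isomorphism relations already established. Parts (i), (ii), (iii) are essentially restatements of earlier results: (i) is exactly the equivalence $(i)\Leftrightarrow(ii)$ of Proposition \ref{iff-lambda-0}; (ii) is Proposition \ref{always-q}; and (iii) is the observation, already recorded in the Remark after Definition \ref{quadratic enhancement}, that if $h$ satisfies $h(x+y)=h(x)+h(y)+\lambda(x,y)$ then $q=2h$ satisfies $q(x+y)=q(x)+q(y)+2\lambda(x,y)\in\zz_4$, so $q$ is a genuine $\zz_4$-enhancement. So the real content is part (iv).

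For part (iv), the starting point is the decomposition of Proposition \ref{decomposition}, $(V,\lambda,q)=\bigoplus_{p_+}P_1\oplus\bigoplus_{p_-}P_{-1}\oplus\bigoplus_m q^{0,0}\oplus\bigoplus_n q^{2,2}$, and the Witt-class formula $\textnormal{BK}(V,\lambda,q)=4n+p_+-p_-\in\zz_8$ from Theorem \ref{iso-BK}. The hypothesis is that this Witt class is $0$ or $4$ in $\zz_8$, i.e. $p_+-p_-\equiv 0\pmod 4$. I would first handle the $P_{\pm1}$ summands: using the relations $P_1\oplus P_{-1}$ admits a lagrangian (Example \ref{split}) and $P_1^{\oplus4}\cong P_{-1}^{\oplus4}$ (Proposition \ref{decomposition}), together with $q^{2,2}\oplus P_{-1}\cong P_1^{\oplus3}$ and $q^{2,2}\oplus P_1\cong P_{-1}^{\oplus3}$, I can convert any collection of $p_+$ copies of $P_1$ and $p_-$ of $P_{-1}$ with $p_+-p_-\equiv0\pmod4$ into a Witt-equivalent form built only from $q^{0,0}$'s and $q^{2,2}$'s, by cancelling hyperbolic pairs $P_1\oplus P_{-1}$ and trading any residual group of four like-signed $P$'s (which appears because $p_+-p_-\equiv0\pmod4$) first against each other via $P_1^{\oplus4}\cong P_{-1}^{\oplus4}$ and then, using $q^{2,2}\oplus P_{-1}\cong P_1^{\oplus3}$ run backwards, replacing $P_1^{\oplus4}=P_1^{\oplus3}\oplus P_1\cong q^{2,2}\oplus P_{-1}\oplus P_1$, whose $P_{-1}\oplus P_1$ is split. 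The bookkeeping here is the one slightly fiddly point, but it is purely a matter of combining the listed isomorphisms.

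Once the form is Witt-equivalent to $\bigoplus_m q^{0,0}\oplus\bigoplus_n q^{2,2}$ (with possibly new values of $m$ and $n$, though $n$ is unchanged mod $2$, consistent with $\textnormal{BK}=4n$), I then observe that $q^{0,0}=2h^{0,0}$ and $q^{2,2}=2h^{1,1}$ directly from the tables of values: $h^{0,0}$ sends $x,y\mapsto0$, $x+y\mapsto1$, so $2h^{0,0}$ sends $x,y\mapsto0$, $x+y\mapsto2$, which is exactly $q^{0,0}$; similarly $2h^{1,1}$ sends $x,y\mapsto2$, $x+y\mapsto2$, which is $q^{2,2}$. Hence $(V',\lambda',q')=\bigoplus_m 2h^{0,0}\oplus\bigoplus_n 2h^{1,1}=2\bigl(\bigoplus_m h^{0,0}\oplus\bigoplus_n h^{1,1}\bigr)=2h'$ for the evident $\zz_2$-enhancement $h'$, and $(V,\lambda,q)$ is Witt-equivalent to $(V',\lambda',2h')$ in $L\langle v_1\rangle^0(\zz_2)$, as claimed. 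The main obstacle, such as it is, is organizing the reduction of the anisotropic $P_{\pm1}$-part in part (iv) cleanly; everything else is immediate from the cited decompositions and from reading off the value tables of $h^{0,0},h^{1,1},q^{0,0},q^{2,2}$.
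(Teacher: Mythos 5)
Your proof is correct and follows essentially the same route as the paper: (i)--(iii) by citing Propositions \ref{iff-lambda-0} and \ref{always-q} and the identity $q=2h$, and (iv) via the decomposition of Proposition \ref{decomposition}, the isomorphism relations among $P_{\pm 1}$ and $q^{2,2}$, cancellation of the split summands $P_1 \oplus P_{-1}$ in $L\langle v_1 \rangle^0(\zz_2)$, and the identifications $q^{0,0}=2h^{0,0}$, $q^{2,2}=2h^{1,1}$ --- indeed you are somewhat more explicit than the paper about the bookkeeping that eliminates the $P_{\pm 1}$-part when $p_+-p_-\equiv 0 \pmod 4$. The only inaccuracy is your parenthetical claim that $n$ is unchanged mod $2$ (for instance $[4P_1]=[q^{2,2}]$ turns $n=0$ into $n=1$), but this side remark plays no role in the argument, which, exactly as in the paper's own proof, lands on a form of the shape $\bigoplus q^{0,0} \oplus \bigoplus q^{2,2}$ with possibly new values of $m$ and $n$.
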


\begin{proof}
\begin{itemize}
\item[(i)] See Proposition \ref{iff-lambda-0}.
\item[(ii)] See Proposition \ref{always-q}.
\item[(iii)] Any $\zz_2$-enhancement $h$ is isomorphic to
$$h = \bigoplus_{m} h^{0,0} \oplus \bigoplus_{n} h^{1, 1},$$
\vspace{-15pt}
so that
\vspace{-1pt}
\begin{align*}2h &= \bigoplus_{m} 2h^{0,0} \oplus \bigoplus_{n}2h^{1, 1} \\
&= \bigoplus_{m} q^{0,0} \oplus \bigoplus_{n} q^{2, 2}  = q.
\end{align*}

\item[(iv)] If $4n+p_+ - p_- \equiv 0 \textnormal{ or } 4 \pmod8$, then $(V, \lambda, q)$ is isomorphic to
$$(V, \lambda, q) = \bigoplus_{m} q^{0,0} \oplus \bigoplus_{n} q^{2, 2} \oplus \bigoplus_k(P_1 \oplus P_{-1}).$$

If $p_+$ and $p_-$ are both $0$, then $(V, \lambda, q) =\bigoplus_{m} q^{0,0} \oplus \bigoplus_{n} q^{2, 2}$, we know by the previous part that this is isomorphic to $(V, \lambda, 2h')$ with $(V, \lambda,h') = \bigoplus \limits_{m} h^{0,0} \oplus \bigoplus \limits_{n} h^{1, 1}.$

If $p_+$ or $p_-$ (or both) are nonzero and $4n+p_+ -p_- = 0 \textnormal{ or } 4 \in \zz_8,$ then $q$ is isomorphic to
$$ q \cong \bigoplus_{m} q^{0,0} \oplus \bigoplus_{n} q^{2, 2} \oplus \bigoplus_k(P_1 \oplus P_{-1})$$
with $m \geq 0$, $n \geq 0$ and $k = 0 \textnormal{ or } 1.$
That is, the  form can be written in a way such that all $P_1$ and $P_{-1}$ can be paired. Since $(P_1 \oplus P_{-1})$ is a split form admitting a lagrangian, then it is zero in the Witt group $L\langle v_1 \rangle^0(\zz_2),$ so by Definition \ref{Witt-equivalence}
 $$[(V, \lambda, q)] = \left[\bigoplus_{m} q^{0,0} \oplus \bigoplus_{n} q^{2, 2}\right] \in L\langle v_1 \rangle^0(\zz_2).$$
\end{itemize}

\end{proof}

\begin{example}  The Witt equivalence of the forms $4P_1$ and $q^{2,2}$ was discussed in example \ref{4P1}. They both have Brown-Kervaire invariant $4 \in \zz_8.$ In this example we see how they can be expressed in terms of a $\zz_2$-valued enhancement,
\begin{align*}
[4P_1] & = [q^{2,2}]  = [2h^{1,1}] \in  L\langle v_1 \rangle^0(\zz_2).
\end{align*}
\end{example}

\begin{example}
$[3P_1] \in L\langle v_1 \rangle^0(\zz_2)$ is not in the subgroup of $L\langle v_1 \rangle^0(\zz_2)$ generated by $[q^{2,2}]$.
Therefore the $\zz_4$-enhancement $3P_1$ is not Witt equivalent in $L\langle v_1 \rangle^0(\zz_2)$ to a $\zz_4$-enhancement that can be written as $(V, \lambda, 2h)$, where $h$ is a $\zz_2$-valued enhancement.
\end{example}

With the following proposition we describe how a Brown-Kervaire invariant that takes values $0$ and $4$ in $\zz_8$ can always be expressed as a classical $\zz_2$-valued Arf invariant.

\begin{proposition} \label{BK-and-4Arf}
If the Brown-Kervaire invariant of a nonsingular symmetric form over $\zz_2$ with a $\zz_4$-valued quadratic enhancement $(V, \lambda, q)$ is divisible by $4$, then this Brown-Kervaire invariant can be expressed as  the classical Arf invariant of a $\zz_2$-valued form,
$$\textnormal{BK}(V, \lambda, q) =4 \textnormal{Arf}\left(L^{\perp}/L, [\lambda], \frac{\left[q\right] }{2} \right) \in 4\zz_2 \subset \zz_8 $$
where $L^{\perp}= \left \{x \in V \vert  \lambda(x,x) = 0 \in \zz_2 \right\}$ and $L= \langle v \rangle$ with $v$ the characteristic element of $\lambda$ in $V$.
\end{proposition}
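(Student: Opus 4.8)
The plan is to realise $L=\langle v\rangle$ as a sublagrangian of $(V,\lambda,q)$, pass to the isotropic subquotient $W=L^{\perp}/L$, and then combine the theorem of Brown quoted just before the statement with the fact that the Brown--Kervaire invariant is a Witt invariant (Theorem~\ref{iso-BK}).

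First I would show that the hypothesis forces $q(v)=0\in\zz_4$. Using the decomposition $(V,\lambda,q)=\bigoplus_{p_+}P_1\oplus\bigoplus_{p_-}P_{-1}\oplus\bigoplus_m q^{0,0}\oplus\bigoplus_n q^{2,2}$ of Proposition~\ref{decomposition}, the characteristic element $v$ has coordinate $1$ in each $P_{\pm1}$ summand and $0$ in each hyperbolic summand (whose characteristic element is $0$), so $q(v)=p_+-p_-\in\zz_4$, while $\textnormal{BK}(V,\lambda,q)=4n+p_+-p_-\pmod8$ by Theorem~\ref{iso-BK}. Hence $\textnormal{BK}(V,\lambda,q)\equiv q(v)\pmod 4$, and the assumption that $4$ divides $\textnormal{BK}(V,\lambda,q)$ gives $q(v)=0\in\zz_4$, whence $\lambda(v,v)\equiv q(v)\equiv0\pmod2$. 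Therefore $\lambda(L,L)=0$ and $q(L)=0$, and $L\subseteq L^{\perp}$, where $L^{\perp}=\{x\in V:\lambda(x,x)=0\}$ coincides with the $\lambda$-orthogonal complement $\langle v\rangle^{\perp}$ by the defining property $\lambda(x,x)=\lambda(x,v)$ of $v$. Thus $L$ is a sublagrangian.

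Then I would build the subquotient $W=L^{\perp}/L$. For $x,y\in L^{\perp}$ one has $\lambda(x+v,y)=\lambda(x,y)$ and $q(x+v)=q(x)+q(v)+2\lambda(x,v)=q(x)$, so $\lambda$ and $q$ descend to a symmetric form $[\lambda]$ and a compatible $\zz_4$-enhancement $[q]$ on $W$. The form $[\lambda]$ is nonsingular: if $x\in\langle v\rangle^{\perp}$ is $\lambda$-orthogonal to all of $\langle v\rangle^{\perp}$, then $x\in\langle v\rangle^{\perp\perp}=\langle v\rangle=L$ by nonsingularity of $\lambda$ on $V$, i.e.\ $[x]=0$. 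Moreover $[\lambda]([x],[x])=\lambda(x,x)=0$ for every $x\in L^{\perp}$, so Proposition~\ref{iff-lambda-0} shows that $[q]$ takes values in $2\zz_2\subset\zz_4$, and hence $h:=[q]/2$ is a nonsingular $\zz_2$-enhancement of $[\lambda]$, giving the nonsingular $\zz_2$-quadratic form $(W,[\lambda],h)=(L^{\perp}/L,[\lambda],[q]/2)$.

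Finally I would compare Witt classes. Inside $(V\oplus W,\ \lambda\oplus(-[\lambda]),\ q\oplus(-[q]))$ consider the subspace $\Gamma=\{(x,[x]):x\in L^{\perp}\}$; the enhancement $q\oplus(-[q])$ vanishes identically on $\Gamma$ (so in particular $\lambda\oplus(-[\lambda])$ does too), and $\dim\Gamma=\dim L^{\perp}=\tfrac12\dim(V\oplus W)$ since $\dim W=\dim L^{\perp}-\dim L$ and $\dim L^{\perp}=\dim V-\dim L$ by nonsingularity of $\lambda$. Hence $\Gamma$ is a lagrangian, so $[(V,\lambda,q)]=[(W,[\lambda],[q])]$ in $L\langle v_1\rangle^0(\zz_2)$. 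As the Brown--Kervaire invariant is a Witt invariant and $[q]=2h$, Brown's theorem yields
$$\textnormal{BK}(V,\lambda,q)=\textnormal{BK}(W,[\lambda],[q])=4\,\textnormal{Arf}(W,[\lambda],h)=4\,\textnormal{Arf}\!\left(L^{\perp}/L,[\lambda],\tfrac{[q]}{2}\right)\in4\zz_2\subset\zz_8.$$
The main obstacle is this last step: one must get the dimension bookkeeping right in all cases (including $v=0$, where $L=0$ and $W=V$) and check that $\Gamma$ is simultaneously isotropic for the bilinear form and for the $\zz_4$-enhancement; everything else is a routine unwinding of the definitions above.
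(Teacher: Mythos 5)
Your proof is correct, and its overall skeleton is the one the paper uses for Proposition \ref{BK-and-4Arf}: show $q(v)=0\in\zz_4$, form the Wu sublagrangian $L=\langle v\rangle\subseteq L^{\perp}$, pass to the maximal isotropic subquotient $W=L^{\perp}/L$ where $[q]=2h$, and conclude with Brown's theorem $\textnormal{BK}=4\textnormal{Arf}$. Where you genuinely differ is in how the two key inputs are justified. The paper obtains the congruence $[\textnormal{BK}(V,\lambda,q)]=q(v)\in\zz_4$ by quoting Morita's results (\cite[Theorem 1.1, Proposition 2.3]{Morita}, reproved by chain-complex methods in Chapter \ref{Morita-chain-cx}), and it asserts without further argument that the Brown--Kervaire invariant is unchanged on passing to the subquotient; you instead derive $\textnormal{BK}\equiv q(v)\pmod 4$ directly from the decomposition of Proposition \ref{decomposition} together with the formula $\textnormal{BK}=4n+p_+-p_-$ of Theorem \ref{iso-BK} (using that the characteristic element of an orthogonal sum is the sum of the characteristic elements), and you make the Witt-invariance of sublagrangian reduction explicit by exhibiting the graph $\Gamma=\{(x,[x])\,\vert\,x\in L^{\perp}\}$ as a lagrangian of $(V,\lambda,q)\oplus(W,-[\lambda],-[q])$, so that Theorem \ref{iso-BK} gives $\textnormal{BK}(V,\lambda,q)=\textnormal{BK}(W,[\lambda],[q])=4\textnormal{Arf}(W,[\lambda],h)$. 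Your route is more self-contained, resting only on the $\zz_2$-classification set up in this chapter, and it supplies the dimension and lagrangian bookkeeping the paper leaves implicit; the paper's route is shorter here because it leans on the Morita identity that the thesis establishes independently and reuses later. One small citation slip: the evenness of $[q]$ on $L^{\perp}/L$ follows directly from $jq(x)=\lambda(x,x)=0\in\zz_2$ for $x\in L^{\perp}$, rather than from Proposition \ref{iff-lambda-0}; this does not affect the argument.
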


\begin{proof}
Let $(V,\lambda,q)$ be a nonsingular symmetric form over $\zz_2$ with a $\zz_4$-valued quadratic enhancement, and denote its dimension by  $n = \textnormal{dim}(V)$.
For any $\zz_4$-enhanced form $(V,\lambda,q)$ there is a unique Wu class $v \in V$ such that
$$\lambda(x,x) = \lambda(x,v) \in \zz_2 \textnormal{ with } x\in V.$$
The following function is linear
$$f : V \longrightarrow \zz_2 ; x \mapsto \lambda(x,x) = jq(x) = \lambda(x,v),$$
where $jq(x)\in \zz_2$ is the mod $2$ reduction of $q(x) \in \zz_4.$

The following identity \eqref{identity} relating the $\zz_4$-quadratic enhancement and the Brown-Kervaire invariant reduced modulo $4$ is a consequence of \cite[theorem 1.1]{Morita} and \cite[Proposition 2.3]{Morita}. (We will discuss these results in more detail in chapter \ref{Morita-chain-cx}),
\begin{equation}\label{identity}[\textnormal{BK}(V,\lambda,q)] = q(v) \in \zz_4.\end{equation}
So $\textnormal{BK}(V,\lambda,q) \in \zz_8$ is divisible by $4$ if and only if $q(v)=0 \in \zz_4$.
If $q(v)=0 \in \zz_4$ then $\lambda(v,v) = 0 \in \zz_2$ and the \textit{ \textbf{Wu sublagrangian}} $L=\langle v \rangle \subset (V,\lambda,q)$
is defined, with $L \subseteq L^\perp = \left\{x \in V|\lambda(x,x)=0 \in \zz_2\right\}$.
The \textit{ \textbf{maximal isotropic subquotient}}
$$(L^{\perp}/L, [\lambda], [q])$$
 has a canonical $\zz_2$-valued quadratic enhancement $[h]: x \mapsto [q(x)]/2$ and
$$\textnormal{BK}(V,\lambda,q) = 4\textnormal{Arf}(L^{\perp}/L,[\lambda],[h]) \subset 4\zz_2 \subset \zz_8.$$
For the dimension of $L^{\perp}/L$ there are two cases, according as to whether $v=0$ or $v\neq 0:$
\begin{itemize}
\item[(i)] If the Wu class is $v=0$ then $(V, \lambda)$ is already isotropic as $L^{\perp}/L = V,$ and $\textnormal{dim}(L^{\perp}/L) = n.$
\item[(ii)] If the Wu class is $v \neq 0$ then $(V, \lambda)$ is anisotropic and $\textnormal{dim}(L^{\perp}/L) = n-2.$

\end{itemize}

\end{proof}

\begin{remark}In \cite[page 42]{algpoinc} there is computed the inclusion of the quadratic $L$-group $L_0(\zz_2) = \zz_2$ defined by the Arf invariant into the Witt group of nonsingular  $\zz_4$-valued quadratic forms over $\zz_2,$ $L\langle v_{1} \rangle^0(\zz_2),$

$$
\xymatrix{ L_0(\zz_2) \ar[d]^{\textnormal{Arf}} \ar[r] &  L\langle v_{1} \rangle^0(\zz_2) \ar[d]^{\textnormal{BK}} \ar[r] & \widehat{L}\langle v_{1} \rangle^0(\zz_2) \ar[d]  \\
\zz_2 \ar[r]^4 & \zz_8 \ar[r] & \zz_4
}
$$

\raggedbottom

\end{remark}




\chapter{Classical and Equivariant Pontryagin squares}\label{Pontryagin-squares chapter}

In this chapter we will review the definition of the classical Pontryagin squares and their relationship with cup-$i$ products and Steenrod squares.
Pontryagin squares provide a quadratic enhancement of the intersection form of a symmetric bilinear form.

The notion of an equivariant Pontryagin square was introduced in \cite{Korzen}. Here we extend this notion to prove that the equivariant Pontryagin squares can also be defined on odd cohomology classes. With this definition we can reconstruct the Pontryagin square of the total space $E$ of a fibration of the form $F^{4m+2} \to E \to B^{4n+2}$ from the equivariant Pontryagin square on the base.
The main tool for the definition of the equivariant Pontryagin square is the exact sequence in Proposition 19 of \cite{BanRan}. We shall start by giving some background on cup-$i$ products and Steenrod squares necessary for the definition of the Pontryagin squares.

\section{Cup-$i$ products and Steenrod squares} \label{cupi}

Recall from Chapter \ref{Foundations} that we defined the chain approximation map
$$\Delta_0 : C(X) \to C(X) \otimes C(X).$$
Writing $[X]$ for the fundamental class of $X$, and using the slant map defined in \ref{slant-map}
we defined
$$\phi_0 := \backslash \Delta_0([X]) : C^{n-r} \to C_r.$$
Recall that  a chain homotopy $\Delta_{i+1}: C \to C \otimes C$ between $\Delta_i$ and $T_{\epsilon}\Delta_{i}$ can be constructed inductively, such that the following relation is satisfied
$$\partial \Delta_{i+1} + (-1)^i \Delta_{i+1} \partial = (T_{\epsilon} + (-1)^{i+1} ) \Delta_{i}.$$
The higher chain homotopies give information about the failure of $\Delta_0$ to be symmetric, and they are used to define the entire symmetric structure on a symmetric complex.
Composing these with the slant map induces,

$$\phi_s := \backslash \Delta_s([X]) : C^{n-r+s} \to C_r.$$

\begin{definition}
The cup product of two cohomology classes $x \in H^p(X)$ and $y \in H^q(X)$ is
$$x \cup y = \Delta_0^*([X])(x \otimes y) \in H^{i+j}(C).$$
\end{definition}

Cup products are signed-commutative  in cohomology $x \cup y = (-1)^{pq}(y \cup x)$,  but they do not commute on the chain level. This is a consequence of the fact that the chain approximation $\Delta_0$ is only defined up to chain homotopy. The transpose $T\Delta_0 $ produces different maps $\phi_0$ on the chain level, which then have the same effect on homology level, as they are homotopic. The non-trivial chain homotopy $\Delta_1: T\Delta_0  \simeq \Delta_0$ gives the failure of the cup product to commute on the chain level.

\begin{definition} (\hspace{-1pt}\cite{Mosher-Tangora})
For each integer $i \geq 0$ define the \textbf{cup-$i$ product} on the chain level as
$$C^p(X) \otimes C^q(X) \longrightarrow C^{p+q-i}(X) : (x, y) \longrightarrow x \cup_i y $$
by the formula $$x \cup_i y = \Delta_i(x \otimes y).$$

On cohomology, the cup-$i$ product of $x \in H^p(X)$ and $y \in H^q(X)$ is given by
$$x \cup_i y = \Delta_i^*([X])(x \otimes y) \in H^{p+q-i}(X).$$
\end{definition}

The higher chain homotopies $\Delta_i$ are strongly related to the Steenrod squares.
\begin{definition} (\hspace{-1pt}\cite{Steenrod, Mosher-Tangora})
The Steenrod square $Sq^i$ is a cohomology operation
$$Sq^i : H^r(X, \bb{Z}_2) \longrightarrow H^{r+i}(X; \bb{Z}_2). $$
On the cochain level, the Steenrod squares are defined as
$$Sq^i  : C^r(X;\mathbb{Z}_2) \longrightarrow  C^{r+i}(X;\mathbb{Z}_2), $$
such that  $Sq^i(x) := x \cup_{r-i} x$, with $x \in C^r(X;\mathbb{Z}_2). $

In particular for $X$ a $2n$-dimensional Poincar\'e space,
$$Sq^{n} : H^{n}(X ; \mathbb{Z}_2) \longrightarrow H^{2n}(X ; \mathbb{Z}_2)= \zz_2$$
is the evaluation of the cup product,
$$x \mapsto \langle x \cup_0 x, [X] \rangle $$
\end{definition}

In chapter \ref{Foundations} we presented the symmetric construction as being given by the natural chain map
$$\phi_X : C(X) \to \textnormal{Hom}_{\bb{Z}[\bb{Z}_2]}(W, C(\wtX)^t \otimes_{\bb{Z}[\pi]} C(\wtX))$$
induced by the diagonal chain approximation $\Delta$, for any space $X.$
The mod $2$ reduction of the composite
$$H_{2n}(X) \xrightarrow{\phi_X} Q^{2n}(C(X)) \xrightarrow{v_i} \textnormal{Hom}_{\bb{Z}}(H^{2n-i}(X), Q^{2n}(S^{2n-i}\bb{Z})) $$
is given by
$$v_i(\phi_X(x))(y) = \langle v_i \cup y, x \rangle= \langle Sq^i(y), x\rangle \in \bb{Z}_2.$$

Steenrod squares are related to Wu classes in the following way.

\begin{definition}
With $X$ a $2n$-dimensional Poincar\'e space, the $i$-th Wu class
$$v_i \in H^{i}(X; \bb{Z}_2)$$
is given by the Steenrod square, so that for all $x \in H^{2n-i}(X ; \bb{Z}_2)$ it holds that,
$$Sq^i(x) =  v_i \cup x \in H^{2n}(X; \zz_2)=\zz_2.$$
(See \cite{Mil-Sta}).
\end{definition}

\section{Classical Pontryagin squares}

Pontryagin first defined the cohomology operation known as Pontryagin square in \cite{Pontryagin}. Pontryagin squares were also carefully studied by Whitehead in \cite{Whitehead2} and \cite{Whitehead}.

Thus with $X$ a space, the Pontryagin square is an unstable cohomology operation
$$\mc{P}_2: H^n(X; \bb{Z}_2) \to H^{2n}(X; \bb{Z}_4).$$
Although the Pontryagin square is defined on modulo $2$ cohomology classes, it cannot be constructed solely from the modulo $2$ cup product structure.

Let  $d^*: C^n(X; \bb{Z}) \to C^{n+1}(X; \bb{Z})$  be the singular cohomology coboundary operator.
We shall represent an element $x \in H^n(X ; \zz_2)$ as a cycle
$$x=(y, z) \in \textnormal{Ker} \left(\left(\begin{array}{cc} d^* & 0 \\   2 & d^* \end{array}    \right) : C^n(X) \oplus C^{n+1}(X) \to C^{n+1}(X) \oplus C^{n+2}(X)\right) $$
Using this notation we define the Pontryagin square as follows.

\begin{definition}
  The Pontryagin square is defined on the cochain level by
\begin{align*}
\mc{P}_2(x) = \cP_2(y, z) &= y \cup_0 y + y \cup_1 d^*y \in H^{2n}(X; \bb{Z}_4) \\
                                       & = y \cup_0 y + 2 y \cup_1 z \in H^{2n}(X; \bb{Z}_4).
\end{align*}
\end{definition}
The construction of the cup-$i$ products was introduced in section \ref{cupi},
$$\cup_i : C^p(X) \otimes C^q(X) \longrightarrow C^{p+q-i}(X) : (u, v) \longrightarrow u \cup_i v. $$

The coboundary formula for the cup-$1$ product, with $u \in C^p(X)$ and $v \in C^q(X)$ is given by
$$d(u \cup_i v) = (-1)^i du \cup_i v + (-1)^{i+p}u \cup_i dv + (-1)^{i+1} u \cup_{i-1} v + (-1)^{pq+1} v \cup_{i-1} u.$$
This formula can be applied to check that $y \cup_0 y + y \cup_1 d^*y \textnormal{ mod} 4 $ is a cocycle mod $4$ and that its cohomology class only depends on that of $x = (y, z) \in H^n(X; \bb{Z}_2)$ (see \cite{Mosher-Tangora})
\begin{definition}\label{Pont-square-definition}
 The Pontryagin square is defined on cohomology by
\begin{align*}\mc{P}_2: H^n(X ; \bb{Z}_2) & \to H^{2n}(X; \bb{Z}_4) \\
                                                 x = (y, z) & \mapsto y \cup_0 y + y \cup_1 d^*y,
\end{align*}
where (y, z) are as defined above.
\end{definition}

The maps in the exact sequence $0 \longrightarrow \mathbb{Z}_2 \overset{i}{\longrightarrow} \mathbb{Z}_4 \overset{r}{\longrightarrow} \mathbb{Z}_2 \longrightarrow  0 $ induce maps in cohomology,
$$\dots \longrightarrow H^{2n}(X; \mathbb{Z}_2)\overset{i^*}{ \longrightarrow} H^{2n}(X; \mathbb{Z}_4) \overset{r^*}{\longrightarrow} H^{2n}(X; \mathbb{Z}_2) \overset{\delta}{\longrightarrow} \dots$$
so
$$r_* \mathcal{P}_2(x) = x \cup x \in H^{n} (X ; \mathbb{Z}_2),$$ where $r_* : H^{n}(X ; \mathbb{Z}_4) \longrightarrow H^{n}(X ; \mathbb{Z}_2)$ is the map induced by the non-trivial map \\ $\mathbb{Z}_4 \longrightarrow \mathbb{Z}_2$.

\begin{remark}

The Steenrod square $Sq^{n}$ is a mod $2$ reduction of the Pontryagin square:
\begin{displaymath}
\xymatrix{ &  H^{2n}(X; \mathbb{Z}_4) \ar[d]^{r^*}  \\
 H^{n}(X; \mathbb{Z}_2) \ar[r]^{Sq^{n}} \ar[ur]^{\mathcal{P}_2} &  H^{2n}(X; \mathbb{Z}_2)
}
\end{displaymath}
\end{remark}
\begin{proposition} \label{on a sum} (\hspace{-1pt}\cite{Mosher-Tangora})
\begin{itemize}
\item[(i)]Let $x$ and $x'$ be cocycles in $H^n(X; \bb{Z}_2)$, where $X$ is a  Poincar\'e space.  The Pontryagin square evaluated on a sum is given by,
$$\mc{P}_2(x+x') = \mc{P}_2(x) + \mc{P}_2(x')+(x \cup x')+(-1)^n(x \cup x') \in H^{2n}(X; \bb{Z}_4).$$
So that in particular for $n$ even,
$$\mc{P}_2(x+x') = \mc{P}_2(x) + \mc{P}_2(x')+j (x \cup x')\in \bb{Z}_4,$$
where $j$  is the non-trivial homomorphism $j: \bb{Z}_2 \to \bb{Z}_4.$

\item[(ii)] The Pontryagin square is a quadratic function with respect to cup product, that is,
$$\mc{P}_2(x+ x') = \mc{P}_2(x) + \mc{P}_2(x') + j(x \cup x') \in H^{2n}(X; \bb{Z}_4)$$
for any $x, x' \in H^{n}(X ; \bb{Z}_2)$, where $j : \bb{Z}_2 \to \bb{Z}_4$ is the non-trivial homomorphism.
\end{itemize}
\end{proposition}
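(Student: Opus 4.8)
The plan is to argue at the cochain level using the explicit formula $\mc{P}_2(y,z) = y\cup_0 y + y\cup_1 d^*y = y\cup_0 y + 2\,(y\cup_1 z) \in C^{2n}(X;\zz_4)$ of Definition \ref{Pont-square-definition}, where $y\in C^n(X;\zz)$ lifts a mod $2$ cocycle representing $x$ and $d^*y = 2z$. First I would fix such representatives $(y,z)$ for $x$ and $(y',z')$ for $x'$, so that $(y+y',z+z')$ represents $x+x'$. Since the cup-$i$ products are $\zz$-bilinear at the cochain level, expanding $\mc{P}_2(y+y',z+z')$ splits it into the two "diagonal" contributions, which reassemble to $\mc{P}_2(x)+\mc{P}_2(x')$, and the "cross" contribution
$$\Gamma \ =\ y\cup_0 y' + y'\cup_0 y + y\cup_1 d^*y' + y'\cup_1 d^*y \ \in\ C^{2n}(X;\zz_4).$$
Thus the whole statement reduces to identifying the class $[\Gamma]$ with $(x\cup x')+(-1)^n(x\cup x') \in H^{2n}(X;\zz_4)$, i.e. with $j(x\cup x')$ when $n$ is even.

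The key input is the coboundary formula for the cup-$1$ product recorded just before Definition \ref{Pont-square-definition}. Applied to $y\cup_1 y'$ (both of degree $n$) it reads
$$d(y\cup_1 y') = -\,d^*y\cup_1 y' + (-1)^{n+1}\,y\cup_1 d^*y' + y\cup_0 y' + (-1)^{n+1}\,y'\cup_0 y,$$
so modulo coboundaries $y\cup_0 y' + (-1)^{n+1}y'\cup_0 y$ may be replaced by $d^*y\cup_1 y' - (-1)^{n+1}y\cup_1 d^*y'$. Writing $y\cup_0 y' + y'\cup_0 y = \big(y\cup_0 y' + (-1)^{n+1}y'\cup_0 y\big) + \big(1+(-1)^n\big)\,y'\cup_0 y$ and substituting into $\Gamma$, the first bracket is absorbed, up to a coboundary, into terms each carrying a factor $d^*y$ or $d^*y'$ and hence a factor $2$, while the second bracket contributes $(1+(-1)^n)$ copies of $y'\cup_0 y$, a cocycle representing the cup product $x\cup x'$ (mod $2$ and in cohomology, $y'\cup_0 y$ and $y\cup_0 y'$ agree). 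It then remains to check that the collected $2$-divisible remainder — the terms involving $z,z'$ coming both from the $2\,(y\cup_1 z)$-type pieces of $\mc{P}_2$ and from the substitutions above — is a coboundary modulo $4$; for this one invokes the cup-$1$ Leibniz rule once more, now for the relevant mod $2$ cocycles, to see that the combinations $z\cup_1 y' + y'\cup_1 z$ and their companions are mod $2$ coboundaries.

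I expect the sign bookkeeping in the cup-$i$ coboundary formula, together with this last reconciliation of the $z$- and $z'$-terms, to be the main obstacle: one has to verify that the mod $4$ cocycle $\Gamma$ represents exactly $j(x\cup x')$ and not that class modified by a Bockstein correction. Once (i) is proved, part (ii) is just the case $n$ even, in which $(1+(-1)^n)(x\cup x') = j(x\cup x')$ by definition of $j:\zz_2\to\zz_4$, combined with the identity $r_*\mc{P}_2(x) = x\cup x$ recorded after Definition \ref{Pont-square-definition}. Alternatively, the same computation can be packaged in the algebraic language of Chapter \ref{hyperquad}: $\mc{P}_2$ is the $\zz_4$-refined Wu-class evaluation $x\mapsto (x\otimes x)(\phi_0)$ of the symmetric construction (Definition \ref{algebraic-Wu}), and the tautology $(x+x')\otimes(x+x') = x\otimes x + x'\otimes x' + (x\otimes x' + x'\otimes x)$ evaluated on $\phi_0$, together with the defining relation $d\phi_1 \pm \phi_1 d^* = \phi_0 - T\phi_0$ controlling the symmetry of $\phi_0$, yields the cross term $(1+(-1)^n)(x\cup x')$ at once.
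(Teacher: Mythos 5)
Your argument is correct, but it takes a genuinely different route from the one in the text: here part (i) is simply quoted from \cite{Mosher-Tangora}, and the proof given merely records that (ii) is the case $n$ even of (i) read against Definition \ref{quadratic enhancement}. You instead rederive (i) from the cochain formula of Definition \ref{Pont-square-definition}: expand $\mc{P}_2(y+y',z+z')$, isolate the cross term $\Gamma$, trade $y\cup_0 y'+(-1)^{n+1}y'\cup_0 y$ for a coboundary plus terms carrying a factor $d^*y$ or $d^*y'$ via the $\cup_1$ coboundary formula, and then dispose of the remaining $2$-divisible part. What this buys is a self-contained proof that meshes with the algebraic Pontryagin square $\phi_0(v,v)+2\phi_1(v,u)$ of Chapter \ref{Pontryagin-squares chapter} (your closing remark is essentially Definition \ref{algebraic-Pont} and the relation controlling $\phi_1$), at the cost of the sign bookkeeping you flag; the citation route is shorter but imports the computation from the literature. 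Two points to make precise in a write-up: the symmetrized term $z\cup_1 y'+y'\cup_1 z$ is a mod $2$ coboundary not by another application of the cup-$1$ formula but by the displayed coboundary formula with $i=2$, namely $d(z\cup_2 y')\equiv z\cup_1 y'+y'\cup_1 z \pmod 2$, where the $dz$ and $dy'$ terms drop because $d^*z=0$ integrally (from $2d^*z=d^*d^*y=0$ and torsion-freeness of the cochain groups) and $d^*y'\equiv 0\pmod 2$; and the other leftover cross term $2\bigl(1+(-1)^n\bigr)(y\cup_1 z')$ is already $\equiv 0 \pmod 4$ when $n$ is even and vanishes when $n$ is odd, so only that one $\cup_2$ trick is needed. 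With these noted, your computation yields exactly $\bigl(1+(-1)^n\bigr)[y'\cup_0 y]=j(x\cup x')$ in $H^{2n}(X;\zz_4)$, with no Bockstein correction, and (ii) follows as you say.
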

\begin{proof}
This follows directly from the definition of a quadratic enhancement \ref{quadratic enhancement} and the evaluation of the Pontryagin square on a sum of cocycles \ref{on a sum}.
\end{proof}

\section{Algebraic Pontryagin squares}

In this section we present the algebraic analog of the Pontryagin square. The image of the algebraic Pontryagin square lies in the $4k$-dimensional symmetric $Q$-group of a finitely generated free $\bb{Z}$-module concentrated in degrees $2k+1$ and $2k$
$$B(2k, s): \dots \to S^{2k+1}\bb{Z} \xrightarrow{d= s} S^{2k}\bb{Z} \to \dots$$
Following the computations of  $Q$-groups presented in \cite{BanRan}, it can be shown that this $Q$-group is $\zz_4$, when $s=2$.

\begin{definition} (\hspace{-5pt} \cite{Mod8})\label{algebraic-Pont}
The \textbf{$\zz_2$-coefficient Pontryagin square} of a $4k$-dimensional symmetric complex $(C, \phi)$ over $\zz$ is the function
$$
\begin{array}{rcl}
\mc{P}_2(\phi) : H^{2k}(C ; \zz_2) = H_0(\textnormal{Hom}_{\zz}(C, B(2k,s)))& \to &Q^{4k}(B(2k, 2)) = \zz_4\\
(u, v) & \mapsto & (u, v)^{\%}(\phi) = \phi_0(u, v) + 2\phi_1(v, u).
\end{array}
$$
\end{definition}


In definition \ref{algebraic-Pont},  $C$ is a $4k$-dimensional chain complex with a $4k$-dimensional symmetric structure, $B(2k, s)$ is a finitely generated free $\bb{Z}$-module concentrated in degrees $2k+1$ and $2k$ and let $g$ be a chain map $g = (u, v) : C \longrightarrow B(2k, d)$.
Note that there is defined a $\bb{Z}$-module chain map as follows,
$$
\xymatrix{
C: \ar[d]_{g = (u, v)} &\dots \ar[r]^{d} &C_{2k+1} \ar[d]_{u} \ar[r]^{d} & C_{2k} \ar[d]_{v} \ar[r]^{d} &C_{2k-1}\ar[d] \ar[r]^{d} & \dots \\
B(2k, s): &\dots  \ar[r] & \bb{Z} \ar[r]^{s=2} & \hspace{5pt} \bb{Z}  \ar[r] & 0 \ar[r] & \dots \\
}
$$

Here $u \in \textnormal{Hom}_{\bb{Z}}(C_{2k+1}, \bb{Z})$, in fact $u$ is a cocycle such that
$$u \in \textnormal{Ker}(d^*: C^{2k+1} \rightarrow C^{2k+2}).$$
Similarly $v \in \textnormal{Hom}_{\bb{Z}}(C_{2k},\bb{Z})$, $v$ is a cochain such that
$$ d^*(v) \in \textnormal{Im}(2: C^{2k}\to C^{2k+1}).$$
Moreover $d^*v = 2u$.

Since $v$ is a morphism $v : C_{2k} \to \bb{Z}$, we can compose $v$ with a map $\bb{Z} \to \bb{Z}_2$,
$$[v]: C_{2k} \to \bb{Z} \to \bb{Z}_2,$$
which represents a cocycle
$$[v] \in \textnormal{Ker}(d^*: \textnormal{Hom}_{\bb{Z}}(C_{2k}, \bb{Z}_2) \to  \textnormal{Hom}_{\bb{Z}}(C_{2k+1}, \bb{Z}_2)). $$

There is an isomorphism
$$H_0(\textnormal{Hom}_{\bb{Z}}(C, B(2k, s))) \to H^{2k}(C, \bb{Z}_2) ; (u, v) \to [v].$$

Hence,
$$
\begin{array}{ccc}
\mc{P}_2(\phi): H^{2k}(C, \bb{Z}_2)=H_0(\textnormal{Hom}_{\bb{Z}}(C, B(2k, s)))  & \to & \bb{Z}_4 \\
(u, v) &\mapsto &\phi_0(v, v) + 2\phi_1(v, u).
\end{array}
$$

The computation of $Q^{4k}(B(2k, s))$ is given in \cite{Mod8} as
$$Q^{4k}(B(2k, s)) \to \bb{Z}_4 ; \phi \to \phi_0 + d\phi_1.$$



The $\bb{Z}$-module chain map $(u, v) : C \longrightarrow B(2k,s)$ induces a map in the $Q$-groups,
$$(u, v)^{\%}: Q^{4k}(C) \longrightarrow Q^{4k}(B(2k,s)) =  Q^{4k}\left(\begin{tabular}{c} $S^{2k+1}\bb{Z}$ \\ $\downarrow^2$ \\ $S^{2k}\bb{Z}$ \end{tabular} \right),$$
which sends the symmetric structure of $(C, \phi)$ to the Pontryagin square evaluated on $(u, v).$ That is, with $(u, v) : C \longrightarrow B(2k,s)$ we have,

\vspace{8pt}

\begin{tabular}{cccccccccc}
\hspace{-30pt}{\small $ Q^{4k}(C)$} & {\small $\xrightarrow{(u, v)^{\%}}$ } & {\small $Q^{4k}(B)$} & {\small $\xrightarrow{\cong}$} &  {\small  $\bb{Z}_4$} & $\xleftarrow{\mc{P}_2(\phi)}$ & {\small $H^{2k}(C ; \bb{Z}_2)$} & {\small $\overset{\cong}\longleftarrow$} & {\small$H_0(\textnormal{Hom}_{\bb{Z}}(C, B))$ }\\
 $\phi$ & $\longmapsto$ & $(u, v)^{\%}(\phi)$& $=$ &  {\small $\phi_0(v,v)+2 \phi_1(u,v)$} & $\longleftarrow$ &$[v]$ & $\longleftarrow$ & $(v, u)$
\end{tabular}

\begin{remark}

If $C=C(X)$ is the chain complex of a space $X$ and  $\phi=\phi_X[X] \in Q^{4k}(C)$ is the image of a homology class $[X] \in H_{4k}(X)$ under the symmetric construction $\phi_X$ (\ref{symmetric construction}) then the evaluation of the Pontryagin square $\mathcal{P}_2:H^{2k}(X;\zz_2) \to H^{4k}(X;\zz_4)$ on the mod $4$ reduction $[X]_4 \in H_{4k}(X;\zz_4)$ is the algebraic Pontryagin square
$$\mathcal{P}_2(\phi) : H^{2k}(C; \zz_2)=H^{2k}(X; \zz_2) \to \zz_4 ; x \mapsto \langle \mathcal{P}_2(x), [X]_4 \rangle.$$
There is a commutative diagram ,
$$
\xymatrix{ H^{2k}(C(X); \zz_2) \ar[r]^{\mc{P}_2(\phi)} &  Q^{4k}(B(2k,2))= \zz_4 \\
H^{2k}(X ; \zz_2) \ar[u]^{=} \ar[r]^{\hspace{-55pt} \mc{P}_2} & H^{4k}(X; \zz_4)= H^{4k}(X ; Q^{4k}(B(2k,2))) \ar[u]_{\langle [X]_4, - \rangle}
}
$$
We are mainly interested in the case when $X$ is a $4k$-dimensional geometric Poincar\'e space and $[X]$ is the fundamental class.
\end{remark}

\section{Equivariant Pontryagin squares}
\subsection{The image of the equivariant Pontryagin square}

In the definition of the equivariant Pontryagin square we will need an explicit description for the $2k$-dimensional $Q$-group of a chain complex $B(k, d)$ concentrated in degrees $k$ and $k+1$.  We then use this computation to prove the isomorphism  $$Q^{2k}(B(k, d))= Q^{2k}  \left(\begin{tabular}{c} $B_{k+1}=I$ \\ $\downarrow^d$ \\ $B_{k}=R$ \end{tabular} \right) \overset{\cong}{\longrightarrow} \bb{Z}[\pi]/(I^2+2I),$$ which is described in Proposition \ref{iso-ring-Q}. The image of the equivariant Pontryagin square lies in this group. The equivariant Pontryagin square was defined on an even cohomology class in \cite{Korzen}. Here we will extend this definition to an odd class.

The $n$-dimensional $\epsilon$-symmetric $Q$-group $Q^n(C, \epsilon)$ is the abelian group of equivalence classes of $n$-dimensional $\epsilon$-symmetric structures on $C$,
$$Q^n(C, \epsilon) = H_n(W^{\%}C)_n.$$

\begin{example} \label{Q-group-1-dimension}
The simplest example for a $Q$-group is  the computation of the $\epsilon$-symmetric $Q$-groups of a f.g. projective $R$-module chain complex concentrated in degree $k$. Proposition 16 in \cite{BanRan} gives the full computation of these $\epsilon$-symmetric $Q$-groups. Following this proposition,
\begin{align*}
Q^{2k}(B_k, \epsilon) & = H^0(\bb{Z}_2; S(B^k), (-1)^kT_{\epsilon}) \\
                       &=H^0(\bb{Z}_2; \textnormal{Hom}_R(B^k,B_k), (-1)^kT_{\epsilon})\\
                       &= \{ \gamma \in \textnormal{Hom}_R(B^k, B_k)\vert T_{\epsilon}\gamma = (-)^k \gamma \}. \\
\end{align*}
\raggedbottom
That is, $Q^{2k}(B_k, \epsilon)$ is the group of $(-)^k \epsilon$-symmetric forms on $B^k.$
\end{example}

We will now focus on the computation of an explicit formula for the $2k$-dimensional $Q$-group of a f.g projective $R$-module chain complex which is concentrated in degrees $k, k+1$:
$$B(k,d): \dots \to 0 \to B_{k+1} \overset{d}{\rightarrow} B_k \to \dots$$
This group is given as part of the long exact sequence presented in Proposition 19 of \cite{BanRan}. Example 20 in \cite{BanRan} gives the explicit computation for the case $k=0$.

We recall now the notation for the $\epsilon$-duality involution $T_{\epsilon}$ which we will use in the next proposition. Let $R$ be a ring with involution,
$$T_{\epsilon} : \textnormal{Hom}_R(B^q, B_q) \to \textnormal{Hom}_{R}(B^q, B_p) ; \gamma \mapsto (-1)^{pq} \epsilon \gamma^*.$$

\begin{proposition}\label{Q-2k} Let $R= \bb{Z}[\pi]$ be a ring with involution.
The map
$$
\begin{array}{l}
Q^{2k}(B(k,d), \epsilon) \overset{\cong}{\longrightarrow} \\
 \dfrac{ \{ \gamma \in \textnormal{Hom}_{\bb{Z}[\pi]} (B^k, B_{k}) \vert \gamma= (-1)^k \epsilon \gamma^*  \}}{\{d\alpha d^* + (1+(-1)^kT)d  ~ \textnormal{ for }~ \alpha \in \textnormal{Hom}_{\bb{Z}[\pi]} (B^{k+1}, B_{k+1}), \beta \in \textnormal{Hom}_{\bb{Z}[\pi]} (B^k, B_{k+1}) \beta \vert \alpha= (-1)^k \epsilon \alpha^*\} }; \\
\phi \mapsto \phi_0  + d\phi_1 .
\end{array}
$$
is an isomorphism.
\end{proposition}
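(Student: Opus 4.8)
The plan is to compute $Q^{2k}(B(k,d),\epsilon)=H_{2k}(W^{\%}B(k,d))$ directly from Definition \ref{structures}, using in an essential way that $B(k,d)$ is concentrated in the two adjacent degrees $k$ and $k+1$. Equivalently, one can feed $B(k,d)$ — viewed as built from $B_k$ in degree $k$ and $B_{k+1}$ in degree $k+1$ via the single map $d$ — into the long exact $Q$-group sequence of \cite[Proposition 19]{BanRan}, whose outer terms are the single-degree $Q$-groups of Example \ref{Q-group-1-dimension} (that is, \cite[Proposition 16]{BanRan}), and then read off the middle term; the case $k=0$ is precisely \cite[Example 20]{BanRan}, and the general $k$ is formally identical. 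I describe the direct route.

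First I would unwind the cycle condition of Definition \ref{structures}(i) for $C=B(k,d)$. Since $C_r=0=C^{r}$ for $r\notin\{k,k+1\}$, a $2k$-dimensional $\epsilon$-symmetric structure $\phi=\{\phi_s\}$ has at most four possibly nonzero components: $\phi_0 : B^k\to B_k$, the two components $\phi_1^{(k)} : B^{k+1}\to B_k$ and $\phi_1^{(k+1)} : B^k\to B_{k+1}$ of $\phi_1$, and $\phi_2 : B^{k+1}\to B_{k+1}$, with $\phi_s=0$ for $s\geq 3$. Evaluating the structure equations in the finitely many surviving bidegrees collapses them to: $\phi_2$ is $(-1)^{k+1}\epsilon$-symmetric; $\phi_1^{(k)}+\epsilon(\phi_1^{(k+1)})^{*}=d\phi_2$ (together with its dual); and $\phi_0-(-1)^k\epsilon\phi_0^{*}=-d\phi_1^{(k+1)}-(-1)^k\phi_1^{(k)}d^{*}$. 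A $(2k+1)$-chain $\eta$ similarly has only the components $\eta_0^{(k)}$, $\eta_0^{(k+1)}$ and $\eta_1 : B^{k+1}\to B_{k+1}$, and I would record the resulting formula for the effect of the boundary $d\eta$ on the pair $(\phi_0,\phi_1^{(k+1)})$.

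Next I would set up the asserted isomorphism. The assignment $\phi\mapsto\phi_0+d\phi_1$, with $d\phi_1$ the composite $B^k\xrightarrow{\phi_1^{(k+1)}}B_{k+1}\xrightarrow{d}B_k$ (equal to $\phi_1^{(k)}d^{*}$ modulo the relations above), is additive; the structure equations for $\phi_1$ and $\phi_2$ show that it lands in the $(-1)^k\epsilon$-symmetric forms once one passes to the quotient by the $d\alpha d^{*}$- and $(1+(-1)^kT)d\beta$-terms, and the boundary formula shows it annihilates $d\eta$, so it descends to a homomorphism onto the stated group. Surjectivity is immediate: any $(-1)^k\epsilon$-symmetric $\gamma\in\textnormal{Hom}_{R}(B^k,B_k)$ is the image of the structure $(\phi_0,\phi_1,\phi_2)=(\gamma,0,0)$, which solves all of the collapsed equations. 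For injectivity I would show that if $\phi_0+d\phi_1$ lies in the denominator subgroup then $\phi$ bounds: the freedom in $\beta$ and in $\eta_1$ lets one kill $\phi_2$ and then $\phi_1$, after which the $d\alpha d^{*}$-term removes the residual part of $\phi_0$.

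The step I expect to be the real obstacle is this last bookkeeping: checking that the subgroup of $\textnormal{Hom}_{R}(B^k,B_k)$ generated by the $d\alpha d^{*}$ (with $\alpha$ of the appropriate symmetry type) and by the $(1+(-1)^kT)d\beta$ is \emph{exactly} the image of the $2k$-boundaries under $\phi\mapsto\phi_0+d\phi_1$. This is a matter of chasing the several sign-laden structure equations in concert with the degeneracies forced on $\phi_1$ and $\phi_2$ by the two-term support of $B(k,d)$. I would arrange the argument so that the dependence on $k$ is handled once and for all — reducing to, or running parallel to, the $k=0$ computation of \cite[Example 20]{BanRan}, or else invoking a skew-suspension isomorphism of chain bundles — leaving only the $k=0$ sign check to be done explicitly.
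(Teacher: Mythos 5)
Your main route (unwinding the cycle condition of Definition \ref{structures} directly for the two--term complex $B(k,d)$, defining $\phi\mapsto\phi_0+d\phi_1$, and checking well--definedness, surjectivity and injectivity by hand) is genuinely different from the paper's proof. The paper does not compute with the structure equations at all: it quotes the exact sequence of \cite[Proposition 19]{BanRan}
$$\dots \to Q^{2k+1}(B,\epsilon)\to B_{k+1}\otimes H_{k+1}(B)\xrightarrow{F} Q^{2k}(d,\epsilon)\to Q^{2k}(B,\epsilon)\to 0,$$
computes the relative term $Q^{2k}(d,\epsilon)$ from the single--degree $Q$-groups of Example \ref{Q-group-1-dimension} (i.e.\ \cite[Proposition 16]{BanRan}) as $(-1)^k\epsilon$-symmetric forms on $B^k$ modulo $d^{\%}$ of such forms on $B^{k+1}$, and identifies the image of $F$ via \cite[Proposition 18]{BanRan} as the $(1+(-1)^kT)d\beta$ terms, so that $Q^{2k}(B(k,d),\epsilon)$ is read off as a cokernel. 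In other words, your parenthetical ``alternative'' route is precisely the paper's proof; your direct route, if completed, would be more self-contained, at the price of redoing the content of \cite[Propositions 18, 19]{BanRan} for this complex.

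The caveat is that the step you defer as ``bookkeeping'' is not a routine sign check: it \emph{is} the proposition. Your component analysis, the cycle relations, and surjectivity via $(\gamma,0,0)$ are fine, but you still must show that the indeterminacy of $\phi_0+d\phi_1$ is exactly the stated denominator. Two points make this nontrivial over $R=\bb{Z}[\pi]$. First, a $(2k+1)$-chain $\eta$ has a completely unconstrained component $\eta_1\in\textnormal{Hom}_R(B^{k+1},B_{k+1})$, and its contribution to $\phi_0+d\phi_1$ is (up to sign) $d\eta_1 d^*$ with $\eta_1$ \emph{not} required to be $(-1)^k\epsilon$-symmetric; second, the symmetry defect of $\phi_0+d\phi_1$ is of the form $d\phi_2 d^*$ with $\phi_2$ of the \emph{opposite} symmetry type $(-1)^{k+1}\epsilon$. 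Neither is visibly of the form $d\alpha d^*$ with $\alpha=(-1)^k\epsilon\alpha^*$, and you cannot symmetrize by dividing by $2$; you have to show these are absorbed by the two generating families $d\alpha d^*$ and $(1+(-1)^kT)d\beta$ taken together (e.g.\ $(1+(-1)^kT)d(\eta_1 d^*)=d(\eta_1+\epsilon\eta_1^*)d^*$, combined with $d(\eta_1-(-1)^k\epsilon\eta_1^*\,)d^*$ of the $d\alpha d^*$ type, and similarly for the $\eta_0$-terms). Until this is carried out, the proposal proves well-definedness and surjectivity but not injectivity; and if you discharge it by ``reducing to \cite[Example 20]{BanRan}'' or to a skew-suspension of the universal case, you have in effect reverted to the paper's citation-based argument rather than given an independent proof.
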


\begin{proof}
We shall write $B$ for $B(k,d)$ where there is no confusion from the context.
The exact sequence given in Proposition 19 of \cite{BanRan} is as follows
\begin{equation}\label{exact-seq-Q}
\dots \to Q^{2k+1}(B, \epsilon) \to B_{k+1} \otimes_A H_{k+1}(B) \overset{F}{\rightarrow} Q^{2k}(d, \epsilon) \overset{t}{\rightarrow} Q^{2k}(B, \epsilon)\to 0.
\end{equation}
Since the sequence is exact, the $Q$-group $Q^{2k}(B, \epsilon)$ is the cokernel of the map $$B_{k+1} \otimes_A H_{k+1}(B) \overset{F}{\rightarrow} Q^{2k}(d, \epsilon).$$
So in the first place we shall compute the relative group $Q^{2k}(d, \epsilon).$
The chain map $d: B_{k+1} \to B_{k}$ induces a map in the $Q$-groups
$$d^{\%} : Q^{2k}(B_{k+1}, \epsilon) \to Q^{2k}(B_{k}, \epsilon) ; \alpha \mapsto d \alpha d^*.$$
The relative group  $Q^{2k}(d, \epsilon)$ fits into the long exact sequence of $Q$-groups,
$$Q^{2k+1}(B_k) \to Q^{2k+1}(d) \to Q^{2k}(B_{k+1}) \xrightarrow{d^{\%}}  Q^{2k}(B_{k}) \to Q^{2k}(d) \to Q^{2k-1}(B_{k+1})=0.$$

Regarding the f.g. free $R$-module chain complex
$$B(k,d): \dots \to 0 \to B_{k+1} \overset{d}{\rightarrow} B_k \to \dots$$  as a chain map of chain complexes both concentrated in degree $k$,
\begin{displaymath}
\xymatrix{
\dots \ar[r] & 0 \ar[r] &*+[F]{B_{k+1}} \ar[d]^d \ar[r] &0 \ar[r]& \dots \\
\dots \ar[r] & 0 \ar[r] &*+[F]{B_{k}} \ar[r] &0 \ar[r]& \dots
}
\end{displaymath}
we have that $B(k, d)$ is the algebraic mapping cone $\mc{C}(d)$ of the chain map $d: B_{k+1} \to B_k.$

We have seen in example \ref{Q-group-1-dimension} that the  $\epsilon$-symmetric $Q$-groups of a f.g. projective $R$-module chain complex concentrated in degree $k$ is given by,
\begin{align*}
Q^{2k}(B_k, \epsilon) & = \{ \gamma \in \textnormal{Hom}_A(B^k, B_k)\vert T_{\epsilon}\gamma = (-)^k \gamma \}.
\end{align*}
Similarly for $B_{k+1}$. Note that we have defined $d : B_{k+1} \to B_k$ to be a chain map of two chain complexes both concentrated in degree $k$, so that we are considering $B_{k+1}$ to be a chain complex concentrated in degree $k$, thus the computation following Proposition 16 of \cite{BanRan} is the same as the one given in example \ref{Q-group-1-dimension},
\begin{align*}
Q^{2k}(B_{k+1}, \epsilon) & = H^0(\bb{Z}_2; S(B^{k+1}), (-1)^kT_{\epsilon}) \\
                       &=H^0(\bb{Z}_2; \textnormal{Hom}_A(B^{k+1},B_{k+1}), (-1)^kT_{\epsilon})\\
                     &= \{ \alpha \in \textnormal{Hom}_A(B^{k+1}, B_{k+1})\vert T_{\epsilon}\alpha =(-)^k\alpha \}.
\end{align*}

Hence the relative group in the exact sequence is
\begin{align*}
Q^{2k}(d, \epsilon) &= \frac{\textnormal{$(-)^k\epsilon$-Sym}(B^{k})}{d^{\%} \textnormal{$(-)^k\epsilon$-Sym}(B^{k+1})}\\
                    &= \frac{ \{\gamma=(-)^k\gamma^* \in \textnormal{Hom}_A(B^k, B_k)\}}{\{ d\alpha d^* \vert \alpha=(-)^k\alpha^* \in \textnormal{Hom}_A(B^{k+1}, B_{k+1})\}}
\end{align*}
and $Q^{2k}(B(k,d),\epsilon)$ is the cokernel of the map
$$F ~ : ~ B_{k+1} \otimes_A H_{k+1}(B) \rightarrow \frac{\textnormal{$(-)^k\epsilon$-Sym}(B^{k})}{d^{\%} \textnormal{$(-)^k\epsilon$-Sym}(B^{k+1})}$$ in the long exact sequence \eqref{exact-seq-Q} on page \pageref{exact-seq-Q}, so that,
$$Q^{2k}(B(k,d),\epsilon)= \textnormal{coker}\left( F: B_{k+1} \otimes_A H_{k+1}(B) \rightarrow \frac{\textnormal{$(-)^k\epsilon$-Sym}(B^{k})}{d^{\%} \textnormal{$(-)^k\epsilon$-Sym}(B^{k+1})}\right).$$

We now refer to Proposition 18 in \cite{BanRan} for the definition of the map $F$.
First note that $B(k,d) = \mc{C}(d)$, as we have explained above. An element in $B_{k+1} \otimes H_{k+1}(B(k, d)) = B_{k+1} \otimes H_{k+1}(\mc{C}(d))$ is a pair $(\beta, h)$ where $\beta: B^{k} \to B_{k+1}$ is a chain map and $h$ is a chain homotopy $h: d \beta \simeq 0: B^k \to B_k$.

Hence,
$$
\begin{array}{l}
Q^{2k}(B(k,d), \epsilon) \overset{\cong}{\longrightarrow} \\
 \textnormal{coker}\left( (1+(-1)^kT)d : \textnormal{Hom}_A(B^k, B_{k+1}) \rightarrow \frac{\textnormal{$(-)^k\epsilon$-Sym}(B^{k})}{d^{\%} \textnormal{$(-)^k\epsilon$-Sym}(B^{k+1})}\right) \\
 = \dfrac{ \{ \gamma \in \textnormal{Hom}_{\bb{Z}[\pi]} (B^k, B_{k}) \vert \gamma= (-1)^k \epsilon \gamma^*  \}}{\{d\alpha d^* + (1+(-1)^kT)d  ~ \textnormal{ for }~ \alpha \in \textnormal{Hom}_{\bb{Z}[\pi]} (B^{k+1}, B_{k+1}), \beta \in \textnormal{Hom}_{\bb{Z}[\pi]} (B^k, B_{k+1}) \beta \vert \alpha= (-1)^k \epsilon \alpha^*\} } . \\
\end{array}
$$
\end{proof}
\raggedbottom

\begin{proposition}\label{iso-ring-Q}
Let $R$ be the ring $R= \bb{Z}[\pi]$ and let $I$ be the ideal given by $I= \textnormal{Ker}(\bb{Z}[\pi] \to \bb{Z}_2)$ and $B(k, d)$ be the f.g free $\bb{Z}[\pi]$-module chain complex concentrated in degrees $k$ and $k+1$, $B(k, d): \dots \to 0 \to B_{k+1}=I \overset{d}{\rightarrow} B_k=R \to 0 \to \dots$, then
$$Q^{2k}(B(k, d))= Q^{2k}  \left(\begin{tabular}{c} $B_{k+1}=I$ \\ $\downarrow^d$ \\ $B_{k}=R$ \end{tabular} \right) \overset{\cong}{\longrightarrow} \bb{Z}[\pi]/(I^2+2I).$$
\end{proposition}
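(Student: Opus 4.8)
The plan is to read the result straight off the general two-term computation of Proposition \ref{Q-2k}: substitute $B_k=R$, $B_{k+1}=I$ and $d$ the inclusion $I\hookrightarrow R$, make the resulting Hom-groups and relations explicit, and then match the quotient of ``form'' groups so obtained with the ring quotient $\mathbb{Z}[\pi]/(I^2+2I)$.

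First I would make the ingredients of the quotient in Proposition \ref{Q-2k} explicit. Since $B_k=R$ is free of rank one, $B^k=(B_k)^*\cong R$, and under the resulting identification $\textnormal{Hom}_R(B^k,B_k)\cong R$ the symmetry condition $\gamma=(-1)^k\epsilon\gamma^*$ becomes $z=(-1)^k\epsilon\bar z$; thus the numerator in Proposition \ref{Q-2k} is the additive subgroup of $(-1)^k\epsilon$-symmetric elements of $R$. Similarly $\textnormal{Hom}_R(B^k,B_{k+1})\cong\textnormal{Hom}_R(R^*,I)\cong I$, and under these identifications $\beta\mapsto d\beta$ corresponds to the inclusion $I\hookrightarrow R$, so $\textnormal{im}\bigl((1+(-1)^kT)d\bigr)$ is the subgroup $\{\,b+(-1)^k\epsilon\,\bar b\mid b\in I\,\}$. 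Finally one has to compute $d^{\%}\colon\alpha\mapsto d\alpha d^*$: evaluating at $\textnormal{id}_R\in B^k$ gives $\alpha(\iota)$, where $\iota\in I^*$ is the inclusion, and running $\alpha$ over the $(-1)^k\epsilon$-symmetric $R$-homomorphisms $I^*\to I$ one finds that $d^{\%}$ hits precisely the $(-1)^k\epsilon$-symmetric elements of the product ideal $I\cdot I=I^2$.

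With these identifications Proposition \ref{Q-2k} presents $Q^{2k}(B(k,d))$ as the quotient of the $(-1)^k\epsilon$-symmetric elements of $R$ by the subgroup generated by $\{\,b+(-1)^k\epsilon\,\bar b\mid b\in I\,\}$ together with the symmetric part of $I^2$, and the rest of the argument is ring theory. One notes that $2\in I$, hence $2I\subseteq I^2$ and so $I^2+2I=I^2$ (the $2I$ is kept only for emphasis); and that for every $r\in R$ the element $r-(-1)^k\epsilon\bar r$ lies in $I^2+2I$, which follows by linearity from the identity $g-g^{-1}=(1+g^{-1})(g-1)=2(g-1)-(1-g^{-1})(g-1)\in 2I+I^2$ on group elements. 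Consequently the inclusion of the $(-1)^k\epsilon$-symmetric elements of $R$ into $R$ induces a surjection onto $R/(I^2+2I)$ whose kernel is exactly the symmetric part of $I^2$, i.e. the subgroup being quotiented out; the first isomorphism theorem then gives $Q^{2k}(B(k,d))\cong R/(I^2+2I)$, and one checks that the isomorphism $\phi\mapsto\phi_0+d\phi_1$ of Proposition \ref{Q-2k} is carried to this identification.

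The step I expect to be the real obstacle is the computation of the image of $d^{\%}\colon\alpha\mapsto d\alpha d^*$ in the second paragraph. Because the ideal $I$ is \emph{not} a projective $R$-module, the slant isomorphism of Proposition \ref{slant-map} is not available for $B_{k+1}=I$, so one cannot simply identify $\textnormal{Hom}_R(I^*,I)$ with $I\otimes_R I$; instead one must either keep the explicit f.g. free two-term complex of the statement throughout (and check that the $Q$-group in question only depends on this truncation) or compute $\textnormal{Hom}_R(I^*,I)$ and the symmetrisation map directly. The remaining difficulty is purely bookkeeping: keeping the involution and the signs $(-1)^k\epsilon$ consistent across all the identifications.
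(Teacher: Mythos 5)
Your overall route is the same as the paper's: specialise Proposition \ref{Q-2k} to $B_{k+1}=I\hookrightarrow B_k=R$ and identify the terms. Several of your refinements are sound and are in fact sharper than the paper's own write-up: $2\in I$, so $2I\subseteq I^2$ and $I^2+2I=I^2$; the $\beta$-relations are exactly $\{b+(-1)^k\epsilon\,\bar b\mid b\in I\}$ rather than literally ``$2I$''; and you are right that $I$ is not projective, so the identification $\textnormal{Hom}_R(I^*,I)=I\otimes_RI=I^2$, which the paper simply asserts, needs an argument (you flag this but do not supply it, so that part of your proposal is also incomplete). The genuine break, however, is in your final ``ring theory'' step.

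The congruence $r-(-1)^k\epsilon\,\bar r\in I^2+2I$ only says that $r$ is symmetric \emph{modulo the ideal}; it does not produce a symmetric element of $R$ congruent to $r$, so it does not give the surjection of the $(-1)^k\epsilon$-symmetric elements onto $R/(I^2+2I)$ that your first-isomorphism-theorem argument needs. Worse, that surjectivity is false in general. Take $\pi=\zz$, $R=\zz[t,t^{-1}]$ with $\bar t=t^{-1}$ and $(-1)^k\epsilon=+1$, so $I=(2,\,t-1)$ and $I^2+2I=I^2=(4,\,2(t-1),\,(t-1)^2)$. Every element of $I^2$ has derivative at $t=1$ even, while every symmetric $s$ (i.e. $s(t)=s(t^{-1})$) has $s'(1)=0$; hence if $t\equiv s\bmod I^2$ with $s$ symmetric then $1=t'(1)\equiv s'(1)=0\bmod 2$, a contradiction. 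So the class of $t$ in $R/(I^2+2I)$ is not represented by any symmetric element, the symmetric elements only inject a proper subgroup into $R/(I^2+2I)$, and your concluding isomorphism does not follow. (In the skew case $(-1)^k\epsilon=-1$ even your displayed membership fails: $1+\bar 1=2\notin I^2+2I$, since every element of $I^2+2I$ has augmentation divisible by $4$.) The classes your symmetric model cannot see are exactly those carried by the $d\phi_1$-term in the isomorphism $\phi\mapsto\phi_0+d\phi_1$ of Proposition \ref{iso-ring-Q}: in a two-term complex $\phi_0$ is symmetric only up to the chain homotopy $\phi_1$, and the non-symmetric correction $d\phi_1\in I$ is essential. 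So the proof cannot be completed by first replacing the $Q$-group with a quotient of honestly symmetric elements of $R$ and then doing ring theory; you must keep the pair $(\phi_0,\phi_1)$ (equivalently, the full relative term with both the $d\alpha d^*$- and the $(1+(-1)^kT)d\beta$-relations) all the way to the end, which is what the paper's proof, however tersely, is doing when it reads the answer $R/(I^2+2I)$ straight off that presentation.
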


\begin{proof}

The ring $R=\bb{Z}[\pi]$ is a ring with involution. This involution extends to the involution on $R/I$. The ideal $I$ is a two sided ideal which is invariant under the involution, so that $I^*=I$.
Note that by the definition of the ideal $I=\textnormal{Ker}(\bb{Z}[\pi] \to \bb{Z}_2)$ we can form an exact sequence of $\bb{Z}[\pi]$ modules,
$$0 \to  I=\textnormal{Ker}(\bb{Z}[\pi] \to \bb{Z}_2) \xrightarrow{d} R=\bb{Z}[\pi] \to \bb{Z}_2 \to 0,$$
so that $R/I= \bb{Z}_2$.
Also note that the differential in the chain complex $$B(k, d): \dots \to 0 \to B_{k+1}=I \overset{d}{\rightarrow} B_k=R \to 0 \to \dots$$ is now given by the inclusion of the ideal $I$ into the ring $R$.
Using the isomorphism described in Proposition \ref{Q-2k}, we know that
$$
\begin{array}{l}
Q^{2k}  \left(\begin{tabular}{c} $B_{k+1}=I$ \\ $\downarrow^d$ \\ $B_{k}=R$ \end{tabular} \right) \overset{\cong}{\longrightarrow} \\
\dfrac{\gamma= (-)^k\gamma^* \in \textnormal{Hom}_{\bb{Z}[\pi]} (\bb{Z}[\pi], \bb{Z}[\pi])}{\{d\alpha d^* + (1+(-1)^kT)d \beta \vert \alpha= (-)^k\alpha^* \in \textnormal{Hom}_{\bb{Z}[\pi]} (I^*, I), \beta \in \textnormal{Hom}_{\bb{Z}[\pi]} (\bb{Z}[\pi], I)\}} \\
\phi \mapsto \phi_0  + d\phi_1 .
\end{array}
$$

In this expression we have
$$\gamma= (-)^k\gamma^* \in \textnormal{Hom}_{\bb{Z}[\pi]} (\bb{Z}[\pi], \bb{Z}[\pi]) = \bb{Z}[\pi] \otimes_{\bb{Z}[\pi]} \bb{Z}[\pi] = \bb{Z}[\pi]=R.$$
Also,
$$\alpha = (-)^k\alpha^* \in \textnormal{Hom}_{\bb{Z}[\pi]} (I^*, I) = I \otimes_{\bb{Z}[\pi]} I =(I^2).$$

Since $d$ is the inclusion of the ideal $I$ into the ring $R$, then an element in the image of $d$ is an element in $I$,
so $d \beta \in I$ and $(1+(-1)^kT)d \beta \in (2I)$.
Hence,
$$Q^{2k}  \left(\begin{tabular}{c} $B_{k+1}=I$ \\ $\downarrow^d$ \\ $B_{k}=R$ \end{tabular} \right) \overset{\cong}{\longrightarrow} R / (I^2+2I). $$
\end{proof}

\raggedbottom


\subsection{Equivariant Pontryagin squares: definition}

The notion of equivariant Pontryagin square was introduced in \cite{Korzen} for even cohomology classes. Here we extend this notion to prove that the equivariant Pontryagin squares can also be defined on odd cohomology classes.

Let $R=  \bb{Z}[\pi]$ be a ring with involution and $I$ the $\zz_2$ augmentation ideal $I=\textnormal{Ker}(\bb{Z}[\pi] \to \bb{Z}_2)$ as in the previous section. We now define a $R$-module chain map with C a $2k$-dimensional chain complex $\bb{Z}[\pi] = R$-module, and $B(k, d_R) : \dots 0 \to B_{k+1} = I \to B_k = R \to 0 \to \dots$
$$
\xymatrix{
C: \ar[d]_{g = (u, v)} &\dots \ar[r]^{d_C} &C_{k+1} \ar[d]_{u} \ar[r]^{d_C} & C_{k} \ar[d]_{v} \ar[r]^{d_C} &C_{k-1}\ar[d] \ar[r]^{d_C} & \dots \\
B(k, d_R): &\dots  \ar[r] & I \ar[r]^{d_R=i} & \hspace{5pt} R  \ar[r] & 0 \ar[r] & \dots \\
}
$$
This chain complex map is defined by a cocycle $u \in \textnormal{Ker}(d^*_C: C^{k+1} \rightarrow C^{k+2})$ and  a cochain $ d^*_C(v) \in \textnormal{Im}(d^*_C: C^{k}\to C^{k+1})$ such that $d_C^*v = d_Ru$.

An element $[x] \in H^k(C; \bb{Z}_2)$ is given by the composite morphism
$$[x] : C_k \xrightarrow{v} \bb{Z} \to \bb{Z}_2 = R/I$$
so it is a cocycle of $\textnormal{Hom}_{\bb{Z}[\pi]}(C_k, \bb{Z}_2),$
$$[x] \in \textnormal{Ker}(d^* : \textnormal{Hom}_{\bb{Z}[\pi]}(C_k, \bb{Z}_2) \to \textnormal{Hom}_{\bb{Z}[\pi]}(C_{k+1}, \bb{Z}_2)). $$
Then there is an isomorphism,
\begin{align*}
H^k(C; \bb{Z}_2) &\to H_0(\textnormal{Hom}_{\bb{Z[\pi]}}(C, B(k, d_R))) \\
[x] & \to (u, v).
\end{align*}

\begin{definition} \label{equivariant-Pont} Let $R$ be the ring $R = \bb{Z}[\pi]$ and $I$ be the ideal $I = \textnormal{Ker}(\bb{Z}[\pi] \to \bb{Z}_2)$.
The $R/I$-coefficient Pontryagin square of a $2k$-dimensional symmetric complex $(C, \phi)$ over $R=\bb{Z}[\pi]$ is the function
\begin{align*}
\mc{P}_{d_R} : H^{k}(C; \bb{Z}_2)\to Q^{2k}(B(k, d_R))= R/(I^2+2I) ;\\
(u, v) \mapsto (u, v)^\%(\phi) = \phi_0(v, v) + d_R \phi_1(v, u)
\end{align*}
with $(v, u) \in C^k \oplus C^{k+1}$ such that $d_C^*(v) = d_Ru$, $d_C^*(u)=0 \in C^{k+2}$.
\end{definition}

\begin{remark}
Note that in definition \ref{equivariant-Pont} $k$ can be odd.
\end{remark}

\chapter{The Signature, the Pontryagin square, the Brown-Kervaire and Arf invariants in topology} \label{Morita-chain-cx}

\begin{flushright}

\textit{``Any good theorem should have several proofs, the more the better. For two reasons: usually, different proofs have different strengths and weaknesses, and they generalise in different directions, they are not just repetitions of each other."}
\end{flushright}

\begin{flushright}
\textit{Sir Michael Atiyah}
\end{flushright}

\vspace{10pt}

In this chapter we will give chain complex proofs of  the main results in \cite{Morita}. We shall prove:
\begin{itemize}
\item Morita's Theorem (\cite[theorem~1.1]{Morita}) which gives the mod $8$ relation between the signature of a $4k$-dimensional Poincar\'e space $X$ and the Brown-Kervaire invariant,
$$\sigma (X)  = \textnormal{BK}(H^{2k}(X; \mathbb{Z}_2), \lambda, \mc{P}_2) \in \zz_8.$$
(See theorem \ref{Morita}).
\item the mod $4$ relation between the signature and Pontryagin square,
$$\sigma(X) = \langle \mc{P}_2(v_{2k}), [X]\rangle \in \zz_4. $$
 This will be proved in the first section in this chapter. This proof is just the mod $4$ reduction of the expression for the signature given in \cite[Proposition 52, Corollary 61]{BanRan}, which we discussed in section \ref{Twisted-Q-groups}.
\end{itemize}
The contents of section \ref{obstructions} in this chapter were not considered in \cite{Morita}. In that section we shall describe certain invariants which detect when the signature is divisible either by $4$ or by $8$, and in particular in subsection \ref{BK-Arf-in-topology} we state the relation between the Brown-Kervaire and Arf invariants in topology.

\section{The signature modulo $4$ and the Pontryagin square} \label{first-section}

Morita \cite{Morita} proved that there exists a mod $4$ relation between the signature of a $4k$-dimensional Poincar\'e space and the Pontryagin square.  Here we give a new proof of this result by proving its algebraic analogue. In \cite{Morita} this result is stated geometrically as follows,

\begin{proposition} (\hspace{-1pt}\cite[Proposition 2.3]{Morita})
Let $X^{4k}$ be an oriented Poincar\'e complex and $\mc{P}_2: H^{2k}(X ; \bb{Z}_2) \to \bb{Z}_4$ be the Pontryagin square, then
$$\sigma(X) = \langle \mc{P}_2 (v_{2k}), [X] \rangle \in \zz_4, $$
where $v_{2k} \in H^{2k}(X ; \bb{Z}_2)$ is the $2k$-th Wu class of $X.$
\end{proposition}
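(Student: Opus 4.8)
The plan is to deduce this as the mod $4$ reduction of the mod $8$ signature formula of Theorem \ref{sign-theta}. Given the $4k$-dimensional oriented Poincar\'e complex $X$, form the $4k$-dimensional symmetric Poincar\'e complex $(C,\phi) = (C(\wtX),\phi_X)$ over $\zz[\pi_1(X)]$ with symmetric signature $\sigma^*(X) \in L^{4k}(\zz[\pi_1(X)])$; since the ordinary signature only sees the image in $L^{4k}(\zz) = \zz$ under $\zz[\pi_1(X)] \to \zz$, I may work throughout with $(C(X),\phi_X)$ over $\zz$, a $4k$-dimensional symmetric Poincar\'e complex whose signature is $\sigma(X) \in \zz$. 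As recalled in Section \ref{chain-bundles}, such a complex carries a unique equivalence class of normal structures $(\gamma,\theta)$, and because $X$ is Poincar\'e the associated normal signature $\ws(C,\phi,\gamma,\theta) \in \widehat{L}^{4k}(\zz) = \zz_8$ is exactly the reduction of $\sigma(X)$ modulo $8$.

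First I would apply Theorem \ref{sign-theta}, which gives
\[
\ws(C,\phi,\gamma,\theta) = \phi_0(v,v) + 2\phi_1(v,u) + 4\theta_{-2}(u,u) \in \zz_8,
\]
where $(u,v) \in H_0(\textnormal{Hom}_{\zz}(C,B(k,1)))$ is a chain map representing the $2k$-th Wu class $v_{2k} \in H^{2k}(C;\zz_2)$. Reducing modulo $4$ annihilates the last term, so
\[
\sigma(X) \equiv \phi_0(v,v) + 2\phi_1(v,u) \pmod{4}.
\]
By Definition \ref{algebraic-Pont}, the right-hand side is precisely the value $\mc{P}_2(\phi)(u,v)$ of the algebraic $\zz_2$-coefficient Pontryagin square of $(C,\phi)$ on the class $[v] = v_{2k} \in H^{2k}(C;\zz_2)$.

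It then remains to translate this algebraic identity back into topology, which requires two compatibilities. First, the algebraic Wu class $v_{2k}(\phi_X)$ of the symmetric structure (equivalently the Wu class $v_{2k}(C,\gamma)$ of the associated chain bundle, these agreeing in the Poincar\'e case) coincides with the topological Wu class $v_{2k}(X)$; this is the content of the mod $2$ reduction of the symmetric construction recalled in Section \ref{cupi}, namely $v_i(\phi_X(x))(y) = \langle Sq^i(y),x\rangle$, together with Definition \ref{algebraic-Wu}. Second, the algebraic Pontryagin square $\mc{P}_2(\phi_X)$ on $H^{2k}(X;\zz_2)$ agrees with $x \mapsto \langle \mc{P}_2(x),[X]_4\rangle$ for the topological Pontryagin square, where $[X]_4$ is the mod $4$ fundamental class; this is the commutative diagram in the Remark following Definition \ref{algebraic-Pont}. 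Combining the two identifications yields $\sigma(X) \equiv \langle \mc{P}_2(v_{2k}),[X]\rangle \in \zz_4$, as claimed.

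The main obstacle I expect is not the mod $4$ reduction itself, which is immediate, but the bookkeeping in the two compatibilities above: in particular, checking that the chain map $(u,v)$ produced from the classifying map $(f,\chi)\colon (C,\gamma) \to (B(\infty),\beta(\infty))$ in the proof of Theorem \ref{sign-theta} really does represent the same Wu class appearing in Morita's statement, and that the duality used to evaluate on $[X]$ (via the slant map and the Poincar\'e chain equivalence $\phi_0\colon C^{4k-*} \to C$) matches the normalisation $\langle -,[X]_4\rangle$ on the topological side. Once these identifications are in place, the proposition is just the displayed mod $4$ reduction.
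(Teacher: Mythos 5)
Your proof is correct and follows essentially the same route as the paper: the paper also deduces the statement by proving its algebraic analogue (Proposition \ref{signature-Psq}), i.e.\ by reducing the mod $8$ formula of Theorem \ref{sign-theta} modulo $4$ so that the $4\theta_{-2}(u,u)$ term vanishes and identifying the remainder $\phi_0(v,v)+2\phi_1(v,u)$ with the algebraic Pontryagin square evaluated on the Wu class. The compatibilities you flag between the algebraic and topological Wu class and Pontryagin square are exactly what the paper leaves implicit (via Definition \ref{algebraic-Wu} and the remark following Definition \ref{algebraic-Pont}), so there is no essential difference in approach.
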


The algebraic analogue of this result is stated as follows, using the algebraic  Wu class (defined in \ref{algebraic-Wu}):
\begin{proposition} \label{signature-Psq}
Let $(C, \phi)$ be a $4k$-dimensional symmetric Poincar\'e complex over $\mathbb{Z}$, then
$$\sigma(C, \phi) = \mc{P}_2(v_{2k})\in \zz_4,$$
where  $v_{2k} \in H^{2k}(C ; \bb{Z}_2)$ is the $2k$-th algebraic Wu class of  $(C, \phi).$
\end{proposition}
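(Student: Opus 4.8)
\medskip
\noindent\textbf{Proof proposal.}
The plan is to obtain this as the mod $4$ reduction of the mod $8$ normal-signature formula of Theorem \ref{sign-theta}, after identifying the relevant term with the algebraic Pontryagin square of the Wu class. First I would recall that a $4k$-dimensional symmetric Poincar\'e complex $(C,\phi)$ over $\zz$ carries a canonical equivalence class of normal structures $(\gamma,\theta)$, making $(C,\phi,\gamma,\theta)$ a $4k$-dimensional algebraic normal complex over $\zz$, with $[\gamma]\in\widehat{Q}^0(C^{0-*})$ the image of $[\phi]$ under $Q^{4k}(C)\xrightarrow{J}\widehat{Q}^{4k}(C)\xrightarrow{((\widehat{\phi}_0)^{\%})^{-1}}\widehat{Q}^{4k}(C^{4k-*})\xrightarrow{(S^{4k})^{-1}}\widehat{Q}^0(C^{0-*})$. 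The normal signature $\widehat{\sigma}(C,\phi,\gamma,\theta)\in\widehat{L}^{4k}(\zz)=\zz_8$ is then the mod $8$ reduction of the genuine signature $\sigma(C,\phi)\in L^{4k}(\zz)=\zz$, this being the algebraic counterpart of the fact (recalled in the section on normal complexes) that for a Poincar\'e space the normal signature equals $\sigma\pmod 8$, and following from the map $J:L^{4k}(\zz)\to\widehat{L}^{4k}(\zz)$ being reduction mod $8$ under the standard identifications and carrying $(C,\phi)$ to $(C,\phi,\gamma,\theta)$.

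Next I would apply Theorem \ref{sign-theta}. Choose a chain map $(u,v):C\to B(k,1)$ representing the $2k$-th Wu class $v_{2k}\in H^{2k}(C;\zz_2)$, with $v\in C^{2k}$, $u\in\ker(d^*:C^{2k}\to C^{2k+1})$ and $d^*v=2u$. Theorem \ref{sign-theta} gives
$$\sigma(C,\phi)\;=\;\widehat{\sigma}(C,\phi,\gamma,\theta)\;=\;\phi_0(v,v)+2\phi_1(v,u)+4\theta_{-2}(u,u)\;\in\;Q_{4k}(B(k,1),\beta(k,1))=\zz_8.$$
Reducing this identity modulo $4$ (equivalently, passing from $Q_{4k}(B(k,1),\beta(k,1))=\zz_8$ to $Q^{4k}(B(2k,2))=\zz_4$, which drops the hyperquadratic term) kills $4\theta_{-2}(u,u)$ and leaves $\sigma(C,\phi)\equiv\phi_0(v,v)+2\phi_1(v,u)\pmod 4$. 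Finally, by the definition of the algebraic $\zz_2$-coefficient Pontryagin square (Definition \ref{algebraic-Pont}) applied to the chain map $(u,v)$ representing $v_{2k}$,
$$\mc{P}_2(v_{2k})\;=\;(u,v)^{\%}(\phi)\;=\;\phi_0(v,v)+2\phi_1(v,u)\;\in\;Q^{4k}(B(2k,2))=\zz_4,$$
whence $\sigma(C,\phi)=\mc{P}_2(v_{2k})\in\zz_4$.

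The step I expect to require the most care is the bookkeeping around the Wu class: one must verify that the class $v_{2k}$ appearing in Theorem \ref{sign-theta}, which classifies the chain-bundle Wu class $v_{2k}(\gamma)$ of the canonical normal structure, agrees with the algebraic Wu class $v_{2k}(\phi)$ of Definition \ref{algebraic-Wu} entering the statement of the present proposition. For a symmetric Poincar\'e complex this holds because the Poincar\'e duality chain equivalence $\phi_0:C^{4k-*}\to C$ intertwines the Wu class of the symmetric structure $\phi$ with the Wu class of the chain bundle $\gamma$ determined by $[\phi]$ as above; this is exactly what makes the canonical normal structure canonical, and is implicitly used already in Theorem \ref{sign-theta}. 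Once this identification is made explicit, everything else is the one-line reduction modulo $4$ carried out above, together with matching the two descriptions of the chain complex $B(k,1)\simeq B(2k,2)$ and the compatibility of the maps $(u,v)^{\%}$ on $\zz_8$ and $\zz_4$.
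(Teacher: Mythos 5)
Your proposal is correct and follows essentially the same route as the paper's own proof: invoke Theorem \ref{sign-theta}, use that for a Poincar\'e complex the hyperquadratic signature is the signature mod $8$, and reduce mod $4$ to kill the $4\theta_{-2}(u,u)$ term, identifying what remains with the algebraic Pontryagin square evaluated on the Wu class. Your extra care about matching the chain-bundle Wu class with the algebraic Wu class of $\phi$, and the notational reconciliation of $B(k,1)$ with $B(2k,2)$, only makes explicit what the paper leaves implicit.
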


\begin{proof}
The proof of this proposition is a direct consequence of Theorem \ref{sign-theta}.
The chain complexes in this proposition are Poincar\'e so the hyperquadratic signature (also referred to in the literature as mod $8$ signature)  coincides with the signature reduced modulo $8$ and we can simply write the relation in Theorem \ref{sign-theta}
$$\widehat{\sigma}(C, \phi, \gamma, \theta) = \phi_0(v, v) + 2 \phi_1(v, u) + 4 \theta_{-2}(u, u) \in Q_{4k}(B(k, 1), \beta(k, 1)) =  \bb{Z}_8 $$
as
$$\sigma(C, \phi) = \phi_0(v, v) + 2 \phi_1(v, u) + 4 \theta_{-2}(u, u) \in \bb{Z}_8. $$
Reducing modulo $4$ we obtain,
$$
\xymatrix{
\sigma (C, \phi)=  \phi_0(v, v) + 2 \phi_1(v, u) + 4 \theta_{-2}(u, u) \ar@{|->}[d] & \in & Q_{4k}(B(k, 1), \beta(k, 1)) = \bb{Z}_8\ar[d]^{\textnormal{mod} 4} \\
\mc{P}_2(\nu_{2k}) = \phi_0(v, v) + 2 \phi_1(v, u) & \in & Q_{4k}(B(k, 1)) = \bb{Z}_4.}
$$

Hence the result follows.

\end{proof}

\subsection{A condition for divisibility by of the signature by $4$}

In \cite{Morita} the following corollary is proved.
\begin{corollary}\label{sign-pont-zero} (\hspace{-1pt}\cite[corollary 1.2]{Morita}) Let $X$ be a $4k$-dimensional oriented Poincar\'e space, then $\sigma(X) = 0 \in \zz_4$ if and only if
$$\mc{P}_2(v_{2k}) = 0 \in \bb{Z}_4.$$
Here $v_{2k}$ is the $2k$-th Wu class of $M$.
\end{corollary}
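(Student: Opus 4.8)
The plan is to deduce this directly from the mod $4$ signature formula just established. The corollary is the ``if and only if'' repackaging of Proposition \ref{signature-Psq} (equivalently of \cite[Proposition 2.3]{Morita}), once one observes that $\mc{P}_2(v_{2k})$ is a single element of $\zz_4$ and that ``$= 0 \in \zz_4$'' is a statement about that element. So the argument is essentially a one-line implication in each direction, and the real work has already been done.

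First I would apply the symmetric construction of Section \ref{symm} to the $4k$-dimensional geometric Poincar\'e space $X$, producing the $4k$-dimensional symmetric Poincar\'e complex $(C(X), \phi_X)$ over $\zz$ with $\sigma(C(X), \phi_X) = \sigma(X) \in \zz$. Next I would invoke Proposition \ref{signature-Psq}, which gives
$$\sigma(X) = \sigma(C(X), \phi_X) = \mc{P}_2(v_{2k}) \in \zz_4,$$
where $v_{2k} \in H^{2k}(C(X); \zz_2) = H^{2k}(X; \zz_2)$ is the algebraic Wu class of $(C(X), \phi_X)$; by the discussion following Definition \ref{algebraic-Pont} and the identification of the algebraic Wu class with the topological one, this is precisely $\langle \mc{P}_2(v_{2k}), [X]\rangle$ in the notation of \cite[Corollary 1.2]{Morita}. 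Then the equivalence is immediate: $\sigma(X) = 0 \in \zz_4$ if and only if the right-hand side $\mc{P}_2(v_{2k})$ is $0 \in \zz_4$.

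There is no substantive obstacle here; the only point requiring a word of care is the matching of the algebraic data with the topological data (that the algebraic Wu class $v_{2k}$ of $(C(X),\phi_X)$ agrees with the Wu class $v_{2k}(X) \in H^{2k}(X;\zz_2)$, and that the algebraic Pontryagin square evaluated on it equals $\langle \mc{P}_2(v_{2k}(X)), [X]\rangle$), which is exactly the content of Definition \ref{algebraic-Wu}, the naturality of the symmetric construction, and the remark after Definition \ref{algebraic-Pont}. Granting those identifications, the corollary follows at once.
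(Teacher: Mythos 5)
Your proposal is correct and matches the paper's own treatment: the corollary is deduced immediately from the mod $4$ formula $\sigma(X)=\mc{P}_2(v_{2k})\in\zz_4$ of Proposition \ref{signature-Psq} (the chain complex version of \cite[Proposition 2.3]{Morita}), with the symmetric construction and the identification of the algebraic Wu class and Pontryagin square with their topological counterparts supplying the translation. The paper's proof of Corollary \ref{Pontryagin-zero} is exactly this one-line consequence, so there is nothing further to add.
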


As before we give the algebraic analogue of this corollary.

\begin{corollary} \label{Pontryagin-zero}
Let $(C, \phi)$ be a $4k$-dimensional symmetric Poincar\'e complex, then $\sigma(C, \phi) = 0  \in \zz_4$ if and only if,
$$\mc{P}_2(v_{2k}) = 0 \in \bb{Z}_4$$
where $v_{2k}$ is the  $2k$-th algebraic Wu class of the symmetric chain complex $(C, \phi)$.
\end{corollary}

\begin{proof}
The proof of this corollary is now a direct consequence of Proposition \ref{signature-Psq}.
\end{proof}
This corollary is specially interesting in the case of a fibration:
From \cite[Theorem A]{modfour} we know that for a fibration $F \to E \to B$, the signature is multiplicative modulo $4$,
$$\sigma(E) - \sigma(F) \sigma(B) =0 \in \zz_4.$$
In the case of a fibration with dimensions of the base and fibre congruent to $2$ modulo $4$,
\begin{equation}\label{sqsym}
F^{4i+2} \to E \to B^{4j+2},\end{equation} we know that for dimension reasons the signatures of base and fibre is $0$, so that the signature of the total space is always divisible by $4$,
$$\sigma(E) = 0 \in \zz_4.$$
Thus by Corollary \ref{sign-pont-zero}, a property of bundles of the form $F^{4i+2} \to E \to B^{4j+2}$ is that
$$\langle \mc{P}_2(v(\nu_E)), [E] \rangle=0 \in \zz_4.$$

When the fibration has dimensions $F^{4j} \to E^{4k} \to B^{4i}$ we have that $\sigma(E) - \sigma(F) \sigma(B) =0 \in \zz_4,$ and hence
$$\langle \mc{P}_2(v), [E \sqcup -F \times B] \rangle =0 \in \zz_4,$$
where $v$ is the Wu class $v \in H^{2k}(E;\zz_2) \oplus H^{2k}(F \times B).$

\section{The signature modulo $8$ and the Brown-Kervaire invariant}

\subsection{The Brown-Kervaire invariant and quadratic linking forms}\label{BK-and-L}

In section \ref{Morita-theorem} we shall give a proof of  \cite[theorem 1.1]{Morita} using chain complex methods. This theorem gives a relation between the signature mod $8$ and the Brown-Kervaire invariant of a linking form over $(\zz, (2)^{\infty})$.

\begin{definition}\label{Brown-linking}
The Brown-Kervaire invariant $\textnormal{BK}(T, b, q) \in \zz_8$ of a nonsingular quadratic linking form $(T, b, q)$ is defined using a Gauss sum
$$\sum_{x \in T} i^{q(x)} = \sqrt{\vert T \vert} e^{2 \pi i \textnormal{BK}(T, b, q)/8}.$$
\end{definition}
 \begin{remark} Note that this essentially is the same expression as that of Definition  \ref{Gauss-sum-formula}, which gives the Brown-Kervaire invariant of \cite{Brown}. In \cite{Brown} $T$ is required to be a $\zz_2$-vector space.  The more general application of this Gauss sum is given  \cite[page 109]{Brum-Morg}.
\end{remark}

Here we will discuss the relation between the signature modulo $8$, the hyperquadratic signature and the Brown-Kervaire invariant. Important references here are \cite{exactseqRan},  \cite[section 4]{BanRan} and \cite{Mod8}.

We start by giving some general definitions about linking forms over $(R, S)$ where $R$ is a ring with involution and  $S\subset R$ is a multiplicative subset of central nonzero divisors.

The following are standard definitions from \cite{exactseqRan}.
\begin{definition}
\begin{itemize}
\item[(i)] \textbf{An $(R, S)$-module} is an $R$-module $T$ with a one-dimensional f.g. projective $R$-module resolution
$$0 \to P \xrightarrow{d} Q \to T \to 0$$
with $S^{-1}d : S^{-1}P \to S^{-1}Q$ is an $S^{-1}R$-module isomorphism, and $$S^{-1}T = 0. $$

\item[(ii)] The \textbf{dual} of an $(R, S)$-module $T$ is given by
\begin{align*}
T\widehat{{\color{White}a}} & = \textnormal{Ext}^1_R(T, R) \\
                                           &= \textnormal{Hom}_R(T, S^{-1}R/R) \\
                                          &=  \textnormal{coker}(d^*: Q^* \to P^*).
\end{align*}

\end{itemize}
\end{definition}
\begin{definition}
\begin{itemize}
\item[(i)]
A \textbf{ $\epsilon$-symmetric linking form $(T, b)$ over $(R, S)$ } is an $(R, S)$-module $T$ together with $b \in \textnormal{Hom}_R(T, \T)$, such that $b \widehat{{\color{White}a}} = \epsilon b$, so that
$$\overline{b(x, y)} = \epsilon b(y, x) \in S^{-1}R/R  ~~~ (x, y \in T)$$

\item[(ii)] A \textbf{lagrangian for $(T, b)$} is an $(R, S)$ module $U \subset T$ such that
$$U^{\perp} = \left\{x \in T \vert b(x)(U) = 0 \in S^{-1}R/R \right\}=U, $$
and the sequence
$$0 \to U \to T \to \U  \to 0$$
is exact, where $\U$ is the dual of $U$.
\end{itemize}
\end{definition}

\begin{definition}
\begin{itemize}
\item[(i)] An \textbf{  $\epsilon$-quadratic linking form $(T, b, q)$ over $(R, S)$}  is an $\epsilon$-symmetric linking form $(T, b)$ together with a function
$$q: T \to Q_{\epsilon}(R, S) = \frac{\{ c \in S^{-1}R \vert \epsilon \overline{c}= c\}}{\{a+ \epsilon \overline{a} \vert a \in R \}}$$
such that
$$
\begin{array}{l}
q(ax) = a q(x) \overline{a} \\
q(x+y) = q(x)+ q(y) + b(x, y) + b(y, x) \\
q(x) = b(x,x) \in S^{-1}R/R.
\end{array}
$$

\item[(ii)] A \textbf{lagrangian} $U$ for the nonsingular quadratic linking form $(T, b, q)$ is a lagrangian of the nonsingular form $(T, b)$, such that $q$ restricts to $0$ on $U$.
\end{itemize}
 \end{definition}

 $L_0(R, S)$ is the cobordism group of quadratic $(R, S)$-module linking forms, and $L^0(R,S)$ is the cobordism group of symmetric $(R, S)$-module linking forms. These groups fit into the localization exact sequences

\begin{equation}\label{exact2}
\dots \to L^n(R) \to L^n(S^{-1}R) \to L^n(R, S) \to L^{n-1}(R) \to \dots
\end{equation}
 \begin{equation}\label{exact3}\dots \to L_n(R) \to L_n(S^{-1}R) \to L_n(R, S) \to L_{n-1}(R) \to \dots
\end{equation}
where $S^{-1}R$ is the localized ring with involution obtained from $R$ by inverting $S$. These exact sequences are defined in detail in \cite{exactseqRan}.

We shall only be concerned with the case $1/2 \in S^{-1}R$
\begin{example} $R=\zz$ and $S=(2)^{\infty}$, $1/2 \in S^{-1}R$.
\end{example}
With $1/2 \in S^{-1}R$ there is an isomorphism $ L_*(S^{-1}R) \cong L^*(S^{-1}R)$, so there is a braid of exact sequences,

\noindent\hspace{2cm}\textcolor{white}{W}\\[0ex]
\[ \xymatrix@C-11pt@R-15pt{
 &&L_0(R) \ar@/^1.7pc/@[Blue]@[thicker][rr] \ar@[Red]@[thicker][dr] &&
L_0(S^{-1}R)\cong L^0(S^{-1}R) \ar@[Blue]@[thicker][dr] \ar@[Green]@/^1.7pc/@[thicker][rr]&& L^0(R, S)
&& \\
&
&&L^0(R) \ar@[Green]@[thicker][ur] \ar@[Red]@[thicker][dr] &&
L_0(R, S)  \ar@[Blue]@[thicker][dr]   \ar@[Black]@[thicker][ur]  && \\
&&
L^1(R, S) \ar@/_1.7pc/@[thicker][rr] \ar@[Green]@[thicker][ur] &&
\widehat{L}^0(R) \ar@[thicker][ur]     \ar@[Red]@/_1.7pc/@[thicker][rr]&& L_{-1}(R)
  &&} \]

\medskip

\noindent\hspace{2cm}\textcolor{white}{W}

\begin{example} Let $R = \bb{Z}$, $S= (2)^{\infty}= \left\{2^i \vert i \geq 0 \right\} \subset \bb{Z}$ and $S^{-1}R = \bb{Z}[\frac{1}{2}]$. Here is the braid in this case.

\vspace{-60pt}
\begin{center}
\noindent%
\begin{minipage}[c]{0.05\linewidth}
{\color{White} a}
\end{minipage}
\begin{minipage}[c]{0.8\linewidth}
\begin{figure}[H]
\rotatebox{90}{
\begin{minipage}{\textheight}
\labellist
\small\hair 2pt
\pinlabel ${L_{4k}(\bb{Z})  \xrightarrow{\cong} \bb{Z}}$ at 80 205
\pinlabel {\tiny${(C, \phi) \mapsto \sigma(F^{2k}(C), \phi_0)/8}$} at 79 190
\pinlabel ${L^{4k}(\bb{Z})  \xrightarrow{\cong} \bb{Z}}$ at 180 125
\pinlabel {\tiny${(C, \phi) \mapsto \sigma(F^{2k}(C), \phi_0)}$} at 185 110
\pinlabel {\tiny{\tiny${\widehat{L}^{4k}(\bb{Z}) \xrightarrow{\cong} Q_{4k}(B, \beta)=\bb{Z}_8}$}} at 286 49
\pinlabel {\tiny${(C, \phi, \gamma, \theta) \mapsto (g, \chi)_{\%}(\phi, \theta)}$} at 286 35
\pinlabel {\tiny ${L_{4k}(\bb{Z}, (2)^{\infty})  \xrightarrow{\cong} \bb{Z}_2 \oplus \bb{Z}_8}$} at 398 125
\pinlabel \tiny {${(T, b, q) \mapsto (|T|_2, \textnormal{BK}(T, b, q))}$} at 401 110
\pinlabel \tiny {${\bb{Z}\left[\frac{1}{2}\right]}$} at 260 220
\pinlabel \tiny{${L^{4k}\left(\bb{Z}\left[\frac{1}{2}\right]\right)=L_{4k}\left(\bb{Z}\left[\frac{1}{2}\right]\right)}$} at 285 190
\pinlabel {\tiny$\zz$} at 521 220
\pinlabel \tiny {${L^{4k}(\bb{Z}, (2)^{\infty})  \xrightarrow{\cong} \bb{Z}_2}$} at 505 209
\pinlabel \tiny {${(T, b) \mapsto |T|_2}$} at 505 200
\pinlabel \tiny {${|T|_2=}$} at 475 192
\endlabellist
\centering
\includegraphics[width=\linewidth]{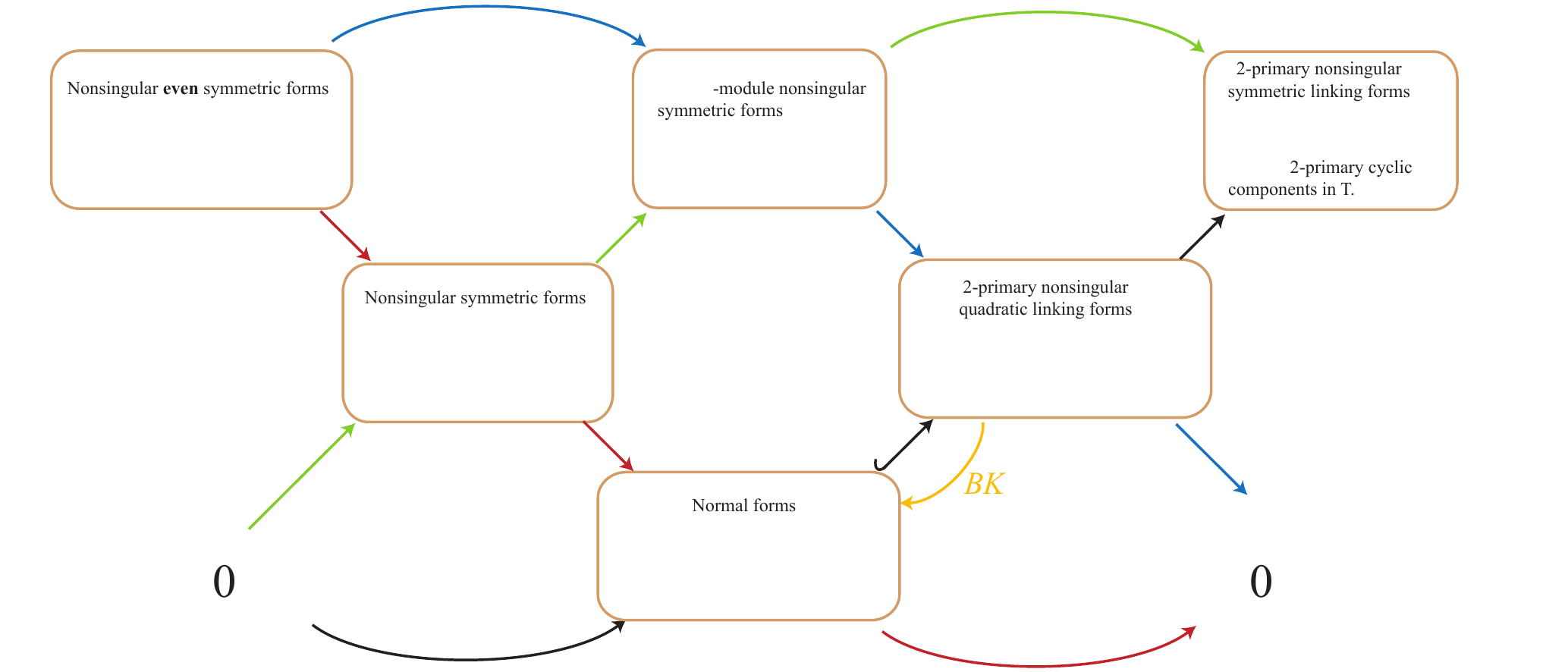}
\caption{close up of the localization braid}
\label{fig:braid}
\end{minipage}}
\end{figure}
\end{minipage}
\end{center}

\end{example}

The group $L_{4k}(\zz, (2^{\infty}))$ is the cobordism group of $(4k-1)$-dimensional $\zz[\frac{1}{2}]$-acyclic quadratic Poincar\'e complexes over $\zz$.
A quadratic $(4k-1)$-dimensional Poincar\'e complex determines a nonsingular quadratic linking form over $(\zz, (2)^{\infty})$, hence the group $L_{4k}(\zz, (2^{\infty}))$ is also the Witt group of nonsingular quadratic linking forms over $(\zz, (2^{\infty}))$.

Similarly the $L^{4k}(\zz, (2)^{\infty})$ is the cobordism group of $(4k-1)$-dimensional $\zz[\frac{1}{2}]$-acyclic symmetric Poincar\'e complexes over $\zz$. These have a one-one correspondence with symmetric linking forms over $(\zz, (2)^{\infty})$. So that  the group $L^{4k}(\zz, (2^{\infty}))$ is also the Witt group of nonsingular symmetric linking forms over $(\zz, (2^{\infty}))$.

An element $(T,b,q) \in \textnormal{ker}(L_0(\zz,(2)^{\infty}) \to L^0(\zz,(2)^{\infty}))$ is represented by a quadratic linking form $(T,b,q)$ such that $(T,b) \oplus (T',b') \cong (T'',b'')$ with $(T',b')$, $(T'',b'')$ symmetric linking forms which admit lagrangians $U' \subset T'$, $U'' \subset T''$. The Brown-Kervaire invariant determines the Witt class of $(T,b,q).$

If $(T,b,q)$ has a lagrangian $U$ (i.e. a lagrangian for $(T,b)$ such that $q$ is $0$ on $U$) then $(T,b,q)=0 \in L_{4k}(Z,(2)^{\infty})$ and $BK(T,b,q)=0 \in Z_8$.

As indicated in the braid above, the Brown-Kervaire invariant splits the sequence
$$
\xymatrix{\dots  \ar[r]& L^1(\zz, (2^{\infty}))=0 \ar[r] & \widehat{L}^0(\zz) \ar[r] & L_0(\zz, (2^{\infty})) \ar@/^1.1pc/[l]^{BK} \ar[r] & L^0(\zz, (2^{\infty})) \ar[r]& \dots
}
$$
so the Brown-Kervaire invariant of a nonsingular linking form determines the hyperquadratic signature of a normal complex. (See \cite{BanRan}).

A $4k$-dimensional normal complex can be expressed as a (symmetric, quadratic) pair which has as boundary a $(4k-1)$-dimensional quadratic Poincar\'e complex.

A $(4k-1)$-dimensional quadratic Poincar\'e complex $(C, \psi)$ over $\zz$ is always null-cobordant by a quadratic Poincar\'e pair $(C \to \delta C, (\delta \psi, \psi))$ since $L_{4k-1}(\zz) =0$.
From the $4k$-dimensional quadratic Poincar\'e pair $(C \to \delta C, (\delta \psi, \psi))$ we can construct a $4k$-dimensional {\bf quadratic} complex,
$$(\delta C/ C,  \delta \psi / \psi),$$
by the algebraic Thom construction (see \cite[proposition 1.15]{bluebook})

The symmetrization map $Q_{4k}(\delta C/C) \xrightarrow{1+T} Q^{4k}(\delta C/C)$  allows us to construct a $4k$-dimensional symmetric complex $(\delta C/ C,  (1+T)\delta \psi / \psi)$.

\begin{definition}\label{hyper-sign} (\hspace{-1pt}\cite{Mod8})
The hyperquadratic signature of a $(4k-1)$-dimensional quadratic Poincar\'e complex $(C, \psi)$ over $\zz$ is defined by the mod $8$ reduction of the  signature of the $4k$-dimensional symmetric complex $(\delta C/ C, (1+T) \delta \psi / \psi),$
$$\widehat{\sigma}^*(C, \psi) = \sigma^*(\delta C/ C, (1+T) \delta \psi / \psi) \in \zz_8. $$
\end{definition}
\begin{remark}
Note that the hyperquadratic signature of a $(4k-1)$-dimensional quadratic Poincar\'e complex $(C, \psi)$ over $\zz$ is a well defined homotopy invariant but not a cobordism invariant. This is the case because two different nullcobordisms $\delta C$ and $\delta C'$ need not have the same signature. That is, if we consider the $4k$-dimensional quadratic Poincar\'e complex over $\zz$ given by $(\delta C \cup_C \delta C')$, then by Novikov additivity,
$$ \sigma(\delta C \cup_C \delta C')= \sigma(\delta C) - \sigma(\delta C'). $$
Since $(\delta C \cup_C \delta C')$ is quadratic then
$$\sigma(\delta C \cup_C \delta C')= \sigma(\delta C) - \sigma(\delta C') =0 \in \zz_8,$$
so the signatures are only congruent mod $8.$
\end{remark}

\begin{definition} \label{sign-pair} (\hspace{-1pt}\cite{Mod8})
The hyperquadratic signature of a (symmetric, quadratic) Poincar\'e pair $(C \to D, (\delta \phi, \psi))$ over $\zz$ is
\begin{align*}
\widehat{\sigma}^*(C \to D, (\delta \phi, \psi)) & = \sigma^*(D \cup_{C} \delta C, \delta \phi \cup -(1+T) \delta \psi)  \\
 & = \sigma (D/C, \delta \phi/ (1+T) \psi) - \widehat{\sigma}^*(C, \psi) \in \zz_8,
\textnormal{~with~} L_{4k-1}(\zz)=0.
\end{align*}
\end{definition}
\begin{remark}Note that the hyperquadratic signature of a (symmetric, quadratic) Poincar\'e pair  is a cobordism invariant. If $(C \to D, (\delta \phi, \psi))$ is nullcobordant then $D \cup_C \delta C$ is the boundary of a symmetric complex $E$. A boundary has signature $0$, so $\sigma^*(D \cup_C \delta C)=0$. This is an algebraic interpretation of Lemma C in \cite{brumfield-Morgan}.
\end{remark}

We are interested in relating the signature modulo $8$ of a $4k$-dimensional symmetric Poincar\'e complex over $\zz$ with the Brown-Kervaire invariant of a linking form. For this purpose, the following is the most relevant part of the braid in figure \ref{fig:braid},
\begin{equation} \label{Braid-essential}
 \xymatrix{& L_{4k}(\zz[\frac{1}{2}]) =  L^{4k}(\zz[\frac{1}{2}])  \ar[dr]^{\partial_*} & \\
L^{4k}(\zz) \ar[ur] \ar[dr]& & L_{4k}(\zz, (2)^{\infty})  \ar@/^1.7pc/[dl]^{BK} \\
& \widehat{L}^{4k}(\zz) \ar[ur] & }
\end{equation}
We shall first discuss this diagram (\ref{Braid-essential}) for the case when $k=0$, i.e. with forms.

A nonsingular symmetric  form $(K, \lambda) \in L^{0}(\zz)$ has signature $\sigma(K, \lambda) \in \zz$, which reduced modulo $8$ is sent to the hyperquadratic signature of the normal complex $\widehat{\sigma}^*(K, \lambda, \gamma, \theta)$ via the map $L^{0}(\zz) \to \widehat{L}^{0}(\zz)$, where $(\gamma, \theta)$ is a representative of the unique equivalence class of normal structures on the symmetric form $(K, \lambda)$ as defined in \ref{chain-bundles}.

We can  trace this diagram to see how a nonsingular symmetric form in $L^{0}(\zz)$ corresponds to a nonsingular quadratic linking form in $L_{0}(\zz, (2)^{\infty}).$
The map $L^{0}(\zz) \to L_{0}(\zz \left[\frac{1}{2}\right]) =  L^{0}(\zz \left[\frac{1}{2}\right])$ sends the nonsingular symmetric form $(K, \lambda)$ over $\zz$ to a  nonsingular symmetric form $\zz[\frac{1}{2}] \otimes_{\zz}(K , \lambda)$ over $\zz \left[\frac{1}{2}\right]$. Since $2$ is a unit in $\zz \left[\frac{1}{2}\right]$, the morphism of forms over $\zz$
$$ 2: (K,  \lambda) \to (K, 4 \lambda)$$
becomes an isomorphism in $\zz \left[\frac{1}{2}\right]$,
$$\zz \left[\frac{1}{2}\right] \otimes_{\zz}(K , \lambda) = \zz \left[\frac{1}{2}\right] \otimes_{\zz}(K , 4 \lambda) \in L_0 \left(\zz \left[\frac{1}{2}\right] \right).$$
Note that $\zz \left[\frac{1}{2}\right] \otimes_{\zz}(K , 4 \lambda) = \zz \left[\frac{1}{2}\right] \otimes_{\zz}(K , 2 \lambda + 2 \lambda^*)$, so this is an even symmetric, i.e. quadratic form.
Following \cite{Mod8},  the form $(K, 2\lambda + 2 \lambda^*)$ is mapped to a quadratic linking form $(T, b, q)$ under the boundary map $L_{0}(\zz \left[ \frac{1}{2} \right]) \xrightarrow{\partial_*} L_{0}(\zz, (2)^{\infty})$, described in detail below.

Recall that a symmetric form $(F, \alpha)$ over $\zz$ is nondegenerate if $\alpha: F \to F^*$ is injective. A symmetric form $(F, \alpha)$ over $\zz$ determines a nondegenerate form
$$(F', \alpha') = (F / \textnormal{Ker}(\alpha: F \to F^*), [\alpha])$$
which has the same signature,
$$\sigma(F', \alpha') = \sigma(F, \alpha) \in \zz.$$

\begin{proposition}  (\hspace{-1pt}\cite{Mod8, Brum-Morg}) The boundary of a {\bf nondegenerate}  symmetric form $(F, \alpha)$ over $\zz$ is the nonsingular symmetric linking form over $(\zz, (2)^{\infty})$
$$T= \partial^*(F, \alpha) = (\partial F, b),$$
with
$$\begin{array}{l}
\partial F = \textnormal{coker}(\alpha: F \to F^*) \\
b : \partial F \times \partial F \to \zz\left[\frac{1}{2} \right]/ \zz; (x,y) \mapsto \alpha^{-1}(x, y) \textnormal{ with } x, y \in T. \\
\end{array}
 $$
If the nonsingular symmetric form is even then it admits a quadratic enhancement,
$$
\begin{array}{l}
q : \partial F \to \zz_4; x \mapsto \frac{\alpha^{-1}(x,x)}{2}.
\end{array}
 $$
\end{proposition}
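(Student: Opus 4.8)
\emph{Proof strategy.} The statement is the explicit description, for $R=\zz$ and $S=(2)^{\infty}$, of the boundary maps $\partial^{*}\colon L^{0}(S^{-1}R)\to L^{0}(R,S)$ and $\partial_{*}\colon L_{0}(S^{-1}R)\to L_{0}(R,S)$ of the localization exact sequences \eqref{exact2}, \eqref{exact3}, evaluated on a $\zz$-lattice $(F,\alpha)$ sitting inside a nonsingular (quadratic) form over $\zz[\tfrac12]$. So the plan is: write down the candidate linking form $(\partial F,b)$, resp.\ $(\partial F,b,q)$, directly from $\alpha$, and check the axioms of a symmetric, resp.\ quadratic, linking form over $(\zz,(2)^{\infty})$, using the formalism of \cite{exactseqRan} and the normalisations of \cite{Mod8}, \cite{Brum-Morg}.

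First I would verify that $\partial F=\operatorname{coker}(\alpha\colon F\to F^{*})$ is an $(\zz,(2)^{\infty})$-module. Nondegeneracy of $(F,\alpha)$ means $\alpha\colon F\to F^{*}$ is injective, so
$$0\longrightarrow F\xrightarrow{\ \alpha\ }F^{*}\longrightarrow\partial F\longrightarrow 0$$
is a $1$-dimensional finitely generated free resolution; after inverting $2$ the map $\alpha$ becomes an isomorphism (the form is nonsingular over $\zz[\tfrac12]$), hence $S^{-1}\partial F=0$, which is exactly what is demanded. Applying $\operatorname{Hom}_{\zz}(-,\zz)$ to this resolution and using that $F^{*}$ is free while $\partial F$ is torsion gives a canonical identification of the dual module $\widehat{\partial F}=\operatorname{Ext}^{1}_{\zz}(\partial F,\zz)=\operatorname{coker}(\alpha^{*}\colon F^{**}\to F^{*})$, together with the isomorphism $\operatorname{Ext}^{1}_{\zz}(\partial F,\zz)\cong\operatorname{Hom}_{\zz}(\partial F,\qq/\zz)$ valid for torsion modules; these are what I will need for nonsingularity.

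Next, for $x,y\in\partial F$ I choose lifts $\tilde x,\tilde y\in F^{*}$; since $\alpha$ is invertible over $\zz[\tfrac12]$ the element $\alpha^{-1}\tilde x\in S^{-1}F$ is defined, and I set $b(x,y)=[\alpha^{-1}(\tilde x,\tilde y)]\in\zz[\tfrac12]/\zz$ where $\alpha^{-1}(\tilde x,\tilde y):=\tilde y(\alpha^{-1}\tilde x)$. Replacing $\tilde x$ by $\tilde x+\alpha f$ with $f\in F$ changes $\alpha^{-1}\tilde x$ by $f$, hence $b(x,y)$ by $\tilde y(f)\in\zz$, so $b$ is well defined; the symmetry $b(x,y)=b(y,x)$ (with $\epsilon=1$ and the trivial involution) is precisely $\alpha=\alpha^{*}$ transported through $\alpha^{-1}$. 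When moreover $\alpha$ is even, i.e.\ $\alpha(f,f)\in2\zz$ for all $f\in F$ — the case of $\alpha=2\lambda+2\lambda^{*}$ with $(K,\lambda)$ nonsingular over $\zz$ — I set $q(x)=[\tfrac12\,\alpha^{-1}(\tilde x,\tilde x)]\in\zz_{4}$; here a change of lift alters $\alpha^{-1}(\tilde x,\tilde x)$ by $2\tilde x(f)+\alpha(f,f)$, so $q(x)$ changes by $\tilde x(f)+\tfrac12\alpha(f,f)\in\zz$, which is absorbed — this is the one place where evenness is essential. The remaining identities $q(ax)=a\,q(x)\overline{a}$, the quadraticity $q(x+y)=q(x)+q(y)+b(x,y)+b(y,x)$, and the compatibility $q(x)\equiv b(x,x)$ in $\zz[\tfrac12]/\zz$ then fall out of expanding $\alpha^{-1}(\tilde x,\tilde x)$ bilinearly and using $\alpha=\alpha^{*}$.

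The one genuinely non-formal point — and the step I expect to cost the most care — is the nonsingularity of $b$, namely that its adjoint $\partial F\to\widehat{\partial F}$ is an isomorphism. Under the identification $\widehat{\partial F}=\operatorname{coker}(\alpha^{*})$ from the second step, together with $\alpha^{*}=\alpha$, this adjoint becomes the identity map $\operatorname{coker}(\alpha)\to\operatorname{coker}(\alpha^{*})$; the real content is to chase the short exact sequence through $\operatorname{Hom}_{\zz}(-,\zz)$ and $\operatorname{Ext}^{1}_{\zz}(-,\zz)\cong\operatorname{Hom}_{\zz}(-,\qq/\zz)$ and check that this abstract duality is the one induced by $b$, together with pinning down the correct normalisation of the codomains. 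Alternatively one may simply invoke the exactness of \eqref{exact2}--\eqref{exact3} from \cite{exactseqRan}, which already packages the assertion that the boundary of a nonsingular form is a nonsingular linking form; everything else, including the identification of $\widehat{L}^{0}(\zz)$ and $L_{0}(\zz,(2)^{\infty})$ with the relevant cyclic groups, is then bookkeeping.
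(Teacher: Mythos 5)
The paper itself offers no proof of this proposition: it is quoted from \cite{Mod8} and \cite{Brum-Morg} and then immediately applied to the form $(K,4\lambda)$, so there is no internal argument to compare yours against. Your route --- take the resolution $0\to F\xrightarrow{\alpha}F^{*}\to\partial F\to 0$, define $b$ on lifts via $\alpha^{-1}$ over $\zz[\tfrac12]$, check well-definedness and symmetry from $\alpha=\alpha^{*}$, and obtain nonsingularity from the identification $\widehat{\partial F}=\textnormal{Ext}^{1}_{\zz}(\partial F,\zz)\cong\textnormal{Hom}_{\zz}(\partial F,\zz[\tfrac12]/\zz)$ (or by appealing to the localization sequence of \cite{exactseqRan}) --- is exactly the standard construction used in those references, and the symmetric part of your sketch is sound. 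One caveat you should state rather than derive: injectivity of $\alpha$ alone does not make $\alpha$ invertible over $\zz[\tfrac12]$; you need $\det\alpha=\pm 2^{k}$, equivalently that $S^{-1}\alpha$ is an isomorphism, which is implicit in the assertion that $\partial F$ is a linking form over $(\zz,(2)^{\infty})$ and holds in the paper's application, but is not a consequence of nondegeneracy. (Also, well-definedness of $b$ in the second variable needs its own one-line check, which your symmetry computation supplies only after the fact.)

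The genuine gap is in the quadratic enhancement --- precisely the step you single out as ``the one place where evenness is essential''. You set $q(x)=[\tfrac12\,\alpha^{-1}(\tilde x,\tilde x)]\in\zz_{4}$, compute that a change of lift alters it by $\tilde x(f)+\tfrac12\alpha(f,f)$, an arbitrary integer, and declare this ``absorbed''. Whether it is absorbed depends entirely on which quotient of $\zz[\tfrac12]$ your $\zz_{4}$ denotes, and you never say. Measured against the paper's own definition of a quadratic linking form, the target is $Q_{+1}(\zz,(2)^{\infty})=\zz[\tfrac12]/2\zz$, where an odd integer is not zero, so your divided formula is simply not well defined there; if instead you work modulo $\zz$ (where integer ambiguities do die, as in the paper's example $\partial_{*}(\zz,4,0)=(\zz_{4},1/4,1/8)$), then the compatibility $q(x)=b(x,x)$ and the identity $q(x+y)=q(x)+q(y)+b(x,y)+b(y,x)$ no longer typecheck as stated and must be re-verified in the rescaled quotient. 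The standard repair is to take the undivided value $q(x)=\alpha^{-1}(\tilde x,\tilde x)\in\zz[\tfrac12]/2\zz$: a change of lift alters it by $2\tilde x(f)+\alpha(f,f)$, which lies in $2\zz$ exactly because $\alpha$ is even --- that is where evenness genuinely enters --- and the ``$/2$'' and the $\zz_{4}$ (in the application, values $\tfrac18\lambda^{-1}(x,x)$ read modulo integers) are an explicit renormalization performed afterwards. As written, your proof hides this normalization, and it is the only point at which the argument, rather than the bookkeeping, could actually fail.
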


Applying this proposition to our nonsingular form $\left[\frac{1}{2}\right] \otimes_{\zz}(K , 4 \lambda) \in L_0 \left(\zz \left[\frac{1}{2}\right] \right)$, we can construct a nonsingular quadratic linking form given by
$$
\begin{array}{l}
T = \textnormal{coker}(4: K \to K)_{*+1}= K / 4K, \\
b : T \times T \to \zz_2 ; (x,y) \mapsto (4\lambda)^{-1}(x, y) \textnormal{ with } x, y \in T, \\
q : T \to \zz_4; x \mapsto \frac{(4\lambda)^{-1}(x,x)}{2}
\end{array}
$$
where $\lambda : K \to K^*$ is an isomorphism and $\lambda^{-1}: K^* \to K$ defines a symmetric form on $K^*$.
There is an exact sequence
$$0 \to U =K/ 2K \to  T= K/ 4K \to  \U = K/ 2K \to 0.$$
The form $(T, b, q )= \left(K/ 4K, (4 \lambda)^{-1}, \frac{(4 \lambda)^{-1}}{2}\right) \in L_{4k}(\zz, (2)^{\infty})$ is quadratic and the nonsingular symmetric form $(T, b)$ has a lagrangian $U= K/ 2K$.  Hence we can form a $0$-dimensional (symmetric, quadratic) complex given by
$$\left(\mc{C}(4:K \to K)_{*+1} \to \mc{C}(2:K \to K)_{*+1}, \left(0, \frac{(4 \lambda)^{-1}}{2}\right)\right) \in \widehat{L}^{0}(\zz).$$
In the braid in figure \ref{fig:braid}, we see that the map $\widehat{L}^{0}(\zz)\to L_{4k}(\zz, (2)^{\infty})$ is an injection. The Brown-Kervaire invariant of the quadratic linking form $(T, b, q)= \left(K/ 4K, (4 \lambda)^{-1}, \frac{(4 \lambda)^{-1}}{2}\right) \in L_{4k}(\zz, (2)^{\infty})$ determines the same Witt class as the hyperquadratic signature of the (symmetric, quadratic) pair  $\left(\mc{C}(4:K \to K)_{*+1} \to \mc{C}(2:K \to K)_{*+1}, \left(0, \frac{(4 \lambda)^{-1}}{2}\right)\right)$, that is,
\begin{multline*}
  \widehat{\sigma}^*\left(\mc{C}(4:K \to K)_{*+1} \to \mc{C}(2:K \to K)_{*+1}, \left(0, \frac{(4 \lambda)^{-1}}{2}\right)\right) = \\
  \textnormal{BK}\left(K/ 4K, (4 \lambda)^{-1}, \frac{(4 \lambda)^{-1}}{2}\right)  \in \zz_8.
\end{multline*}
(See \cite[section 4]{BanRan}).

We need to check that $\widehat{\sigma}^*\left(\mc{C}(4:K \to K)_{*+1} \to \mc{C}(2: K \to K)_{*+1}, \left(0, \frac{(4 \lambda)^{-1}}{2}\right)\right) $ coincides with $\sigma(K, \lambda) \pmod{8}.$

The (symmetric, quadratic) pair $\left(\mc{C}(4:K \to K)_{*+1} \to \mc{C}(2:K \to K)_{*+1}, \left(0, \frac{(4 \lambda)^{-1}}{2}\right)\right) $ can be expressed as a normal complex,
\begin{multline*}
  \left(\mc{C}(4: K \to K)_{*+1} \to \mc{C}(2: K \to K)_{*+1}, \left(0, \frac{(4 \lambda)^{-1}}{2}\right)\right)  =  \\
  \left(\mc{C}(2: K \to K)_{*+1}, [\lambda], [\gamma], [\theta]\right) \in \widehat{L}^{4k}(\zz).
\end{multline*}

\begin{proposition} \label{cobordism-normal-cx}
The normal form $(K, \lambda, \gamma, \theta)$ is cobordant to $ \left(\mc{C}(2: K \to K)_{*+1}, [\lambda], [\gamma], [\theta]\right).$
\end{proposition}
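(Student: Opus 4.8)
## Proof proposal for Proposition~\ref{cobordism-normal-cx}

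The plan is to exhibit an explicit $(4k+1)$-dimensional normal cobordism between the two $4k$-dimensional normal complexes. The point is that $(K,\lambda,\gamma,\theta)$ is the normal complex associated to the nondegenerate symmetric form $(K,\lambda)$ concentrated in degree $2k$, whereas $\left(\mc{C}(2\colon K\to K)_{*+1},[\lambda],[\gamma],[\theta]\right)$ is the normal complex associated to the $(4k-1)$-dimensional $\zz[\tfrac12]$-acyclic symmetric Poincar\'e complex $\mc{C}(2\colon K\to K)_{*+1}$ together with the chain bundle data pushed down from $K$. Both represent the same hyperquadratic signature in $\widehat{L}^{4k}(\zz)=\zz_8$, so the content of the proposition is to see the cobordism at the chain level rather than merely at the level of Witt classes.

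First I would set up the relevant mapping cones. Write $C=(K\xrightarrow{2}K)$ with $K$ in degrees $2k$ and $2k+1$, and let $C'=\mc{C}(2\colon K\to K)_{*+1}$ be the $(4k-1)$-dimensional complex obtained by the algebraic Thom construction, with its symmetric structure $[\lambda]$, chain bundle $[\gamma]$, and normal structure $[\theta]$ inherited from $(K,\lambda,\gamma,\theta)$ via the projection onto the cokernel. The key algebraic input is the standard fact (from \cite{algpoinc}, \cite{bluebook}) that a nondegenerate symmetric form $(K,\lambda)$ over $\zz$, regarded as a $4k$-dimensional symmetric complex concentrated in degree $2k$, is a normal complex whose associated (symmetric, quadratic) Poincar\'e pair has boundary the linking form $\partial(K,\lambda)=(K/2K,\,(2\lambda)^{-1},\dots)$; dividing this by $2$ and passing to the quadratic refinement $(4\lambda)^{-1}/2$ gives exactly the normal complex $\left(\mc{C}(2\colon K\to K)_{*+1},[\lambda],[\gamma],[\theta]\right)$. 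So the two normal complexes sit as the two ends of the (symmetric, quadratic) Poincar\'e pair
$$\left(\mc{C}(4\colon K\to K)_{*+1}\longrightarrow \mc{C}(2\colon K\to K)_{*+1},\ \left(0,\tfrac{(4\lambda)^{-1}}{2}\right)\right),$$
and the plan is to read off from this pair an explicit normal-complex cobordism.

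Next I would carry out the cobordism argument. I would build an $(4k+1)$-dimensional normal pair $(D\to E,(\delta\phi,\psi,\delta\gamma,\delta\theta))$ whose algebraic boundary is $(K,\lambda,\gamma,\theta)\sqcup -\left(\mc{C}(2\colon K\to K)_{*+1},[\lambda],[\gamma],[\theta]\right)$, taking $E=\mc{C}(\id_K)$ (an acyclic complex, hence with vanishing symmetric structure up to chain homotopy) together with the chain bundle data $\gamma$ restricted from $K$, and $D$ the appropriate union. Concretely, the algebraic surgery / Thom construction identifies the cobordism: since $\mc{C}(2\colon K\to K)_{*+1}$ is obtained from the pair $(\mc{C}(4)\to\mc{C}(2))$ by the Thom construction, and $\mc{C}(4\colon K\to K)_{*+1}$ is null-cobordant (it is $\zz[\tfrac12]$-acyclic and the lagrangian $U=K/2K$ gives the null-cobordism), gluing the null-cobordism of $\mc{C}(4)$ onto the pair produces precisely a cobordism from $(K,\lambda)$ — with its induced chain bundle — to $\left(\mc{C}(2\colon K\to K)_{*+1},[\lambda]\right)$, compatibly with the normal structures by naturality of the one-one correspondence between normal complexes and (symmetric, quadratic) Poincar\'e pairs recalled in Section~\ref{chain-bundles}. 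Tracking the chain bundle $\gamma$ and the hyperquadratic chain $\theta$ through this construction — using that every symmetric \emph{Poincar\'e} complex has a \emph{unique} equivalence class of normal structures, so the bundle data on the cobordism is forced — shows the cobordism respects the full normal-complex structure.

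The main obstacle I anticipate is the bookkeeping of the chain bundle and hyperquadratic data $([\gamma],[\theta])$ under the algebraic Thom construction and the gluing: one must check that the $\gamma$ and $\theta$ chosen on $\mc{C}(2\colon K\to K)_{*+1}$ are genuinely the ones induced from $(K,\gamma,\theta)$ by restriction to the cokernel, and that they extend over the cobordism $E=\mc{C}(\id_K)$. Here the uniqueness of normal structures on symmetric Poincar\'e complexes (used for $K$) and the explicit description of $\widehat{W}^\%$ maps under quotients will do the work, but the indices $n+s$, $n-r+s$ in Definition~\ref{structures} make the sign verifications delicate; this is the step I would expect to occupy most of the written-out proof. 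Everything else — the null-cobordism of $\mc{C}(4\colon K\to K)_{*+1}$ via the lagrangian $K/2K$, the acyclicity of $\mc{C}(\id_K)$, and the additivity of signatures — is routine given the machinery already recalled.
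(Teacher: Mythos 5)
Your plan — to exhibit an explicit $(4k+1)$-dimensional normal cobordism and push the bundle data through the Thom construction and a gluing — leaves undone exactly the step that carries the content of the proposition, and the justification you offer for why that step must work is not available. Uniqueness of normal structures holds for symmetric \emph{Poincar\'e} complexes only; neither $\left(\mc{C}(2\colon K\to K)_{*+1},[\lambda]\right)$ (whose symmetric structure is not a chain equivalence, its boundary being the quadratic Poincar\'e complex of the linking form) nor your proposed pair with $E=\mc{C}(\id_K)$ is Poincar\'e, so the data $([\gamma],[\theta])$ on them and on the cobordism is genuinely extra structure that must be constructed and compared, not ``forced''. Two further points are off: for nonsingular $\lambda$ the boundary of $(K,\lambda)$ is contractible, so your ``standard fact'' $\partial(K,\lambda)=(K/2K,(2\lambda)^{-1},\dots)$ is misstated — the linking forms in this section arise from $(K,4\lambda)$ after inverting $2$, with boundary $\left(K/4K,(4\lambda)^{-1},\tfrac{(4\lambda)^{-1}}{2}\right)$; and the choice $E=\mc{C}(\id_K)$ is unmotivated — the natural carrier of such a cobordism would be the algebraic mapping cylinder of the normal map $K \to \mc{C}(2\colon K\to K)_{*+1}$, and for that candidate the $\widehat{W}^{\%}$-level verifications you defer are precisely the proof.

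For comparison, the paper's proof avoids constructing any cobordism. By the isomorphism $\widehat{L}^{4k}\langle B(\infty),\beta(\infty)\rangle(\zz)\cong Q_{4k}(B(\infty),\beta(\infty))$ of Proposition \ref{Proposition: Weiss}, the cobordism class of a $4k$-dimensional normal complex over $\zz$ is detected by the image of its algebraic normal invariant under the classifying chain bundle map. The normal structure of $(K,\lambda,\gamma,\theta)$ restricts to $0$ on $2K\subset K$, giving the quotient normal structure on $\mc{C}(2\colon K\to K)_{*+1}$ pulled back from the universal chain bundle via the Wu class $v\colon K\to\zz$, and the classifying map of $(K,\gamma)$ factors through that of the quotient; hence the two normal complexes have the same class in the universal twisted $Q$-group and are therefore cobordant in $\widehat{L}^{4k}(\zz)$. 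If you want to keep your explicit route you must actually build the normal pair on the mapping cylinder and track $(\gamma,\theta)$ through it; as written, that gap is not closed and the appeal to uniqueness does not close it.
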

\begin{proof}

The normal structure on $(K, \lambda, \gamma, \theta)$ restricts to $0$ on $2K \subset K$, so there is a quotient normal structure on
$\left(\mc{C}(2: K \to K)_{*+1}, [\lambda], [\gamma], [\theta]\right).$ That is, there is an algebraic normal map $K \to \mc{C}(2)$ as defined in \cite[pages 48 and 49]{bluebook}.
The quotient normal structure on $\left(\mc{C}(2: K \to K)_{*+1}, [\lambda], [\gamma], [\theta]\right)$ is the pullback from the universal chain bundle $(B, \beta)$ by the Wu class $K \xrightarrow{v} \zz.$
$$
\xymatrix{
K \ar[r]^{\hspace{0pt}v} \ar[d]^{2} &  \bb{Z}  \ar[d]^2\\
K \ar[r]^{\hspace{0pt}v}  & \bb{Z}}
$$
(See section \ref{Twisted-Q-groups} for the definition of the universal chain bundle.)
The normal structure is determined by this chain map to the universal chain bundle, which identifies its class in the universal twisted $Q$-group $Q_{0}(B, \beta)$ and consequently its cobordism class  in $\widehat{L}^{0}(\zz)$
as a normal complex.

The classifying map of $(K, \lambda, \gamma, \theta)$ factors through that of the map for $\left(\mc{C}(2: K \to K)_{*+1}, [\lambda], [\gamma], [\theta]\right),$
$$
\xymatrix{
 & K \ar[r]^{\hspace{0pt}v} \ar[d]^{2} &  \bb{Z}  \ar[d]^2\\
K \ar[r]^1 & K \ar[r]^{\hspace{0pt}v}  & \bb{Z}}
$$
so the class of the normal structure of $(K, \lambda, \gamma, \theta)$ is identified with the same class as $\left(\mc{C}(2: K \to K)_{*+1}, [\lambda], [\gamma], [\theta]\right)$ in the universal twisted $Q$-group $Q_{0}(B, \beta)$. Consequently both complexes, $(K, \lambda, \gamma, \theta)$  and $\left(\mc{C}(2: K \to K)_{*+1}, [\lambda], [\gamma], [\theta]\right)$ are cobordant in $\widehat{L}^{0}(\zz).$

\end{proof}

If $(K, \lambda, \gamma, \theta)$ and $\left(\mc{C}(2: K \to K)_{*+1}, [\lambda], [\gamma], [\theta]\right)$ are cobordant,  then $(K, \lambda, \gamma, \theta)$ is also cobordant to $\left(\mc{C}(4: K \to K)_{*+1} \to \mc{C}(2:K \to K)_{*+1}, \left(0, \frac{(4\lambda)^{-1}}{2}\right)\right).$

Hence,
\begin{multline*}
\widehat{\sigma}^*\left(\mc{C}(4: K \to K)_{*+1} \to \mc{C}(2:K \to K)_{*+1}, \left(0, \frac{(4\lambda)^{-1}}{2}\right)\right) =  \\ \widehat{\sigma}^*(K, \lambda, \gamma, \theta)\equiv  \sigma(K, \lambda) \pmod{8}.
\end{multline*}
This means that the diagram \eqref{Braid-essential} on page \pageref{Braid-essential} coming from the relevant part of the braid in figure \ref{fig:braid}   is commutative, so that,
$$\textnormal{BK} \left(\mc{C}(4\lambda)_{*+1}, (4 \lambda)^{-1}, \frac{(4 \lambda)^{-1}}{2} \right) = \sigma(K, \lambda)\in \zz_8.$$

\begin{example}
The symmetric form $(\zz, 4,0)$ has signature $1$. The boundary map gives a quadratic linking form
$$\partial_*(\zz, 4, 0) = (\zz_4, 1/4, 1/8 ),$$
We can compute the Brown-Kervaire invariant of this quadratic linking form by using the Gauss sum formula from Definition \ref{Brown-linking},
\begin{align*}
\sum_{x \in T} e^{2 \pi i q(x)} & =  e^{2 \pi i / 4} + e^{2\pi i} \\
& = i + 1
\end{align*}
This has argument $\pi/4$, hence
$$BK(\zz_4, 1/4, 1/8) = \sigma(\zz, 4, 0) = 1 \in \zz_8. $$
\end{example}

\subsection{Morita's theorem} \label{Morita-theorem}
The original statement of  \cite[theorem~1.1]{Morita} relating the Brown-Kervaire invariant and the signature modulo $8$ is formulated geometrically and it relates the signature of a $4k$-dimensional Poincar\'e space $X$ and the Brown-Kervaire invariant of a Pontryagin square, which is a quadratic enhancement of the cup product structure on the $\bb{Z}_2$-vector space $H^{2k}(X; \mathbb{Z}_2)$.

Before Morita proved his theorem, Brown had already given a proof in a special case:

\begin{theorem}\label{Brown's theorem} (\hspace{-1pt}\cite[Theorem 1.20 (viii)]{Brown})
Let $U$ be a finitely generated  free abelian group. If $\Phi: U \otimes U \to \mathbb{Z}$ is a symmetric bilinear form over $\mathbb{Z}$ with determinant $1$ or $-1$, and  $\eta: U/2U \to \mathbb{Z}_4$ is defined by $\eta(u) = \Phi(u, u) \in \zz_4$, then $\eta$ is quadratic and
$$\textnormal{BK}(U/2U, \Phi, \eta) = \sigma(U, \Phi)\in \zz_8.$$
\end{theorem}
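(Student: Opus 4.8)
The plan is to exhibit both $\textnormal{BK}(U/2U,\Phi,\eta)$ and $\sigma(U,\Phi)$ as values of a single homomorphism out of the symmetric Witt group $L^0(\zz)$, and then to pin that homomorphism down on a generator. As preliminaries: since $\det\Phi=\pm1$ the mod $2$ reduction $\bar\Phi\colon U/2U\times U/2U\to\zz_2$ is nonsingular, and for $x\in U/2U$ the value $\eta(x)=\Phi(x,x)\in\zz_4$ is well defined because $\Phi(x+2y,x+2y)=\Phi(x,x)+4\Phi(x,y)+4\Phi(y,y)\equiv\Phi(x,x)\pmod4$; moreover
$$\eta(x+y)=\Phi(x,x)+2\Phi(x,y)+\Phi(y,y)=\eta(x)+\eta(y)+i\,\bar\Phi(x,y)\in\zz_4,$$
with $i\colon\zz_2\to\zz_4$ the injection, so $\eta$ is a $\zz_4$-valued quadratic enhancement of $\bar\Phi$ in the sense of Definition \ref{quadratic enhancement}(ii), and $(U/2U,\bar\Phi,\eta)$ is a legitimate argument for the Brown--Kervaire invariant of Definition \ref{Gauss-sum-formula}.

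I would then show that the assignment $\rho\colon(U,\Phi)\mapsto(U/2U,\bar\Phi,\eta)$ induces a homomorphism $\rho\colon L^0(\zz)\to L\langle v_1\rangle^0(\zz_2)$. Additivity under orthogonal direct sum is clear on forms, so it suffices to check that $\rho$ kills metabolic forms. If $L\subset(U,\Phi)$ is a lagrangian --- a direct summand with $\Phi(L,L)=0$ and $\rk L=\tfrac12\rk U$ --- then $L/2L\subset U/2U$ is an isotropic subspace of half the dimension, hence a lagrangian of the nonsingular $\zz_2$-form $(U/2U,\bar\Phi)$, and $\eta$ vanishes identically on $L/2L$ because $\Phi$ restricts to the \emph{zero} form on $L$. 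Thus $(U/2U,\bar\Phi,\eta)$ admits a lagrangian in the sense of Definition \ref{Witt-equivalence} and represents $0$ in $L\langle v_1\rangle^0(\zz_2)$, so $\rho$ is well defined; composing with the isomorphism $\textnormal{BK}\colon L\langle v_1\rangle^0(\zz_2)\xrightarrow{\cong}\zz_8$ of Theorem \ref{iso-BK} gives a homomorphism $\textnormal{BK}\circ\rho\colon L^0(\zz)\to\zz_8$.

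To finish, note that the signature is an isomorphism $L^0(\zz)\xrightarrow{\cong}\zz$, so $L^0(\zz)$ is infinite cyclic on the class of $(\zz,1)$. One has $\rho(\zz,1)=(\zz_2,1,P_1)$, and $\textnormal{BK}(\zz_2,1,P_1)=1\in\zz_8$ --- directly from $\sum_{x\in\zz_2}i^{\eta(x)}=1+i=\sqrt2\,e^{2\pi i/8}$, or from Theorem \ref{iso-BK} with $p_+=1$ and $p_-=n=0$ --- while $\sigma(\zz,1)=1$. Hence the homomorphisms $\textnormal{BK}\circ\rho$ and ``$\sigma$ followed by reduction mod $8$'' from $L^0(\zz)=\zz$ to $\zz_8$ agree on a generator, so they coincide; evaluating at $(U,\Phi)$ gives $\textnormal{BK}(U/2U,\Phi,\eta)=\sigma(U,\Phi)\in\zz_8$.

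I expect the only genuinely delicate point to be the vanishing of $\rho$ on metabolic forms, which needs $\eta|_{L/2L}=0$ in $\zz_4$ rather than merely modulo $2$; this is precisely where one uses that over $\zz$ a unimodular form carries an \emph{integral} lagrangian on which $\Phi$ vanishes outright. As an alternative to this whole argument, Brown's theorem is the special case of the chain-complex form of Morita's theorem (Theorem \ref{Morita}) applied to the $4k$-dimensional symmetric Poincar\'e complex over $\zz$ consisting of $U$ concentrated in degree $2k$ with symmetric structure having $\phi_0=\Phi$ as its only nonzero component (a cycle precisely because $\Phi$ is symmetric, Poincar\'e precisely because $\Phi$ is unimodular): its signature is $\sigma(U,\Phi)$, its $\zz_2$-coefficient intersection form is $\bar\Phi$, and its algebraic Pontryagin square of Definition \ref{algebraic-Pont} is $\eta$ after the canonical identification of $(U,\bar\Phi)$ with its dual induced by $\Phi$.
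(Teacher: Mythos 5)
Your main argument is correct, but it is not the route the thesis takes: the thesis does not prove this statement at all --- it quotes it from Brown's paper (Theorem 1.20 (viii)) with the remark that it is ``purely algebraic and has a straightforward proof'', and then uses it as an input to the chain-complex proof of Morita's theorem (Theorem \ref{Morita-theorem-cx}). Your proof --- checking that $(U,\Phi)\mapsto(U/2U,\bar\Phi,\eta)$ is additive and kills metabolic forms (the key point, which you correctly isolate, being that an \emph{integral} lagrangian forces $\eta$ to vanish in $\zz_4$ and not merely mod $2$), and then comparing the two homomorphisms $L^0(\zz)=\zz\to\zz_8$ on the generator $(\zz,1)$ using the Witt invariance of $\textnormal{BK}$ from Theorem \ref{iso-BK} --- is a clean, self-contained Witt-group argument, essentially van der Blij's theorem in this language. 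The closest material in the thesis is the localization-braid discussion of section \ref{BK-and-L}, which reaches the same identification of $\sigma \bmod 8$ with a Brown--Kervaire invariant by passing through $L_0(\zz[\tfrac{1}{2}])$, boundary quadratic linking forms and normal complexes; your argument buys elementarity and independence from that machinery, while the thesis's route embeds the statement into the $L$-theory framework it needs later. Two caveats. First, with the paper's definition of lagrangian (no direct-summand condition) you should either pass to the saturation of an isotropic half-rank sublattice or, as you in effect do, take lagrangian to mean a direct summand, so that $L/2L$ really is a half-dimensional subspace of $U/2U$ on which both $\bar\Phi$ and $\eta$ vanish. Second, your proposed ``alternative'' deduction of Brown's theorem from Theorem \ref{Morita} is circular within this thesis, since the proof of Theorem \ref{Morita-theorem-cx} given here explicitly invokes Brown's theorem; it is fine as a consistency check but not as an independent proof.
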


\cite[Theorem 1.20 (viii)]{Brown} is purely algebraic and has a straightforward proof.

Morita's theorem is given in \cite{Morita} as follows:

\begin{theorem}\label{Morita} (\hspace{-1pt}\cite[theorem~1.1]{Morita})
Let $X$ be a $4k$-dimensional Poincar\'e space, then
$$\sigma (X)  \equiv \textnormal{BK}(H^{2k}(X; \mathbb{Z}_2), \lambda, \mc{P}_2) \in \zz_8.$$
\end{theorem}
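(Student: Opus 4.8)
The plan is to deduce Theorem~\ref{Morita} from the chain-level machinery already assembled in this chapter, rather than following Morita's original geometric argument. First I would pass from the space $X$ to its symmetric Poincar\'e complex $\sigma^*(X) = (C(X),\phi_X)$ over $\zz$, which is $4k$-dimensional. This has a canonical normal structure $(\gamma,\theta)$ (the unique equivalence class attached to a symmetric Poincar\'e complex), so we obtain a $4k$-dimensional normal complex $(C(X),\phi_X,\gamma,\theta)$ whose hyperquadratic signature $\widehat{\sigma}(C(X),\phi_X,\gamma,\theta) \in \widehat{L}^{4k}(\zz)=\zz_8$ agrees with $\sigma(X) \bmod 8$, because $X$ is Poincar\'e. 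By Theorem~\ref{sign-theta}, this is computed by
\begin{equation*}
\sigma(X) = \phi_0(v,v) + 2\phi_1(v,u) + 4\theta_{-2}(u,u) \in \zz_8,
\end{equation*}
where $(u,v)$ represents the Wu class $v_{2k}\in H^{2k}(C(X);\zz_2)=H^{2k}(X;\zz_2)$.

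The key identification is then that this $\zz_8$-valued expression equals $\textnormal{BK}(H^{2k}(X;\zz_2),\lambda,\mc{P}_2)$. The strategy here is to show that the linking-form picture of Section~\ref{BK-and-L} applied to the intersection form $(K,\Lambda)$ on $H^{2k}(X;\zz)/\mathrm{torsion}$ produces exactly the quadratic enhancement given by the Pontryagin square. Concretely, I would invoke the commutative braid (Figure~\ref{fig:braid}, diagram \eqref{Braid-essential}): the image of $(K,\Lambda)\in L^{4k}(\zz)$ under $L^{4k}(\zz)\to L_{4k}(\zz,(2)^\infty)$ is the quadratic linking form $(K/4K,(4\Lambda)^{-1},(4\Lambda)^{-1}/2)$, whose Brown-Kervaire invariant equals $\sigma(X)\bmod 8$ by the commutativity established just before Morita's theorem. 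It then remains to identify $(K/4K,(4\Lambda)^{-1},(4\Lambda)^{-1}/2)$ — or rather its Brown-Kervaire invariant — with $\textnormal{BK}(H^{2k}(X;\zz_2),\lambda,\mc{P}_2)$. For the torsion-free part this is essentially Brown's Theorem~\ref{Brown's theorem} with $\Phi=\Lambda$ and $\eta(u)=\Lambda(u,u)\bmod 4$, once one checks that $\eta$ coincides with the restriction of $\mc{P}_2$ to the image of $H^{2k}(X;\zz)\to H^{2k}(X;\zz_2)$; the definition of $\mc{P}_2$ via $y\cup_0 y + 2y\cup_1 d^*y$ reduces, on an integral class, to the cup square, i.e. to $\Lambda(y,y)\bmod 4$.

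The main obstacle, and the point where the chain-complex proof genuinely does work, is handling torsion in $H^{2k}(X;\zz)$: the intersection form on the free quotient is only nondegenerate, not nonsingular, and the full form $(H^{2k}(X;\zz_2),\lambda,\mc{P}_2)$ sees the $2$-torsion linking form of $X$ as well. Here I would argue that the normal complex $(C(X),\phi_X,\gamma,\theta)$, via the (symmetric, quadratic) pair it determines (the correspondence of Section on (symmetric, quadratic) pairs and normal complexes), has boundary a $\zz[\tfrac12]$-acyclic quadratic Poincar\'e complex whose associated quadratic linking form over $(\zz,(2)^\infty)$ is Witt equivalent to $(H^{2k}(X;\zz_2),\lambda,\mc{P}_2)$ regarded as a linking form over $(\zz_2,(1))$ — this uses that $\mc{P}_2$ is precisely the quadratic refinement captured by $\theta$ together with the Alexander–Whitney–Steenrod symmetric construction, which by construction encodes the Pontryagin square. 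Since the Brown-Kervaire invariant is a Witt invariant (Theorem~\ref{iso-BK} and its linking-form extension Definition~\ref{Brown-linking}), and since $\widehat{\sigma}$ is the Brown-Kervaire invariant of that linking form by the splitting of the localization sequence displayed before Morita's theorem, we get
\begin{equation*}
\sigma(X) = \widehat{\sigma}(C(X),\phi_X,\gamma,\theta) = \textnormal{BK}(H^{2k}(X;\zz_2),\lambda,\mc{P}_2) \in \zz_8,
\end{equation*}
which is the assertion. I expect the bulk of the writing to consist of verifying, on the chain level, that the $\theta$-term $4\theta_{-2}(u,u)$ in Theorem~\ref{sign-theta} accounts exactly for the contribution of the $2$-torsion linking form to the Gauss sum defining $\textnormal{BK}(H^{2k}(X;\zz_2),\lambda,\mc{P}_2)$.
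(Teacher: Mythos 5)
Your setup (passing to the symmetric Poincar\'e complex, the canonical normal structure, the localization braid, and Brown's Theorem \ref{Brown's theorem} for the torsion-free quotient) follows the paper's framework, but the decisive step is missing. The theorem's whole difficulty is that $H^{2k}(X;\zz_2)\neq H^{2k}(X)/2H^{2k}(X)$ in general, the discrepancy being $\textnormal{ker}(2:H^{2k}(X)\to H^{2k}(X))$, so $\textnormal{BK}(H^{2k}(X;\zz_2),\lambda,\mc{P}_2)$ sees more than the form $(F^{2k},\Lambda)$ on the free quotient to which Brown's theorem applies. You acknowledge this as ``the main obstacle'' but supply no argument: the plan to verify ``on the chain level'' that the term $4\theta_{-2}(u,u)$ of Theorem \ref{sign-theta} accounts for the $2$-torsion contribution to the Gauss sum is not workable as stated, because that formula is the evaluation of $(\phi,\theta)$ on a single chain-bundle map representing the Wu class, landing in $Q_{4k}(B(k,1),\beta(k,1))=\zz_8$; it is not a sum over $H^{2k}(X;\zz_2)$ and there is no evident term-by-term comparison with the Gauss sum of Definition \ref{Gauss-sum-formula}. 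Likewise, asserting that the boundary quadratic linking form of the normal complex is Witt equivalent to $(H^{2k}(X;\zz_2),\lambda,\mc{P}_2)$ is essentially the statement being proved in the presence of torsion, so at its key point the proposal is circular --- this is exactly the issue Morita needed spectral sequences to resolve.

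The missing idea, and the way the paper's proof works, is cobordism invariance. Every $4k$-dimensional symmetric Poincar\'e complex over $\zz$ is cobordant to one concentrated in degree $2k$, i.e.\ to the nonsingular symmetric form $(F^{2k}(C),\phi_0)$ on $H^{2k}(C)/\textnormal{torsion}$, and the assignment $(C,\phi)\mapsto \textnormal{BK}(H^{2k}(C;\zz_2),\phi,\mc{P}_2)$ is a cobordism invariant, being the image of the associated normal complex under $L^{4k}(\zz)\to\widehat{L}^{4k}(\zz)\to L_{4k}(\zz,(2)^{\infty})$; Proposition \ref{cobordism-normal-cx} provides the cobordism identifying $(C,\phi,\gamma,\theta)$ with the quotient normal complex $\mc{C}(2:C\to C)$ that carries the mod $2$ cohomology form. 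Replacing $(C,\phi)$ by its middle-dimensional representative therefore changes neither side of the asserted identity, eliminates the torsion, and reduces the claim verbatim to Brown's Theorem \ref{Brown's theorem} plus the identification of the algebraic Pontryagin square $\phi_0(v,v)+2\phi_1(v,u)$ with the mod $4$ cup square on integral classes. (Your appeal to Theorem \ref{sign-theta} is then an unnecessary detour.) With this cobordism step inserted your outline closes; without it the torsion contribution remains unproved.
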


The proof given in \cite{Morita} involves spectral sequences, and a further proof given in \cite{Taylor} involves Gauss sums. In our proof we will reformulate the theorem in terms of symmetric Poincar\'e complexes $(C, \phi)$. The Pontryagin square which geometrically depends on the cup and cup-$1$ products, depends algebraically on the symmetric structure $\phi$ as was explained in Chapter \ref{Pontryagin-squares chapter}. This is denoted by $\mc{P}_2(\phi)$, although for simplicity we will  write $\mc{P}_2$ in what follows.
\begin{theorem} \label{Morita-theorem-cx}
Let $(C, \phi)$ be a $4k$-dimensional symmetric Poincar\'e complex over $\mathbb{Z}$, then
$$\sigma (C, \phi) \equiv \textnormal{BK}(H^{2k}(C; \mathbb{Z}_2), \phi, \mathcal{P}_2)  \in \zz_8.$$
\end{theorem}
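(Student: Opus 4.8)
The plan is to reduce the statement to the identity for the mod $8$ signature of a normal complex that is already available in the excerpt, namely Theorem \ref{sign-theta}. Since $(C,\phi)$ is a $4k$-dimensional symmetric \emph{Poincar\'e} complex over $\zz$, it carries a unique equivalence class of normal structures $(\gamma,\theta)$, and the hyperquadratic signature $\widehat{\sigma}(C,\phi,\gamma,\theta)\in\widehat{L}^{4k}(\zz)=\zz_8$ agrees with the ordinary signature $\sigma(C,\phi)$ reduced mod $8$. By Theorem \ref{sign-theta}, choosing a chain map $(u,v):C\to B(k,1)$ representing the $2k$-th Wu class $v_{2k}\in H^{2k}(C;\zz_2)$, we have
$$\sigma(C,\phi) = \phi_0(v,v) + 2\phi_1(v,u) + 4\theta_{-2}(u,u) \in Q_{4k}(B(k,1),\beta(k,1)) = \zz_8.$$
The first two terms are precisely the algebraic Pontryagin square $\mc{P}_2(\phi)$ evaluated on the Wu class, as recorded in Definition \ref{algebraic-Pont}, so the content of the theorem is the identification of the full right-hand side with $\textnormal{BK}(H^{2k}(C;\zz_2),\phi,\mc{P}_2)$.

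First I would set $V=H^{2k}(C;\zz_2)$ with its cup-product form $\lambda=[\phi]$ and $\zz_4$-enhancement $q=\mc{P}_2(\phi)$; by Proposition \ref{always-q} this is a genuine $\zz_4$-valued quadratic enhancement of $\lambda$. Then I would use the identification of the Brown-Kervaire invariant as the $L$-theory invariant classifying $(V,\lambda,q)$ up to Witt equivalence (Theorem \ref{iso-BK}) together with the localization braid of Figure \ref{fig:braid}: a nonsingular symmetric form $(K,\lambda_K)$ over $\zz$ maps under $L^{0}(\zz)\to L_0(\zz,(2)^\infty)$ to the quadratic linking form $(K/4K,(4\lambda_K)^{-1},(4\lambda_K)^{-1}/2)$, whose Brown-Kervaire invariant equals $\sigma(K,\lambda_K)\in\zz_8$ by the commutativity established in section \ref{BK-and-L}. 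The remaining task is to relate the \emph{mod $2$ cohomology} form $(V,\lambda,q)$ of the chain complex $C$ to an integral form $(K,\lambda_K)$ with the right signature. Here I would pass to a $4k$-dimensional symmetric Poincar\'e complex with homology concentrated in degree $2k$ (which represents the same class in $L^{4k}(\zz)=\zz$, hence the same signature), take $K = H^{2k}(C)/\mathrm{torsion}$ with $\lambda_K$ the integral intersection form, observe $\sigma(C,\phi)=\sigma(K,\lambda_K)$, and compute that the reduction mod $2$ of $K$ with its Pontryagin square recovers $(V,\lambda,q)$ up to the contribution of torsion — which, being a sum of hyperbolic and split linking pieces, is Witt-trivial and so does not affect the Brown-Kervaire invariant.

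Concretely, the chain of equalities I would assemble is
$$\textnormal{BK}(V,\lambda,\mc{P}_2) = \textnormal{BK}(K/4K,(4\lambda_K)^{-1},(4\lambda_K)^{-1}/2) = \sigma(K,\lambda_K) = \sigma(C,\phi) \in \zz_8,$$
where the first equality is the Witt-invariance of $\textnormal{BK}$ applied to the isomorphism between the $\zz_4$-enhancement $\mc{P}_2$ on $H^{2k}(C;\zz_2)$ and the boundary linking form of the integral intersection form (using that $\mc{P}_2(x)=\Lambda(x,x)/4\bmod 4$ for any integral lift $\Lambda$, as in the proof of Proposition \ref{always-q}), the second is the braid commutativity of section \ref{BK-and-L}, and the third is homotopy invariance of the signature of a symmetric Poincar\'e complex together with the fact that $\sigma(C,\phi)$ depends only on $H^{2k}(C)/\mathrm{torsion}$ with its intersection form. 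I would then close the argument by invoking Theorem \ref{sign-theta}: the same quantity $\sigma(C,\phi)$ is computed by $\phi_0(v,v)+2\phi_1(v,u)+4\theta_{-2}(u,u)$, so both the Brown-Kervaire description and the twisted $Q$-group description agree, giving $\sigma(C,\phi)=\textnormal{BK}(H^{2k}(C;\zz_2),\phi,\mc{P}_2)\in\zz_8$.

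The main obstacle is the identification of the mod $2$ cup-product form-with-enhancement $(H^{2k}(C;\zz_2),\lambda,\mc{P}_2)$ with the boundary linking form of the integral intersection form. One must check carefully that (i) the Pontryagin square on $H^{2k}(C;\zz_2)$ really is the mod $2$ reduction $q(x)=\Lambda(x,x)/4\in\zz_4$ of an integral lift — this is where the algebraic description of $\mc{P}_2$ in terms of $\phi_0$ and $\phi_1$ from Chapter \ref{Pontryagin-squares chapter} is essential, since $\phi_1$ encodes precisely the cup-$1$ correction term $y\cup_1 d^*y$ that distinguishes $\mc{P}_2$ from the naive diagonal of $\lambda$; and (ii) the torsion of $H^{2k}(C)$ contributes only Witt-trivial summands to the associated linking form, so that restricting to $H^{2k}(C)/\mathrm{torsion}$ loses no Brown-Kervaire information. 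Dispatching (i) rigorously — matching the chain-level formula $\phi_0(v,v)+2\phi_1(v,u)$ with $\Lambda(v,v)\bmod 4$ — is the technical heart, and I would handle it by the same kind of explicit chain-level bookkeeping used in Definition \ref{algebraic-Pont} and the surrounding remarks.
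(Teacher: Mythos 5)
Your overall strategy is in the same family as the paper's (reduce to a nonsingular symmetric form over $\zz$, then use the localization braid and the identity $\textnormal{BK}$ of the boundary linking form $=$ signature mod $8$), but as written it has a genuine gap at exactly the step that carries the whole difficulty of Morita's theorem. The first equality in your chain,
$$\textnormal{BK}(H^{2k}(C;\zz_2),\lambda,\mc{P}_2)~=~\textnormal{BK}\bigl(K/4K,(4\lambda_K)^{-1},(4\lambda_K)^{-1}/2\bigr),$$
with $K=H^{2k}(C)/\mathrm{torsion}$, is justified only by the assertion that the torsion of $H^{2k}(C)$ contributes ``a sum of hyperbolic and split linking pieces'' which is Witt-trivial. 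No argument is offered for this, and it is not a routine check: the paper points out that one cannot pass directly from Brown's Theorem \ref{Brown's theorem} to Theorem \ref{Morita} precisely because $H^{2k}(C;\zz_2)\neq H^{2k}(C)/2H^{2k}(C)$ in general (the discrepancy being $\ker(2:H^{2k+1}\to H^{2k+1})$ by the Bockstein sequence), and that Morita's original proof needed a spectral sequence argument to show the torsion does not affect the Brown--Kervaire invariant. Your passage to a cobordant complex concentrated in degree $2k$ also changes $H^{2k}(\,\cdot\,;\zz_2)$, so you would additionally need to know that $\textnormal{BK}(H^{2k}(C;\zz_2),\lambda,\mc{P}_2)$ is a cobordism invariant of $(C,\phi)$ — which is again the same unproved point. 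The appeal to Theorem \ref{sign-theta} at the start and end does no work here: it recomputes $\sigma(C,\phi)\bmod 8$ in the twisted $Q$-group but does not by itself identify that class with the Brown--Kervaire invariant of the mod $2$ cohomology form.

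The paper closes this gap not by analysing the torsion directly but by a cobordism argument in $\widehat{L}^{4k}(\zz)$: the normal complexes of $(C,\phi)$ and of $(F^{2k}(C),\phi_0)$ are cobordant, each is cobordant to its mod $2$ cone by Proposition \ref{cobordism-normal-cx}, and under the injection $\widehat{L}^{4k}(\zz)\to L_{4k}(\zz,(2)^{\infty})$ (the Witt group of quadratic linking forms, classified by $\textnormal{BK}$) the cone of $(C,\phi)$ maps to $\textnormal{BK}(H^{2k}(C;\zz_2),\phi,\mc{P}_2)$ while the cone of $(F^{2k}(C),\phi_0)$ maps to the invariant computed by Brown's theorem; Witt equivalence of the two linking forms then forces the Brown--Kervaire invariants to coincide, which is exactly the statement you asserted. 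To repair your proof you would either have to reproduce that normal-cobordism argument (in which case your route collapses into the paper's) or supply an independent proof that the torsion summands of the linking form of $(C,\phi)$ are Witt-trivial, which is the nontrivial content you currently assume.
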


The proof of Morita's theorem (\ref{Morita}) using the chain complex approach relies on the following key points:

\begin{itemize}
\item[(i)] There is an isomorphism given by the signature $L^{4k}(\bb{Z}) \overset{\sigma}{\longrightarrow} \bb{Z},$
\item[(ii)] There is an injective map $\widehat{L}^{4k}(\bb{Z}) \to  L_{4k}(\zz, (2)^{\infty}),$
\item[(iii)]   $\textnormal{BK}(H^{2k}(C, \bb{Z}_2), \phi, \mathcal{P}_2) \in L_{4k}(\zz, (2)^{\infty}),$
\item[(iv)] $\sigma(C,\phi)$ maps to  $\textnormal{BK}(H^{2k}(C, \bb{Z}_2), \phi, \mathcal{P}_2)$ under the composition of maps
$$
\begin{array}{ccccc}
L^{4k}(\bb{Z}) &\to &\widehat{L}^{4k}(\bb{Z}) & \to & L_{4k}(\zz, (2)^{\infty})  \\
\sigma(C, \phi) &\mapsto &\widehat{\sigma}^*(C, \phi, \gamma, \theta)& \mapsto & \textnormal{BK}(H^{2k}(C, \bb{Z}_2), \phi,  \mathcal{P}_2).
\end{array}
$$

\end{itemize}

It is not possible to pass directly from Theorem \ref{Brown's theorem} (Brown) to Theorem \ref{Morita} (Morita) because in general
$$H^{2k}(X; \mathbb{Z}_2) \neq H^{2k}(X)/ 2H^{2k}(X),$$
the difference being given by the exact sequence,
$$0 \longrightarrow H^{2k}(X)/ 2H^{2k}(X)  \longrightarrow H^{2k}(X; \mathbb{Z}_2) \longrightarrow \textnormal{ker}(2)\longrightarrow 0$$
with $2: H^{2k}(X) \to H^{2k}(X)$.

In the proof of Theorem \ref{Morita} Morita uses spectral sequences to prove that the torsion in the middle dimension $H^{2k}(X)$ does not contribute to the Brown-Kervaire invariant.

The proof of Morita's theorem (as in \ref{Morita-theorem-cx}) simplifies considerably by using algebraic symmetric Poincar\'e complexes $(C, \phi)$. Every $4k$-dimensional symmetric Poincar\'e complex over $\zz$ is cobordant to one that is connected below the middle dimension.
So it can be concentrated in the middle dimension
$$C: \dots \longrightarrow 0 \longrightarrow C_{2k} \longrightarrow 0 \longrightarrow \dots$$
Such a symmetric Poincar\'e complex is homotopy equivalent to a symmetric form. The essential idea is that by using symmetric Poincar\'e complexes instead of Poincar\'e spaces we can reduce the proof of Morita's theorem to Brown's simpler case by chain complex methods.

\begin{proof} (of Theorem \ref{Morita-theorem-cx})
A $4k$-dimensional symmetric Poincar\'e complex $(C, \phi)$ over $\zz$ is cobordant to the symmetric form $(F^{2k}(C), \phi_0)$, where $F^{2k}(C)$ is the finitely generated free abelian group given by $F^{2k}(C)= H^{2k}(C, \zz) / torsion$ and $(F^{2k}(C), \phi_0)$ is a nonsingular symmetric form over $\zz$ with $\phi_0: F^{2k}(C) \times F^{2k}(C) \to \zz.$
 It is defined in \cite{atsI} that the signature of a nonsingular symmetric Poincar\'e complex $(C, \phi)$ over $\zz$ is given by,
$$\sigma(C, \phi) = \sigma(F^{2k}(C), \phi_0) \in \zz,$$

Every symmetric Poincar\'e complex gives rise to a normal complex,

\begin{minipage}[c]{0.4\linewidth}
\vspace{-33pt}
\begin{align*}
L^{4k}(\zz) &\to \widehat{L}^{4k}(\zz) \\
(C, \phi) & \mapsto (C, \phi, \gamma, \theta )
\end{align*}

\end{minipage}
\begin{minipage}[c]{0.40\linewidth}

\begin{align*}
\vspace{-50pt}
L^{4k}(\zz) &\to \widehat{L}^{4k}(\zz) \\
(F^{2k}(C), \phi_0) & \mapsto (F^{2k}(C), \phi_0, [\gamma], [\theta] )
\end{align*}
\vspace{10pt}
\end{minipage}

\vspace{-20pt}
Clearly, since $(C, \phi)$ and $ (F^{2k}(C), \phi_0)$ are cobordant, then the normal complexes $(C, \phi, \gamma, \theta )$ and $(F^{2k}(C), \phi_0, [\gamma], [\theta] )$ are also cobordant. (\cite{atsII})

From Proposition \ref{cobordism-normal-cx} we know that $(F^{2k}(C), \phi, [\gamma], [\theta] )$ is cobordant to the normal complex  $(\mc{C}(2:F^{2k}(C) \to F^{2k}(C) ), [\phi]', [\gamma]', [\theta]')$. Similarly $(C, \phi, \gamma, \theta)$ is cobordant to $(\mc{C}(2: C \to C), \phi', \gamma', \theta')$.

Since $F^{2k}(C)$ is a finitely generated free abelian group, we can apply \cite[theorem 1.20 (viii)]{Brown}. We define a $\zz_4$-valued quadratic form
$$ \Phi : F^{2k}(C) / 2 F^{2k}(C) \to \zz_4$$ by
$\Phi(u) = \phi_0(u, u) \in \zz_4.$
Hence applying Brown's theorem, we find that
$$\sigma(F^{2k}(C),\phi_0)  = \textnormal{BK}\left(F^{2k}(C)/ 2 F^{2k}(C), \phi_0, \Phi \right) \in \zz_8,$$
where $\phi_0: F^{2k}(C)/ 2 F^{2k}(C) \times F^{2k}(C)/ 2 F^{2k}(C) \to \zz_2$ is the mod $2$ reduction of $\phi_0.$
Note that
$$ F^{2k}(C)/ 2 F^{2k}(C) = H^{2k}(\mc{C}(2:F^{2k}(C) \to F^{2k}(C) ).$$
Similarly, the normal complex $(\mc{C}(2: C \to C), \phi', \gamma', \theta') \in \widehat{L}^{4k}(\zz)$ is mapped to $\textnormal{BK}(H^{2k}(C;\zz_2), \phi, q) \in L_{4k}(\zz, (2)^{\infty}).$

The description of the quadratic enhancement $q$ in the quadratic linking form $(H^{2k}(C;\zz_2), \phi, q)$ as the Pontryagin square $\mc{P}_2$ follows directly from the argument in chapter \ref{Pontryagin-squares chapter}, where we described how the symmetric structure $\phi$ of the symmetric Poincar\'e complex $(C, \phi)$ over $\zz$  is sent to the Pontryagin square,
\begin{align*}
(u,v)^{\%}: Q^{4k}(C) &\longrightarrow   Q^{4k}\left(S^{2k+1}\bb{Z} \xrightarrow{2}S^{2k}\bb{Z} \right) = \zz_4 \\
\phi & \longmapsto \mc{P}_2(u, v) = \phi_0(v, v) + 2 \phi_1(v, u),
\end{align*}
\vspace{-2pt}
that is,
\vspace{-5pt}
\begin{align*}
\mc{P}_2 : H^{2k}(C ; \zz_2) & \longrightarrow \zz_4 \\
    (u, v) & \longmapsto \phi_0(v, v) + 2 \phi_1(v, u).
\end{align*}
\raggedbottom
Hence,
$$\sigma(C, \phi) = \textnormal{BK}(H^{2k}(C;\zz_2), \phi, \mc{P}_2) \in \zz_8,$$
and the result follows.

Note also that since $\sigma(C, \phi) = \sigma(F^{2k}(C), \phi_0) \in \zz,$ the corresponding Brown-Kervaire invariants are equal,
$$\textnormal{BK}\left(H^{2k}(C;\zz_2), \phi, \mc{P}_2\right) = \textnormal{BK}\left(F^{2k}(C)/ 2 F^{2k}(C), \phi_0, \Phi \right) \in \zz_8. $$

\end{proof}

\section{Obstructions to divisibility of the signature by $8$}\label{obstructions}
\subsection{The normal structure}
Both Theorem \ref{sign-theta} which gives the mod $8$ signature of a $4k$-dimensional normal complex over $\bb{Z}$ by
$$\widehat{\sigma}(C, \phi, \gamma, \theta) = \phi_0(v, v) + 2 \phi_1(v, u) + 4 \theta_{-2}(u, u) \in Q_{4k}(B(k, 1), \beta(k, 1)) =  \bb{Z}_8 $$
and Morita's theorem relating the signature and the Brown-Kervaire invariant of the Pontryagin square have important applications to the study of the signature of a fibration.
The chain complex proof of Morita's theorem will be useful in Chapter \ref{mod-eight-proof}.
We now discuss the application of Theorem \ref{sign-theta}.

In \cite{Weiss-Ker} the following exact sequence including the twisted $Q$-groups is defined,
$$
\begin{array}{rcccccccc} \dots \widehat{Q}^{n+1}(C)  & \longrightarrow & Q_n(C, \gamma) & \overset{N_{\gamma}}{\longrightarrow} & Q^n(C) & \overset{J_{\gamma}}{\longrightarrow} & \widehat{Q}^n(C) & \longrightarrow \dots \\
            &                 & (\phi, \theta) & \longmapsto & \phi & \longmapsto & {\tiny J(\phi)- (\phi_0)^{\%}(S^n \gamma)} & & \\
\theta & \longmapsto &(0, \theta) &   &&  & \\
\end{array}
$$

This exact sequence involving twisted $Q$-groups is given in the particular case of the chain bundle $(B(k, 1), \beta(k, 1))$ by
\begin{equation}\label{ex-q-twisted} Q^{4k+1}(B(k, 1)) \xrightarrow{0} \widehat{Q}^{4k+1}(B(k,1)) \to Q_{4k}(B(k, 1), \beta(k, 1)) \to Q^{4k}(B(k, 1)) \to \widehat{Q}^{4k}(B(k,1)).
\end{equation}

From Theorem \ref{sign-theta} we know that the mod $8$ signature of a symmetric chain complex can be viewed as an element in the twisted $Q$-group $Q_{4k}(B(k, 1), \beta(k, 1))$, so we are interested in explicit descriptions of the $Q$-groups in the exact sequence \ref{ex-q-twisted}.
The following computation of appears in the preprint by A. Ranicki and L. Taylor \cite{Mod8} and it can be deduced from the computations in Banagl and Ranicki \cite[Proposition 52, Corollary 61]{BanRan}
\begin{theorem} \label{Q-groups computation} (\hspace{-1pt}\cite{Mod8})
\begin{itemize}
\item[(i)] The $4k$-dimensional symmetric $Q$-group of $B(k, m)$ is
$$Q^{4k}(B(k,m)) = \bb{Z}_{4m} $$
with an isomorphism
$$ Q^{4k}(B(k,m)) \to \bb{Z}_{4m} ; \phi \to \phi_0(1,1) + 2 \phi_1(1,1).$$
\item[(ii)] The $4k$-dimensional \textbf{twisted} $Q$-group of $(B(k,m), \beta(k,m))$ is
$$Q_{4k}(B(k,m), \beta(k,m)) = \left\{ \begin{array}{ccc} \bb{Z}_{8m} &if & m \textnormal{ is odd} \\
\bb{Z}_{4m} &if & m \textnormal{ is even}\end{array} \right. $$
\end{itemize}
\end{theorem}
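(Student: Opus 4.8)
The plan is to reduce both statements to the general $Q$-group computations of \cite[Proposition~19, Proposition~52, Corollary~61]{BanRan} already invoked for Proposition~\ref{Q-2k}, and then to build the twisted group out of an exact sequence. Throughout write $B=B(k,m)$ for the $\zz$-module chain complex
$$B:\ \dots\to 0\to B_{2k+1}=\zz\xrightarrow{\,d=2m\,}B_{2k}=\zz\to 0\to\dots$$
concentrated in degrees $2k$ and $2k+1$, with chain bundle $\gamma=\beta(k,m)$ determined by $\chi=2m^2$ as in Definition~\ref{chain bundle}. For part~(i) I would apply Proposition~\ref{Q-2k} with the degree $k$ there replaced by $2k$ and the ring taken to be $\zz$. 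Since $H_{2k+1}(B)=\ker(2m\colon\zz\to\zz)=0$, the map $F$ in the exact sequence of \cite[Proposition~19]{BanRan} has zero source, so $Q^{4k}(B)$ coincides with the relative group $Q^{4k}(d)$, which by that computation is the quotient of $\mathrm{Hom}_\zz(\zz,\zz)=\zz$ by the subgroup generated by $4m^2$ (the image of $\alpha\mapsto d\alpha d^{\ast}$) and $4m$ (the image of $\beta\mapsto(1+T)d\beta$). Hence $Q^{4k}(B)=\zz/(4m^2\zz+4m\zz)=\zz_{4m}$, and tracking a generator through the identification produces the explicit isomorphism $\phi\mapsto\phi_0(1,1)+2\phi_1(1,1)$ of the statement (this is the normalisation recorded in \cite[Proposition~52]{BanRan}).

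For part~(ii) I would feed $(B,\gamma)$ into the exact sequence of twisted $Q$-groups in dimension $n=4k$,
$$Q^{4k+1}(B)\xrightarrow{\,J_\gamma\,}\widehat{Q}^{4k+1}(B)\xrightarrow{\,H_\gamma\,}Q_{4k}(B,\gamma)\xrightarrow{\,N_\gamma\,}Q^{4k}(B)\xrightarrow{\,J_\gamma\,}\widehat{Q}^{4k}(B),$$
which exhibits $Q_{4k}(B,\gamma)$ as an extension
$$0\to\mathrm{coker}\bigl(J_\gamma\colon Q^{4k+1}(B)\to\widehat{Q}^{4k+1}(B)\bigr)\to Q_{4k}(B,\gamma)\to\ker\bigl(J_\gamma\colon Q^{4k}(B)\to\widehat{Q}^{4k}(B)\bigr)\to 0.$$
The groups $\widehat{Q}^{4k}(B)$, $\widehat{Q}^{4k+1}(B)$ and $Q^{4k+1}(B)$ are $2$-primary and are read off from \cite[Proposition~52, Corollary~61]{BanRan} by the same exact-sequence method; from these I expect to find that $J_\gamma$ vanishes on $Q^{4k}(B)=\zz_{4m}$, so that the quotient in the extension is all of $\zz_{4m}$ and the kernel is $\mathrm{coker}(J_\gamma\colon Q^{4k+1}(B)\to\widehat{Q}^{4k+1}(B))$.

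The parity of $m$ enters through $J_\gamma(\phi)=J(\phi)-(\phi_0)^{\%}(S^{4k}\gamma)$: the twisting correction $(\phi_0)^{\%}(S^{4k}\beta(k,m))$ is governed by the class of $\chi=2m^2$, hence by $2m^2\bmod 4$, which is $2$ when $m$ is odd and $0$ when $m$ is even. Accordingly the cokernel $\mathrm{coker}(J_\gamma\colon Q^{4k+1}(B)\to\widehat{Q}^{4k+1}(B))$ should be $\zz_2$ for $m$ odd and $0$ for $m$ even. The even case then gives $Q_{4k}(B,\gamma)=\zz_{4m}$ at once. In the odd case one is left with an extension $0\to\zz_2\to Q_{4k}(B,\gamma)\to\zz_{4m}\to 0$, and the remaining step is to show it is non-split, i.e.\ that a generator of $Q^{4k}(B)$ lifts to an element of order $8m$ in $Q_{4k}(B,\gamma)$; I would check this by an explicit cycle computation with $\phi_0,\phi_1,\theta_{-2}$. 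This is consistent with Theorem~\ref{theta-and-Z8} (the case $m=1$, where the extension reads $0\to\zz_2\to\zz_8\to\zz_4\to 0$) and with $Q_{4k}(B(\infty),\beta(\infty))=\widehat{L}^{4k}(\zz)=\zz_8$ via $B(\infty)=\sum_k B(k,1)$.

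The main obstacle is the determination of $J_\gamma$ on the hyperquadratic groups together with the non-splitness in the odd case: the bookkeeping with the complete resolution $\widehat{W}$ and the term $(\phi_0)^{\%}(S^{4k}\beta)$ is delicate, and it is exactly here that the dichotomy $2m^2\equiv 2$ versus $2m^2\equiv 0\pmod 4$ does the work. Once the relevant portions of \cite[Proposition~19, Proposition~52, Corollary~61]{BanRan} are quoted, the rest is routine.
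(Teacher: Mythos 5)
The paper itself contains no proof of Theorem \ref{Q-groups computation}: it is imported from \cite{Mod8} with a pointer to \cite[Proposition 52, Corollary 61]{BanRan}, so your outline has to stand on its own, and as written it does not. In part (i) you reach the right group, but by an internally inconsistent route: if the map $F$ in the quoted exact sequence really had zero source, then $Q^{4k}(B(k,m))$ would coincide with the relative group $Q^{4k}(d)=\mathrm{coker}\bigl(d^{\%}\colon Q^{4k}(B_{2k+1})\to Q^{4k}(B_{2k})\bigr)\cong\zz_{4m^2}$, not $\zz_{4m}$; the relations $4m\zz$ you then adjoin, i.e. the image of $\beta\mapsto(1+T)d\beta$, are exactly the image of $F$, which is therefore nonzero, so the transcription of its source as $B_{2k+1}\otimes H_{2k+1}(B)$ cannot be used the way you use it (it is the lower group, with $H_{2k}(B(k,m))=\zz_{2m}\neq 0$, that feeds these relations). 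The honest argument is simply to apply Proposition \ref{Q-2k} as displayed, or to compute cycles and boundaries of $(W^{\%}B)_{4k}$ directly: cycles are pairs $(\phi_0,\phi_1)=(\lambda,\alpha)$ and the boundary lattice is generated by $(2m,\pm 1)$ and $(0,2m)$, whence $\zz_{4m}$. Note also that the generator-tracking you defer is not vacuous: carried out for general $m$ it produces the evaluation $\phi_0(1,1)+2m\,\phi_1(1,1)$, i.e. the map $\phi\mapsto\phi_0+d\phi_1$ used after Definition \ref{algebraic-Pont}, so the normalisation in the displayed formula has to be checked rather than asserted.

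The substantive gap is in part (ii), which is where the whole content of the theorem sits. The exact sequence and the extension $0\to\mathrm{coker}(J_\gamma^{4k+1})\to Q_{4k}(B,\gamma)\to\ker(J_\gamma^{4k})\to 0$ are the right framework, and the auxiliary groups are easy ($\widehat{Q}^{4k}(B)\cong\widehat{Q}^{4k+1}(B)\cong\zz_2$ because $d=2m$ is even, and $Q^{4k+1}(B)\cong\zz_2$), but every ingredient that produces the dichotomy $\zz_{8m}$ versus $\zz_{4m}$ is left as an expectation: that $J_\gamma$ vanishes on $Q^{4k}(B)$ for all $m$, that $J_\gamma\colon Q^{4k+1}(B)\to\widehat{Q}^{4k+1}(B)$ is zero for $m$ odd and onto for $m$ even, and that the extension is non-split when $m$ is odd. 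The remark that $\chi=2m^2$ is $2$ or $0$ modulo $4$ is a heuristic, not a computation of $(\widehat{\phi}_0)^{\%}(S^{4k}\beta(k,m))$ on the relevant classes; in the even case you must also exclude the other degeneration compatible with the order count ($J_\gamma^{4k}\neq 0$ together with a split extension, giving $\zz_2\oplus\zz_{2m}$ of the same order $4m$). Be careful, too, not to borrow the paper's later proposition on $J_\beta=0$: there those vanishings for $m=1$ are deduced \emph{from} the known value $Q_{4k}(B(k,1),\beta(k,1))=\zz_8$ of Theorem \ref{theta-and-Z8}, so using them here would be circular. What is missing is precisely the explicit cycle-level work with the chain bundle $\beta(k,m)$ (representatives $(\phi,\theta)$ and the coboundary identifications in the twisted group), in which the term $2m^2$ genuinely enters and which is what \cite{BanRan} supply; until that is done, your proposal is a plan for a proof rather than a proof.
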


In Theorem \ref{sign-theta} we have proved that
$$\widehat{\sigma}(C, \phi, \gamma, \theta) = \phi_0(v, v) + 2 \phi_1(v, u) + 4 \theta_{-2}(u, u) \in Q_{4k}(B(k, 1), \beta(k, 1)) =  \bb{Z}_8.$$
So we  are particularly interested in the computations in Theorem \ref{Q-groups computation} when $m=1$, that is
$$Q^{4k}(B(k,1)) \to \bb{Z}_{4} ; \phi \mapsto \phi_0(1,1) + 2 \phi_1(1,1) $$
and

$$Q_{4k}(B(k,1), \beta(k,1)) \to \bb{Z}_8 ; (\phi, \theta) \mapsto \phi_0(1,1) + 2 \phi_1(1,1) + 4\theta_{-2}(1,1). $$

Furthermore we are also interested in the following computations of $Q$ groups of the chain complex $B(k, 1):  B(k, 1)_{2k+1}=\zz \xrightarrow{2} B(k, 1)_{2k}= \zz$,
\begin{proposition} We have the following isomorphisms,

\begin{itemize}
\item[(i)] $\widehat{Q}^{4k+1}(B(k,1)) \cong \bb{Z}_2.$
\item[(ii)] $\widehat{Q}^{4k}(B(k,1)) \cong \bb{Z}_2.$
\item[(iii)] $Q^{4k+1}(B(k, 1)) \cong \bb{Z}_2.$
\item[(iv)] The map $Q^{4k+1}(B(k, 1)) \cong \bb{Z}_2 \xrightarrow{J_{\beta}=0}\widehat{Q}^{4k+1}(B(k,1)) \cong \bb{Z}_2$ is the zero map.
\item[(v)] The map $Q^{4k}(B(k, 1)) \cong \bb{Z}_4 \xrightarrow{J_{\beta}=0}\widehat{Q}^{4k}(B(k,1)) \cong \bb{Z}_2$ is the zero map.
\end{itemize}
\end{proposition}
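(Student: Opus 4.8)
The plan is to reduce the whole statement to explicit computations with the two-term $\bb{Z}$-module chain complex $B(k,1)\colon B(k,1)_{2k+1}=\bb{Z}\xrightarrow{\,2\,}B(k,1)_{2k}=\bb{Z}$, most of which are already contained in \cite[Propositions 16, 18, 19, 52 and Corollary 61]{BanRan}; the content here is to repackage the five facts in the form needed later. Recall that $Q^{n}(B(k,1))$, $\widehat{Q}^{n}(B(k,1))$ and $Q_{n}(B(k,1),\beta(k,1))$ are all homology groups built from the $\bb{Z}[\bb{Z}_2]$-module chain complex $B(k,1)^{t}\otimes_{\bb{Z}}B(k,1)$ equipped with the transposition involution $T=T_{1}$. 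A direct calculation shows this tensor product is concentrated in degrees $4k,4k+1,4k+2$, of ranks $1,2,1$, with differential given by multiplication by $2$ on the two diagonal summands; its homology is $\bb{Z}_2$ in each of the degrees $4k$ and $4k+1$, with trivial $\bb{Z}_2$-action but linked by a non-trivial $k$-invariant. This description is the common input for everything below.

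For parts (i) and (ii) one computes the Tate hypercohomology $\widehat{Q}^{n}(B(k,1))=\widehat{H}^{n}(\bb{Z}_2;B(k,1)^{t}\otimes_{\bb{Z}}B(k,1))$ in the degrees $n=4k+1$ and $n=4k$: the hyperhomology spectral sequence with $E^{2}$-term $\widehat{H}^{*}(\bb{Z}_2;H_{*}(B(k,1)^{t}\otimes B(k,1)))$ has, thanks to the $k$-invariant above, enough differentials to leave a single $\bb{Z}_2$ in each of these two total degrees; equivalently the values are read off from \cite[Proposition 52, Corollary 61]{BanRan}. For (iii) I would feed $B(k,1)$, which is concentrated in degrees $2k$ and $2k+1$, into the exact sequence of \cite[Proposition 19]{BanRan} (the sequence quoted in the proof of Proposition \ref{Q-2k}), together with the vanishing of the one-degree $Q$-groups of $B(k,1)_{2k}$ and $B(k,1)_{2k+1}$ in the neighbouring dimensions from \cite[Proposition 16]{BanRan}; this pins down $Q^{4k+1}(B(k,1))\cong\bb{Z}_2$.

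For (iv) and (v) I would use the twisted exact sequence of Section \ref{Twisted-Q-groups}, which for $C=B(k,1)$ and $\gamma=\beta(k,1)$ reads
$$\cdots\to Q_{n}(C,\gamma)\xrightarrow{\,N_{\gamma}\,}Q^{n}(C)\xrightarrow{\,J_{\gamma}\,}\widehat{Q}^{n}(C)\xrightarrow{\,H_{\gamma}\,}Q_{n-1}(C,\gamma)\to\cdots,$$
with $N_{\gamma}(\phi,\theta)=\phi$ and with boundary $\theta\mapsto(0,\theta)\in Q_{n}(C,\gamma)$ on $\widehat{Q}^{n+1}(C)$; hence $J_{\beta}=J_{\gamma}=0$ in degree $n$ is equivalent to surjectivity of $N_{\gamma}$ in that degree. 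For $n=4k$ this is immediate: by Theorem \ref{theta-and-Z8} and Theorem \ref{Q-groups computation}(i) the isomorphisms $Q_{4k}(C,\gamma)\cong\bb{Z}_8$, $(\phi,\theta)\mapsto\phi_0(1,1)+2\phi_1(1,1)+4\theta_{-2}(1,1)$, and $Q^{4k}(C)\cong\bb{Z}_4$, $\phi\mapsto\phi_0(1,1)+2\phi_1(1,1)$, identify $N_{\gamma}$ with the canonical surjection $\bb{Z}_8\twoheadrightarrow\bb{Z}_4$, so $J_{\beta}=0$, which is (v). For $n=4k+1$ I would instead show that the boundary $\widehat{Q}^{4k+1}(C)=\bb{Z}_2\to Q_{4k}(C,\gamma)=\bb{Z}_8$ is injective; by Theorem \ref{theta-and-Z8} it sends $\theta\mapsto4\theta_{-2}(1,1)$, so injectivity amounts to exhibiting a cycle representing the non-zero class of $\widehat{Q}^{4k+1}(B(k,1))$ with $\theta_{-2}(1,1)$ odd. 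Once it is injective, $\im(J_{\beta})=\ker(\text{boundary})=0$, giving $J_{\beta}=0$, which is (iv).

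The $Q$-group bookkeeping for $B(k,1)$ is routine, and (v) is essentially formal once Theorems \ref{theta-and-Z8} and \ref{Q-groups computation} are available. I expect the genuinely delicate step to be the chain-level computation underlying (iv): one must write down an honest cycle in $(\widehat{W}^{\%}B(k,1))_{4k+1}$ representing the non-zero class and evaluate its component $\theta_{-2}$, keeping careful track both of the sign conventions in Definition \ref{structures}(iii) and of the correction term $(\widehat{\phi}_0)^{\%}(S^{n}\beta(k,1))$ that distinguishes $J_{\beta}$ from the untwisted $J$. This is the natural place to lean on the explicit low-dimensional computations in \cite{BanRan}.
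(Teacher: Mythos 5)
Your overall strategy is sound, and for (v) it coincides with the paper's: identify $N_{\gamma}$ with the surjection $\bb{Z}_8 \to \bb{Z}_4$ via Theorems \ref{theta-and-Z8} and \ref{Q-groups computation}(i), then use exactness of the twisted sequence. For (i)--(iii) you take a slightly different route: the paper computes $\widehat{Q}^{4k+1}(B(k,1))$ and $\widehat{Q}^{4k}(B(k,1))$ from the exact sequence of \cite[Proposition 13(i)]{BanRan}, observing that $\widehat{d}^{\%}=0$ because $d=2$ and the involution on $\bb{Z}$ is even, rather than running a hypercohomology spectral sequence; and it obtains (iii) from $Q_{4k+1}(B(k,1),\beta(k,1)) \cong Q_{4k+1}(B(\infty),\beta(\infty)) \cong \widehat{L}^{4k+1}(\bb{Z}) = \bb{Z}_2$ together with the twisted exact sequence and the nonvanishing of $Q^{4k+1}(B(k,1))$, rather than from the two-term-complex sequence of \cite[Proposition 19]{BanRan}. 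Your version of (i)--(ii) rests on the unproved assertion that the $k$-invariant forces the relevant spectral sequence differential to be non-zero; as stated this is a claim, not an argument, though it can be replaced by the same citation the paper uses.

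The material divergence is (iv). You propose to prove injectivity of the boundary $\widehat{Q}^{4k+1}(B(k,1)) \to Q_{4k}(B(k,1),\beta(k,1))$ by exhibiting an explicit cycle in $(\widehat{W}^{\%}B(k,1))_{4k+1}$ with $\theta_{-2}(1,1)$ odd, and you flag this chain-level computation as the delicate step; since it is not carried out, your proof of (iv) is incomplete as written. But the computation is unnecessary: by (i) the source has order $2$, and by the identification you already use for (v) the kernel of $N_{\gamma}:\bb{Z}_8 \to \bb{Z}_4$ also has order $2$; exactness at $Q_{4k}(B(k,1),\beta(k,1))$ then forces the boundary to be injective onto that kernel, and exactness at $\widehat{Q}^{4k+1}(B(k,1))$ gives $J_{\beta}=0$. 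This is in substance the paper's argument (the boundary is the injection $\bb{Z}_2 \xrightarrow{4} \bb{Z}_8$), and adopting it lets you drop the sign and cocycle bookkeeping you were bracing for.
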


\begin{proof}

\begin{itemize}
\item[(i)], (ii) Using \cite[Proposition 13 (i)]{BanRan} we know that $\widehat{Q}^{4k+1}(B(k,1))$  and $\widehat{Q}^{4k}(B(k,1))$ fit into the exact sequence
\begin{multline}
\dots \to H^1(\bb{Z}_2; \bb{Z}) = 0 \xrightarrow{\widehat{d}^{\%}} H^1(\bb{Z}_2; \bb{Z}) =0  \to \widehat{Q}^{4k+1}(B(k,1)) \to H^0(\bb{Z}_2; \bb{Z})= \bb{Z}_2 \xrightarrow{\widehat{d}^{\%}}  \\   H^0(\bb{Z}_2; \bb{Z}) = \bb{Z}_2 \to   \widehat{Q}^{4k}(B(k,1)) \to H^{-1}(\bb{Z}_2; \bb{Z})=0
\end{multline}
As the differential $d=2$ and $\bb{Z}$ is a commutative ring with even involution then
$\widehat{d}^{\%}=0$. So there are defined isomorphisms
$$\widehat{Q}^{4k+1}(B(k,1))\xrightarrow{\cong}  H^0(\bb{Z}_2; \bb{Z})= \bb{Z}_2$$
and
 $$H^0(\bb{Z}_2; \bb{Z}) = \bb{Z}_2 \xrightarrow{\cong}   \widehat{Q}^{4k}(B(k,1)).$$

\item[(iii)] We know from \cite{BanRan} that $Q^{4k+1}(B(k, 1)) \neq 0$. We also know that
$$Q_{4k+1}(B(k,1) \beta(k,1)) \xrightarrow{\cong} Q_{4k+1}(B(\infty), \beta(\infty)) \xrightarrow{\cong} \widehat{L}^{4k+1}(\bb{Z}) = \bb{Z}_2,$$
so from the exact sequence \eqref{ex-q-twisted}, we deduce that $Q^{4k+1}(B(k, 1)) = \bb{Z}_2.$
\item[(iv)] The map $$\widehat{Q}^{4k+1}(B(k, 1)) = \bb{Z}_2 \xrightarrow{4} Q_{4k}(B(k, 1), \beta(k,1))= \bb{Z}_8$$ is injective. This forces the map $Q^{4k+1}(B(k, 1)) = \bb{Z}_2 \xrightarrow{J_{\beta}=0}\widehat{Q}^{4k+1}(B(k,1)) =\bb{Z}_2$ in the exact sequence of $Q$-groups in \eqref{ex-q-twisted} to be the zero map.
\item[(v)] From the proof of Proposition \ref{signature-Psq} we know that the map $Q_{4k}(B(k,1), \beta(k,1)) \to Q^{4k}(B(k,1))$ is surjective. This implies that the next map in the exact sequence \eqref{ex-q-twisted} has to be the zero map. Hence the map $Q^{4k}(B(k, 1)) = \bb{Z}_4 \xrightarrow{J_{\beta}=0}\widehat{Q}^{4k}(B(k,1)) =\bb{Z}_2$ is the zero map.

\end{itemize}
\end{proof}

\begin{theorem} \label{obstruction-theta}
Let $(C,\phi)$ be a $4k$-dimensional symmetric Poincar\'e complex over $\zz$, and let $(\gamma,\theta)$  be the
canonical algebraic normal structure, with classifying chain bundle map $(f,\chi):(C,\gamma) \to (B(k,1),\beta(k,1))$.
The induced morphism $(f,\chi)_{\%}:Q_{4k}(C,\gamma) \to Q_{4k}(B(k,1),\beta(k,1))$ is such that
$$(f,\chi)_{\%}(\phi,\theta)~=~ \phi_0(v,v)~=~\sigma (C,\phi) \in Q_{4k}(B(k,1),\beta(k,1))=\zz_8$$
and is the signature of $(C,\phi)$ modulo $8$, for any lift of $v_{2k}(\phi) \in H^{2k}(C; \zz_2)$ to a class $v \in H^{2k}(C)$,
with image
$$f^{\%}(\phi)~=~\mc{P}_2(v_{2k}(\phi))~=~\sigma(C,\phi) \in Q^{4k}(B(k,1))=\zz_4$$
independent of $\theta$. Here, $\mc{P}_2:H^{2k}(C;\zz_2) \to \zz_4$ is the algebraic Pontryagin square.
If $\sigma(C,\phi) \equiv 0 \pmod{4}$ then
$$(f,\chi)_{\%}(\phi,\theta)=\phi_0(v,v) \in \textnormal{Im}(\widehat{Q}^{4k+1}(B(k,1)) \to Q_{4k}(B(k, 1), \beta(k, 1)) )= 4\zz_2  \subset \zz_8$$
depends on $\theta$.
\end{theorem}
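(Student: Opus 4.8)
The plan is to combine Theorem \ref{sign-theta}, which already expresses the mod $8$ signature in $Q_{4k}(B(k,1),\beta(k,1))$, with the explicit isomorphisms computed in the preceding propositions. Concretely, by Theorem \ref{sign-theta} the classifying chain bundle map $(f,\chi):(C,\gamma)\to(B(k,1),\beta(k,1))$ of the canonical normal structure induces
$$(f,\chi)_{\%}(\phi,\theta)~=~\phi_0(v,v)+2\phi_1(v,u)+4\theta_{-2}(u,u)\in Q_{4k}(B(k,1),\beta(k,1))=\zz_8,$$
where $(u,v)$ is a chain representative of the Wu class $v_{2k}(\phi)\in H^{2k}(C;\zz_2)$ as in that theorem, and this element is $\sigma(C,\phi)$ reduced mod $8$ since $(C,\phi)$ is Poincar\'e (so the hyperquadratic signature agrees with the signature mod $8$). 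First I would record that the composite $f^{\%}:Q^{4k}(C)\to Q^{4k}(B(k,1))=\zz_4$ sends $\phi$ to $\phi_0(v,v)+2\phi_1(v,u)$, which by Definition \ref{algebraic-Pont} is exactly the algebraic Pontryagin square $\mc{P}_2(v_{2k}(\phi))$, and by Proposition \ref{signature-Psq} this equals $\sigma(C,\phi)\in\zz_4$; that this value is independent of $\theta$ is immediate because $f^{\%}$ factors through the untwisted $Q$-group $Q^{4k}(C)$, on which $\theta$ plays no role, and any two lifts of $v_{2k}(\phi)$ to $H^{2k}(C)$ differ by an element killed in $\zz_4$ by the quadratic-enhancement relation of Proposition \ref{on a sum}.

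Next I would analyse the case $\sigma(C,\phi)\equiv 0\pmod 4$. By Proposition \ref{signature-Psq} this is equivalent to $\mc{P}_2(v_{2k}(\phi))=0\in\zz_4$, i.e. $f^{\%}(\phi)=0\in Q^{4k}(B(k,1))$. Now I invoke the exact sequence of twisted $Q$-groups \eqref{ex-q-twisted}: since $(f,\chi)_{\%}(\phi,\theta)\in Q_{4k}(B(k,1),\beta(k,1))$ maps to $f^{\%}(\phi)=0$ under $Q_{4k}(B(k,1),\beta(k,1))\to Q^{4k}(B(k,1))$, exactness places $(f,\chi)_{\%}(\phi,\theta)$ in the image of $\widehat{Q}^{4k+1}(B(k,1))\to Q_{4k}(B(k,1),\beta(k,1))$. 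By the preceding proposition this map is the injection $\zz_2\xrightarrow{4}\zz_8$, so its image is precisely $4\zz_2\subset\zz_8$. Hence $(f,\chi)_{\%}(\phi,\theta)=\phi_0(v,v)\in 4\zz_2\subset\zz_8$ as claimed (the terms $2\phi_1(v,u)$ and $4\theta_{-2}(u,u)$ combine with $\phi_0(v,v)$ to land in $4\zz_2$; one should note $2\phi_1(v,u)$ contributes only through its class mod $4$, which vanishes by the $\zz_4$-triviality of $\mc{P}_2(v_{2k}(\phi))$, leaving the genuinely $\theta$-dependent $4$-divisible term).

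The remaining point to make carefully is the dependence on $\theta$ in this last case. Here I would argue that the map $H:\widehat{Q}^{4k+1}(B(k,1))\to Q_{4k}(B(k,1),\beta(k,1))$ being injective means that different normal structures $\theta$, which differ by elements of $\widehat{Q}^{4k+1}(B(k,1))=\zz_2$ (the indeterminacy in the normal structure once the Poincar\'e symmetric complex is fixed is measured by this group via the exact sequence of twisted $Q$-groups, after restricting to the kernel of $J_\beta$), genuinely change the value $\phi_0(v,v)$ by the nonzero element $4\in\zz_8$; whereas its image $f^{\%}(\phi)=\mc{P}_2(v_{2k}(\phi))\in\zz_4$ is unchanged, confirming the stated $\theta$-independence mod $4$ and $\theta$-dependence mod $8$.

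I expect the main obstacle to be the bookkeeping of indeterminacies: making precise that for a \emph{Poincar\'e} symmetric complex the normal structure $(\gamma,\theta)$ is unique up to equivalence (as stated in Section \ref{chain-bundles}), yet the \emph{lift} of $v_{2k}(\phi)$ to integral cohomology and the representative $(u,v)$ are not unique, and tracking exactly which choices affect $\phi_0(v,v)$ modulo $4$ versus modulo $8$. The clean way to handle this is to phrase everything functorially through $(f,\chi)_{\%}$ and the exact sequence \eqref{ex-q-twisted}, so that the well-definedness of the $\zz_8$-valued class $\sigma(C,\phi)$ is inherited from Theorem \ref{sign-theta} and only the identification with $\phi_0(v,v)$ in the $4$-divisible case requires the exact-sequence argument above.
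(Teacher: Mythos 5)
Your proof is correct and follows essentially the same route as the paper: Theorem \ref{sign-theta}, the computations of the $Q$-groups of $(B(k,1),\beta(k,1))$, and the exact sequence $\zz_2 \xrightarrow{0} \zz_2 \xrightarrow{4} \zz_8 \to \zz_4 \xrightarrow{0} \zz_2$ of twisted $Q$-groups. The only cosmetic difference is that the paper invokes van der Blij's theorem to identify the value with $\phi_0(v,v)$, whereas you can obtain this from Theorem \ref{sign-theta} simply by using the integral lift $v$ of $v_{2k}(\phi)$ (so that $u=0$ and the terms $2\phi_1(v,u)$ and $4\theta_{-2}(u,u)$ vanish), which is cleaner than the mod-$4$ parenthetical argument you sketch.
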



\begin{proof}

By van der Blij's theorem \cite{Blij} the signature mod 8 of a nonsingular symmetric form
$(C,\phi)$ over $\zz$ is determined by any Wu class $v \in E$
$$\sigma(C,\phi)\equiv\phi_0(v,v) \bmod 8~.$$
We also know from \cite{Mod8} that
$$\sigma(C, \phi) \in Q_{4k}(B(k,1), \beta(k,1))= \zz_8.$$
(The statement and proof of this result were reproduced in Theorem \ref{sign-theta} )

Using the computations above we see that the exact sequence involving twisted $Q$-groups given in \cite{Weiss-Ker} is given in the particular case of the chain bundle $(B(k, 1), \beta(k, 1))$ by
$$Q^{4k+1}(B(k, 1)) \xrightarrow{0} \widehat{Q}^{4k+1}(B(k,1)) \to Q_{4k}(B(k, 1), \beta(k, 1)) \to Q^{4k}(B(k, 1)) \to \widehat{Q}^{4k}(B(k,1)) $$
that is,
$$\bb{Z}_2 \xrightarrow{0} \bb{Z}_2 \xrightarrow{4} \bb{Z}_8 \to \bb{Z}_4 \xrightarrow{0} \bb{Z}_2.$$
From this exact sequence we see that if
$$\theta = 1 \in \widehat{Q}^{4k+1}(B(k, 1)) = \bb{Z}_2,$$
then this is mapped to
$$\sigma(C, \phi) = 4 \in Q_{4k}(B(k, 1), \beta(k, 1)) =  \bb{Z}_8.$$
\end{proof}

\begin{remark}
This is specially interesting in the case when the $(C, \phi)$ is the chain complex of the total space of a fibration with base and fibre of dimensions congruent to $2$ modulo $4$.  In this case we know that the signature will be a multiple of $4$. So $\theta \in  \widehat{Q}^{4k+1}(B(k, 1))$ can be used to detect when the signature is divisible by $8$.
\end{remark}

\subsection{The Arf invariant} \label{BK-Arf-in-topology}
From Theorem \ref{Morita-theorem-cx} we know that if $(C, \phi)$ is a $4k$-dimensional symmetric Poincar\'e complex over $\mathbb{Z}$, then
$$\sigma (C, \phi) = \textnormal{BK}(H^{2k}(C; \mathbb{Z}_2), \phi_0, \mathcal{P}_2(\phi))  \in \zz_8.$$

In Proposition  \ref{BK-and-4Arf} we proved that if the Brown-Kervaire invariant of a $\zz_4$-valued nonsingular quadratic form $\textnormal{BK}(V, \lambda, q) \in \zz_8$ takes values $0$ or $4$ in $\zz_8$, then this Brown-Kervaire invariant can be expressed as  the classical Arf invariant of a $\zz_2$-valued form.

Combining Theorem \ref{Morita-theorem-cx}  with the result in Proposition  \ref{BK-and-4Arf} we can state the following theorem.
\begin{theorem} \label{4Arf-Algebra}
If the signature of a symmetric Poincar\'e complex $(C, \phi)$ takes value $0$ modulo $4$, then this signature modulo $8$ can be expressed as an Arf invariant as follows,
$$\sigma(C, \phi)=\textnormal{BK}(H^{2k}(C;\zz_2), \phi_0, \mc{P}_2(\phi)) = 4 \textnormal{Arf}\left( L^{\perp}/L , [\phi_0],  \frac{\left[\mc{P}_2(\phi) \right]}{2} \right) \in \zz_8,$$
where $L^{\perp} =\left\{x \in H^{2k}(C;\zz_2) \vert \phi_0(x, x)= 0 \in \zz_2 \right\}$ and the Wu sublagrangian $L = \langle v \rangle \subset L^{\perp}$, with $v$ the algebraic Wu class $v \in H^{2k}(C;\zz_2).$
\end{theorem}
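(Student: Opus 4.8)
The plan is to obtain Theorem \ref{4Arf-Algebra} as the formal combination of the chain-complex version of Morita's theorem (Theorem \ref{Morita-theorem-cx}) with the purely algebraic identity of Proposition \ref{BK-and-4Arf}. First I would apply Theorem \ref{Morita-theorem-cx} to the given $4k$-dimensional symmetric Poincar\'e complex $(C,\phi)$, which gives
$$\sigma(C,\phi) \;=\; \textnormal{BK}\big(H^{2k}(C;\zz_2),\,\phi_0,\,\mc{P}_2(\phi)\big) \in \zz_8,$$
where $\phi_0$ is the (mod $2$ reduction of the) symmetric structure, which is a nonsingular symmetric bilinear form over $\zz_2$ by Poincar\'e duality for $(C,\phi)$ tensored with $\zz_2$, and $\mc{P}_2(\phi)$ is the algebraic Pontryagin square of Chapter \ref{Pontryagin-squares chapter}, a $\zz_4$-valued quadratic enhancement of $\phi_0$ by Proposition \ref{on a sum}(ii).

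Next I would feed this into Proposition \ref{BK-and-4Arf} with $(V,\lambda,q)=\big(H^{2k}(C;\zz_2),\phi_0,\mc{P}_2(\phi)\big)$. The hypothesis $\sigma(C,\phi)\equiv 0 \pmod 4$ says precisely that $\textnormal{BK}(V,\lambda,q)$ is divisible by $4$ — equivalently, by Proposition \ref{signature-Psq} together with Corollary \ref{Pontryagin-zero}, that $\mc{P}_2(v_{2k})=0\in\zz_4$ — so Proposition \ref{BK-and-4Arf} applies and yields
$$\textnormal{BK}\big(H^{2k}(C;\zz_2),\phi_0,\mc{P}_2(\phi)\big) \;=\; 4\,\textnormal{Arf}\!\left(L^{\perp}/L,\,[\phi_0],\,\frac{[\mc{P}_2(\phi)]}{2}\right) \in 4\zz_2 \subset \zz_8,$$
with $L^{\perp}=\{x\in H^{2k}(C;\zz_2) : \phi_0(x,x)=0\}$ and $L=\langle v\rangle$ the Wu sublagrangian, $v$ being the characteristic element of $\phi_0$. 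Chaining the two displayed equalities proves the theorem.

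The only point worth spelling out — everything else being a citation — is the dictionary between the chain-complex data and the algebraic input of Proposition \ref{BK-and-4Arf}, specifically that the characteristic element $v$ appearing there coincides with the \emph{algebraic} Wu class $v_{2k}(\phi)\in H^{2k}(C;\zz_2)$ of Definition \ref{algebraic-Wu}. This is immediate because both are defined by the single relation $\phi_0(x,x)=\phi_0(x,v)\in\zz_2$ for all $x$ and such $v$ is unique for a nonsingular symmetric $\zz_2$-form; one should also record that when $v_{2k}(\phi)=0$ the sublagrangian is trivial and $L^\perp/L=(H^{2k}(C;\zz_2),\phi_0)$, while when $v_{2k}(\phi)\neq 0$ the dimension drops by two, matching the two cases in the proof of Proposition \ref{BK-and-4Arf}. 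I do not anticipate any genuine obstacle: once these identifications are made explicit, the theorem is a direct consequence of the two results already established, and the proof is essentially two lines.
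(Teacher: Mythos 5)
Your proposal is correct and is essentially the paper's own argument: the paper likewise obtains Theorem \ref{4Arf-Algebra} by applying Theorem \ref{Morita-theorem-cx} to identify $\sigma(C,\phi)$ with $\textnormal{BK}(H^{2k}(C;\zz_2),\phi_0,\mc{P}_2(\phi))$ and then invoking Proposition \ref{BK-and-4Arf}, whose proof already contains the identification of the characteristic element with the Wu class and the two cases $v=0$, $v\neq 0$ that you spell out.
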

\begin{proof}
This is a direct application of the proof of Proposition  \ref{BK-and-4Arf}.

\end{proof}

This algebraic theorem has the  following analogue in topology.\begin{theorem}\label{4Arf-topology}An oriented $4k$-dimensional geometric Poincar\'e space $M$ has signature $0$ mod $4$ if and only if
$L=\langle v_{2k}(M) \rangle \subset H^{2k}(M;\zz_2)$ is a sublagrangian of $(H^{2k}(M;\zz_2),\lambda,q)$. If such is the case, there is
defined a sublagrangian quotient nonsingular symmetric form over $\zz_2$ with a $\zz_2$-valued enhancement
$$(W,\mu,h) = (L^{\perp}/L , [\lambda] , h = [q]/2)$$
and the signature mod $8$ is given by
$$\sigma(M) = \textnormal{BK}(H^{2k}(M;\zz_2),\lambda,q) = 4\textnormal{Arf}(W,\mu,h) \in  4\zz_2 \subset \zz_8.$$
\end{theorem}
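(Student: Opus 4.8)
The plan is to derive this statement from the algebraic Theorem~\ref{4Arf-Algebra} together with the symmetric construction, so that it becomes essentially a translation exercise. First I would attach to $M$ the $4k$-dimensional $\zz$-coefficient symmetric Poincar\'e complex $(C(M),\phi_M)$, where $\phi_M$ is the image of the fundamental class $[M]$ under the symmetric construction of Section~\ref{symm}; since $M$ is oriented, Poincar\'e duality with $\zz$ coefficients makes this a symmetric \emph{Poincar\'e} complex, and its signature is $\sigma(C(M),\phi_M)=\sigma(M)\in\zz$. Under the canonical identification $H^{2k}(C(M);\zz_2)=H^{2k}(M;\zz_2)$ the symmetric form $\phi_0$ is the cup-product pairing $\lambda$, the algebraic Pontryagin square $\mathcal{P}_2(\phi_M)$ of Definition~\ref{algebraic-Pont} is the classical Pontryagin square $q=\mathcal{P}_2$ evaluated on $[M]$ (the remark following Definition~\ref{algebraic-Pont}), and the algebraic Wu class $v_{2k}(\phi_M)$ of Definition~\ref{algebraic-Wu} is the topological Wu class $v_{2k}(M)$, since it is the characteristic element of $\lambda$, determined by $\lambda(x,x)=\langle Sq^{2k}x,[M]\rangle$ as in Section~\ref{cupi}.

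With these dictionaries in place, Theorem~\ref{4Arf-Algebra} applied to $(C(M),\phi_M)$ is exactly the assertion. For the equivalence, I would use the identity $[\textnormal{BK}(H^{2k}(M;\zz_2),\lambda,q)]=q(v_{2k}(M))\in\zz_4$ established in the proof of Proposition~\ref{BK-and-4Arf} (equivalently Proposition~\ref{signature-Psq}, which gives $\sigma(M)=\mathcal{P}_2(v_{2k})\in\zz_4$): the condition that $L=\langle v_{2k}(M)\rangle$ be a sublagrangian of $(H^{2k}(M;\zz_2),\lambda,q)$ --- meaning $\lambda(L,L)=0$, $q(L)=0$ and $L\subseteq L^\perp$ --- holds precisely when $q(v_{2k}(M))=0\in\zz_4$, because $q(v_{2k}(M))=0$ already forces $\lambda(v_{2k}(M),v_{2k}(M))=jq(v_{2k}(M))=0\in\zz_2$. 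Hence $L$ is a sublagrangian $\iff$ $q(v_{2k}(M))=0\in\zz_4$ $\iff$ $\sigma(M)\equiv 0\pmod 4$. When this holds, Proposition~\ref{BK-and-4Arf} produces the maximal isotropic subquotient $(W,\mu,h)=(L^\perp/L,[\lambda],[q]/2)$, a nonsingular symmetric form over $\zz_2$ carrying a genuine $\zz_2$-valued enhancement (the values of $[q]$ lie in $2\zz_2\subset\zz_4$ since $[\lambda]$ is isotropic on $W$), with $\textnormal{BK}(H^{2k}(M;\zz_2),\lambda,q)=4\textnormal{Arf}(W,\mu,h)\in 4\zz_2\subset\zz_8$; combining this with Morita's theorem in the form of Theorem~\ref{Morita-theorem-cx}, namely $\sigma(M)=\textnormal{BK}(H^{2k}(M;\zz_2),\lambda,\mathcal{P}_2)\in\zz_8$, gives $\sigma(M)=4\textnormal{Arf}(W,\mu,h)$.

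The hard part is not any computation but the verification that the invariants of the chain complex $(C(M),\phi_M)$ coincide with the classical topological ones: that $\phi_0,\phi_1$ reproduce the cup and cup-$1$ products at the cochain level (so $\phi_0(v,v)+2\phi_1(v,u)$ is genuinely $y\cup_0 y+2\,y\cup_1 z$), and that $v_{2k}(\phi_M)$ is $v_{2k}(M)$. All of this is implicit in Chapter~\ref{Pontryagin-squares chapter} and Section~\ref{cupi}; I would spell it out and then appeal to Theorem~\ref{4Arf-Algebra}. The two cases $\dim(W)=n$ and $\dim(W)=n-2$ according as $v_{2k}(M)=0$ or $v_{2k}(M)\neq 0$ are already dealt with inside Proposition~\ref{BK-and-4Arf} and need only be quoted.
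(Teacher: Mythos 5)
Your proposal is correct and follows essentially the same route as the paper: the theorem is obtained by applying the algebraic result (Proposition \ref{BK-and-4Arf}, packaged as Theorem \ref{4Arf-Algebra}) to the symmetric Poincar\'e complex $(C(M),\phi_M)$ of the symmetric construction, with Morita's theorem \ref{Morita-theorem-cx} supplying $\sigma(M)=\textnormal{BK}$ mod $8$ and Proposition \ref{signature-Psq} giving the mod $4$ criterion $q(v_{2k})=0$. The only difference is that you spell out the dictionary (cup product $=\phi_0$, classical Pontryagin square $=\mathcal{P}_2(\phi)$, topological Wu class $=$ algebraic Wu class), which the paper leaves implicit.
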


The proof is a direct application of an algebraic result, which we give in Proposition \ref{BK-and-4Arf}.


\begin{remark} Let $(C, \phi)$ be a $4k$-dimensional symmetric Poincar\'e complex over $\zz$, such that  the signature is divisible by $4$,
$$\sigma(C, \phi) = 0 \in 4 \zz \subset L^{4k}(\zz)=\zz.$$
 Then by Theorem \ref{4Arf-Algebra}  we can express the $\zz_2$-obstruction to divisibility by $8$ as an Arf invariant, since
$$\sigma(C, \phi)/4 ~=~ \textnormal{Arf}\left( L^{\perp}/L , [\phi_0],  \frac{\left[\mc{P}_2(\phi) \right]}{2} \right) \in \zz_2,$$
where $L^{\perp} =\left\{x \in H^{2k}(C;\zz_2) \vert \phi_0(x, x)= 0 \in \zz_2 \right\}$ and the Wu sublagrangian $L = \langle v \rangle \subset L^{\perp}$, with $v$ the algebraic Wu class $v \in H^{2k}(C;\zz_2).$

\end{remark}

We shall now give examples which illustrate some relevant applications of Theorem \ref{4Arf-topology}.

Let $\epsilon=\pm 1$.
A nonsingular $\epsilon$-symmetric form $(E,\phi)$ over $\zz$  has a signature $\sigma(E,\phi) \in \zz$ and a Wu class $v \in E$ such that
$$\phi(x,x) \equiv \phi(x,v) \bmod 2~(x \in E)~.$$
The Wu class is  unique up to $2E$. If $\epsilon=-1$ then $\sigma(E,\phi)=0$ and $v=0$.

By van der Blij's theorem \cite{Blij} the signature mod 8 of a nonsingular symmetric form
$(E,\phi)$ over $\zz$ is determined by any Wu class $v \in E$
$$\sigma(E,\phi)\equiv\phi(v,v) \bmod 8~.$$
As usual, $(E,\phi)$ has an associated $\zz_4$-enhanced nonsingular symmetric form
$(V,\lambda,q)$ over ${\mathbb Z}_2$ with
$$V~=~E/2E~,~\lambda([x],[y])~=~[\phi(x,y)] \in \zz_2~,~q([x])~=~[\phi(x,x)]\in \zz_4~(x,y \in E)~,$$
such that the isomorphism $BK:L\langle v_1 \rangle^0(\zz_2) \to \zz_8$ sends the Witt class of $(V,\lambda,q)$ to
$$\textnormal{BK}(V,\lambda,q)~=~[\sigma(E,\phi)]~=~[\phi(v,v)]\in {\mathbb Z}_8~.$$
The image of a Wu class $v \in E$ is the unique Wu class $[v] \in V$ for $(V,\lambda)$, with
$$\lambda(a,a)=\lambda(a,[v]) \in {\zz}_2~(a \in V)~,$$
and is such that
$$q([v])~=~[\textnormal{BK}(V,\lambda,q)]~=~[\sigma(E,\phi)]~=[\phi(v,v)] \in \zz_8/\zz_2~=~\zz_4~.$$
The following condition on $(E,\phi)$ are equivalent
\begin{enumerate}
\item $\sigma(E,\phi) \equiv 0 \bmod 4$,
\item $\phi(v,v) \equiv  0 \bmod 4$,
\item $q([v])=0 \in \zz_4$,
\item $\textnormal{BK}(V,\lambda,q) \in {\rm im}(4:\zz_2 \to \zz_8)$.
\end{enumerate}
If these conditions are satisfied the maximal isotropic subquotient of $(V,\lambda)$
$$(W,\mu)~=~(\,\langle [v] \rangle^{\perp}/\langle [v] \rangle\,,\,[\lambda]\,)$$
has
$$\begin{array}{ll}
W&=~\{y \in V\vert \lambda(y,[v])=0 \in \zz_2)\}/\langle [v]\rangle\\[1ex]
&=~\{ y \in V\vert q(y) \in {\rm im}(2:\zz_2 \to \zz_4)\}/\langle [v]\rangle
\end{array}$$
and admits a $\zz_2$-enhancement
$$h~:~W \to \zz_2~;~[x] \mapsto q([x])/2= [\phi(x,x)/2]$$
such that
$$[\sigma(E,\phi)]~=~[\textnormal{BK}(V,\lambda,q)]~=~4{\rm Arf}(W,\mu,h) \in 4\zz_2 \subset \zz_8~.$$

\begin{example}
Let
$$(E,\phi)=\bigoplus\limits_4 (\zz,1)~,~v~=~(1,1,1,1) \in E~,~
(V,\lambda,q)~=~\bigoplus\limits_4 (\zz_2,1,1)~.$$
The $\zz_2$-enhanced symmetric form $(W,\mu,h)$ over $\zz_2$ is given by
$$\begin{array}{l}
W~=~\{(x_1,x_2,x_3,x_4) \in V\vert x_1+x_2+x_3+x_4=0 \in \zz_2\}/\langle (1,1,1,1)\rangle~,\\[1ex]
\mu~:~W \times W \to \zz_2~;~((x_1,x_2,x_3,x_4),(y_1,y_2,y_3,y_4))
\mapsto \sum\limits^4_{i=1}x_iy_i~,\\[1ex]
h~:~W \to \zz_2~;~x\mapsto
0~{\rm if}~x=0~,~x \mapsto 1 ~{\rm if}~x \neq 0~.
\end{array}$$
As $h(x) = 1$ for each of the three elements $x \neq 0 \in W$
${\rm Arf}(W,\mu,h)=1 \in \zz_2$, in accordance with
$$\sigma(E,\phi)~=~4{\rm Arf}(W,\mu,h)~=~4 \in \zz_8~.$$

\bigskip
\end{example}

\begin{example}
\indent  Let $(B,\phi_B)$, $(F,\phi_F)$ be nonsingular
$\epsilon$-symmetric forms over $\mathbb Z$ with Wu classes $v_B \in B$, $v_F \in F$. The product nonsingular symmetric form over $\zz$
$$(E',\phi')~=~(B\otimes_{\mathbb Z}F,\phi_B\otimes \phi_F)$$
has signature
$$\sigma(E',\phi')~=~\sigma(B,\phi_F)\sigma(F,\phi_F) \in \zz~.$$
For any $a=\sum\limits_i x_i \otimes y_i \in E'$
$$\phi'(a,a)~=~\sum\limits_i\phi_B(x_i,x_i)\phi_F(y_i,y_i)+
2 \sum\limits_{i<j}\phi_B(x_i,x_j)\phi_F(y_i,y_j) \in \zz~,$$
so that $v' = v_B \otimes v_F\in E$ is a Wu class for $(E',\phi')$,
and the associated $\zz_4$-enhanced form over $\zz_2$ is
$$(V',\lambda',q')~=~(\,E'/2E'\,,\,([a],[b]) \mapsto [\phi'(a,b)]\,,\,[a] \mapsto [\phi'(a,a)]\,)$$
with
$$[\sigma(E',\phi')]~=~\textnormal{BK}(V',\lambda',q')~=~[\phi'(v',v')]~=~[\phi_B(v_B,v_B)][\phi_F(v_F,v_F)] \in \zz_8~.$$
Let $(E,\phi)$ be a nonsingular symmetric form over $\zz$ with Wu
class $v \in E$ and associated $\zz_4$-enhanced form over $\zz_2$
$$(V,\lambda,q)~=~(E/2E,([x],[y]) \mapsto [\phi(x,y)],[x] \mapsto [\phi(x,x)])~.$$
If
$$\sigma(E,\phi)~\equiv~\sigma(E',\phi') \bmod 4$$
then
$$(E'',\phi'')~=~(E,\phi) \oplus (E',-\phi')$$
is a nonsingular symmetric form over $\zz$ with Wu class $v''=(v,-v') \in E''$. The signature
$$\sigma(E'',\phi'')~=~\sigma(E,\phi)-\sigma(E',\phi') \in \zz$$
is such that
$$\sigma(E'',\phi'')~\equiv~0 \bmod 4~.$$
The associated $\zz_4$-enhanced form over $\zz_2$
$$(V'',\lambda'',q'')~=~(V,\lambda,q)\oplus (V',-\lambda',-q')$$
is such that
$$q''([v''])~=~q([v])-q'([v'])~=~0 \in \zz_4~.$$
The maximal isotropic subquotient of $(V'',\lambda'')$
$$(W,\mu)~=~(\,\langle[ v'' ]\rangle^\perp/\langle [v''] \rangle,[\lambda''])$$
admits a $\zz_2$-enhancement $h:W \to \zz_2$ and
$$\sigma(E'',\phi'')~=~4 {\rm Arf}(W,\mu,h) \in \zz_8~.$$
\end{example}
\begin{example}
There are two special cases of the previous example which are of interest:
\begin{enumerate}
\item If there exists an isomorphism $f:(V,\lambda)\to (V',\lambda')$ then
$$f([v])~=~[v'] \in V'$$
and
$$[\sigma(E,\phi)]-[\sigma(E',\phi')]~=~
[\textnormal{BK}(V,\lambda,q)] - [\textnormal{BK}(V',\lambda',q')]~=~0 \in \zz_8/2\zz_4~=~\zz_2$$
so that
$$\sigma(E,\phi) \equiv \sigma(E',\phi') \bmod 2~.$$
Furthermore
$$\begin{array}{ll}
[\sigma(E,\phi)]-[\sigma(E',\phi')]&=~
[\textnormal{BK}(V,\lambda,q)] - [\textnormal{BK}(V',\lambda',q')]\\[1ex]
&=~q''([v,v'])~=~q([v])-q'([v']) \\[1ex]
&\in
{\rm ker}(\zz_8 \to \zz_2)/{\rm im}(\zz_4 \to \zz_8)~=~\zz_4/2\zz_4~.
\end{array}
$$
Thus $\sigma(E,\phi) \equiv \sigma(E',\phi') \bmod 4$ if and only if
$q([v])=q'([v']) \in \zz_4$, if and only if the Wu class
$[v'']=([v],[v']) \in E \oplus E'$ for $(E'',\phi'')=(E,\phi) \oplus (E',-\phi')$
is such that $q''([v''])=0\in \zz_4$, in which case
$$\sigma(E'',\phi'')~=~\sigma(E,\phi)-\sigma(E',\phi') \equiv 4{\rm Arf}(W,\mu,h) \bmod 8$$
with
$$(W,\mu,h)~=~(\{(x,x') \in V'' |q(x)-q'(x') \in 2\zz_4 \subset \zz_4\}/\langle [v'']\rangle \,,\,\lambda \oplus -\lambda',(q \oplus -q')/2\,)~.$$
\item If there exists an isomorphism $f:(V,\lambda,q)\to (V',\lambda',q')$ (this is the case of a $\zz_4$-trivial $\pi_1(B)$-action as will be discussed in Chapter \ref{mod-eight-proof}) then
$$\begin{array}{ll}
[\sigma(E'',\phi'')]&=~ \textnormal{BK}(V'',\lambda'',q'')\\[1ex]
&=~\textnormal{BK}(V,\lambda,q)- \textnormal{BK}(V',\lambda',q')\\[1ex]
&=~0 \in \zz_8
\end{array}$$
and
$$\sigma(E,\phi)~\equiv~\sigma(E',\phi')~=~\sigma(B,\phi_B)\sigma(F,\phi_F)
\bmod\, 8~.$$
Note that
$$L~=~\{(x,f(x))\vert x\in V\}/\langle [v'']\rangle \subset W$$
is a lagrangian of $(W,\mu,h)$, so that ${\rm Arf}(W,\mu,h)=0 \in \zz_2$, in accordance with
$$\sigma(E'',\phi'')~=~\sigma(E,\phi) - \sigma(E',\phi')~=~4{\rm Arf}(W,\mu,h)~=~0 \in 4\zz_2 \subset \zz_8~.$$
\end{enumerate}

\end{example}

\subsection{The Hasse-Witt invariant in $K$-theory}\label{Hasse-Witt}

The Witt ring of $\bb{R}$, $W(\bb{R}) = L^0(\bb{R})$ is the abelian group of Witt-equivalence classes of nonsingular symmetric bilinear forms over $\bb{R}$. These are classified by the signature, so there is an isomorphism
$$\sigma: W(\bb{R}) \xrightarrow{\cong} \bb{Z}.$$
The fundamental ideal of $W(\bb{R})$ consisting of all Witt classes with even rank is given by $I = 2\bb{Z}$.
In \cite{Milnhus} the chain of ideals $I \supset I^2 \supset I^3 \supset \dots$  is discussed. Here we are specially interested in the quotient $I^2/ I^3$ in the case when $I= 2 \bb{Z}$, so the case when the chain of ideals is
$$2\bb{Z} \supset 4\bb{Z} \supset 8 \bb{Z} \supset \dots $$

Each of these ideals gives us information about the divisibility of the signature by $2$, by $4$ and by $8$ respectively.
Each quotient $I^n/ I^{n+1}$is a vector space over the field
$$W(\bb{R})/ I \cong \bb{Z}_2.$$

\begin{theorem} \cite[Theorem 5.8, chapter III]{Milnhus}
For each symbol $\phi$ the restriction of the Hasse-Witt function $h_{\phi}$ to the ideal $I^2=4\bb{Z}$ yields a well defined homomorphism
$$h_{\phi} : I^2 \to \bb{Z}_2.$$
An element $w \in I^2$ is annihilated by every one of these homomorphisms $h_{\phi}$ if and only if $w \in I^3.$
\end{theorem}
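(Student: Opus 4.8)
The plan is to deduce the statement from the structure of the Witt ring $W(\bb{R})=L^0(\bb{R})$, which is entirely governed by the signature isomorphism $\sigma\colon W(\bb{R})\xrightarrow{\cong}\zz$. Under this isomorphism the fundamental ideal is $I=2\zz$, and more generally $I^n=2^n\zz$, so that $I^2=4\zz$ and $I^3=8\zz$; the quotient $I^2/I^3=4\zz/8\zz$ is a $1$-dimensional vector space over $W(\bb{R})/I\cong\zz_2$, generated by the class of the anisotropic form $\langle 1,1,1,1\rangle$ of signature $4$.

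First I would identify the symbols over $\bb{R}$. A symbol $\phi$ is a $2$-fold Pfister form $\langle 1,a\rangle\otimes\langle 1,b\rangle$ with $a,b\in\bb{R}^\ast$; since $\bb{R}^\ast/(\bb{R}^\ast)^2=\{\pm 1\}$ and a $2$-fold Pfister form is either anisotropic or hyperbolic, every such form is hyperbolic, hence $0\in W(\bb{R})$, except for the single anisotropic one, $\langle 1,1,1,1\rangle$, of signature $4$. For every hyperbolic symbol the associated Hasse--Witt function $h_\phi$ vanishes identically, so among all the $h_\phi$ only the one attached to $\langle 1,1,1,1\rangle$, call it $h_0$, can be nontrivial on $I^2$.

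Next I would invoke the fact --- this is the input I would cite, and it is precisely what makes $h_\phi$, a priori only a function on forms, restrict to a \emph{homomorphism} on $I^2$ --- that $I^2$ is additively generated by the symbols and that the total Hasse--Witt invariant induces an injection $I^2/I^3\hookrightarrow\bigoplus_\phi\zz_2$ whose $\phi$-component is $h_\phi|_{I^2}$. Over $\bb{R}$ this injection is simply the identity $\zz_2=4\zz/8\zz\hookrightarrow\zz_2$, taking the generator $\langle 1,1,1,1\rangle$ of $I^2/I^3$ to the generator; equivalently $h_0\colon 4\zz\to\zz_2$ is the homomorphism $4n\mapsto n\bmod 2$, with kernel $8\zz$. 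Hence an element $w\in I^2$ is annihilated by every $h_\phi$ if and only if it is annihilated by $h_0$, if and only if $w\in 8\zz=I^3$; conversely $I^3=8\zz$ is generated by $\langle 1,1,1,1,1,1,1,1\rangle$, which represents twice the generator of $I^2/I^3$ and therefore lies in $\ker h_\phi$ for every $\phi$.

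The main obstacle is exactly the well-definedness asserted in the theorem and used in the third paragraph: the naive Hasse invariant of a diagonalized real form $\langle 1\rangle^{\oplus p}\oplus\langle -1\rangle^{\oplus q}$ equals $(-1)^{q(q-1)/2}$, which is \emph{not} constant on Witt classes, so producing a genuine $\zz_2$-valued homomorphism on $I^2$ rests on the classical (citable) theorem that $I^2$ is spanned by $2$-fold Pfister forms together with Pfister's multiplicativity of such forms. For the field $\bb{R}$ considered here this reduces to the elementary observation that the only obstruction is already seen in the single quotient $I^2/I^3\cong\zz_2$, so no structure theory is needed beyond the computation $I^2=4\zz$, $I^3=8\zz$.
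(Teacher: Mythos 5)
First, a point of reference: the paper does not prove this statement at all --- it is quoted verbatim from Milnor--Husemoller \cite[Theorem 5.8, Chapter III]{Milnhus} and then applied, so there is no internal proof to measure you against; your proposal has to stand on its own as a proof of the real case (which is legitimate, since the statement as quoted has $I^2=4\bb{Z}$ built in, i.e.\ it lives in $W(\bb{R})\cong\bb{Z}$).

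As it stands there is a genuine gap, and it sits exactly where you locate "the main obstacle". The theorem has two pieces of content: (a) the Hasse--Witt function $h_\phi$, a priori defined on forms, descends to a well-defined homomorphism on $I^2$, and (b) the resulting homomorphism(s) have common kernel $I^3$. You never use the actual definition of $h_\phi$ (Milnor--Husemoller's function carries a normalisation precisely so that it becomes a Witt-class invariant on even-rank, trivial-discriminant classes); the only formula you write down, the naive $\prod_{i<j}\phi(a_i,a_j)=(-1)^{q(q-1)/2}$ on $\langle 1\rangle^{\oplus p}\oplus\langle -1\rangle^{\oplus q}$, is (as you note) not constant on Witt classes, and worse, it evaluates to $+1$ on $\langle 1,1,1,1\rangle$, contradicting your key claim that the generator of $I^2/I^3$ is sent to the generator of $\bb{Z}_2$. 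So the assertion "$h_0(4n)=n\bmod 2$" is introduced by fiat rather than computed from the definition; the sentence that over $\bb{R}$ the well-definedness "reduces to the elementary observation that the only obstruction is already seen in $I^2/I^3\cong\bb{Z}_2$" is not an argument. What is actually needed (and is easy, but must be done) is the computation that the correctly normalised invariant of $\langle 1\rangle^{\oplus p}\oplus\langle -1\rangle^{\oplus q}$ depends only on $p-q\bmod 8$ and is nontrivial exactly when $p-q\equiv 4\bmod 8$; that single computation gives both well-definedness on $4\bb{Z}$ and kernel $8\bb{Z}$, and it is essentially van der Blij's mod $8$ statement, i.e.\ the real content of the theorem in this case. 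Two further, smaller points: the "symbols" $\phi$ are Steinberg symbols on $\bb{R}^{\ast}/(\bb{R}^{\ast})^2$ (there are exactly two, the trivial one and the one with $\phi(-1,-1)=-1$), not $2$-fold Pfister forms --- over $\bb{R}$ the conflation is harmless but should be fixed; and the "input" you propose to cite in your second paragraph (injectivity of $I^2/I^3\hookrightarrow\bigoplus_\phi\bb{Z}_2$ with components $h_\phi|_{I^2}$) is, for a general field, precisely the statement being proved, so invoking it is circular --- over $\bb{R}$ it should instead be replaced by the explicit computation just described.
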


Applying this theorem to the context of the divisibility of the signature, we see that a form $w \in I^3= 8\bb{Z} \subset I^2 = 4\bb{Z}$ is divisible by $8$ when $h_{\phi} (w)$ is trivial. If $h_{\phi}(w')$ is non-trivial then $w'$ is a form in $I^2$ but not in $I^3$, so that the form $w'$ has signature divisible by $4$ but not by $8$. In other words the Hasse-Witt invariant detects when a form has signature divisible by $8$.

The relation between the Hasse-Witt invariant and the Arf invariant is discussed in \cite{Giffen}.

\part{The signature of a fibration}

\chapter*{Introduction to part II}
\addcontentsline{toc}{chapter}{Introduction to Part II}

A key feature for our study of the signature of a fibration is obtaining a model for the chain complex of the total space which gives us enough information to compute its signature. It has been known since the work in \cite{HirzebruchSerreChern}, \cite{Meyerpaper} and \cite{Korzen} that the signature of the total space depends only on the action of the fundamental group of the base $\pi_1(B)$ on the cohomology of the fibres.
Clearly it is not possible to construct the chain complex of the total space by taking into account only the action of $\pi_1(B).$ For example, the base space of the Hopf fibration
$S^1 \to S^3 \to S^2 $
has trivial fundamental group $\pi_1(S^2)= \{ 1 \}$, but the chain complex of the total space in this case is clearly not a product.
So taking into account the information from the chain complexes of the base and fibre and the action of $\pi_1(B)$ is not enough to construct the chain complex of the total space, but it is enough to construct a model that will detect the signature.

The model that we will develop in this chapter and use in subsequent chapters is inspired by the transfer map in quadratic $L$-theory given in \cite{SurTransfer}. This model was previously used in \cite{modfour} and in \cite{Korzen}.
In \cite{SurTransfer} the surgery transfer of a fibration $F \to E \to B$ with fibre of dimension $m$ and base of dimension $n$ is given by the map
$$\begin{array}{ccc}p^{!}: L_n(\bb{Z}[\pi_1(B)]) &\to & L_{n+m}(\bb{Z}[\pi_1(E)]). \end{array}$$

In \cite{SurTransfer} it is also proven that the surgery transfer map in quadratic $L$-theory agrees with the geometrically defined transfer maps.

A similar transfer map does not exist in symmetric $L$-theory. So it is \textbf{not} always possible to define a map
$$p^{!}: L^n(\bb{Z}[\pi_1(B)]) \to  L^{n+m}(\bb{Z}[\pi_1(E)]). $$
There are two obstructions to lifting a symmetric chain complex $(C, \phi) \in L^n(\bb{Z}[\pi_1(B)])$ to an $(m+n)$-dimensional chain complex $p^!(C, \phi) \in L^{n+m}(\bb{Z}[\pi_1(E)])$ which are described in the appendix of \cite{SurTransfer}.
Basically the difference with the quadratic $L$-theory transfer lies in the fact that the symmetric $L$-groups are not $4$-periodic, so that one cannot assume that we may perform surgery below the middle dimension to make $(C, \phi)$ is highly-connected.

The chain model that we shall discuss in this chapter will provide a well defined map
$$L^n(\bb{Z}[\pi_1(B)]) \to  L^{n+m}(\bb{Z}) $$
This map was constructed in \cite[chapters 3, 4]{Korzen}. We review the main ideas of the construction here, as it is relevant for further results in part III.

The construction uses the fact that the chain complex of the total space is \textit{filtered}, so the construction of our model for the total space is similar to that of a Serre spectral sequence. This was the approach taken by Meyer in \cite{Meyerpaper} where he describes the intersection form of the total space of a surface bundle in terms the intersection form on the base with coefficients in a local coefficient system.



\chapter{The algebraic model for the signature of a fibration} \label{model}

The ideas and notation in the following diagram have not yet been introduced. The purpose of the diagram is to give an overview of the key steps in the construction of a suitable model for a fibration $F \to E \to B$, which we will explain in detail in this chapter.
\vspace{-5pt}

\begin{figure}[ht!] \label{figure}
\labellist
\small\hair 2pt
\pinlabel ${\textnormal{Theorem \ref{3.15}}}$ at 130 445
\pinlabel ${E \textnormal{ is homotopy equivalent} }$ at 130 425
\pinlabel ${\textnormal{to a filtered $CW$-complex $X$} }$ at 130 405
\pinlabel ${\textnormal{Theorem \ref{chain-iso}}}$ at 340 445
\pinlabel ${\textnormal{There is an isomorphism} }$ at 350 425
\pinlabel ${G_*C(X) \cong C(\wtB) \otimes (C(\wtF), U) }$ at 350 405
\pinlabel ${\sigma(E) = \sigma (X) \in \bb{Z}}$ at 130 300
\pinlabel ${\sigma_{\bb{D}(\bb{Z})}(G_*C(X) ) }$ at 350 310
\pinlabel ${= \sigma_{\bb{D}(\bb{Z})}(C(\wtB) \otimes (C(F), U)) \in \bb{Z}}$ at 350 290
\pinlabel ${\textnormal{Proposition \ref{sig-ass-complex}}}$ at 240 195
\pinlabel ${\sigma (X) = \sigma_{\bb{D}(\bb{Z})}(G_*C(X) )  \in \bb{Z}}$ at 240 165
\pinlabel ${\textnormal{Theorem \ref{3.15} and Proposition \ref{two-functors}}}$ at 240 70
\pinlabel ${\sigma (E) = \sigma_{\bb{D}(\bb{Z})} ((C(\wtB), \phi) \otimes (C(F), \alpha, U))}$ at 240 45
\pinlabel ${ =\sigma ((C(\wtB), \phi) \otimes (H^m(F), \bar{\alpha}, \bar{ U}))}$ at 250 20
\endlabellist
\centering
\includegraphics[scale=0.8]{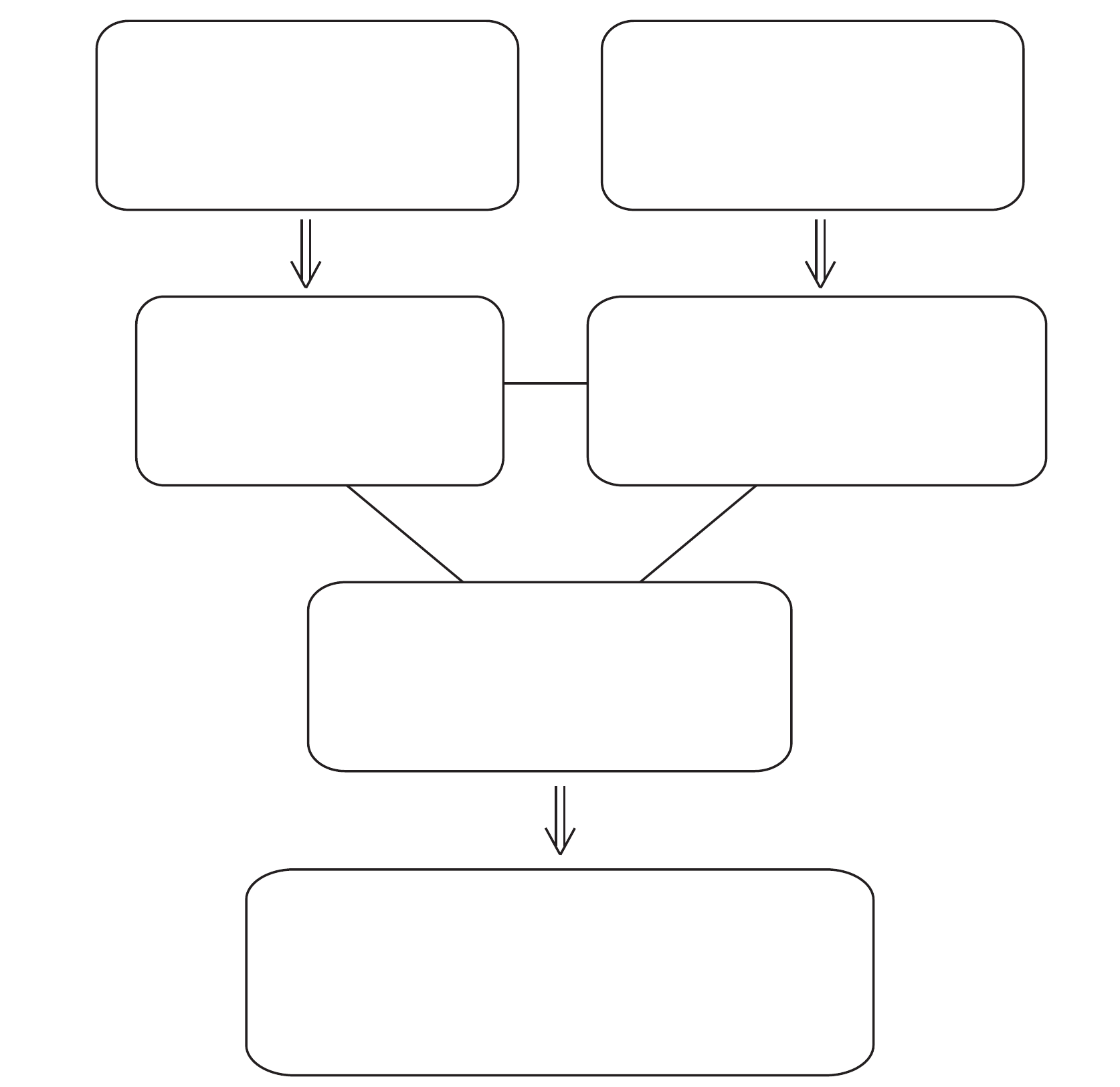}
\label{fig:cobo}
\end{figure}

\section{Fibrations and $\Gamma$-fibrations}

\begin{definition} A Hurewicz fibration is a continuous function $p: E \to B$ between topological spaces  satisfying the homotopy lifting property. This property states that for all spaces $X$ and maps $f: X \to E$ there exist commutative diagrams
\begin{displaymath}
\xymatrix{
X \ar[r]^{f} \ar[d]_{x \mapsto (x, 0)} & E \ar[d]^{p} \\
X \times I \ar[r]^{F} & B }
\end{displaymath}
with $F(x, 0) = f(x)$ such that
\begin{displaymath}
\xymatrix{
X \ar[r]^{f} \ar[d]_{x \mapsto (x, 0)} & E \ar[d]^{p} \\
X \times I \ar[ur]^{H} \ar[r]^{F} & B }
\end{displaymath}
commutes. That is, there exists a map $H: X \times I \to E$ such that $H(x, 0) = f(x)$ and $p(H(x, t)) = F(x, t).$
\end{definition}

For the fibrations that we will consider, the base space $B$ will be a path connected $CW$-complex and for any point $b_0 \in B$, the fibre $F= p^{-1}(b_0)$ has the homotopy type of a finite $CW$-complex. Furthermore all the fibres have the same homotopy type.

In order to consider the most general possible setting for the construction of our chain complex algebraic model of a fibration we will need to use the definition of $\Gamma$-fibration from \cite{Transfer-K}.

\begin{definition} (\hspace{-1pt}\cite[Definition 1.1]{Transfer-K})
Let $\Gamma$ be a discrete group. A $\Gamma$-fibration is a $\Gamma$-equivariant map $p' : E' \to B'$ with $E'$ a $\Gamma$-space  such that $\Gamma$ acts trivially on $B'$ and $p'$ has the $\Gamma$-equivariant lifting property for any $\Gamma$-space $X$
\begin{displaymath}
\xymatrix{X \ar[r]^{f} \ar[d]_{x \mapsto (x, 0)} & E' \ar[d]^{p'} \\
X \times I \ar[ur]^{H'} \ar[r]^{F} & B'.}
\end{displaymath}

\end{definition}

We will consider a fibration $p: E \to B$ with fibre $F = p^{-1}(b)$. $q: \widetilde{E}\to E$ will denote the universal cover of $E$. The composition $p \circ q = \widetilde{p} : \widetilde{E} \to B$ is a $\Gamma$-fibration with $\Gamma= \pe$. This fibration has fibre $\widetilde{p}^{-1}(b)=\widetilde{F}$, which is the cover of $F$ induced from the universal cover of $E$.

\begin{displaymath}
\xymatrix{\widetilde{F} \ar[r] \ar[d] & \widetilde{E} \ar[d]^{q} \ar[dr]^{\widetilde{p}= p \circ q} \\
F \ar[r] & E \ar[r]^p & B.}
\end{displaymath}

\section{Filtrations}

The argument to construct an algebraic model of the total space appropriate for the computation of the signature is motivated by the well known result that the total space of a fibration is filtered. The following Theorem \ref{E and X} was proved in \cite{Stasheff} using an inductive argument and by \cite{Schoen} using a different argument based on the $CW$ approximation theorem of Whitehead \cite{Whitehead}, \cite[p. 412]{Spanier}.
A similar argument to that of \cite{Stasheff} was used in \cite{Transfer-K}, \cite{modfour} and \cite{Korzen}.

\begin{theorem} \label{E and X} (\hspace{-1pt}\cite{Stasheff, Schoen})
Let $F \to E \to B$ be a Hurewicz fibration where $B$ and $F$ have the homotopy type of $CW$ complexes, then the total space $E$ is weakly homotopically equivalent to a $CW$ complex $X$.
\end{theorem}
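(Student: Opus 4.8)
\textbf{Proof plan for Theorem \ref{E and X}.}

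The plan is to reduce the statement to the classical $CW$ approximation theorem of J.H.C. Whitehead (\cite{Whitehead}, \cite[p. 412]{Spanier}), which guarantees that \emph{any} topological space is weakly homotopy equivalent to a $CW$ complex. First I would observe that the total space $E$ of a Hurewicz fibration $F \to E \to B$ is a topological space like any other, so the $CW$ approximation theorem applies directly: there is a $CW$ complex $X$ together with a weak homotopy equivalence $X \to E$, that is, a map inducing isomorphisms $\pi_n(X,x_0) \to \pi_n(E, f(x_0))$ on all homotopy groups for every basepoint. This already gives the bare assertion of the theorem. The substantive content worth recording is that this is the ``right'' model: one can choose $X$ to be filtered compatibly with the fibration, which is precisely what the later Theorems \ref{3.15} and \ref{chain-iso} will exploit, so I would set the argument up with that refinement in view rather than just quoting the black box.

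Concretely, the approach I would take follows \cite{Stasheff}. Fix a $CW$ structure on the base $B$ with skeleta $B^{(0)} \subset B^{(1)} \subset \dots$, and build $X$ by induction over the skeleta of $B$. Over a single cell $e^k \subset B^{(k)} \smin B^{(k-1)}$ the restricted fibration $p^{-1}(e^k) \to e^k$ is fibre-homotopy trivial (since $e^k$ is contractible and $B$ is a $CW$ complex, hence paracompact, so the fibration is trivial over each cell by the Hurewicz property), so $p^{-1}(e^k) \simeq D^k \times F$. Using the homotopy lifting property one glues a $CW$ model $D^k \times F'$ (with $F' \simeq F$ a finite $CW$ complex, which exists by hypothesis) onto the part of $X$ already constructed over $B^{(k-1)}$, extending the weak equivalence. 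The homotopy lifting property is exactly what makes the attaching maps lift, and the colimit $X = \bigcup_k X^{(k)}$ over the skeletal filtration of $B$ carries an obvious filtration $X^{(0)} \subset X^{(1)} \subset \dots$ with $X^{(k)}/X^{(k-1)}$ a wedge of copies of $\Sigma^k F'_+$; the induced map $X \to E$ is a weak homotopy equivalence because it is one in the colimit and homotopy groups commute with the relevant colimits. Alternatively, as the theorem statement notes, one can bypass the explicit induction and simply invoke the $CW$ approximation theorem of Whitehead directly à la \cite{Schoen}, at the cost of not seeing the filtration explicitly.

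The main obstacle is not logical but organizational: one must keep track of the filtration data carefully enough that the later chain-level isomorphism $G_*C(X) \cong C(\widetilde B) \otimes (C(\widetilde F), U)$ of Theorem \ref{chain-iso} can be extracted, and this requires the $\Gamma$-equivariant version of the lifting property for $\Gamma = \pi_1(E)$ acting on the universal cover $\widetilde E$, i.e. one really wants to run the construction for the $\Gamma$-fibration $\widetilde p : \widetilde E \to B$ rather than just for $p$. So in practice I would state and prove the equivariant refinement: $\widetilde E$ is $\Gamma$-equivariantly weakly equivalent to a filtered $\Gamma$-$CW$ complex $\widetilde X$ whose filtration quotients are free $\zz[\Gamma]$-modules built from $C(\widetilde F)$. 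The only delicate point there is verifying that the gluing can be done $\Gamma$-equivariantly, which again comes down to the $\Gamma$-equivariant homotopy lifting property in the definition of a $\Gamma$-fibration and the fact that $\Gamma$ acts trivially on $B$, so the cell-by-cell construction over $B$ proceeds without interference from the group action. With that in hand, Theorem \ref{E and X} is the non-equivariant shadow, and the signature computation $\sigma(E) = \sigma(X)$ follows since $\sigma$ is a homotopy invariant (indeed a weak homotopy invariant for spaces with the homotopy type of finite $CW$ complexes, which $E$ has once we know $X$ can be taken finite when $B$ is finite).
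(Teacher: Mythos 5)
Your proposal is correct and matches the paper's treatment: the paper states Theorem \ref{E and X} by citing \cite{Stasheff, Schoen} (noting precisely the two routes you describe, Stasheff's skeletal induction and Schoen's appeal to Whitehead's $CW$ approximation theorem), and the substantive work it actually carries out is the $\pi_1(E)$-equivariant filtered refinement in Theorem \ref{3.15}, proved by exactly the cell-by-cell induction over the skeleta of $B$ with trivializations $T(f,\xi)$ over cells that you outline. No gaps to report.
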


\subsection{Filtered spaces}
Here we shall define what is meant by filtered spaces. We only consider compactly generated spaces, such as CW complexes.

\begin{definition}\label{522}
A $k$-filtered topological space $X$ is a topological space which is equipped with a series of subspaces
$$X_{-1}= \emptyset \subset X_0 \subset X_1 \subset \dots \subset X_k=X.$$
In the context of this thesis we assume that each of the inclusions $X_j \subset X_{j+1}$ is a \textit{cofibration}.
\end{definition}
The condition that  $X_j \subset X_{j+1}$ is a  cofibration implies that the pair $(X_{j+1}, X_j)$ has the \textit{homotopy extension property.} (\hspace{-1pt}\cite[page 14]{Hatcher}). In particular, the inclusion of a CW subcomplex is a cofibration, and has the homotopy extension property.

The following gives a definition of a filtered map and a filtered homotopy equivalence.
\begin{definition} Let $X$ and $Y$ be two filtered spaces,
\begin{itemize}
\item A map $f:X \to Y$ is a filtered map if $f(X_j) \subset Y_j$.
\item A filtered homotopy between the maps $f :X \to Y$ and $g: X \to Y$ is a homotopy $H : X \times I \to Y$ such that $H(X_j \times I) \subset Y_j$.
\item A filtered homotopy equivalence  between two filtered maps $f : X \to Y$ and $h : Y \to X$ is such that $fh \simeq Id \simeq hf$.
\end{itemize}
\end{definition}

In the context established in this section, namely that each of the inclusions $X_j \subset X_{j+1}$ (or $Y_j \subset Y_{j+1}$) is a cofibration, the following lemma holds.
\begin{lemma} A filtered map
 $f: X \to Y$ is a filtered homotopy equivalence if and only if each $f_j: X_j \to Y_j$  is a homotopy equivalence of unfiltered spaces.
\end{lemma}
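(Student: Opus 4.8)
The statement to prove is the standard fact that a filtered map $f : X \to Y$ between filtered spaces (with each inclusion $X_j \subset X_{j+1}$ and $Y_j \subset Y_{j+1}$ a cofibration) is a filtered homotopy equivalence if and only if each restriction $f_j : X_j \to Y_j$ is an ordinary homotopy equivalence. One direction is immediate: if $f$ is a filtered homotopy equivalence with filtered homotopy inverse $h : Y \to X$, then $h_j : Y_j \to X_j$ is defined, and the filtered homotopies $fh \simeq \id$, $hf \simeq \id$ restrict to homotopies $f_jh_j \simeq \id_{Y_j}$, $h_jf_j \simeq \id_{X_j}$, so each $f_j$ is a homotopy equivalence. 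The content is in the converse, which I would prove by induction on the filtration degree $j$, building a filtered homotopy inverse $h$ and the homotopies $fh \simeq \id$, $hf \simeq \id$ one filtration step at a time.

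For the inductive step, suppose we have constructed $h_{j-1} : Y_{j-1} \to X_{j-1}$ together with homotopies $H^{j-1} : f_{j-1}h_{j-1} \simeq \id_{Y_{j-1}}$ and $K^{j-1} : h_{j-1}f_{j-1} \simeq \id_{X_{j-1}}$. Since $f_j : X_j \to Y_j$ is a homotopy equivalence, it has \emph{some} homotopy inverse $g : Y_j \to X_j$; the problem is that $g$ need not restrict correctly on $Y_{j-1}$, i.e. $g|_{Y_{j-1}}$ need not agree with (or be homotopic rel nothing useful to) $h_{j-1}$. The standard fix uses the cofibration hypothesis: the pair $(Y_j, Y_{j-1})$ has the homotopy extension property, so one can modify $g$ by a homotopy supported near $Y_{j-1}$ so that the modified map $h_j$ satisfies $h_j|_{Y_{j-1}} = h_{j-1}$ exactly, using the comparison between $g|_{Y_{j-1}}$ and $h_{j-1}$ through the diagram $g|_{Y_{j-1}} \simeq h_{j-1}f_{j-1}g|_{Y_{j-1}} \simeq h_{j-1}$ (the first via $K^{j-1}$ applied after $g$, the second since $f_{j-1}g|_{Y_{j-1}} \simeq \id$). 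Having arranged $h_j$ to extend $h_{j-1}$, one then extends the homotopies $H^{j-1}$ and $K^{j-1}$ over $Y_j$ and $X_j$ respectively: the homotopy $f_jh_j \simeq \id_{Y_j}$ exists because $f_jh_j \simeq f_jg \simeq \id_{Y_j}$ (as $g$ was a homotopy inverse and $h_j \simeq g$), and it can be chosen to extend $H^{j-1}$ again by the homotopy extension property for $(Y_j \times I, Y_{j-1}\times I \cup Y_j \times \{0,1\})$, which is a cofibration since $Y_{j-1}\subset Y_j$ is. The same argument on the source side, using that $h_j$ is then seen to be a homotopy equivalence and hence has a homotopy inverse, gives $K^j$ extending $K^{j-1}$.

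The main obstacle is bookkeeping rather than conceptual: one must be careful that the homotopy $h_j \simeq g$ used at degree $j$ is compatible with the rel-$Y_{j-1}$ modification, and that the extensions of the homotopies $H$ and $K$ can genuinely be performed simultaneously at each stage without destroying what was built at lower stages. This is handled cleanly by invoking the homotopy extension property in the form: if $A \subset Z$ is a cofibration, then $Z \times \{0\} \cup A \times I \subset Z \times I$ is a cofibration, applied with $(Z,A) = (Y_j, Y_{j-1})$ or $(X_j, X_{j-1})$, iterated up the filtration. Since $k$ is finite, the induction terminates and produces a genuine filtered homotopy inverse $h$ with filtered homotopies on both sides. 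I would cite the relevant cofibration facts from \cite{Hatcher} and present the argument as a short induction, suppressing the routine verifications that the homotopy-extension diagrams commute.
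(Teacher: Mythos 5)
Your overall strategy (trivial forward direction, then an induction up the filtration using the cofibration hypothesis) is the standard one; note that the paper itself offers no argument at all here, it simply cites \cite[7.4.1]{Brown-book}, so the real comparison is with the classical proof of that gluing-type theorem. Within your sketch, the forward direction and the construction of $h_j$ extending $h_{j-1}$ are essentially fine (though your displayed comparison $g|_{Y_{j-1}} \simeq h_{j-1}f_{j-1}g|_{Y_{j-1}}$ is not well-formed, since $g$ is only an unfiltered homotopy inverse and need not carry $Y_{j-1}$ into $X_{j-1}$; the correct chain is $\iota\, h_{j-1} \simeq g f_j \iota\, h_{j-1} = g\, \iota' f_{j-1} h_{j-1} \simeq g\,\iota'$ as maps $Y_{j-1} \to X_j$, where $\iota,\iota'$ are the filtration inclusions, after which HEP for $Y_{j-1}\subset Y_j$ lets you deform $g$ to an $h_j$ with $h_j|_{Y_{j-1}} = h_{j-1}$).

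The genuine gap is the assertion that the homotopies $H^j: f_jh_j \simeq \id_{Y_j}$ and $K^j: h_jf_j \simeq \id_{X_j}$ "can be chosen to extend $H^{j-1}$, $K^{j-1}$ again by the homotopy extension property." HEP does not do this: it lets you extend the data consisting of $H^{j-1}$ on $Y_{j-1}\times I$ and the map $f_jh_j$ on $Y_j\times\{0\}$ to a homotopy on $Y_j\times I$, but you have no control over the time-$1$ map of that extension — it will be some map $\phi: Y_j \to Y_j$ restricting to the identity on $Y_{j-1}$ and freely homotopic to $\id_{Y_j}$, not the identity itself, and a self-map under $Y_{j-1}$ that is freely homotopic to the identity need not be homotopic to it relative to $Y_{j-1}$ by any one-line argument. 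Prescribing both ends of the extended homotopy compatibly with the lower stage is exactly the non-trivial content of the gluing theorem, and the standard repair is the Dold-type lemma (homotopy equivalences under a cofibration): if $u$ is a map under $A$, with $A\subset X$, $A \subset Z$ cofibrations, and $u$ is an ordinary homotopy equivalence, then $u$ is a homotopy equivalence under $A$, with inverse and both homotopies under $A$. With that lemma (proved, e.g., in Brown's book or tom Dieck, by a genuinely more careful double use of HEP and composition of correcting homotopies), your induction goes through; without it, the step as you wrote it fails, and the "bookkeeping" you defer is precisely where the theorem lives. The same issue recurs, and is resolved the same way, for the source-side homotopies $K^j$.
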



\begin{proof} See \cite[7.4.1]{Brown-book}.
\end{proof}

\begin{definition}
A $k$-filtered $CW$-complex $X$ is a $CW$-complex $X$ together with a series of subcomplexes
$$X_{-1}=\emptyset \subset X_0 \subset X_1 \subset \dots \subset X_k=X.$$
The cellular chain complex $C(X)$ is filtered with
$$F_jC(X) = C(X_j).$$
\end{definition}

A $k$-filtered $CW$ satisfies the conditions of \ref{522}.


The main application will be a filtered complex in the context of Theorem \ref{E and X}, a fibration $E \xrightarrow{p} B$ with $B$ a $CW$-complex. We will consider $B$ with a filtration given by its skeleta which induces a filtered structure on $E$ by defining $E_k := p^{-1}(B_k),$ where $B_k$ is the $k$-th skeleton of $B$. Note that here the inclusions $E_{k-1} \subset E_k$ are cofibrations.

\subsection{Filtered complexes}

Here $\bb{A}$ will denote an additive category.
\begin{definition}\label{FM} (\hspace{-1pt}\cite{modfour})
\begin{itemize}
\item[(i)] Let $M$ be an object in the additive category $\bb{A}$ and let $M$ have a direct sum decomposition,
$$M = M_0 \oplus M_1 \oplus \dots \oplus M_k,$$
so that M has a filtration of length $k$,
$$F_{-1}M=0 \subseteq F_0M \subseteq F_1M \subseteq \dots \subseteq F_kM = M$$
where
$$F_iM = M_0 \oplus M_1 \oplus \dots \oplus M_i.$$
A \textbf{$k$-filtered object $F_*M \in \bb{A}$} is the object $M \in \bb{A}$ together with the direct sum decomposition of $M$.

\item[(ii)]
Let $F_*M$ and $F_*N$ be two $k$-filtered objects in  the additive category $\bb{A}$.
A filtered morphism is given by
$$
f = \left( \begin{array}{ccccc} f_0 & f_1 & f_2 & \dots & f_k \\
                                              0  & f_0 & f_1 & \dots & f_{k-1} \\
                                              0 &    0  & f_0 & \dots & f_{k-2} \\
                                              \vdots & \vdots & \ddots & \vdots& \vdots \\
                                               0 & 0 & 0 & \dots & f_0 \end{array} \right) : M = \bigoplus^{k}_{s=0} M_s \to N= \bigoplus^{k}_{s=0} N_s.
$$
\end{itemize}

\end{definition}

In the context of chain complexes in the additive category $\bb{A}$, a $k$-filtered complex $F_*C$ is defined as follows.

\begin{definition}
Let $C :C_n \to \dots \to C_r \to C_{r-1} \to \dots  \to C_{0}$ be a chain complex, and let each $C_r$ be $k$-filtered, that is, $$C_r = C_{r, 0} \oplus C_{r, 1} \oplus \dots \oplus C_{r, s} \oplus \dots \oplus C_{r, k} = \bigoplus^k_{s=0} C_{r,s}.$$
Then a $k$-filtered complex $F_*C$ in $\bb{A}$ is a finite chain complex $C$ in $\bb{A}$ where each of the chain groups is $k$-filtered and each of the differentials $d: F_*C_r \xrightarrow{d} F_*C_{r-1}$ is a filtered morphism. The matrix components of $d$ are the maps $C_{r,s} \xrightarrow{d_j} C_{r-1, s-j}.$

\end{definition}

\begin{example}
The following diagram represents a $k$-filtered $n$-dimensional chain complex $F_*C$ in $\bb{A}$.

\begin{displaymath}
\xymatrix{ C_n \ar[r]  & \dots  \ar[r]  & C_r   \ar[r]  &C_{r-1}  \ar[r] &\dots \ar[r] & C_0  \\
\vspace{-10pt}
C_{n, k} \ar[r] \ar[d] & \dots  \ar[r] & C_{r, k}   \ar[r]\ar[d] &C_{r-1, k}  \ar[r] \ar[d] &\dots \ar[r]& C_{0, k} \ar[d] \\
\vdots  \ar[d] &  &\vdots   \ar[d] &\vdots  \ar[d] & & \vdots \ar[d] \\
C_{n, s} \ar[r] \ar[d]& \dots \ar[r] & C_{r, s} \ar[dr]^{d_1}   \ar[dddr]^{d_j} \ar[r]^{d_0} \ar[d] &C_{r-1, s}  \ar[r]\ar[d]&\dots \ar[r] & C_{0, s} \ar[d] \\
C_{n, s-1} \ar[r]\ar[d] & \dots  \ar[r]& C_{r, s-1}   \ar[r] \ar[d]&C_{r-1, s-1}  \ar[r] \ar[d]&\dots \ar[r] & C_{0, s-1} \ar[d] \\
\vdots  \ar[d] &  &\vdots   \ar[d] &\vdots  \ar[d] & & \vdots \ar[d] \\
C_{n, s-j} \ar[r]\ar[d] & \dots  \ar[r]& C_{r, s-j}   \ar[r] \ar[d]&C_{r-1, s-j}  \ar[r] \ar[d]&\dots \ar[r] & C_{0, s-j} \ar[d] \\
\vdots  \ar[d] &  &\vdots   \ar[d] &\vdots  \ar[d] & & \vdots \ar[d] \\
C_{n, 0} \ar[r] & \dots  \ar[r] & C_{r, 0}   \ar[r]&C_{r-1, 0}  \ar[r] &\dots \ar[r] & C_{0, 0} \\
}
\end{displaymath}
$F_sC_r = \sum\limits^s_{i=0}C_{r,i}$, and $F_sC_r/F_{s-1}C_r = C_{r,s}$. So $C_{r,s}$ represents the $s$-th filtration quotient of $C_r$.
\end{example}

\section{The associated complex of a filtered complex} \label{derived section}

\subsection{A complex in the derived category $\bb{D}(\bb{A})$}

Given an additive category $\bb{A}$ we write $\bb{D}(\bb{A})$ for the homotopy category of $\bb{A}$, which is the additive category of finite chain complexes in $\bb{A}$ and chain homotopy classes of chain maps with
$$\textnormal{Hom}_{\bb{D(A)}}(C, D) = H_0(\textnormal{Hom}_\bb{A} (C, D)).$$
(See \cite[Definition 1.5]{SurTransfer}).

 A $k$-filtered complex $F_*C $  in $\bb{A}$ has an associated chain complex in the derived category $\bbDA$. The associated complex of a $k$-filtered space is denoted by $G_*(C)$  and is $k$-dimensional.
\begin{displaymath}
G_*(C): G_k (C)\to \dots \to G_r(C) \to G_{r-1}(C) \to \dots  \to G_{0}(C).
\end{displaymath}

The \textbf{morphisms} in $G_*(C)$ are given by the (filtered) differentials
$$(-)^s d_1 : (G_kC)_s = C_{k+s, r}\to (G_kC)_{s-1}=C_{k+s-1, r-1}.$$

Each of the individual terms $G_r(C)$ is an \textbf{object} in $\bbDA$, hence a chain complex in $\bb{A}$,
$$G_r(C) : \dots \to (G_rC)_s \to (G_rC)_{s-1} \to (G_rC)_{s-2} \to \dots .$$
As a chain complex, $G_rC$ has differentials,
$$d_{G_r(C)} = d_0 : G_r(C)_s = C_{r+s, r} \to G_r(C)_{s-1}= C_{r+s-1, r}$$
such that
$$(d_{G_r(C)})^2 = d_0^2 =0 : G_r(C)_s= C_{r+s, r} \to G_r(C)_{s-2}=C_{r+s-2, r} $$
Note that the differentials of a filtered complex $d: C_k \to C_{k-1}$ are such that  $d^2=0: C_r \to C_{r-2}$. These differentials are upper triangular matrices. If we write $d_0$ for the diagonal, $d_1$ for the superdiagonal we obtain some relations:
$$d_0^2 = 0 : C_{r, s} \to C_{r-2, s}$$
$$d_0d_1 + d_1 d_0=0 : C_{r,s} \to C_{r-2, s-1}$$
$$(d_1)^2 + d_0 d_2 + d_2d_0 =0 : C_{r,s} \to C_{r-2, s-2} $$
up to sign.
These are the required relations for the objects in the associated complex to be in $\bbDA$ and for the differentials to be morphisms in $\bbDA$ with square $0$.

\begin{example}
The following diagram is an $k$-dimensional $G_*C$ complex in $\bb{D(A)}$,
\begin{displaymath}
\xymatrix{ G_kC : \ar[d] &\dots \ar[r] & (G_kC)_s \ar[d] \ar[r] & (G_kC)_{s-1} \ar[d] \ar[r] &(G_kC)_{s-2} \ar[d] \ar[r] &\dots \\
\vdots \ar[d] & & \vdots \ar[d] & \vdots \ar[d] & \vdots \ar[d]\\
G_rC : \ar[d] &\dots \ar[r] & (G_rC)_s \ar[d]^{(-)^sd_1} \ar[r]^{d_0} & (G_rC)_{s-1} \ar[d] \ar[r] &(G_rC)_{s-2}\ar[d] \ar[r] &\dots  \\
G_{r-1}C : \ar[d] &\dots  \ar[r] & (G_{r-1}C)_s \ar[d] \ar[r] & (G_{r-1}C)_{s-1} \ar[r] \ar[d]&(G_{r-1}C)_{s-2} \ar[r] \ar[d]&\dots \\
\vdots \ar[d] & & \vdots \ar[d] & \vdots \ar[d] & \vdots \ar[d]\\
G_0C : &\dots \ar[r] & (G_0C)_s \ar[r] & (G_0C)_{s-1} \ar[r] &(G_0C)_{s-2} \ar[r] &\dots
}
\end{displaymath}
\end{example}

\begin{example}
For a filtered $CW$-complex $X$,
$$G_kC(X) = S^{-k}C(X_k, X_{k-1}).$$
\end{example}

When defining the symmetric structure of a filtered complex, we shall need to understand the behaviour of tensor products of filtered complexes. This is carefully described in \cite[section 12.2]{modfour}.
\begin{definition}
Let $F_*C$ and $F_*D$ be filtered chain complexes over $\bb{A}(R)$ and $\bb{A}(S)$, where $R$ and $S$ are rings, then the  tensor product is a filtered complex,
$$F_k(C \otimes_{\bb{Z}} D) = \bigoplus_{i+j = k} F_iC \otimes_{\bb{Z}} F_jD.$$

\end{definition}


\subsection{Duality for a filtered complex and its associated complex in $\bbDA$}
The definition of a filtered dual chain complex in an additive category with involution $\bb{A}$ is given in section 12.6 of \cite{modfour}. Here we will review the ideas that will be necessary in the construction of the symmetric structure in the derived category $\bbDA$.

Namely we will need to define what the dual $\Fd_*C$ of a $k$-filtered chain complex $F_*C$ is.

\begin{definition} (\hspace{-1pt}\cite[Definition 12.21 (ii)]{modfour})
Let $F_*C$ be a $k$-filtered $n$-dimensional chain complex in $\bb{A}$.

\begin{itemize}
\item[(i)] The filtered dual $\Fd_*C$ of $F_*C$ is the $k$-filtered complex with modules $$(\Fd_*C)_{r, s} = C^*_{n-r, k-s},$$
where $0\leq r \leq n$ and $0 \leq s \leq k$.
\item[(ii)] The dual of the differential $C_r \xrightarrow{d} C_{r-1}$ is given by $d^{\textnormal{dual}}: (\Fd_*C)_r \to (\Fd_*C)_{r-1}$. This dual differential is also $k$-filtered,
$$C^*_{n-r, k-s} \xrightarrow{(-)^{r+s +j(n+r)} d^*_j}C^*_{n-(r-1), k-(s-j)}.$$
\end{itemize}
\end{definition}

\begin{definition} (See \cite[12.20]{modfour})
The associated complex $G_*(F^{\textnormal{dual}}_*C)$ is the $k$-filtered dual of $G_*C$.
\end{definition}

\section{The total space of a fibration is homotopy equivalent to a filtered complex} \label{E-filtered}
The aim of this section will be to prove that the total space of a fibration $F \to E \to B$ (with $F$ and $B$ $CW$-complexes) is homotopically equivalent to a filtered $CW$-complex $X$, (see Theorem \ref{3.15}).

The argument given here is similar to that given in \cite{Transfer-K}. This is a $\pi_1(E)$-equivariant  version of the inductive proof in \cite{Stasheff}.

Before going into the proof of Theorem \ref{3.15}, we need to recall some theory from \cite{Transfer-K} about the behaviour of the pull-back of a bundle over an attaching map on the base.

We consider  $(D, d)$ to be a pointed contractible space and $f: D \to B$ a map. Given a morphism $\xi : f(d) \to b$ in $\pi_1(B)$ i.e. a homotopy class relative to $\{ 0, 1 \}$ of paths from $b$ to $f(b)$ in $B$, we have the following definition:

\begin{definition}(\hspace{-1pt}\cite[Definition 7.3]{Transfer-K})
Let $h$ be any homotopy between the constant map $c : D \to \{ b\}$ and $f$ such that $h(d, i)$ represents $\xi(i)$. Define
$$ T_{\xi}: D \times \widetilde{F} \to f^*\widetilde{E},$$
so that we can write a map $$ T(f, \xi) : D \times \widetilde{F} \to \widetilde{E}$$
by composing the pullback map with $T_{\xi}$
\begin{displaymath}
\xymatrix{
D \times \widetilde{F} \ar@/^2pc/[rr]^{T(f, \xi)} \ar[r]^{T_{\xi}} & f^*\widetilde{E} \ar[r] \ar[d]& \widetilde{E} \ar[d]^p \\
&D \ar[r]^f & B.}
\end{displaymath}
\end{definition}

\begin{remark}
We will denote the $k$-cells in $B$ by an ordered set $I_k$ and the $k$-th skeleton of $B$ by $B_k$.
\end{remark}

\begin{lemma} \label{trivialization} (\hspace{-1pt}\cite[Corollary 7.5]{Transfer-K})
Let $D$ be a contractible space and the map $h: (D^k, S^{k-1}) \times I \to (B_k, B_{k-1})$ be a homotopy between two maps $f_1, f_2: D^k \to B$, $\xi_1, \xi_2$ paths from $f_1(d)$ and $f_2(d)$ to $b$ respectively and $\xi : f_1(d) \to f_2(d)$. The following diagram commutes up to homotopy,
\begin{displaymath}
 \xymatrix{ (D^k, S^{k-1}) \times \widetilde{F}  \ar[dr]^{T(f_1, \xi_1)} \ar[dd]_{\textnormal{ID} \times (\xi_1 * \xi_2^{-1} *\xi^{-1}) }& \\
& (\widetilde{E}_k, \widetilde{E}_{k-1}) \\
(D^k, S^{k-1}) \times \widetilde{F}  \ar[ur]_{T(f_2, \xi_2)} &}
\end{displaymath}

\end{lemma}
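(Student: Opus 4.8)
The statement to prove is Lemma~\ref{trivialization} (\cite[Corollary 7.5]{Transfer-K}): the diagram comparing $T(f_1,\xi_1)$ and $T(f_2,\xi_2)$ on $(D^k,S^{k-1})\times \wtF$ commutes up to homotopy, after twisting one side by the deck transformation $\xi_1 * \xi_2^{-1} * \xi^{-1}$.

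The plan is to reduce everything to the defining property of $T(f,\xi)$, namely that it is built from a homotopy $h$ from the constant map to $f$ realising the path $\xi$ at the basepoint. First I would unpack the data: we are given $h:(D^k,S^{k-1})\times I \to (B_k,B_{k-1})$ a homotopy from $f_1$ to $f_2$, together with chosen paths $\xi_1$ (from $f_1(d)$ to $b$), $\xi_2$ (from $f_2(d)$ to $b$), and $\xi$ (from $f_1(d)$ to $f_2(d)$). Since $D^k$ is contractible, the pullbacks $f_1^*\wtE$ and $f_2^*\wtE$ are trivialisable, and the trivialisations $T_{\xi_1}, T_{\xi_2}$ are determined up to homotopy by the homotopy classes of the paths used at the basepoint. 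The homotopy $h$ provides a bundle homotopy over $h$ identifying the pullback of $\wtE$ along $f_1$ with that along $f_2$, which on fibres is (up to the action of $\pi_1$) the identity on $\wtF$. The point is to track exactly which deck transformation appears: transporting the $T_{\xi_1}$-trivialisation across $h$ and then comparing with the $T_{\xi_2}$-trivialisation produces a self-map of $D^k\times\wtF$ which, fibrewise, is the deck transformation given by the loop obtained by concatenating $\xi_1$, the reverse of $h$ restricted to $\{d\}\times I$ (which represents $\xi$ after the appropriate identification), and the reverse of $\xi_2$ — i.e.\ $\xi_1 * \xi^{-1} * \xi_2^{-1}$, equivalently $\xi_1*\xi_2^{-1}*\xi^{-1}$ once one checks these loops agree in $\pi_1(B)$.

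Concretely, the key steps in order would be: (1) use the covering homotopy / homotopy lifting property for the $\Gamma$-fibration $\widetilde p:\wtE\to B$ to lift $h$ to an equivariant homotopy $\widetilde h:(D^k,S^{k-1})\times I \to (\wtE_k,\wtE_{k-1})$ starting from $T(f_1,\xi_1)$; (2) observe that at time $1$ this lift $\widetilde h(\cdot,1)$ covers $f_2$, hence factors through $f_2^*\wtE$, giving a trivialisation of $f_2^*\wtE$ which differs from $T_{\xi_2}$ only by a fibrewise deck transformation, constant over the contractible $D^k$; (3) identify that deck transformation by evaluating at the basepoint $d$: there the lift $\widetilde h(d,\cdot)$ is a path in $\wtE$ over $h(d,\cdot)$, which represents $\xi$ up to the chosen conventions, so comparing with $\xi_1$ at the start and $\xi_2$ at the end yields the element $\xi_1 * \xi_2^{-1} * \xi^{-1}\in\pi_1(B)$; (4) conclude the diagram commutes up to homotopy after precomposing with $\mathrm{ID}\times(\xi_1*\xi_2^{-1}*\xi^{-1})$, taking care that the restriction to $S^{k-1}\times\wtF$ lands in $\wtE_{k-1}$ throughout, which it does because $h$ maps $S^{k-1}\times I$ into $B_{k-1}$.

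The main obstacle I expect is the bookkeeping of basepoints, path orientations, and the $\pi_1$-conventions: getting the deck transformation to come out as exactly $\xi_1*\xi_2^{-1}*\xi^{-1}$ (rather than its inverse or a cyclic rearrangement) requires being scrupulous about whether paths go from $f(d)$ to $b$ or from $b$ to $f(d)$, and about left versus right actions on $\wtF$. The topological content — existence of the homotopy and its lift, and the fact that a bundle isomorphism over a contractible base is fibrewise constant — is routine given the $\Gamma$-equivariant homotopy lifting property already set up for $\widetilde p:\wtE\to B$; the real care is in the sign/orientation/convention matching, which is why this is packaged as a corollary of the earlier machinery in \cite{Transfer-K} rather than proved from scratch here.
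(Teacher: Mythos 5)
The thesis does not actually prove Lemma~\ref{trivialization}: it is quoted from \cite[Corollary 7.5]{Transfer-K} and used as an input to the proof of Theorem~\ref{3.15}, so there is no in-paper argument to compare yours against. Your sketch is the standard proof from the cited source: use the $\Gamma$-equivariant homotopy lifting property to lift the homotopy $h$ starting from $T(f_1,\xi_1)$; observe that the time-one map is a trivialisation of $f_2^*\wtE$ built from the concatenated homotopy; and, since two trivialisations over the contractible pointed space $D^k$ agree up to fibre homotopy as soon as they agree over the basepoint, identify the discrepancy with $T_{\xi_2}$ by restricting to the fibre over $d$, where the basepoint tracks concatenate to the stated loop. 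Two small cautions, neither of which changes the structure of the argument: the correction term in the diagram is the fibre transport (monodromy) of that loop acting on $\wtF$ up to $\pi_1(E)$-equivariant fibre homotopy, not literally a deck transformation, and your remark that $\xi_1*\xi^{-1}*\xi_2^{-1}$ and $\xi_1*\xi_2^{-1}*\xi^{-1}$ ``agree'' is really a statement about the two opposite composition conventions in the fundamental groupoid (only one of the two composites typechecks as a loop under a given convention), so it requires exactly the convention-matching you flag rather than an identity inside $\pi_1(B)$.
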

The map $T(f, \xi)$ provides a trivialization of the bundle along the path $\xi$. This trivialization is an important element in the proof of Theorem \ref{3.15}.

To follow the same notation as in \cite{Transfer-K}, we shall denote the attaching maps by $$(Q(j), q(j)) : (D^k, S^{k-1}) \to (B_k, B_{k-1}).$$

\begin{theorem}\label{3.15} (\hspace{-1pt}\cite{Stasheff, Schoen})
Let $F \to E \to B$ be a fibration such that $B$ and $F$ have the homotopy type of  $CW$ complexes. Then  $\wtE$ is $\pi_1(E)$-homotopically equivalent to a filtered $\pi_1(E)$-complex $\wtX$.

\end{theorem}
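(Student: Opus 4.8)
The plan is to construct the filtered $\pi_1(E)$-complex $\wtX$ cell by cell over the skeleta of the base $B$, following the inductive scheme of Stasheff and its $\Gamma$-equivariant refinement in \cite{Transfer-K}. First I would set up the induction: suppose that we have already built a filtered $\pi_1(E)$-CW-complex $\wtX_{k-1}$ together with a $\pi_1(E)$-equivariant homotopy equivalence $g_{k-1}: \wtX_{k-1} \to \wtp^{-1}(B_{k-1}) = \wtE_{k-1}$ which is \emph{filtered}, i.e. restricts on each $\wtX_j$ to a homotopy equivalence onto $\wtE_j$. The base case $k=0$ is immediate, since $\wtE_0 = \wtp^{-1}(B_0)$ is a disjoint union of copies of $\wtF$ indexed by the $0$-cells of $B$ together with the $\pi_1(E)$-action, and we take $\wtX_0$ to be a $\pi_1(E)$-CW model of this.

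For the inductive step I would attach, for each $k$-cell of $B$ with attaching map $(Q(j),q(j)):(D^k,S^{k-1}) \to (B_k,B_{k-1})$, a copy of $D^k \times \wtF$ along $S^{k-1} \times \wtF$. The gluing data comes from the trivialization map $T(f,\xi): D^k \times \wtF \to \wtE$ of Lemma \ref{trivialization}: on $S^{k-1} \times \wtF$ this restriction maps into $\wtE_{k-1}$, and composing with a homotopy inverse of the already-constructed equivalence $g_{k-1}$ gives a $\pi_1(E)$-equivariant attaching map $S^{k-1} \times \wtF \to \wtX_{k-1}$. One has to be slightly careful to make these choices compatibly over the $\pi_1(E)$-orbits of cells, but since $\pi_1(E)$ acts freely and cellularly on $\wtB$ (the pullback of the universal cover structure along $\wtp$), it suffices to choose one representative per orbit and extend equivariantly; the contractibility of $D^k$ and the path-trivialization $T(f,\xi)$ ensure the choices patch up to the homotopies recorded in Lemma \ref{trivialization}. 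Define $\wtX_k = \wtX_{k-1} \cup_{\{S^{k-1}\times \wtF\}} \bigsqcup (D^k \times \wtF)$, which is filtered with $\wtX_j$ $(j<k)$ the previously constructed subcomplexes; the inclusions $\wtX_{k-1}\subset \wtX_k$ are cofibrations by construction.

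Next I would extend $g_{k-1}$ to a $\pi_1(E)$-equivariant filtered map $g_k:\wtX_k\to\wtE_k$: on the old part it is $g_{k-1}$, and on each attached $D^k\times\wtF$ it is the trivialization $T(f,\xi)$ (suitably homotoped so that it agrees on the overlap with $g_{k-1}$ composed with the attaching map, which is possible precisely by the homotopy-commutativity of the diagram in Lemma \ref{trivialization} together with the homotopy extension property for the cofibration $\wtX_{k-1}\subset\wtX_k$). To see $g_k$ is a homotopy equivalence I would use the relative Mayer--Vietoris / five lemma argument: the pairs $(\wtX_k,\wtX_{k-1})$ and $(\wtE_k,\wtE_{k-1})$ have, by excision and the local triviality of $\wtp$ over the open cells of $B_k$, homology (and $\pi_1(E)$-equivariant homotopy type) of $\bigsqcup (D^k/S^{k-1})\wedge \wtF_+$, and $g_k$ induces the identity on these; combined with the inductive hypothesis that $g_{k-1}$ is an equivalence and Whitehead's theorem (applied $\pi_1(E)$-equivariantly, i.e. to the map of universal covers), one concludes $g_k$ is a $\pi_1(E)$-homotopy equivalence. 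Running the induction up to $k = \dim B$ and setting $\wtX = \wtX_{\dim B}$, $g = g_{\dim B}$ yields the result; combined with Theorem \ref{E and X} one recovers the unequivariant statement for $X = \wtX/\pi_1(E)$.

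The main obstacle, as in Stasheff's original argument, is the compatibility of the trivializations: one must choose the homotopies $h$ defining $T(f,\xi)$ for all cells and all group elements in a coherent way so that $g_k$ is genuinely well-defined and filtered, not merely defined up to homotopy on each cell separately. This is exactly what Lemma \ref{trivialization} (Corollary 7.5 of \cite{Transfer-K}) is designed to handle — it shows the two natural trivializations of a pulled-back bundle along homotopic maps differ by the action of an explicit loop in $\pi_1(B)$, hence by a deck transformation on $\wtF$ — so the equivariance is built into the bookkeeping rather than being an extra difficulty. The remaining technical points (cofibrancy of the inclusions, applicability of the equivariant Whitehead theorem, the excision computation of the filtration quotients) are routine given the $CW$ hypotheses on $B$ and $F$.
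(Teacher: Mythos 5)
Your inductive construction is the paper's construction: filter by the skeleta of $B$, use the trivializations $T(f,\xi)$ of Lemma \ref{trivialization} to attach $\sum D^k\times\wtF$ along $\sum S^{k-1}\times\wtF$, and define $\wtX_k$ as the pushout over the previously built $\wtX_{k-1}$ (the paper follows \cite[pp.~114--115]{Transfer-K} for exactly this). The genuine gap is in your final step, where you verify that $g_k:\wtX_k\to\wtE_k$ is a $\pi_1(E)$-homotopy equivalence by a relative-homology/five-lemma computation followed by ``Whitehead's theorem applied $\pi_1(E)$-equivariantly''. A homology isomorphism does not give a weak homotopy equivalence here: $\wtF$ is the cover of $F$ induced from the universal cover of $E$, not the universal cover of $F$, so $\pi_1(\wtF)\cong\ker(\pi_1(F)\to\pi_1(E))$ can be nontrivial (for the Hopf fibration $S^1\to S^3\to S^2$, which the paper itself uses as an example, $\wtF=S^1$), and $\wtX_k$, $\wtE_k$ are in general neither simply connected nor connected. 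You would need at least a $\pi_1$ comparison and homology with arbitrary local coefficients, neither of which your sketch supplies. Worse, the appeal to an equivariant Whitehead theorem is circular in spirit: that theorem requires both sides to have the equivariant homotopy type of $\pi_1(E)$-CW complexes, and producing such a structure (up to equivariant homotopy) for $\wtE_k$ is essentially what is being proved; nor are $\wtX_k$ and $\wtE_k$ universal covers of $X_k$ and $E_k$, so ``passing to universal covers'' does not make sense as stated.

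The missing idea, and the paper's actual tool, is the gluing lemma for homotopy equivalences (homotopy invariance of pushouts along cofibrations), cited from \cite[p.~246]{Brown-book} and \cite[p.~161]{Dieck}. Both $\wtE_k$ and $\wtX_k$ are pushouts of the rows of the ladder built from the attaching data, the relevant legs are cofibrations, and the vertical maps (the trivializations on $\sum D^k\times\wtF$, the identity/inclusion maps, and the inductive equivalence on the $(k-1)$-st stage, made compatible up to homotopy by Lemma \ref{trivialization}) are homotopy equivalences; the gluing lemma then yields a genuine $\pi_1(E)$-homotopy equivalence of the pushouts directly, with no homology computation, no Whitehead theorem, and no CW hypothesis on $\wtE_k$. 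Your route could in principle be repaired (prove separately that $\wtE_k$ has the equivariant homotopy type of a free $\pi_1(E)$-CW complex and run the argument with local coefficients), but as written the verification step fails, and the gluing-lemma argument is the clean replacement.
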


\begin{proof}

This proof is based on \cite[pages 114, 115]{Transfer-K}.
The proof is by induction. We shall denote by $\wtE_0$ the restriction of $\wtE$ to $B_0$. As $B_0$ is the zeroth skeleton of $B$, then $\wtE_0$ is a disjoint union of spaces homotopy equivalent to $\wtF$. If we set $\wtX_0$ to be a number of copies of $\wtF$ one copy for each $0$-cell in $B$, then there is a $\pi_1(E)$-homotopy equivalence between $\wtE_0$ and $\wtX_0$.

For the induction step, suppose we have a $\pi_1(E)$-homotopy equivalence between $\wtE_{k-1}$ and $\wtX_{k-1}$. Writing $\widetilde{p}$ for the map $\wtE \xrightarrow{\widetilde{p}} B$, we have that $\wtE_{k-1}:= \widetilde{p}^{-1}(B_{k-1})$ so that the filtration of $B$ by its skeleta induces a filtration on $\wtE_{k-1}$.  The attaching maps
$$(Q(j), q(j)) : (D^k, S^{k-1}) \to (B_k, B_{k-1}) $$
give rise to the following pullback diagrams

\begin{minipage}[c]{0.4\linewidth}
\vspace{-10pt}
\begin{displaymath}
 \xymatrix{ Q(j)^*\wtE_k \ar[r] \ar[d] & \wtE_k \ar[d] \\
D^k \ar[r]^{Q(j)} & B_{k}, }
\end{displaymath}

\end{minipage}
\begin{minipage}[c]{0.30\linewidth}

\begin{displaymath}
 \xymatrix{ q(j)^*\wtE_{k-1} \ar[r] \ar[d] & \wtE_{k-1} \ar[d] \\
S^{k-1} \ar[r]^{q(j)} & B_{k-1}. }
\end{displaymath}
\vspace{10pt}
\end{minipage}

These give rise to the following pushout diagram
\begin{align}\label{pushout}
\xymatrix{Q(j)^*\wtE_k \ar[r] & \wtE_k \\
q(j)^* \wtE_{k-1} \ar[r] \ar[u]& \wtE_{k-1}. \ar[u]
}
\end{align}

Using the trivialization maps from \ref{trivialization} the above diagram \ref{pushout} extends to
\begin{displaymath}
\xymatrix{\sum_{I_k} D^k \times \wtF \ar[r] &Q(j)^*\wtE_k \ar[r] & \wtE_k \\
\sum_{I_{k-1}} S^{k-1} \times \wtF \ar[u] \ar[r] &q(j)^* \wtE_{k-1} \ar[r] \ar[u]& \wtE_{k-1}. \ar[u]
}
\end{displaymath}
From the induction step we have established that there exists a  homotopy equivalence between $\wtE_{k-1}$ and $\wtX_{k-1}$, so the bottom row of the previous diagram extends to
 \begin{displaymath}
\xymatrix{\sum_{I_k} D^k \times \wtF  & & & \\
\sum_{I_{k-1}} S^{k-1} \times \wtF \ar[u] \ar[r] &q(j)^* \wtE_{k-1} \ar[r] & \wtE_{k-1} \ar[r] & \wtX_{k-1}.
}
\end{displaymath}

So $\wtX_k$ can be obtained as a pushout from this diagram,
 \begin{displaymath}
\xymatrix{\sum_{I_k} D^k \times \wtF \ar[r] & \wtX_{k} \\
\sum_{I_{k-1}} S^{k-1} \times \wtF \ar[u] \ar[r] &\wtX_{k-1}. \ar[u]
}
\end{displaymath}

Hence the attaching maps for $\wtX_k$ are given by the attaching from the pushout diagram $\sum_{I_k} D^k \times \wtF \to \wtX_{k}$ and by the attaching maps induced from those of $\wtX_{k-1}.$ So we deduce that $\wtX_{k}$ is a $\pi_1(E)$-filtered space.
To prove that there is a homotopy equivalence between $\wtE_k$ and $\wtX_k$ we reproduce the first diagram in \cite[page 115]{Transfer-K}.

 \begin{displaymath}
\xymatrix{\sum_{j \in I_k}Q(j)^*\wtE_{k-1} \ar[d] & \sum_{j \in I_k} q(j)^*\wtE_{k-1} \ar[r] \ar[l] \ar[d]& \wtE_{k-1} \ar[d] \\
\sum_{j \in I_k}D^k \times \wtF \ar[d] & \sum_{j \in I_k} S^{k-1} \times \wtF \ar[l] \ar[r] \ar[d]& \wtE_{k-1} \ar[d]^{f_{k-1}} \\
\sum_{j \in I_k}D^k \times \wtF \times I & \sum_{j \in I_k} S^{k-1} \times \wtF \times I \ar[l] \ar[r] & \wtX_{k-1}  \\
\sum_{j \in I_k}D^k \times \wtF \ar[u] & \sum_{j \in I_k} S^{k-1} \times \wtF \ar[u] \ar[l] \ar[r] & \wtX_{k-1}. \ar[u] \\
}
\end{displaymath}

We first note from this diagram that $\wtX_k$ is the pushout of the bottom row and that $\wtE_k$ is the pushout of the top row

\begin{minipage}[c]{0.5\linewidth}
\vspace{-10pt}
\begin{displaymath}
\xymatrix{\sum_{I_k} D^k \times \wtF \ar[r] & \wtX_{k} \\
\sum_{I_{k-1}} S^{k-1} \times \wtF \ar[u] \ar[r] &\wtX_{k-1}, \ar[u]
}
\end{displaymath}
\end{minipage}
\begin{minipage}[c]{0.20\linewidth}

\begin{displaymath}
\xymatrix{Q(j)^*\wtE_{k} \ar[r] & \wtE_k \\
q(j)^* \wtE_{k-1} \ar[r] \ar[u]& \wtE_{k-1}. \ar[u]
}
\end{displaymath}
\vspace{3pt}
\end{minipage}

The push-outs of the two central rows give filtered spaces. Hence by the argument in \cite[page 246]{Brown-book} and \cite[page 161]{Dieck} the diagram defines a $\pi_1(E)$-homotopy equivalence between $\wtE_k$ and $\wtX_k$, which completes the induction step.
Thus the result follows.
\end{proof}

\section{The symmetric construction for the derived symmetric structure}

We will now use the symmetric construction to define the derived symmetric structure $G_* \phi$.
We have shown in the previous section that we can find a $\pi_1(E)$-space $\wtX$ filtered homotopy equivalent to $\widetilde{E}$. So we can construct a filtered chain diagonal approximation
$$\Delta^{X} : C(\widetilde{X}) \to C(\widetilde{X})^t \otimes_{\bb{Z}[\pi_1(X)]} C(\widetilde{X}).$$

This chain diagonal approximation induces a map

$$G_*\Delta^{X} : G_*C(\widetilde{X}) \to G_* (C(\widetilde{X})^t \otimes_{\bb{Z}[\pi_1(X)]} C(\widetilde{X}) ).$$
Since there exists a chain equivalence,
$$\theta_{X,X} : G_* (C(\widetilde{X})^t \otimes_{\bb{Z}[\pi_1(X)]} C(\widetilde{X}) ) \to G_* (C(\widetilde{X}))^t \otimes_{\bb{Z}[\pi_1(X)]} G_*(C(\widetilde{X}) ), $$
we can compose this chain equivalence with $G_* \Delta^X$ to obtain a chain diagonal approximation in the derived category,

$$ \theta_{X,X} \circ G_*\Delta^{X} : G_*C(\widetilde{X}) \to  G_* (C(\widetilde{X}))^t \otimes_{\bb{Z}[\pi_1(X)]} G_*(C(\widetilde{X}) ).$$

The slant isomorphism allows us to identify
$$G_* (C(\widetilde{X}))^t \otimes_{\bb{Z}[\pi_1(X)]} G_*C(\wtX) = \textnormal{Hom}_{\bb{D}_m \bb{Z}} (G_*C(\wtX)^{-*}, G_*C(\wtX) ).$$

Let $W$ be the standard free $\bb{Z}[\bb{Z}_2]$ resolution of $\bb{Z}$
$$W: \dots \to \bb{Z}[\bb{Z}_2] \xrightarrow{1-T} \bb{Z}[\bb{Z}_2] \xrightarrow{1+T}   \bb{Z}[\bb{Z}_2]  \xrightarrow{1-T}   \bb{Z}[\bb{Z}_2] \to 0.$$
Using the action of $T \in \bb{Z}_2$ on $G_*C(\wtX)^t \otimes_{\bb{D}_m \bb{Z}} G_*C(\wtX)$ by the transposition involution we define the $\bb{Z}$ module chain complex
$$W^{\%} G_*C(\wtX) = \textnormal{Hom}_{\bb{Z}[\bb{Z}_2]}(W, G_*C(\wtX)^t \otimes_{\bb{D}_m \bb{Z}} G_*C(\wtX)).$$

\begin{definition}
An $n$-dimensional derived symmetric structure on the associated chain complex in $\bb{D}( \bb{Z})$ of a filtered complex $C(\wtX)$ in $\bb{D}(\bb{Z})$ is a cycle in
$(W^\% G_*C(\wtX))_n$
\end{definition}

\begin{definition}
The $n$-dimensional $Q$-group $Q^n(G_*C(\wtX))$ of a derived chain complex is the abelian group of equivalence classes of  derived $n$-dimensional symmetric structure on $G_*C(\wtX)$,
$$Q^n(G_*C(\wtX)) = H^n (\bb{Z}_2; G_*C(\wtX)^t \otimes_{\bb{D}_m \bb{Z}} G_*C(\wtX)) = H_n(W^\% G_*C(\wtX)).$$
\end{definition}

\section{Transfer functor associated to a fibration}\label{transfer-sec}

In a fibration $F \to E \xrightarrow{p} B$ a point in the base lifts to a copy of the fibre $F$.
The total space of a fibre bundle is determined by the homotopy action of $\Omega B$ on the fibre $U: \Omega B \to  \textnormal{Map}(F, F) $  with a homotopy equivalence $E \simeq EB \times_{\Omega B} F $

\vspace{25pt}
\begin{figure} [ht]
\centering
\includegraphics[scale=0.95, trim=10 100 0 100]{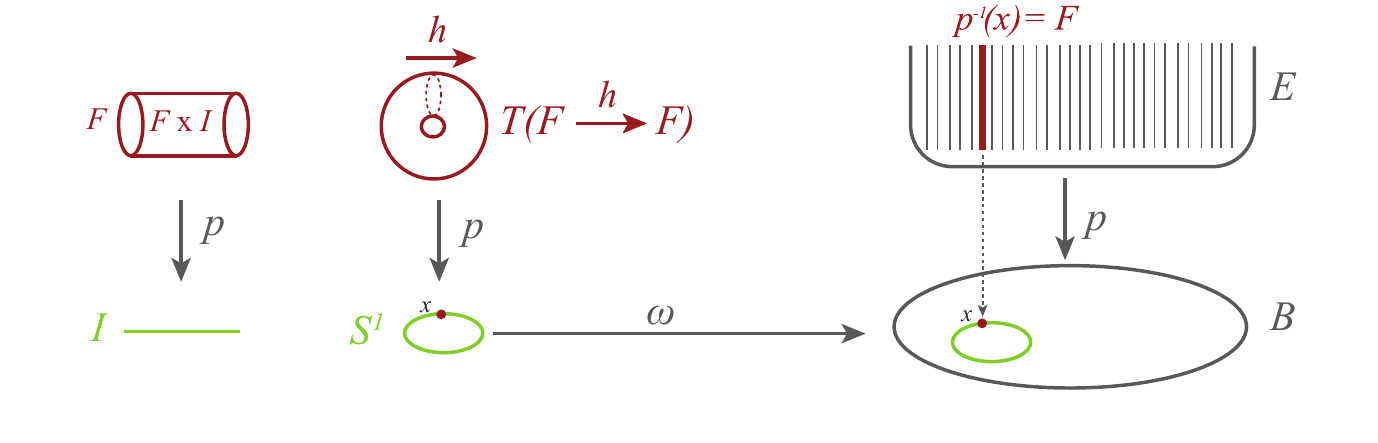}
\vspace{50pt}
\end{figure}
\vspace{1pt}
\vspace{30pt}
For any loop $\omega : S^1 \to B$ the pullback $F \to \omega^*E \to S^1 $ is the mapping torus $\omega^*E= T(h)$ of the monodromy automorphism $h= U(\omega): F \to F.$

 So we are considering the homotopy action of $\Omega B$ on the fibre $F$,
$$\Omega B \to \textnormal{Map}(F, F); \omega \mapsto h.$$
If $\omega$ is a loop in $\Omega B$, the homotopy class of its corresponding map $h_{\omega}: F \to F$ only depends on the homotopy class of $\omega \in \pi_1(B)$. So there is a homotopy action of the fundamental group $\pi_1(B)$ on the fibre $F$ given by the fibre transport
$$u: \pi_1(B) \to [F, F].$$
The homotopy action of $\Omega B$ on $F$ determines the chain homotopy action
$$ u : \pi_1(B) \to [C(F), C(F)]^{op}.$$
which extends to a ring morphism
\begin{equation}\label{ring-morph}
U : H_0(\Omega B) = \bb{Z}[\pi_1(B)] \to H_0(\textnormal{Hom}_{\bb{Z}}(C(F), C(F))).
\end{equation}

It is also possible to give a $\pi_1(E)$-equivariant version of the fibre transport by considering the $\pi_1(E)$-fibration  $\wtE \to B$, with fibres $\wtF$ the pullback to $F$ of the universal cover $\wtE$ of $E$.
In this case the ring morphism is given by
\begin{equation}\label{equiv-ring-morph}
U : H_0(\Omega B) = \bb{Z}[\pi_1(B)] \to H_0(\textnormal{Hom}_{\bb{Z}[\pi_1(E)]}(C(\wtF), C(\wtF))).
\end{equation}

The ring morphisms from \ref{ring-morph} and \ref{equiv-ring-morph} induce transfer functors as we now explain.

The idea of the transfer functor associated to a fibration  $F \to E \xrightarrow{p} B$ was developed in \cite{SurTransfer}.

\begin{definition} (\hspace{-1pt}\cite[Definition 1.1]{SurTransfer}) A representation $(A, U)$ of a ring $R$ in an additive category $\bb{A}$ is an object $A$ in $\bb{A}$ together with a morphism of rings $U : R \to \textnormal{Hom}_{\bb{A}}(A, A)^{op}.$
\end{definition}

Following the notation in \cite{SurTransfer} we will denote by $\bb{B}(R)$ the additive category of based finitely generated free $R$-modules, where $R$ is an associative ring with unity. As above, $\bb{D(A)}$ denotes the derived homotopy category of the additive category $\bb{A}$.
From \cite[Definition 1.5]{SurTransfer} we know that
$$\textnormal{Hom}_{\bb{D(A)}}(C, D) = H_0(\textnormal{Hom}_{\bb{A}}(C, D)).$$

\begin{definition}(\hspace{-1pt}\cite{SurTransfer})
A representation $(A, U)$ determines a transfer functor\\ $-\otimes (A, U) = F : \bb{B}(R) \to \bb{D(A)}$ as follows
$$
\begin{array}{l}F(R^n) = A^n, \\
F\left((a_{ij} : R^n \to R^m) \right) = \left(U(a_{ij})\right) : A^n \to A^m.
\end{array}
$$
\end{definition}

The ring morphism induced by the fibre transport,
$$U: \bb{Z}[\pi_1(B)] \to H_0(\textnormal{Hom}_{\bb{Z}[\pi_1(E)]} (C(\wtF), C(\wtF))^{op}.$$
determines the representation $(C(\wtF), U)$ of the ring $\bb{Z}[\pi_1(B)]$ into the derived category $\bb{D}_m(\bb{A}(\bb{Z}[\pi_1(E)])$,
$$-\otimes (C(\wtF), U) : \bb{B}(\bb{Z}[\pi_1(B)]) \to \bb{D}_m(\bb{A}(\bb{Z}[\pi_1(E)]) $$
where $\bb{D}_{m}(\bb{A}(\bb{Z}[\pi_1(E)])$ is the derived  category $\bb{D}(\bb{A}(\bb{Z}[\pi_1(E)])$ with the $m$-duality involution $C \mapsto C^{m-*}$.
This functor induced maps in the $L$-groups.
In \cite{SurTransfer} it is defined and used to construct a transfer map in the quadratic $L$-groups associated to a fibration $F^m \to E^{n+m} \to B^n$,
$$L_n(\bb{Z}[\pi_1(\wtB)]) \xrightarrow{-\otimes (C(\wtF), U)} L_{n}(\bb{D}_m(\bb{A}(\bb{Z}[\pi_1(E)])) \xrightarrow{\mu} L_{n+m}(\bb{Z}[\pi_1(E)]),  $$
where $\mu$ is the Morita map given in \cite{SurTransfer}.

In symmetric $L$-theory the transfer functor still induces a map,
$$L^n(\bb{Z}[\pi_1(\wtB)]) \xrightarrow{-\otimes (C(\wtF), U)} L^{n}(\bb{D}_m(\bb{A}(\bb{Z}[\pi_1(E)])),$$
but unlike in quadratic $L$-theory, in general in  symmetric $L$-theory a well-defined Morita map does not exist.

In our context, since we are only interested in obtaining information about the ordinary signature of $E$, we can forget about $\pi_1(E)$. That is, we only need to work with the map which we will make precise in the next section,
$$L^n(\bb{Z}[\pi_1(B)])\xrightarrow{-\otimes (C(F), \alpha, U)} L^{n}(\bb{D}_m\bb{Z})  \xrightarrow{\sigma_{\bb{D}_m {\bb{Z}}}} \bb{Z},$$
This is defined in \cite[Lemma 4.10]{Korzen}.

\section{The signature of a filtered complex and its associated complex in the derived category}

In section \ref{E-filtered} we have given the proof that the total space of a fibration $F^{2m} \to E \to B^{2n}$  is homotopically equivalent to a filtered $CW$-complex $X$ with the filtration induced from the cellular structure of the base space.
As $X$ is a filtered complex over $\bb{Z}$, it has an associated complex $(G_*C(X) , G_*\phi \in Q^{2n}(G_*C(X))$ in $\bb{D}_{2m}(\bb{Z})$, where $\bb{D}_{2m}(\zz)$ is the derived  category $\bb{D}(\zz)$ with the $2m$-duality involution $C \mapsto C^{2m-*}$.

\begin{proposition}\label{L-derived} (\hspace{-1pt}\cite[Lemma 4.10]{Korzen})
Let $m, n$ be such that $2m+2n=0 \pmod{4}$, the derived signature of the associated chain complex of a filtered complex induces a well defined map
$$\sigma_{\bb{D}_{2m}\bb{Z}} : L^{2n}(\bb{D}_{2m}(\bb{Z})) \to L^{2n+2m}(\zz)= \bb{Z}.$$

\end{proposition}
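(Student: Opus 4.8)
The claim is that the derived signature of the associated complex $G_*C(X)$ of a filtered complex descends to a well-defined homomorphism $\sigma_{\bb{D}_{2m}\bb{Z}}\colon L^{2n}(\bb{D}_{2m}(\bb{Z})) \to L^{2n+2m}(\bb{Z}) = \bb{Z}$. The plan is to produce the map at the level of representatives and then verify that it respects the cobordism relation defining $L^{2n}(\bb{D}_{2m}(\bb{Z}))$. The construction of the underlying map is essentially functorial: an object of $\bb{D}_{2m}(\bb{Z})$ is a finite chain complex $D$ over $\bb{Z}$ with the $2m$-duality involution $D \mapsto D^{2m-*}$, and a $2n$-dimensional symmetric Poincar\'e complex $(D,\Phi)$ in $\bb{D}_{2m}(\bb{Z})$ unfolds to a $(2n+2m)$-dimensional symmetric Poincar\'e complex over $\bb{Z}$ by forming the total complex (a ``de-suspension'' / unfiltering operation), using that the $2m$-duality involution on the derived category is exactly what makes the total complex satisfy ordinary $(2n+2m)$-dimensional Poincar\'e duality over $\bb{Z}$. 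First I would spell out this totalization functor on objects and check that Poincar\'e duality for $\Phi_0 \colon D^{2n-*} \to D$ in $\bb{D}_{2m}(\bb{Z})$ — i.e. a chain equivalence of complexes — becomes a genuine chain equivalence after totalizing, hence gives a $(2n+2m)$-dimensional symmetric Poincar\'e complex over $\bb{Z}$, whose Witt class lies in $L^{2n+2m}(\bb{Z}) = \bb{Z}$ (using the identification of this $L$-group with the signature stated earlier in the excerpt).

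The second and main step is to check that this totalization functor takes cobordisms in $\bb{D}_{2m}(\bb{Z})$ to cobordisms over $\bb{Z}$. A null-cobordism of $(D,\Phi)$ is a $(2n+1)$-dimensional symmetric Poincar\'e pair $(f \colon D \to D', (\delta\Phi,\Phi))$ in $\bb{D}_{2m}(\bb{Z})$; I would totalize the whole pair, obtaining a $(2n+2m+1)$-dimensional symmetric Poincar\'e pair over $\bb{Z}$, and confirm that the Poincar\'e–Lefschetz condition — the chain map $D'^{\,2n+1-*} \to \mathcal{C}(f)$ being a chain equivalence in $\bb{D}_{2m}(\bb{Z})$ — is preserved by totalization, for the same reason as on objects. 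Since a symmetric Poincar\'e complex over $\bb{Z}$ that bounds has zero signature, this shows $\sigma_{\bb{D}_{2m}\bb{Z}}$ is constant on cobordism classes. Additivity under direct sum is immediate because totalization is additive and the signature of a direct sum is the sum of signatures, so the induced map $L^{2n}(\bb{D}_{2m}(\bb{Z})) \to \bb{Z}$ is a group homomorphism. The hypothesis $2m+2n \equiv 0 \pmod 4$ is exactly what guarantees the target is $L^{4k}(\bb{Z}) = \bb{Z}$ rather than a torsion or trivial group, so that ``signature'' makes sense; I would note where this parity enters (the total dimension must be divisible by $4$ for the symmetrization of the middle-dimensional form to carry a signature).

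The step I expect to be the main obstacle is the careful bookkeeping in the totalization functor: one must track the filtration degree $r$ in $G_*C(X)$, the internal chain degree $s$ inside each object $G_r C(X)$, and the various signs coming from the $\epsilon$-transposition map $T_\epsilon$ and from the filtered-dual differential $(-)^{r+s+j(n+r)}d_j^*$ recorded in the definition of $F^{\mathrm{dual}}_*C$, and show that after totalizing, the collection $\{(G_*\phi)_s\}$ assembles into a bona fide symmetric structure $\phi \in (W^\% C(X))_{2n+2m}$ over $\bb{Z}$ satisfying the structure equations of Definition \ref{structures}. This is where the chain equivalence $\theta_{X,X}$ relating $G_*(C(\wtX)^t\otimes C(\wtX))$ to $G_*C(\wtX)^t \otimes G_*C(\wtX)$ and the interplay of tensor products of filtered complexes enter; the conceptual content is small but the sign-chasing is delicate, and it is essentially the content already worked out in \cite[section 12]{modfour} and \cite[chapters 3, 4]{Korzen}, which I would cite rather than reprove. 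Once the totalization is established as an exact functor compatible with duality, pairs, and direct sums, the proposition follows formally.
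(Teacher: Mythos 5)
You have identified the wrong step as routine. The crux of your argument is the ``totalization functor'' that unfolds a symmetric Poincar\'e complex (or pair) in $\bb{D}_{2m}(\bb{Z})$ into a genuine $(2n+2m)$-dimensional symmetric Poincar\'e complex (or pair) over $\bb{Z}$, and you describe this as small conceptual content plus delicate sign-chasing. In fact this assembly is precisely the Morita-type map which, as stressed in the introduction to Part II and in the appendix of \cite{SurTransfer}, does \emph{not} exist in symmetric $L$-theory in general. The point is that a symmetric structure on a complex in the derived category consists of \emph{homotopy classes} of morphisms: $\textnormal{Hom}_{\bb{D}(\bb{A})}(C,D)=H_0(\textnormal{Hom}_{\bb{A}}(C,D))$, so the data $\{\Phi_s\}$ and, for a cobordism, $\{\delta\Phi_s\}$ are only given up to chain homotopy with no preferred coherence. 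To build a chain-level element of $(W^{\%}\,\textnormal{Tot})_{2n+2m}$ over $\bb{Z}$ you must choose chain representatives together with an infinite tower of higher homotopies satisfying the equations of Definition \ref{structures}, and there is an obstruction to doing so (this is exactly the failure of the symmetric $L$-theory transfer; it is not a sign issue). Your second step inherits the same problem in a worse form: an arbitrary null-cobordism of $(D,\Phi)$ in $L^{2n}(\bb{D}_{2m}(\bb{Z}))$ is a derived-category object and need not be realizable as a filtered chain-level pair over $\bb{Z}$ at all, so ``totalize the whole pair and use that boundaries have zero signature'' is not available.

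The paper's route (following \cite[Lemma 4.10]{Korzen}) avoids chain-level assembly altogether: the signature is defined homologically, by a construction modelled on the spectral sequence of the filtered complex, using that over $\bb{Z}$ (or after tensoring with $\bb{R}$) the signature is carried by the nonsingular symmetric form on the appropriate middle-dimensional subquotient of homology — compare Proposition \ref{sig-ass-complex}, where only the second page of the spectral sequence matters, and the hyperbolic-splitting argument in the proof of Proposition \ref{two-functors}. Cobordism invariance is then proved for this homological invariant directly, without ever producing a symmetric Poincar\'e complex over $\bb{Z}$. If you want to keep your outline, you would have to either prove that the relevant lifting obstructions vanish in this situation (which is essentially redoing \cite{SurTransfer} and is not true at the generality at which you assert it) or replace the totalization by the homological definition of $\sigma_{\bb{D}_{2m}\bb{Z}}$; your final paragraph's appeal to \cite{modfour} and \cite{Korzen} is in effect citing the very argument your proposal claims to bypass.
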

\begin{proof} The computation of  the signature $\sigma_{\bb{D}_{2m}\bb{Z}}(G_*C(X), G_*\phi)$ is done by a construction similar to that of the spectral sequence of a filtered complex. For details see \cite{Korzen}
\end{proof}

\begin{proposition}\label{sig-ass-complex} (\hspace{-1pt}\cite[Lemma 4.12]{Korzen})
Let $F^{2m} \to E \to B^{2n}$ be a fibration, and let $X$ be a filtered complex homotopy equivalent to $E$. The signature of $X$ is equal to the signature of its associated complex $(G_*C(X), G_*\phi)$ in the derived category $\bb{D}_{2m}(\bb{Z})$
$$\sigma(C(X), \phi) = \sigma_{\bb{D}_{2m}(\bb{Z})}(G_*C(X), G_*\phi).$$
\end{proposition}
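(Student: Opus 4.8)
The identity to be proved is the chain--level incarnation of the classical fact that the signature of a Poincar\'e complex equipped with a filtration can be read off from the spectral sequence of that filtration, so the plan is to make this precise at the level of symmetric Poincar\'e complexes. First I would record the setup carefully: by Theorem \ref{3.15} we may take $X$ to be a filtered $CW$-complex with $F_jC(X) = C(X_j)$ and each inclusion $X_j \subset X_{j+1}$ a cofibration; since a filtered $CW$ pair has the homotopy extension property, the Alexander--Whitney--Steenrod diagonal approximation can be chosen filtration preserving, so that the symmetric structure $\phi \in Q^{2(n+m)}(C(X))$ is a \emph{filtered} symmetric structure and $(C(X),\phi)$ is a filtered $2(n+m)$-dimensional symmetric Poincar\'e complex over $\zz$. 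By definition $\sigma(C(X),\phi)$ is the image of the cobordism class of $(C(X),\phi)$ under the isomorphism $L^{2(n+m)}(\zz) \xrightarrow{\cong} \zz$.

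Next I would check that the associated complex $(G_*C(X), G_*\phi)$ genuinely represents a class in $L^{2n}(\bb{D}_{2m}(\zz))$, i.e. that $G_*\phi$ is Poincar\'e. This is exactly what the filtered--duality machinery of Section \ref{derived section} is for: the filtered dual $\Fd_*C(X)$ has associated complex $G_*(\Fd_*C(X)) = (G_*C(X))^{2m-*}$, and the Poincar\'e duality chain equivalence $\phi_0 : C(X)^{2(n+m)-*} \to C(X)$, being filtration preserving, induces after composition with the Eilenberg--Zilber-type equivalence $\theta_{X,X}$ a chain equivalence $(G_*\phi)_0 : (G_*C(X))^{2n-*} \to G_*C(X)$ in $\bbDA$. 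Thus $(G_*C(X),G_*\phi)$ is a $2n$-dimensional symmetric Poincar\'e complex in the derived category with $2m$-duality involution, and $\sigma_{\bb{D}_{2m}(\zz)}(G_*C(X),G_*\phi)$ is defined via Proposition \ref{L-derived}.

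The heart of the proof is to show the two integers agree. One direction is conceptual: up to reindexing, $C(X)$ is literally the total complex of the bicomplex underlying $G_*C(X)$, and totalization carries $G_*\phi$ back to $\phi$ up to chain homotopy; so it suffices to identify the map $\sigma_{\bb{D}_{2m}(\zz)}$ of Proposition \ref{L-derived} with "totalize, then take the ordinary signature". I would do this by induction on the filtration length: passing from $F_sC(X)$ to $F_{s-1}C(X)$ together with the top filtration quotient $C_{*,s}$, one performs the algebraic surgery / Thom construction on the symmetric structure; the point is that this operation preserves the Witt class (Novikov additivity for symmetric Poincar\'e complexes over $\zz$), so the signature is unchanged along each stage. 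Collapsing to the $E_\infty$-page leaves precisely the contributions that the spectral-sequence construction of Proposition \ref{L-derived} computes, yielding $\sigma(C(X),\phi) = \sigma_{\bb{D}_{2m}(\zz)}(G_*C(X),G_*\phi)$.

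The main obstacle I expect is exactly this last identification: making the "spectral sequence of symmetric forms" rigorous, i.e. tracking how the higher chain homotopies $\Delta_i$ --- hence the components $\phi_s$ of the symmetric structure --- interact with the filtration differentials $d_0,d_1,d_2,\dots$, and verifying that the successive quotient forms are nonsingular on each page and that replacing a page by its homology does not alter the signature. This is the content of \cite[Chapters 3, 4]{Korzen}, whose construction I would follow in detail; the guiding analogy is Meyer's description in \cite{Meyerpaper} of the intersection form of a surface bundle via an intersection form on the base with local coefficients, of which the present statement is the chain-level, higher-dimensional generalization.
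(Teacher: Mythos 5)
Your proposal is correct and follows essentially the same route as the paper: the paper's own proof is just the observation that the signature of a filtered complex depends only on the (second page of the) spectral sequence of the filtration, with all details deferred to \cite[Chapters 3, 4]{Korzen}, which is exactly the filtered symmetric structure plus spectral-sequence argument you outline and likewise defer to \cite{Korzen} at the crucial step.
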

\begin{proof}
The signature of a filtered complex only depends on the second page of its Serre spectral sequence. For details about the spectral sequence construction see \cite{Korzen}.

\end{proof}

For a fibration $F^{2m} \to E \xrightarrow{p} B^{2n}$, with the fibre a $2m$-dimensional geometric Poincar\'e complex, the derived signature of the associated chain complex of a filtered complex induces a well-defined derived signature map
$$\sigma_{\bb{D}_{2m}(\zz)} : L^{2n}(\bb{D}_{2m}(\zz)) \to L^{2m+2n}(\zz)=\zz.$$
and the composite
$$p^!: L^{2n}(\bb{Z}[\pi_1(B)]) \xrightarrow{-\otimes (C(F), \alpha, U)} L^{2n}(\bb{D}_m\bb{Z})  \xrightarrow{\sigma_{\bb{D}_m {\bb{Z}}}} L^{2m+2n}(\zz)= \bb{Z},$$
is a transfer map in symmetric $L$-theory. The functor $-\otimes (C(F), \alpha, U)$ was discussed in section \ref{transfer-sec} and will be reviewed again in section \ref{functors-sec}.

If $B$ is a $2n$-dimensional geometric Poincar\'e complex then $E$ is a $(2m+2n)$-dimensional geometric Poincar\'e complex with the transfer of the
symmetric signature $\sigma^*(B) \in L^{2n}(Z[\pi_1(B)])$ the ordinary signature
$\sigma(E) = p^!(\sigma^*(B)) \in L^{2m+2n}(\zz) = \zz.$



\begin{example} With this example we shall illustrate Proposition  \ref{L-derived} in two special cases, when the base is a point and when the fibre is a point.
\begin{itemize}
\item Case 1: Let $F^{2m} \to E \to \{pt\} $ be a fibration with base a point, that is, we take $n=0$. Then $F \to E$ is a homotopy equivalence, and there is a symmetric $L$-theory transfer
$$p^! : L^0(\zz) \to L^0(\bb{D}_{2m}(\zz)) \to L^{2m}(\zz)=\zz$$

In general the ring morphism $\zz[\pi_1(B)] \to \textnormal{Hom}_{\zz}(C(F), C(F))$ induces a map in $L$-theory $L^0(\zz[\pi_1(B)]) \to L^0 (\textnormal{Hom}_{\zz}(C(F), C(F)))$. Composing this with the canonical map  $L^0(\textnormal{Hom}_{\zz}(C( F ),C( F ))) \to L^0(\bb{D}_{2m}(\zz))$ we obtain the first functor in the transfer map.
If $B= \{ pt \}$, the ring morphism is $\zz \to \textnormal{Hom}_{\zz}(C( F ),C( F )); 1 \mapsto 1$, and the canonical map sends $1$ to $C(F)$ in the $0$-th filtration.
Therefore the transfer map is
$$p^!: L^0(\zz)=\zz \to L^{2m}(\zz) ; 1 \mapsto \sigma(E)=\sigma(F).$$
and
$$p^!(\sigma^*(B)) = \sigma(F)= \sigma( E). $$

\item Case 2:  Let $\{pt\} \to E \to B^{2n}$ be a fibration with fibre a point, that is we take $m=0$. Then $p:E \to B$ is a homotopy equivalence, and
$$p^!:L^{2n}(\zz[\pi_1(B)]) \to L^{2n}(\zz)=\zz$$
is the forgetful map induced by the augmentation $\zz[\pi_1(B)]\to \zz,$ and
$$p^!(\sigma^*( B)) = \sigma(B)= \sigma( E) $$


\end{itemize}
\end{example}

\section{An isomorphism of derived complexes}
At this point we know that for a fibration $F^m \to E \to B^n$,
 $$\sigma (E) = \sigma(X) =  \sigma_{\bb{D}({\zz})}(G_*C(X)) \in \zz,$$
 and that there is a well defined functor $$L^n(\bb{Z}[\pi_1(B)])\xrightarrow{-\otimes (C(F), \alpha, U)} L^{n}(\bb{D}_m\bb{Z})  \xrightarrow{\sigma_{\bb{D}_m {\bb{Z}}}} \bb{Z}.$$
To show that this functor describes the signature of the total space we will need that $\sigma_{\bb{D}({\zz})}(G_*C(X)) = \sigma_{\bb{D}({\zz})}(C(\wtB) \otimes (C(F), \alpha, U)). $


In Lemma \ref{trivialization} we saw that there exists a map
$$ (D^k, S^{k-1}) \times \wtF \to (\wtE_k, \wtE_{k-1}).$$
We compose this map with $(\wtE_k, \wtE_{k-1})  \xrightarrow{(f_k, f_{k-1})} (\wtX_k, \wtX_{k-1})$, and write a map of chain complexes,
$$C((D^k, S^{k-1}) \times \wtF) \to C(\wtX_{k}, \wtX_{k-1}).$$
The complex $C((D^k, S^{k-1}) \times \wtF)$ is canonically isomorphic to $C(\wtF)$ and we already showed in section \ref{derived section} that
$$S^{-k}C(\wtX_{k}, \wtX_{k-1})  \cong G_kC(\wtX).$$
So we take a $k$th-desuspension of the map $C((D^k, S^{k-1}) \times \wtF) \to C(\wtX_{k}, \wtX_{k-1}) $ and obtain,
$$ C(\wtF) \cong S^{-k}C((D^k, S^{k-1}) \times \wtF) \to S^{-k}C(\wtX_{k}, \wtX_{k-1}) \cong   G_kC(\wtX).$$

\begin{theorem}\label{chain-iso} (\hspace{-1pt}\cite[Theorem 4.5]{Korzen}) Given a fibration $p: E \to B$, let $X$ be a filtered complex homotopically equivalent to $E$. Then there is
\begin{itemize}
\item[(i)] a chain isomorphism of chain complexes in the derived category $\bb{D}(\bb{Z}[\pi_1(B)])$
$$\lambda : G_*C(X)  \cong C(\wtB) \otimes (C(F), U).$$
\item[(ii)] an equivalence of the derived symmetric structure defined on $C(\wtB) \otimes (C(F), U)$ and the derived symmetric structure on $G_*C(X).$
\end{itemize}
\end{theorem}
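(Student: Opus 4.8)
The statement to prove is Theorem \ref{chain-iso}: given a fibration $p:E \to B$ with $X$ a filtered complex homotopy equivalent to $E$, there is a chain isomorphism $\lambda: G_*C(X) \cong C(\wtB) \otimes (C(F),U)$ in $\bb{D}(\bb{Z}[\pi_1(B)])$, together with an equivalence of the derived symmetric structures.

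The plan is to build $\lambda$ filtration-degree by filtration-degree using the pushout description of $\wtX_k$ established in the proof of Theorem \ref{3.15}. First I would recall that $G_kC(\wtX) = S^{-k}C(\wtX_k,\wtX_{k-1})$, and that from the pushout square $\sum_{I_k} D^k \times \wtF \to \wtX_k \leftarrow \sum_{I_{k-1}} S^{k-1}\times\wtF$ (glued over $\wtX_{k-1}$) one gets $C(\wtX_k,\wtX_{k-1}) \cong \bigoplus_{I_k} C((D^k,S^{k-1})\times\wtF)$, and hence, after $k$-fold desuspension and using $C((D^k,S^{k-1})) \cong S^k\bb{Z}$, a natural isomorphism $G_kC(\wtX) \cong \bigoplus_{I_k} C(\wtF) = C_k(\wtB)\otimes C(\wtF)$ of objects in the derived category, where $C_k(\wtB)$ is the free $\bb{Z}[\pi_1(B)]$-module on the $k$-cells of $B$ (pulled back to $\wtB$). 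This gives the underlying module isomorphism in each filtration degree.

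Next I would check that these degreewise isomorphisms are compatible with the differentials of $G_*C(\wtX)$, i.e. that they assemble into a chain map $\lambda: G_*C(\wtX) \to C(\wtB)\otimes(C(\wtF),U)$. The key point is that the component $d_1$ of the filtered differential of $C(\wtX)$ — which is the differential of the associated complex $G_*C(\wtX)$ as a complex over $\bb{D}(\bb{Z}[\pi_1(B)])$ — is, under the above identifications, exactly the differential $d_{C(\wtB)}\otimes 1$ twisted by the fibre transport. This is where Lemma \ref{trivialization} does its work: the map $T(Q(j),\xi)$ trivializes the bundle along a path, and Lemma \ref{trivialization} says that changing the trivialization changes the gluing map by the deck transformation $\xi_1 * \xi_2^{-1} * \xi^{-1}$, which is precisely the value of the homotopy action $U:\bb{Z}[\pi_1(B)] \to H_0(\textnormal{Hom}_{\bb{Z}[\pi_1(E)]}(C(\wtF),C(\wtF)))$ on the corresponding group element. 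So the attaching data of $\wtX_k$ to $\wtX_{k-1}$ reproduces, at the level of $\bb{D}(\bb{Z}[\pi_1(B)])$, the chain complex $C(\wtB)\otimes(C(\wtF),U)$. Since each $\lambda_k$ is already an isomorphism, $\lambda$ is a chain isomorphism, giving part (i).

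For part (ii), I would use the symmetric construction from Section \ref{symm} applied to the filtered diagonal approximation $\Delta^X: C(\wtX) \to C(\wtX)^t\otimes_{\bb{Z}[\pi_1(X)]} C(\wtX)$, pass to the associated complexes via the Eilenberg--Zilber-type equivalence $\theta_{X,X}$, and observe that under $\lambda$ the derived symmetric structure $G_*\phi_X \in Q^{2n}(G_*C(\wtX))$ is carried to the symmetric structure on $C(\wtB)\otimes(C(\wtF),U)$ coming from the product of the symmetric structure $\phi_B$ on $C(\wtB)$ with the ``twisted'' symmetric structure $\alpha$ on $(C(\wtF),U)$; the point is that the diagonal on a fibre bundle, restricted to a cell $D^k\times\wtF$, is (up to filtered chain homotopy) the product of the diagonal on $D^k$ and the diagonal on $\wtF$, so compatibility of the $\Delta_i$ with the filtration forces the derived $\Delta^X$ to agree with the product diagonal up to equivalence. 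This uses the naturality of the symmetric construction and the fact that two filtered chain homotopic diagonal approximations induce the same element in $Q^{2n}$ of the associated complex.

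The main obstacle I expect is part (ii): carefully tracking the higher chain homotopies $\Delta_i^X$ (not just $\Delta_0^X$) through the filtration and through $\theta_{X,X}$, and verifying that the resulting derived symmetric structure really is equivalent to the external product structure rather than merely agreeing in $Q^0$ of the underlying complex. The degreewise module isomorphism and the identification of $d_1$ with the twisted base differential are essentially bookkeeping built on Lemma \ref{trivialization}; the symmetric-structure comparison requires genuinely working in the derived category $\bb{D}_{2m}(\bb{Z}[\pi_1(B)])$ and invoking acyclic-models-type uniqueness for the filtered diagonal. I would lean on \cite[Theorem 4.5]{Korzen} and \cite[section 12]{modfour} for the technical details of the symmetric part.
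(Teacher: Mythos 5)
Your proposal is correct and follows essentially the same route as the paper: the paper also identifies each filtration quotient $G_kC(\wtX)$ with $\bigoplus_{I_k}C(\wtF)=C_k(\wtB)\otimes C(\wtF)$ via the trivializations of Lemma \ref{trivialization}, checks via commutative diagrams that these identifications intertwine the filtered differential with the $U$-twisted differential of $C(\wtB)\otimes(C(\wtF),U)$ (the paper additionally writes down an explicit map $\epsilon$ in the reverse direction to conclude the isomorphism in $\bb{D}(\bb{Z}[\pi_1(B)])$), and proves (ii) exactly as you propose, by the commuting diagram relating $G_*\Delta^X$, $\theta_{X,X}$ and the product diagonal $\Delta^{\wtB}\otimes\Delta^{\wtF}$. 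Your extra care about the higher homotopies $\Delta_i$ in part (ii) is a reasonable elaboration of what the paper dispatches with a single commutative diagram.
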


\begin{proof}

\begin{itemize}
\item[(i)]
Firstly we observe that there is a commutative diagram
$$
\xymatrix{ C(\wtF) \ar[r] \ar[d]_{Id} & G_kC(\wtX) \ar[d]^{G_kd} \\
C(\wtF) \ar[r] & G_{k-1}C(\wtX),
} $$
 and hence we can construct the  following diagram
$$
\xymatrix{ \bigoplus_{j \in I_k} C(\wtF)  \ar@/^2pc/[rr]^{Id} \ar[r] \ar[d]_{Id}&  G_kC(\wtX) \ar[dd]^{G_kd} \ar[r]^{\lambda_k} & C_k(\wtB) \otimes C(\wtF) \ar[dd]^{d_{C(\wtB) \otimes (C(\wtF), U)}} \\
\bigoplus_{j \in I_k}C(\wtF) \ar[dr] \ar[d] & & \\
\bigoplus_{j \in I_{k-1}}C(\wtF) \ar@/^-2pc/[rr]_{Id} \ar[r] & G_{k-1}C(\wtX) \ar[r]^{\lambda_{k-1}} & C(\wtB)_{k-1} \otimes C(\wtF). \\
}
$$
\raggedbottom

Here the square on the left
$$\xymatrix{\bigoplus_{j \in I_k} C(\wtF) \ar[r] \ar[d] & G_kC(\wtX) \ar[d]^{G_kd} \\
\bigoplus_{j \in I_k} C(\wtF)  \ar[r] & G_{k-1}C(\wtX)
} $$
commutes. The outer square also commutes
$$\xymatrix{\bigoplus_{j \in I_k} C(\wtF) \ar[r] \ar[d]_{Id} & C_k(\wtB) \otimes C(\wtF) \ar[d]^{d_{C(\wtB)\otimes (C(\wtF), U)}} \\
\bigoplus_{j \in I_k} C(\wtF)  \ar[r] &C(\wtB)_{k-1} \otimes C(\wtF)
} $$
with the above map being just the identity and the lower map being $d_{C(\wtB)\otimes (C(\wtF), U)}$.

Since the left and the outer squares commute, we deduce that the right square
$$
\xymatrix{  G_kC(\wtX) \ar[d]_{G_kd} \ar[r]^{\lambda_k} & C_k(\wtB) \otimes C(\wtF) \ar[d]^{d_{C(\wtB) \otimes (C(\wtF), U)}} \\
G_{k-1}C(\wtX) \ar[r]^{\lambda_{k-1}} & C(\wtB)_{k-1} \otimes C(\wtF) \\
}
$$
\raggedbottom
also commutes. Hence $\lambda: G_{k}C(\wtX) \to C(\wtB)_{k} \otimes (C(\wtF), U)$ is a chain map.

Using again the same argument now with the diagram
$$
\xymatrix{ \bigoplus_{j \in I_k} C(\wtF)  \ar[r] \ar[d]_{Id} & C_k(\wtB) \otimes C(\wtF)\ar[r]^{\epsilon_k} \ar[dd]^{d_{C(\wtB) \otimes (C(\wtF), U)}} &  G_kC(\wtX) \ar[dd]^{G_kd}  \\
\bigoplus_{j \in I_k}C(\wtF) \ar[dr] \ar[d] & & \\
\bigoplus_{j \in I_{k-1}}C(\wtF)  \ar[r] &  C(\wtB)_{k-1} \otimes C(\wtF) \ar[r]^{\hspace{15pt}\epsilon_{k-1}} & G_{k-1}C(\wtX) \\
}
$$
\raggedbottom
We can show that the square
$$
\xymatrix{  C_k(\wtB) \otimes C(\wtF) \ar[d]_{d_{C(\wtB) \otimes (C(\wtF), U)}} \ar[r]^{\epsilon_k} &  G_kC(\wtX) \ar[d]^{G_kd}\\
C(\wtB)_{k-1} \otimes C(\wtF) \ar[r]^{\hspace{15pt}\epsilon_{k-1}} & G_{k-1}C(\wtX) \\
}
$$
commutes, so that there is also a chain equivalence $$\epsilon:  G_{k}C(\wtX) \to C(\wtB)_{k} \otimes (C(\wtF), U).$$
Furthermore $\epsilon$ and $\lambda$ are chain homotopy equivalent. Since $G_kC(\wtX)$ and $C(\wtB) \otimes (C(\wtF), U)$ are homotopy equivalent chain complexes in the derived category $\bb{D}(\bb{Z}[\pi_1(B)])$ then this implies that they are isomorphic.

\item[(ii)]
We have already proved that there is a chain equivalence $\lambda: G_*C(X) \to C(\wtB) \otimes (C(F), U)$ and we also already described the filtered chain approximation $\Delta^X: C(X) \to C(\wtX) \otimes_{\bb{Z}[\pi_1(X)]} C(\wtX)$. The filtered chain approximation on $C(\wtB) \otimes (C(F), U)$ can be constructed in the same way.
{\small
$$\xymatrix{
G_*C(X) \ar[r]^{\lambda \otimes_{\bb{Z}[\pi_1(X)]} \bb{Z}} \ar[d]_{G_*C(\Delta^X)} & C(\wtB) \otimes (C(F), U) \ar[d]^{\Delta^{\wtB} \otimes \Delta^{\wtF}}\\
G_*(C(\wtX) \otimes_{\bb{Z}[\pi_1(X)]} C(\wtX)) \ar[d]_{\theta^{X, X}}& (C(\wtB) \otimes_{\bb{Z}[\pi_1(B)]} C(\wtB)) \otimes (C(\wtF) \otimes_{\bb{Z}[\pi_1(E)]} C(\wtF), U \otimes U) \ar[d]^{\theta^{\wtB, \wtF}} \\
G_*C(\wtX) \otimes_{\bb{Z}[\pi_1(X)]} G_*C(\wtX) \ar[r]^{\hspace{-50pt}\lambda \otimes \lambda} & (C(\wtB) \otimes (C(\wtF), U)) \otimes_{\bb{Z}[\pi_1(E)]}(C(\wtB) \otimes (C(\wtF), U)).
}
$$
}

Clearly this diagram commutes. Hence, there is an equivalence of both symmetric structures induced from the chain isomorphism $\lambda$.

\end{itemize}

\end{proof}

\begin{remark}
Using the results of Theorem \ref{3.15},  Proposition \ref{sig-ass-complex} and    Theorem \ref{chain-iso} we know at this point that
$$\sigma (E) = \sigma(X) =  \sigma_{\bb{D}({Z})}(G_*C(X)) = \sigma_{\bb{D}({Z})}(C(\wtB) \otimes (C(F), \alpha, U)) \in \bb{Z}.$$
\end{remark}

\section{Two equivalent functors for the signature of a fibration}\label{functors-sec}

We start by giving the definition of a $(\zz, m)$-representation.

\begin{definition} A $(\zz, m)$-symmetric representation $(A, \alpha, U)$ of a group ring $\zz[\pi]$ is a symmetric representation of $\zz[\pi]$ in the category $\mathbb{A}(\zz^{(m)})$, where $\zz^{(m)}$ is the ring $\zz$ with involution given by $a^*= (-1)^m a ~~ (a \in A).$
\end{definition}

Let $(C(F), \alpha, U)$ be the $(\bb{Z},m)$-symmetric representation of the group ring $\bb{Z}[\pi_1(B)]$ in $\bb{D}_{2m}(\bb{Z})$ associated to the fibration $F^{2m} \to E \to B^{2n}$.
\begin{definition}\label{functor-chain}
The representation $(C(F), \phi, U)$ gives rise to the following functor,
$$- \otimes (C(F), \alpha, U) : \bb{A}(\bb{Z}[\pi_1(B)]) \to \bb{D}_{2m}(\bb{Z}).$$
\end{definition}
 From this representation we can construct another $(\bb{Z}, m)$-representation $(A, \alpha, U)$ associated to the same fibration which is given by
$$
\begin{array}{l}
A = H_m(C(F))/ torsion, \\
\alpha: A=   H^m(C(F))/ torsion \to A^*= H_m(C(F))/ torsion, \\
U : \bb{Z}[\pi_1(B)] \to H_0(\textnormal{Hom}_{\bb{Z}}(A, A))^{op}.
\end{array}
$$

\begin{definition} \label{representation A}
The $(\bb{Z}, m)$-symmetric representation $(A, \alpha, U)$ associated to a fibration $F^{2m} \to E \to B^{2n}$ is a $(-1)^m$-symmetric form $(A, \alpha)$ together with the representation $U$ of the group ring $\bb{Z}[\pi_1(B)]$ in the additive category with involution $\bb{A}(\bb{Z}^{(m)})$.
\end{definition}

\begin{definition}\label{functor-homology}
The representation $(A, \alpha, U)$ gives rise to the following functor,
$$- \otimes (A, \alpha, U) : \bb{A}(\bb{Z}[\pi_1(B)]) \to \bb{A}(\bb{Z}^{(m)}) $$
where $\bb{Z}^{(m)}$ is the ring of integers with involution given by $e^* = (-1)^m e.$
\end{definition}

The two functors from Definitions \ref{functor-chain} and \ref{functor-homology} induce maps in symmetric $L$-theory,
$$- \otimes (C(F), \alpha, U) : L^{2n}(\bb{Z}[\pi_1(B)]) \to L^{2n}(\bb{D}_{2m}(\bb{Z})) $$
and
$$- \otimes (A, \alpha, U) : L^{2n}(\bb{Z}[\pi_1(B)]) \to L^{2n}(\bb{Z}).$$

\begin{remark} Note that $(A, \alpha, U) \in L^0(\bb{Z}, (-1)^{m})$, so the second functor does not immediately give us  a chain complex of the dimension of the total space. Nevertheless, since the $\epsilon$-symmetric $L$-groups over $\bb{Z}$ are $4$-periodic, we have an isomorphism given by skew suspension,
\begin{equation}\label{Skew-suspension}
\bar{S}^{m}: L^{2n}(\bb{Z}, (-1)^{n+m}) \xrightarrow \cong L^{2n+2m}(\bb{Z})
\end{equation}
sending the $2n$-dimensional $(-1)^{n+m}$-symmetric   complex $(C(\widetilde{B}), \phi) \otimes (A, \alpha, U)$ to the $2n+2m$-dimensional symmetric complex $\bar{S}(C(\widetilde{B}), \phi) \otimes (A, \alpha, U)$ so that  the signatures are defined,

$${\small
\xymatrix{
\txt{$L^{2n}(\bb{Z},(-1)^{n+m})$\\$(C(\widetilde{B}), \phi) \otimes (A, \alpha, U) $} \ar@<-.8ex>[dr] \ar@<.8ex>[dr]^{\sigma} \ar@{->}[rr]^{\cong}
&
&\txt{$L^{2n+2m}(\bb{Z},1)$\\$\bar{S}^m(C(\widetilde{B}), \phi) \otimes (A, \alpha, U) $}\ar@<-.8ex>[dl]_{\sigma} \ar@<.8ex>[dl]\\
&\txt { $\bb{Z}$\\ $\sigma((C(\widetilde{B}), \phi) \otimes (A, \alpha, U)) =$ \\ $\sigma(\bar{S}^{m}(C(\widetilde{B}), \phi) \otimes (A, \alpha, U))$}}}
$$

\end{remark}
\begin{proposition} \label{two-functors}(\hspace{-1pt}\cite[Lemma 4.11]{Korzen})
Using the functors for a fibration $F^{2m} \to E \to B^{2n}$ described in Definitions \ref{functor-chain} and \ref{functor-homology}, the following diagram commutes.
\begin{displaymath}
\xymatrix{
 L^{2n}(\bb{Z}[\pi_1(B)])  \ar[d]_{- \otimes (A, \alpha, U)} \ar[rr]^{- \otimes (C(F), \alpha, U)} &  & L^{2n}(\bb{D}_{2m}(\bb{Z})) \ar[dd]^{\sigma_{\bb{D}_{2m}(\bb{Z})}} \\
L^{2n}(\bb{Z}, (-)^{n+m}) \ar[d]_{\bar{S}^m} & \\
L^{2n+2m}(\bb{Z}) \ar[rr]^{\sigma} & & \bb{Z}
}
\end{displaymath}

\end{proposition}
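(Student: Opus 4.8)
To prove that the diagram commutes, the plan is to unwind the construction of the derived signature $\sigma_{\bb{D}_{2m}(\bb{Z})}$ in terms of the algebraic Serre-type spectral sequence and to observe that both composites end up computing the signature of its $E_2$-term. Both maps
$$\sigma_{\bb{D}_{2m}(\bb{Z})}\circ\bigl(-\otimes (C(F),\alpha,U)\bigr)~,~\sigma\circ\bar{S}^m\circ\bigl(-\otimes (A,\alpha,U)\bigr)~:~L^{2n}(\bb{Z}[\pi_1(B)])\longrightarrow \bb{Z}$$
are homomorphisms, being built from maps induced by additive functors together with the $L$-theory maps $\bar{S}^m$, $\sigma$ and $\sigma_{\bb{D}_{2m}(\bb{Z})}$, so it suffices to establish, for a symmetric Poincar\'e complex $(C(\wtB),\phi)$ representing a class in $L^{2n}(\bb{Z}[\pi_1(B)])$, the chain-level identity
$$\sigma_{\bb{D}_{2m}(\bb{Z})}\bigl((C(\wtB),\phi)\otimes (C(F),\alpha,U)\bigr)~=~\sigma\Bigl(\bar{S}^m\bigl((C(\wtB),\phi)\otimes (A,\alpha,U)\bigr)\Bigr)\in \bb{Z}.$$

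First I would recall, from the constructions behind Proposition \ref{L-derived} and Proposition \ref{sig-ass-complex} (worked out in detail in \cite{Korzen}), that the derived signature of a $2n$-dimensional symmetric complex in $\bb{D}_{2m}(\bb{Z})$ is read off from the second page of its associated spectral sequence together with the induced symmetric Poincar\'e structure in total degree $2n+2m$. In particular, if two $(\bb{Z},m)$-symmetric representations of $\bb{Z}[\pi_1(B)]$ have the same homology objects carrying the same induced structure, then tensoring $(C(\wtB),\phi)$ with either of them produces derived complexes with the same $E_2$-term and hence the same derived signature.

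Next I would use that the fibre $F^{2m}$ is a geometric Poincar\'e complex: exactly as in the proof of Theorem \ref{Morita-theorem-cx}, the $2m$-dimensional symmetric Poincar\'e complex $(C(F),\alpha)$ over $\bb{Z}$ is cobordant to the $(-1)^m$-symmetric form $(A,\alpha)=(H^m(F)/\mathrm{torsion},\alpha_0)$ concentrated in the middle chain degree $m$ --- Poincar\'e duality pairs $H_{m-i}(F)/\mathrm{torsion}$ with $H_{m+i}(F)/\mathrm{torsion}$ for $i\neq 0$ into a Witt-trivial hyperbolic summand, and the torsion contributes nothing to the signature. Since this reduction is compatible with the fibre transport $U$, the representation $(C(F),\alpha,U)$ has the same homology objects and induced structure as the $(\bb{Z},m)$-symmetric representation $A[m]$ given by $A$ placed in chain degree $m$ with the induced form and $\pi_1(B)$-action. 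Combined with the previous paragraph this gives
$$\sigma_{\bb{D}_{2m}(\bb{Z})}\bigl((C(\wtB),\phi)\otimes (C(F),\alpha,U)\bigr)~=~\sigma_{\bb{D}_{2m}(\bb{Z})}\bigl((C(\wtB),\phi)\otimes A[m]\bigr).$$

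Finally, when the coefficient complex is concentrated in the single degree $m$, the complex $(C(\wtB),\phi)\otimes A[m]$ in $\bb{D}_{2m}(\bb{Z})$ is precisely the $m$-fold skew-suspension $\bar{S}^m$ of the $2n$-dimensional $(-1)^{n+m}$-symmetric complex $(C(\wtB),\phi)\otimes (A,\alpha,U)$ over $\bb{Z}$; its spectral sequence is concentrated in one coefficient row, so it degenerates and $\sigma_{\bb{D}_{2m}(\bb{Z})}$ agrees by construction with the ordinary signature $\sigma$ of this skew-suspension. Chaining the three displayed identities yields the required equality and hence the commutativity of the diagram. The hard part will be the second and third steps: making precise in which sense $\sigma_{\bb{D}_{2m}(\bb{Z})}$ depends only on the $E_2$-term, and checking that the reduction from $(C(F),\alpha)$ to $(A,\alpha)$ takes place through a cobordism of $(\bb{Z},m)$-symmetric \emph{representations} rather than merely of symmetric complexes over $\bb{Z}$; the rest is bookkeeping of the Koszul signs --- the $(-1)^{n+m}$ twist and the degree shifts introduced by $\bar{S}^m$ --- against the conventions of \cite{SurTransfer} and \cite{Korzen}.
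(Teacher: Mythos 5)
Your proposal is correct in substance and its key idea is the same as the paper's: only the middle\--dimensional cohomology of the fibre can contribute, because Poincar\'e duality of $F$ pairs the off\--middle homology into hyperbolic, Witt\--trivial pieces, and once the coefficient object is the form $(A,\alpha)$ concentrated in degree $m$ the derived complex is the $m$\--fold skew\--suspension whose derived signature is the ordinary signature. Where you differ is in the technical vehicle for replacing the chain\--level representation $(C(F),\alpha,U)$ by its homology: you appeal to the $E_2$\--page\--only dependence of $\sigma_{\bb{D}_{2m}(\bb{Z})}$ and to a cobordism of $(\bb{Z},m)$\--symmetric representations, and you yourself flag that making this precise is the hard part. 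The paper takes a more elementary route that avoids exactly this issue: it tensors with $\bb{R}$, where the coefficient complex is chain equivalent to its homology, so the representation literally splits as a direct sum $\bigoplus_i(H_i(C(F)\otimes\bb{R}),\dots)$; additivity of the signature then gives a degree\--by\--degree sum in which every term with $i\neq m$ is hyperbolic and hence $0\in L^{2m+2n}(\bb{Z})$, leaving only $\sigma(-\otimes(A\otimes\bb{R},\alpha\otimes\bb{R},U\otimes\bb{R}))=\sigma(-\otimes(A,\alpha,U))$. Note also that your statement that $(C(F),\alpha,U)$ ``has the same homology objects'' as $A$ placed in degree $m$ is not literally true ($C(F)$ has homology in all degrees, plus torsion); the correct formulation is the one the paper uses, namely that the extra homology splits off as summands whose signature contribution vanishes after tensoring with $(C(\wtB),\phi)$. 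With that adjustment your argument goes through, but passing to $\bb{R}$\--coefficients is the cleaner way to buy both formality and the direct\--sum splitting in one stroke.
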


\begin{proof}
$$
\begin{array}{ccl}
\sigma_{\bb{D}_{2m}\bb{Z}}(-\otimes (C, \alpha, U)) & = & \sigma_{\bb{D}_{2m}\bb{Z}}(-\otimes (H_*(C\otimes \bb{R}), \alpha \otimes R, U\otimes R)) \\
 & = & \sigma_{\bb{D}_{2m}\bb{Z}}(-\otimes (H_0(C\otimes \bb{R}), \alpha \otimes R, U\otimes R)) + \\
& & \hspace{20pt} \sigma_{\bb{D}_{2m}\bb{Z}}(-\otimes (H_1(C\otimes \bb{R}), \alpha \otimes R, U\otimes R)) + \\
& & \hspace{20pt} \dots + \sigma_{\bb{D}_{2m}\bb{Z}}(-\otimes (H_m(C\otimes \bb{R}), \alpha \otimes R, U\otimes R)) + \\
& & \hspace{20pt} \dots +\sigma_{\bb{D}_{2m}\bb{Z}}(-\otimes (H_{2m}(C\otimes \bb{R}), \alpha \otimes R, U\otimes R))
\end{array}
$$
The signature only depends on the middle homology so the only non-zero term is $\sigma_{\bb{D}_{2m}\bb{Z}}(-\otimes (H_m(C\otimes \bb{R}), \alpha \otimes R, U\otimes R))$ the other terms are just hyperbolic modules which are $0 \in L^{2m+2n}(\bb{Z})$. hence,
$$
\begin{array}{ccl}
\sigma_{\bb{D}_{2m}\bb{Z}}(-\otimes (C, \alpha, U)) & = & \sigma_{\bb{D}_{2m}\bb{Z}}(-\otimes (H_m(C\otimes \bb{R}), \alpha \otimes \bb{R}, U\otimes \bb{R})) \\
& = & \sigma (- \otimes (A \otimes \bb{R}, \alpha \otimes \bb{R}, U \otimes \bb{R})).
\end{array}
$$
\end{proof}

\begin{remark}
We had already noted that
$$\sigma(E) = \sigma_{\bb{D}_{2m}\bb{Z}} (C(\wtB), \phi) \otimes (C(F), \alpha, U). $$
Combining this result with Proposition \ref{two-functors} we have that
$$\sigma(E) = \sigma((C(\wtB), \phi) \otimes (A, \alpha, U)).$$
\end{remark}

\part{The signature of a fibration modulo $8$}

\chapter*{Introduction to part III}
\addcontentsline{toc}{chapter}{Introduction to Part III}

In this part of the thesis we shall combine ideas from the two previous parts to prove results concerning the signature modulo $8$ of a fibre bundle.
As was mentioned before, in  \cite{HirzebruchSerreChern} Chern, Hirzebruch and Serre proved that multiplicativity of the signature of a fibre bundle holds when the action of the fundamental group $\pi_1(B)$ is trivial on the cohomology ring of the fibres $H^*(F; \bb{Q})$,
$$\sigma(E) - \sigma(B)\sigma(F) =0 \in \bb{Z}.$$
Later on Kodaira, Hirzebruch and Atiyah constructed nonmultiplicative fibre bundles, with the action of $\pi_1(B)$ action on $H^*(F;\bb{Q})$ necessarily nontrivial.

The signature of a fibre bundle is multiplicative modulo 4, whatever the action. This was proved in \cite{Meyerpaper} for surface bundles and in \cite{modfour} for high dimensions. In Theorem \ref{4Arf-general-fibration} in chapter \ref{mod-eight-proof} we identify the obstruction to multiplicativity of the signature modulo $8$ of a fibration with a $\zz_2$-valued Arf invariant. When shall prove that if the action of $\pi_1(B)$ is trivial on $H^m(F, \bb{Z})/torsion \otimes \bb{Z}_4$, this Arf invariant takes value $0$. And we shall prove the following theorem,
\vspace{1pt}

\textbf{Theorem \ref{mod8-theorem}.} \textit{Let $F^{2m} \to E^{4k} \to B^{2n}$ be an oriented Poincar\'e duality fibration. If the action of $\pi_1(B)$ on $H^m(F, \bb{Z})/torsion \otimes \bb{Z}_4$ is trivial, then
$$\sigma(E)  -\sigma(F) \sigma(B) =0\in \zz_8.$$}
\vspace{-20pt}


Chapter \ref{examples-chapter} is devoted to investigating examples with non-trivial signature modulo $8$, that is example of fibre bundles which have signature equal to $4$. The first of these examples, will be a review of the construction by Endo of a surface bundle with signature $4$.  A Python module is used  to find some further nontrivial examples of  non-multiplicativity modulo $8$. The Python module used for these computations is included in the Appendix.

\chapter{Multiplicativity of the signature modulo $8$} \label{mod-eight-proof}

\section{Notation}
In this chapter we shall be using the notation presented in the following table. This notation has been introduced  mainly in chapter \ref{model}.

\begin{center}
  \begin{tabular}{ l  l }
 \\ \textbf{In Algebra:}  \\   \\ \hline
      $(C, \phi)$ & A $2n$-dimensional  $(-1)^n$-symmetric Poincar\'e \\  & complex over $\zz[\pi]$. (Section \ref{symm}) \\ \hline

  $(A, \alpha)$  & A nonsingular $(-1)^m$-symmetric form over $\zz$  \\ & with $\alpha: A \to A^*$ (Definition \ref{representation A})  \\ \hline
  $(A, \alpha, U)$ & A $(\zz, m)$-symmetric representation with \\ & $U : \bb{Z}[\pi] \to H_0( \textnormal{Hom}_{\zz}(A, A))^{op}$ \\ & (Definition \ref{functor-homology}) \\ \hline
$\bar{S}^m(A, \alpha, U)$ & Skew-suspension of $(A, \alpha, U)$ \\ &  (Equation  \ref{Skew-suspension}) \\ \hline
$(C, \phi) \otimes \bar{S}^m(A, \alpha, U) = (D, \Gamma)$ {\color{White}aaaaaaaaa}& A $4k=2m+2n$-dimensional symmetric  \\  & complex over $\zz$ with the action of $\pi$\\ &  given by $U$. (Proposition \ref{two-functors}). \\ \hline
  $(C, \phi) \otimes \bar{S}^m(A, \alpha, \epsilon) = (D', \Gamma')$  & The $4k$-dimensional symmetric complex  \\  & over $\zz$ with the action of $\pi$ given by \\ & the trivial action $\epsilon$.  (Proposition \ref{two-functors})  \\ \hline
  \end{tabular}
\end{center}

\begin{center}
  \begin{tabular}{ l  l }
 $C^{(2n)-r+s} \otimes  \bar{S}^mA^* \xrightarrow{U(\phi_s)  (\alpha)}C_r \otimes  \bar{S}^mA $ {\color{White}{aaaaaaa}}& The symmetric structure of a twisted product.  \\ & (Theorem \ref{chain-iso} and \cite{SurTransfer}) \\ \hline
  $\mc{P}_2$  & Pontryagin square depending on  \\ & the symmetric  structure  $\Gamma$ (Chapter \ref{Pontryagin-squares chapter}) \\ \hline
 $\mc{P}'_2$ & Pontryagin square depending on  \\ & the symmetric  structure $\Gamma'$ (Chapter \ref{Pontryagin-squares chapter}) \\ \hline
  \end{tabular}
\end{center}

\begin{center}
  \begin{tabular}{ l  l }
 \\ \textbf{In Topology:}  \\   \\ \hline
      $(C(\tilde{B}), \phi)$ & The $2m$-dimensional symmetric  complex \\ &  over $\zz[\pi_1(B)]$ of the universal cover \\  &of the base. (Section \ref{symm}) \\ \hline
  $A = H^{m}(F, \bb{Z})$    & The middle dimensional cohomology \\& of the fibre $F^{2m}$ (Definition \ref{representation A}) \\ \hline
  $(A, \alpha)$  & The nonsingular $(-1)^m$-symmetric form   \\ & with $\alpha= (-1)^m \alpha^*: A \to A^*$ \\ & (Definition \ref{representation A})  \\ \hline
  $(A, \alpha, U)$ & The symmetric representation with \\ & $U : \bb{Z}[\pi_1(B)] \to H_0( \textnormal{Hom}_{\zz}(A, A))^{op}$ \\ & (Definition \ref{functor-homology}) \\ \hline
$\bar{S}^m(A, \alpha, U)$ & $2m$-dimensional $(-1)^m$-symmetric complex \\& given by skew-suspension of $(A, \alpha, U)$ \\ &  (Equation  \ref{Skew-suspension}) \\ \hline
$(C(\tilde{B}), \phi) \otimes_{\zz[\pi_1(B)]} \bar{S}^m(A, \alpha, U)$ {\color{White}aaaaaaaaa}& The chain complex model for the \\  & total space with the action of $\pi_1(B)$\\ &  given by $U$. (Proposition \ref{two-functors})\\ \hline
$\sigma \left((C(\tilde{B}), \phi) \otimes_{\zz[\pi_1(B)]} \bar{S}^m(A, \alpha, U) \right)$  {\color{White}aaaaaaaaa}& The signature of the total space \\ $= \sigma(E) \in \zz$& is the signature of the chain complex model \\  & for the total space with the action of $\pi_1(B)$\\ &  given by $U$. (\cite[Theorem 4.9]{Korzen})\\ \hline

  $(C(\tilde{B}), \phi) \otimes_{\zz[\pi_1(B)]} \bar{S}(A, \alpha, \epsilon)$  & The chain complex model for the  \\  & total space with trivial  action of $\pi_1(B)$, \\ & which we denote by $\epsilon$.  (Proposition \ref{two-functors}) \\ \hline

$\sigma \left((C(\tilde{B}), \phi) \otimes_{\zz[\pi_1(B)]} \bar{S}^m(A, \alpha, \epsilon) \right)$  {\color{White}aaaaaaaaa}& The signature of the trivial product $B \times F$ \\ $=\sigma(C(\tilde{B}), \phi) \otimes_{\zz[\pi_1(B)]} \sigma(\bar{S}^m(A, \alpha, \epsilon)) $& is the signature of the chain complex model  \\  $= \sigma(B) \sigma(F) \in \zz$& for the total space with the action of $\pi_1(B)$\\ &  given by $\epsilon$. (\cite[Theorem 4.9]{Korzen})\\ \hline
  \end{tabular}
\end{center}

\begin{center}
  \begin{tabular}{ l  l }
 $C^{(2n)-r+s}(\tilde{B}) \otimes  \bar{S}A^* \xrightarrow{U(\phi_s)  (\alpha)}C_r(\tilde{B})\otimes  \bar{S}A $ & symmetric structure of a twisted product.  \\ & (Theorem \ref{chain-iso} and \cite{SurTransfer}) \\ \hline
  $\mc{P}_2$   &Pontryagin square of twisted product \\ &depending on the symmetric  structure \\ &  of  $(C(\tilde{B}), \phi) \otimes \bar{S}(A, \alpha, U)$ (Chapter \ref{Pontryagin-squares chapter}) \\ \hline

 $\mc{P}'_2$ &Pontryagin square of untwisted product \\ &depending on the symmetric  structure \\ &  of  $(C(\tilde{B}), \phi) \otimes \bar{S}(A, \alpha, \epsilon)$ (Chapter \ref{Pontryagin-squares chapter}) \\ \hline

  \end{tabular}
\end{center}

\section{Obstructions to multiplicativity modulo 8 of a fibration}

By the results in \cite{Meyerpaper} and \cite{modfour} the signature of a fibration
$F^{2m} \to E^{4k} \to B^{2n}$ of  geometric Poincar\'e complexes is multiplicative mod $4$
$$\sigma(E) - \sigma(B) \sigma(F) =0 \in \zz_4 .$$
If we set $M = E \sqcup - (B \times F)$, where $-$ reverses the orientation, then $M$ has signature
$$\sigma(M) = \sigma(E) - \sigma(B)\sigma(F) \in \zz,$$
so that $\sigma(M) = 0 \in \zz_4$, and Theorem \ref{4Arf-topology} can be applied to $M$.

In Theorem \ref{4Arf-topology} we have proved that when the signature is divisible by $4$, it is detected modulo $8$ by the Arf invariant. This can be applied in the situation of the signature of a fibration.
\begin{theorem}\label{4Arf-general-fibration}
Let $F^{2m} \to E^{4k} \to B^{2n}$ be a   Poincar\'e duality fibration. With $(V, \lambda)= \left(H^{2k}(E, \zz_2), \lambda \right)$ and $(V', \lambda')= \left(H^{2k}(B \times F), \zz_2), \lambda' \right)$, the signatures mod $8$ of the fibre, base and total space are related by
$$
\sigma(E) - \sigma(B \times F) = 4 \textnormal{Arf}\left( L^{\perp}/L , [\lambda \oplus - \lambda'], \frac{\left[ \mc{P}_2 \oplus-\mc{P}'_2 \right]}{2} \right) \in 4\zz_2 \subset \zz_8,
$$
where $L^{\perp} =\left\{(x, x') \in V \oplus V' \vert \lambda(x,x) = \lambda'(x', x') \in \zz_2 \right\}$ and $L = \langle v_{2k} \rangle \subset L^{\perp}$, with $v_{2k}=(v_{2k}(E), v_{2k}(B \times F)) \in V \oplus V'$ the Wu class of $E \sqcup -(B \times F).$
\end{theorem}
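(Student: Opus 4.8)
The plan is to deduce the statement from Theorem \ref{4Arf-topology} applied to the $4k$-dimensional geometric Poincar\'e complex $M = E \sqcup -(B \times F)$. First I would check that $M$ really is an oriented $4k$-dimensional geometric Poincar\'e complex: $E$ is one because $F^{2m} \to E \to B^{2n}$ is a Poincar\'e duality fibration and $2m + 2n = 4k$; $B \times F$ is one as a cartesian product of geometric Poincar\'e complexes; and a disjoint union of oriented $4k$-dimensional geometric Poincar\'e complexes is again one, with the orientation of the second summand reversed. Since the signature is additive over disjoint unions, changes sign under reversal of orientation, and is multiplicative for cartesian products, we obtain
$$\sigma(M) = \sigma(E) - \sigma(B \times F) = \sigma(E) - \sigma(B)\sigma(F) \in \zz.$$

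Next I would invoke the mod $4$ multiplicativity of the signature of a fibration proved in \cite{Meyerpaper} and \cite{modfour}, namely $\sigma(E) \equiv \sigma(B)\sigma(F) \pmod{4}$. Combined with the previous display this gives $\sigma(M) \equiv 0 \pmod{4}$, which is exactly the hypothesis needed to apply Theorem \ref{4Arf-topology} to $M$.

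The remaining work is the identification of the data attached to $M$ with the data in the statement. As $M$ is a disjoint union, $H^{2k}(M;\zz_2) = H^{2k}(E;\zz_2) \oplus H^{2k}(B \times F;\zz_2) = V \oplus V'$, and cup products, Steenrod squares, the Pontryagin square and the Wu class are computed one summand at a time; reversing the orientation of the second summand multiplies its fundamental class, and hence its intersection form and its Pontryagin square, by $-1$, while leaving its Wu class unchanged (Wu classes are orientation independent, being characterised via Steenrod squares and the pairing with $[M]$). Thus the intersection form of $M$ is $\lambda \oplus -\lambda'$, the Pontryagin square of $M$ is $\mc{P}_2 \oplus -\mc{P}'_2$, and $v_{2k}(M) = (v_{2k}(E), v_{2k}(B \times F))$. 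Feeding these into Theorem \ref{4Arf-topology}, the Wu sublagrangian is $L = \langle v_{2k} \rangle \subset V \oplus V'$, its annihilator is
$$L^{\perp} = \{(x,x') \in V \oplus V' \vert (\lambda \oplus -\lambda')((x,x'),v_{2k}) = 0\} = \{(x,x') \in V \oplus V' \vert \lambda(x,x) = \lambda'(x',x') \in \zz_2\},$$
using $\lambda(x,x) = \lambda(x,v_{2k}(E))$, the analogous identity for $B \times F$, and $-1 = 1 \in \zz_2$; the maximal isotropic subquotient $(L^{\perp}/L, [\lambda \oplus -\lambda'])$ carries the canonical $\zz_2$-enhancement $h = [\mc{P}_2 \oplus -\mc{P}'_2]/2$, and Theorem \ref{4Arf-topology} yields
$$\sigma(E) - \sigma(B \times F) = \sigma(M) = 4\,\textnormal{Arf}\!\left(L^{\perp}/L,\ [\lambda \oplus -\lambda'],\ \frac{[\mc{P}_2 \oplus -\mc{P}'_2]}{2}\right) \in 4\zz_2 \subset \zz_8,$$
which is the assertion.

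The only genuinely non-formal point, and it is a mild one, is confirming that the Pontryagin square behaves as claimed under disjoint union and orientation reversal at the cochain level, so that $[q]/2$ is indeed the canonical $\zz_2$-enhancement of the maximal isotropic subquotient of $(H^{2k}(M;\zz_2), \lambda_M, \mc{P}_2^M)$; this follows from the cup-$i$ product formulas recalled in Chapter \ref{Pontryagin-squares chapter} together with Proposition \ref{on a sum}. Everything else is a direct application of Theorem \ref{4Arf-topology} (whose own proof rests on the algebraic Proposition \ref{BK-and-4Arf}) and of the mod $4$ multiplicativity results, so I do not expect any serious obstacle.
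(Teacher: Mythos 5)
Your proposal is correct and follows essentially the same route as the paper: mod $4$ multiplicativity of the signature (Meyer, Hambleton--Korzeniewski--Ranicki) combined with the Brown--Kervaire/Arf result of Proposition \ref{BK-and-4Arf}, applied to $E \sqcup -(B\times F)$ exactly as the paper's own preamble indicates. The only cosmetic difference is that the paper's written proof assembles $\sigma(E)-\sigma(B\times F)$ via Morita's theorem and the additivity of the Brown--Kervaire invariant before invoking Theorem \ref{4Arf-Algebra}, whereas you verify directly the behaviour of the intersection form, Pontryagin square and Wu class under disjoint union and orientation reversal and then apply Theorem \ref{4Arf-topology}; these amount to the same bookkeeping.
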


\begin{proof}
We first rewrite the signatures $\sigma(E)$ and $\sigma(F \times B)$ in terms of Brown-Kervaire invariants,  and use the additivity properties of the Brown-Kervaire invariant described in \cite[Proposition 2.1 (i)]{Morita} as follows
\begin{align*}
\sigma(E) - \sigma(B \times F) &=\textnormal{BK}(H^{2k}(E;\zz_2), \lambda, \mc{P}_2) - \textnormal{BK}(H^{2k}(B \times F;\zz_2), \lambda', \mc{P}'_2) \in \zz_8 \\
& = \textnormal{BK}\left(V \oplus V', \lambda \oplus -\lambda', \mc{P}_2 \oplus - \mc{P}'_2 \right) \in \zz_8,
\end{align*}

We know by \cite{Meyerpaper} and by \cite[Theorem A]{modfour} that
$$ \sigma(E) - \sigma(B \times F) = 0 \in \zz_4.$$
 Applying Theorem \ref{4Arf-Algebra}, $\textnormal{BK}\left(V \oplus V', \lambda \oplus -\lambda', \mc{P}_2 \oplus - \mc{P}'_2 \right) \in 4\zz_2 \subset \zz_8$ can be written as an Arf invariant,
$$4 \textnormal{Arf}\left( L^{\perp}/L , [\lambda \oplus - \lambda'],  \frac{\left[\mc{P}_2 \oplus-\mc{P}'_2\right] }{2} \right) \in \zz_8,$$
with $L^{\perp} =\left\{(x,x') \in V\oplus V' \vert \lambda (x, x)= \lambda'(x',x')  \in \zz_2 \right\}$ and the Wu sublagrangian $L = \langle (v_{2k}(E), v_{2k}(B \times F)) \rangle \subset L^{\perp}$, with $(v_{2k}(E), v_{2k}(B \times F))$ the  Wu class given by $(v_{2k}(E), v_{2k}(B \times F)) \in H^{2k}(E;\zz_2) \oplus H^{2k}(B \times F;\zz_2) = V \oplus V'.$
\end{proof}

In the following theorem we state the chain complex version of Theorem \ref{4Arf-general-fibration}.

\begin{theorem}\label{algebraic-version-4Arf-obstruction}
Let $(C, \phi)$ be the $2n$-dimensional $(-1)^n$-symmetric Poincar\'e complex, and let $(A, \alpha, U)$ be a $(\bb{Z}, m)$-symmetric representation. We shall write as before $(D, \Gamma) = (C, \phi) \otimes \bar{S}^m(A, \alpha, U)$ and $(D', \Gamma')=(C, \phi) \otimes \bar{S}^m(A, \alpha, \epsilon)$. Here $(D, \Gamma)$ and $(D', \Gamma')$ are $(2n+2m)$-dimensional symmetric complexes and $(2n+2m)$ is divisible by $4$. Then,
$$
\sigma(D, \Gamma) - \sigma(D', \Gamma') = 4 \textnormal{Arf}\left( L^{\perp}/L , [\Gamma_0 \oplus -\Gamma'_0], \frac{ \left[\mc{P}_2 \oplus-\mc{P}'_2\right] }{2} \right) \in 4\zz_2 \subset \zz_8,
$$
where $L^{\perp} =\left\{(x,x') \in H^{2k}(D;\zz_2) \oplus H^{2k}(D';\zz_2) \vert \Gamma_0(x, x) =\Gamma'_0(x', x') \in \zz_2 \right\}$ and the Wu sublagrangian $L = \langle (v_{2k}, v'_{2k}) \rangle \subset L^{\perp}$, with $(v_{2k}, v'_{2k})$ the algebraic Wu class $(v_{2k}, v'_{2k}) \in H^{2k}(D;\zz_2) \oplus H^{2k}(D';\zz_2).$
\end{theorem}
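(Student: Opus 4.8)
The plan is to reduce this to the algebraic result Theorem \ref{4Arf-Algebra}, exactly as Theorem \ref{4Arf-general-fibration} was reduced to it in the topological setting. The two symmetric complexes $(D,\Gamma)$ and $(D',\Gamma')$ are both $(2n+2m)$-dimensional symmetric Poincaré complexes over $\zz$ (Poincaré because $(C,\phi)$ is Poincaré and the twisted and untwisted tensor products with a nonsingular symmetric form preserve the Poincaré property, cf. \cite{SurTransfer}), and $2n+2m=4k$ by hypothesis. So first I would form the difference complex $(D,\Gamma)\oplus(D',-\Gamma')$, which is a $4k$-dimensional symmetric Poincaré complex over $\zz$. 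By Morita's theorem in chain-complex form (Theorem \ref{Morita-theorem-cx}) together with the additivity of the Brown-Kervaire invariant, its signature is
$$\sigma(D,\Gamma)-\sigma(D',\Gamma')=\textnormal{BK}\!\left(H^{2k}(D;\zz_2)\oplus H^{2k}(D';\zz_2),\ \Gamma_0\oplus-\Gamma'_0,\ \mc{P}_2\oplus-\mc{P}'_2\right)\in\zz_8.$$

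Next I would invoke multiplicativity of the signature modulo $4$. This is where the structure of the two representations matters: $(D,\Gamma)=(C,\phi)\otimes\bar S^m(A,\alpha,U)$ computes the ``twisted'' signature and $(D',\Gamma')=(C,\phi)\otimes\bar S^m(A,\alpha,\epsilon)$ computes the product signature $\sigma(C,\phi)\cdot\sigma(A,\alpha)$ (after skew-suspension, via Proposition \ref{two-functors}). By the high-dimensional multiplicativity mod $4$ of \cite{modfour} (applied in the chain-complex formulation, which is available here since these complexes are exactly the algebraic models of the total spaces in Part II), one has $\sigma(D,\Gamma)-\sigma(D',\Gamma')\equiv0\pmod 4$. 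Hence $\textnormal{BK}$ of the difference form lands in $4\zz_2\subset\zz_8$, and the hypotheses of Theorem \ref{4Arf-Algebra} (equivalently Proposition \ref{BK-and-4Arf}) are met.

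Finally I would apply Theorem \ref{4Arf-Algebra} to the $\zz_4$-enhanced nonsingular symmetric form $(V'',\lambda'',q'')=(H^{2k}(D;\zz_2)\oplus H^{2k}(D';\zz_2),\ \Gamma_0\oplus-\Gamma'_0,\ \mc{P}_2\oplus-\mc{P}'_2)$ over $\zz_2$. Its Wu class is $(v_{2k},v'_{2k})$, the direct sum of the algebraic Wu classes of $(D,\Gamma)$ and $(D',\Gamma')$; the Wu sublagrangian is $L=\langle(v_{2k},v'_{2k})\rangle$, and $L^\perp=\{(x,x')\mid \Gamma_0(x,x)=\Gamma'_0(x',x')\in\zz_2\}$ is exactly the set where the enhancement takes even values. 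Theorem \ref{4Arf-Algebra} then gives
$$\sigma(D,\Gamma)-\sigma(D',\Gamma')=4\,\textnormal{Arf}\!\left(L^\perp/L,\ [\Gamma_0\oplus-\Gamma'_0],\ \tfrac{[\mc{P}_2\oplus-\mc{P}'_2]}{2}\right)\in4\zz_2\subset\zz_8,$$
which is the claimed identity.

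The main obstacle I anticipate is the bookkeeping needed to identify the Pontryagin square $\mc{P}_2$ (resp. $\mc{P}'_2$) of the twisted (resp. untwisted) tensor product explicitly in terms of $\Gamma$ (resp. $\Gamma'$) — i.e. verifying that the algebraic Pontryagin square of Definition \ref{algebraic-Pont} applied to $(D,\Gamma)$ really is the map appearing in the statement, and that the Wu class of the tensor product decomposes as the product of Wu classes (the computation sketched in the last example of Section \ref{BK-Arf-in-topology} for forms, now needed at chain-complex level). Everything else is a direct concatenation of Theorem \ref{Morita-theorem-cx}, multiplicativity mod $4$, and Theorem \ref{4Arf-Algebra}.
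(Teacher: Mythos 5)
Your proposal is correct and is essentially the paper's own argument: the thesis proves the topological version (Theorem \ref{4Arf-general-fibration}) by exactly this chain — Morita's theorem \ref{Morita-theorem-cx} plus additivity of the Brown--Kervaire invariant, multiplicativity mod $4$ from \cite{Meyerpaper} and \cite{modfour}, and then Theorem \ref{4Arf-Algebra}/Proposition \ref{BK-and-4Arf} applied to the difference form — and states the algebraic version as its direct chain-complex analogue. The bookkeeping you flag (expressing $\mc{P}_2$ for the twisted product in terms of the symmetric structure) is handled in the paper by the algebraic Pontryagin square of Chapter \ref{Pontryagin-squares chapter} and Lemma \ref{Pontryagin-total-space}, so no gap remains.
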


Note that when both $m$ and $n$ are odd in the fibration $F^{2m} \to E^{4k} \to B^{2n}$ then by \cite{Meyerpaper} and \cite{modfour} we have that
$$\sigma(E) = 0 \in \zz_4.$$
So  the general formula for the signature mod $8$ of a fibration given geometrically in
\ref{4Arf-general-fibration} and algebraically in \ref{algebraic-version-4Arf-obstruction} simplifies in the case of a fibration $F^{4i+2} \to E^{4k} \to B^{4j+2}$ to the expression in Proposition \ref{4Arf-odd-m-n-geometric}  geometrically or \ref{algebraic-version-4Arf-obstruction-4i+2} algebraically.

\begin{proposition}\label{4Arf-odd-m-n-geometric}
Let $F^{4i+2} \to E^{4k} \to B^{4j+2}$ be an oriented Poincar\'e duality fibration, then
$$\sigma(E) =\textnormal{BK}(H^{2k}(E;\zz_2), \lambda, \mc{P}_2) = 4 \textnormal{Arf}\left( L^{\perp}/L , [\lambda],  \frac{\left[\mc{P}_2\right]}{2}  \right) \in \zz_8,$$
where $L^{\perp} =\left\{x \in H^{2k}(E;\zz_2) \vert \lambda (x,x)= 0 \in \zz_2 \right\}$ and $L = \langle v_{2k} \rangle \subset L^{\perp}$, with $v_{2k}(E)$ the  Wu class $v_{2k}(E) \in H^{2k}(E;\zz_2).$
\end{proposition}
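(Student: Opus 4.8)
The plan is to deduce Proposition \ref{4Arf-odd-m-n-geometric} from the general statement of Theorem \ref{4Arf-general-fibration} by feeding in the dimensional vanishing that occurs when both the fibre and base have dimension $\equiv 2 \pmod 4$. First I would observe that for a fibration $F^{4i+2} \to E^{4k} \to B^{4j+2}$ the base $B$ and fibre $F$ have dimensions not divisible by $4$, so by the definition of the signature $\sigma(B) = 0 \in \zz$ and $\sigma(F) = 0 \in \zz$, hence $\sigma(B \times F) = \sigma(B)\sigma(F) = 0 \in \zz$. Moreover $B \times F$ is a $4k$-dimensional Poincar\'e complex whose middle cohomology $H^{2k}(B \times F; \zz_2)$, while possibly nonzero, carries a hyperbolic intersection form in the relevant Witt-theoretic sense: indeed, by \cite{Meyerpaper} and \cite[Theorem A]{modfour} we have $\sigma(E) - \sigma(B \times F) = \sigma(E) = 0 \in \zz_4$, so Theorem \ref{4Arf-general-fibration} applies to $M = E \sqcup -(B \times F)$.

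Next I would use the fact that the $\zz_4$-enhanced symmetric form $(V', \lambda', \mc{P}'_2) = (H^{2k}(B \times F; \zz_2), \lambda', \mc{P}'_2)$ associated to the product $B \times F$ is metabolic. The cleanest way to see this is via the K\"unneth description: since the middle dimension $2k = (4i+2)/2 + (4j+2)/2$ splits as a sum of \emph{odd} numbers, the intersection form on $B \times F$ in the middle dimension decomposes as a sum of forms of the shape $H^{a}(B)\otimes H^b(F) \oplus H^{b}(B) \otimes H^{a}(F)$ which are hyperbolic, together with a contribution from the truly middle-middle term; but because $\dim B$ and $\dim F$ are both $\equiv 2 \pmod 4$, the Wu classes $v_{2i+1}(B) \in H^{2i+1}(B;\zz_2)$ and $v_{2j+1}(F) \in H^{2j+1}(F;\zz_2)$ vanish (a space of dimension $\equiv 2 \bmod 4$ has trivial middle Wu class, since $Sq^{2i+1}$ is zero on $H^{2i+1}$ of a $(4i+2)$-manifold by the instability relation $Sq^r x = 0$ for $r > \deg x$ applied after Poincar\'e duality — more precisely $v_{2i+1} = 0$). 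Hence the Wu class $v_{2k}(B \times F) = 0 \in V'$ and, more to the point, $(V', \lambda', \mc{P}'_2)$ represents $0$ in $L\langle v_1\rangle^0(\zz_2)$, i.e.\ $\textnormal{BK}(V', \lambda', \mc{P}'_2) = 0 \in \zz_8$.

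Then I would run the computation inside $L\langle v_1\rangle^0(\zz_2)$: by Theorem \ref{Morita} (in the form \ref{Morita-theorem-cx}), $\sigma(E) = \textnormal{BK}(H^{2k}(E;\zz_2), \lambda, \mc{P}_2)$ and $\sigma(B\times F) = \textnormal{BK}(V', \lambda', \mc{P}'_2) = 0$, so that the Witt class $[V \oplus V', \lambda \oplus -\lambda', \mc{P}_2 \oplus -\mc{P}'_2] = [V, \lambda, \mc{P}_2] - 0 = [V, \lambda, \mc{P}_2]$. Since $\sigma(E) \equiv 0 \bmod 4$, Proposition \ref{BK-and-4Arf} (equivalently Theorem \ref{4Arf-Algebra}) applies directly to $(V, \lambda, \mc{P}_2)$ and gives
$$\sigma(E) = \textnormal{BK}(H^{2k}(E;\zz_2), \lambda, \mc{P}_2) = 4\textnormal{Arf}\left(L^\perp/L, [\lambda], \frac{[\mc{P}_2]}{2}\right) \in 4\zz_2 \subset \zz_8$$
with $L^\perp = \{x \in V \mid \lambda(x,x) = 0 \in \zz_2\}$ and $L = \langle v_{2k}(E)\rangle$, where I use that $q([v_{2k}(E)]) = [\sigma(E)] = 0 \in \zz_4$ so that $v_{2k}(E) \in L^\perp$ and the Wu sublagrangian is defined. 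This is exactly the claimed formula, and it is also consistent with specialising the $V' = 0$ case of Theorem \ref{4Arf-general-fibration}.

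The main obstacle I anticipate is justifying rigorously that the product term $(V', \lambda', \mc{P}'_2)$ is Witt-trivial — i.e.\ the vanishing of the middle Wu class of a Poincar\'e complex of dimension $\equiv 2 \bmod 4$ and the resulting hyperbolicity of the K\"unneth-middle form. One must be careful that $H^{2k}(B\times F;\zz_2)$ can have $2$-torsion-related subtleties coming from the difference between $H^{2k}(B\times F;\zz_2)$ and $H^{2k}(B\times F)\otimes \zz_2$; the clean route is to phrase everything at the level of the symmetric Poincar\'e complex $(C(B\times F), \phi) = (C(B),\phi_B)\otimes (C(F), \phi_F)$ and observe that a tensor product of a symmetric complex with a $(-1)$-symmetric complex (which is what $(C(F),\phi_F)$ is, up to skew-suspension, since $\dim F \equiv 2 \bmod 4$) has zero signature and vanishing Wu class, hence represents $0$ in $\widehat{L}^{4k}(\zz)$, so its Brown–Kervaire invariant is $0$. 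Once that is in place the rest is a direct invocation of the already-established Theorems \ref{Morita-theorem-cx}, \ref{4Arf-Algebra} and Proposition \ref{BK-and-4Arf}.
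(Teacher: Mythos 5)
Your core argument is exactly the paper's proof: for dimension reasons $\sigma(B)=\sigma(F)=0$, so by \cite{Meyerpaper} and \cite[Theorem A]{modfour} $\sigma(E)\equiv 0 \pmod 4$; Morita's theorem (in the form \ref{Morita-theorem-cx}) gives $\sigma(E)=\textnormal{BK}(H^{2k}(E;\zz_2),\lambda,\mc{P}_2)\in\zz_8$; and since this Brown--Kervaire invariant vanishes in $\zz_4$, Theorem \ref{4Arf-Algebra} (i.e.\ Proposition \ref{BK-and-4Arf}) applied directly to $(H^{2k}(E;\zz_2),\lambda,\mc{P}_2)$ yields the stated $4\,\textnormal{Arf}$ formula.

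The long middle section of your proposal, which you flag as the main obstacle, is both unnecessary and partly incorrect. It is unnecessary because the proposition only concerns the enhanced form on $E$ itself: once $\sigma(E)\equiv 0\pmod 4$ you never need to know that $(V',\lambda',\mc{P}'_2)$ is metabolic, and even if you did want $\textnormal{BK}(V',\lambda',\mc{P}'_2)=0\in\zz_8$ it follows in one line from Morita's theorem applied to the Poincar\'e complex $B\times F$, whose signature is $0\in\zz$ for dimension reasons --- no K\"unneth or Wu-class analysis is required. It is partly incorrect because the claim that a Poincar\'e complex of dimension $4i+2$ has trivial middle Wu class $v_{2i+1}$ is false in general: the instability relation $Sq^r x=0$ only applies for $r>\deg x$, whereas here $r=\deg x=2i+1$, and indeed for oriented $10$-manifolds one has $v_5=w_5$, which can be nonzero (e.g.\ on a product of two copies of the $5$-dimensional Wu manifold). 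So that route would not be rigorous; fortunately the rest of your argument does not depend on it.
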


\begin{proof}
Here for dimension reasons $\sigma(F)$ and $\sigma(B)$ are both $0.$ Thus, by \cite[Theorem A]{modfour}, we know that the signature of $E$ is divisible by $4$.
So that we can write
\begin{align*}
\sigma(E) - \sigma(B \times F) &= \sigma(E)        =\textnormal{BK}(H^{2k}(E;\zz_2), \lambda, \mc{P}_2)  \in \zz_8 \\
\end{align*}
and since $\sigma(E)= \textnormal{BK}(H^{2k}(E;\zz_2), \lambda, \mc{P}_2) =0 \in \zz_4$, then the result follows as an application of Theorem \ref{4Arf-Algebra}.
\end{proof}

Algebraically Proposition \ref{4Arf-odd-m-n-geometric} is restated as follows,

\begin{proposition}\label{algebraic-version-4Arf-obstruction-4i+2}
Let $(C, \phi)$ be a $4i+2$-dimensional $(-1)$-symmetric Poincar\'e complex, and let $(A, \alpha, U)$ be a $(\bb{Z}, 2j+1)$-symmetric representation. We shall write $(D, \Gamma) = (C, \phi) \otimes \bar{S}^{2j+1}(A, \alpha, U)$ and $(D', \Gamma')=(C, \phi) \otimes \bar{S}^{2j+1}(A, \alpha, \epsilon)$,
then $\sigma(D', \Gamma')=0 \in \zz$, $\sigma(D, \Gamma)=0 \in \zz_4$ and
$$
\sigma(D, \Gamma) = 4 \textnormal{Arf}\left( L^{\perp}/L , [\Gamma_0],  \frac{\left[\mc{P}_2\right]}{2}  \right) \in 4\zz_2 \subset \zz_8,
$$
where $L^{\perp} =\left\{x \in H^{2k}(D;\zz_2) \vert \Gamma_0(x, x)= 0  \in \zz_2 \right\}$ and $L = \langle v_{2k} \rangle \subset L^{\perp}$, with $v_{2k}$ the algebraic Wu class $v_{2k} \in H^{2k}(D;\zz_2).$
\end{proposition}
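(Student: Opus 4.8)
The plan is to deduce Proposition \ref{algebraic-version-4Arf-obstruction-4i+2} from Proposition \ref{algebraic-version-4Arf-obstruction} (the general algebraic statement) by specialising the dimensions. First I would observe that, with $m=2j+1$ and $n=2i+1$, the $(-1)^m$-symmetric representation $(A,\alpha,U)$ is in fact a $(-1)$-symmetric, i.e.\ skew-symmetric, form over $\zz$. A nonsingular skew-symmetric form over $\zz$ always admits a symplectic basis, hence is hyperbolic; consequently its signature is $0$, and its Wu class $v=0$ (as recalled in the text preceding Example before Theorem \ref{4Arf-topology}, ``if $\epsilon=-1$ then $\sigma(E,\phi)=0$ and $v=0$''). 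The same applies with the trivial action $\epsilon$ in place of $U$, and to $(C,\phi)$, which is a $(4i+2)$-dimensional $(-1)$-symmetric Poincar\'e complex and hence has $\sigma(C,\phi)=0$.

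Next I would compute the signatures of the two twisted products. For $(D',\Gamma')=(C,\phi)\otimes\bar S^{2j+1}(A,\alpha,\epsilon)$, the trivial-action case, Proposition \ref{two-functors} and the multiplicativity of the signature over $\zz$ give
$$\sigma(D',\Gamma')=\sigma(C,\phi)\,\sigma(A,\alpha)=0\in\zz,$$
since both factors vanish; this is the geometric statement $\sigma(B\times F)=\sigma(B)\sigma(F)=0$. For $(D,\Gamma)=(C,\phi)\otimes\bar S^{2j+1}(A,\alpha,U)$, the twisted case, I would invoke \cite{modfour} (Theorem A), applied in the algebraic setting via the chain model of Part II, to conclude $\sigma(D,\Gamma)\equiv\sigma(D',\Gamma')\equiv 0\pmod 4$, so $\sigma(D,\Gamma)=0\in\zz_4$. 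Thus the hypothesis $\sigma(M)\equiv 0\pmod 4$ of Theorem \ref{4Arf-Algebra} (equivalently, of Proposition \ref{BK-and-4Arf}) is automatically satisfied for the symmetric complex $(D,\Gamma)$ by itself, with no need to subtract $(D',\Gamma')$.

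Finally I would apply Proposition \ref{algebraic-version-4Arf-obstruction}. Since $\sigma(D',\Gamma')=0\in\zz$, the difference $\sigma(D,\Gamma)-\sigma(D',\Gamma')$ is just $\sigma(D,\Gamma)$. Moreover, because the representation $(A,\alpha,\epsilon)$ is hyperbolic with vanishing Wu class, the symmetric form $(H^{2k}(D';\zz_2),\Gamma'_0,\mc P'_2)$ is metabolic with a sublagrangian containing its Wu class $v'_{2k}$, which forces the $V'$-summand to contribute nothing to the maximal isotropic subquotient up to isomorphism. Concretely, I would show the orthogonal-sum form $(H^{2k}(D;\zz_2)\oplus H^{2k}(D';\zz_2),\Gamma_0\oplus-\Gamma'_0,\mc P_2\oplus-\mc P'_2)$ is Witt-equivalent to $(H^{2k}(D;\zz_2),\Gamma_0,\mc P_2)$ with the Wu class $(v_{2k},v'_{2k})$ mapping to $v_{2k}$ under the quotient by the split part, so that the maximal isotropic subquotient $L^\perp/L$ of the sum agrees with $\langle v_{2k}\rangle^\perp/\langle v_{2k}\rangle$ inside $H^{2k}(D;\zz_2)$, and the Arf invariant of the quotient form on the sum equals $\textnormal{Arf}(L^\perp/L,[\Gamma_0],[\mc P_2]/2)$ with $L^\perp=\{x\in H^{2k}(D;\zz_2)\mid \Gamma_0(x,x)=0\in\zz_2\}$, $L=\langle v_{2k}\rangle$. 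Combining these identifications with Proposition \ref{algebraic-version-4Arf-obstruction} yields the stated formula.

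\textbf{Main obstacle.} The routine parts are the signature vanishing statements. The one step deserving care is the reduction of the maximal isotropic subquotient of the orthogonal sum $(V\oplus V',\Gamma_0\oplus-\Gamma'_0)$ to that of $V$ alone: one must verify that the splitting-off of the metabolic summand coming from $(D',\Gamma')$ is compatible with the Wu sublagrangian construction, i.e.\ that choosing $v'_{2k}$ inside a lagrangian of $(V',\lambda')$ realises $(V',\lambda',\mc P'_2)$ as a direct sum of copies of $q^{0,0}$ (and $P_1\oplus P_{-1}$ pairs), so that it drops out of both the Brown--Kervaire and the Arf computations. This is exactly the content already established in the proof of Proposition \ref{BK-and-4Arf} and the examples following Theorem \ref{4Arf-topology}, so the obstacle is bookkeeping rather than a genuinely new argument.
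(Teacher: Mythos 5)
Your proposal is correct, but it reaches the formula by a more roundabout route than the paper. The paper treats $(D,\Gamma)$ on its own: for dimension reasons $\sigma(D',\Gamma')=0$, by (the algebraic form of) \cite[Theorem A]{modfour} $\sigma(D,\Gamma)\equiv 0 \pmod 4$, and then Theorem \ref{4Arf-Algebra} (Morita's theorem \ref{Morita-theorem-cx} combined with Proposition \ref{BK-and-4Arf}) is applied directly to the single complex $(D,\Gamma)$, so the Wu sublagrangian construction takes place inside $H^{2k}(D;\zz_2)$ from the start and no orthogonal sum ever appears; this is exactly the observation you make yourself when you note that the mod $4$ hypothesis holds for $(D,\Gamma)$ ``with no need to subtract $(D',\Gamma')$'' -- had you followed that remark you would have recovered the paper's proof in one line. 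Instead you specialise the two-complex statement, Proposition \ref{algebraic-version-4Arf-obstruction}, and must then split off the $V'$-summand. That reduction does work, but state it precisely: since $\alpha$ is skew, $\Gamma'_0(x',x')=0$ for all $x'\in V'$ and the Wu class of $(D',\Gamma')$ vanishes, so for the sum one has $L^{\perp}=\{x\in V\mid \Gamma_0(x,x)=0\}\oplus V'$ and $L^{\perp}/L=\bigl(\langle v_{2k}\rangle^{\perp}/\langle v_{2k}\rangle\bigr)\oplus \bigl(V',[\Gamma'_0],[\mc{P}'_2]/2\bigr)$; the subquotient does not literally ``agree with'' the one inside $H^{2k}(D;\zz_2)$, it differs by this orthogonal summand. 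What makes your argument close is additivity of the Arf invariant together with $4\,\textnormal{Arf}\bigl(V',[\Gamma'_0],[\mc{P}'_2]/2\bigr)=\textnormal{BK}(V',\Gamma'_0,\mc{P}'_2)=\sigma(D',\Gamma')=0\in\zz_8$, so the extra summand has Arf invariant zero; this is cleaner than the decomposition into $q^{0,0}$'s and $P_1\oplus P_{-1}$ pairs you sketch (there are in fact no $P_{\pm 1}$ summands, as $\Gamma'_0$ is even). In short, both routes are valid and rest on the same Proposition \ref{BK-and-4Arf}; the paper's direct application to $(D,\Gamma)$ avoids the orthogonal-sum bookkeeping altogether, while your derivation has the mild virtue of exhibiting the proposition as a formal specialisation of Proposition \ref{algebraic-version-4Arf-obstruction}.
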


\subsection{Relation to other expressions in the literature for the signature of a fibre bundle}
\subsubsection{The Arf invariant and the second Stiefel-Whitney class}
In \cite{Meyerpaper} Meyer studied the signature of a surface bundle $F^2 \to E^4 \to B^2$, where both $F$ and $B$ are orientable surfaces of genus $h$ and $g$ respectively. Meyer expressed the signature of the total space in terms of the first Chern class of the complex vector bundle $\beta: B \to BU(h)$ associated to the local coefficient system $\widetilde{B} \times_{\pi_1(B)} \bb{R}^{2h},$
$$\sigma(E) = 4 c_1(\beta) \in \zz.$$
So that,
\begin{equation}\label{Chern-Stiefel}
\frac{\sigma(E)}{4} = c_1(\beta) \in \zz,
\end{equation}

From \cite[problem 14-B]{Mil-Sta} the Chern classes of an $h$-dimensional complex vector bundle $\beta: B \to BU(h)$ are integral lifts of the Stiefel-Whitney classes of the underlying oriented $2h$-dimensional real vector bundle $\beta^{\bb{R}}: B \to BSO(2h)$. That is, the mod $2$ reduction of the first Chern class is the second Stiefel-Whitney class.
Hence the mod $2$ reduction of Equation \eqref{Chern-Stiefel} expresses the second Stiefel-Whitney class of the real vector bundle $\beta^{\bb{R}}: B \to BSO(2h)$ associated to the local coefficient system $\widetilde{B} \times_{\pi_1(B)} \bb{R}^{2h}$ as
\begin{equation}\label{sign-stiefel-surf}\frac{\sigma(E)}{4} = w_2(\beta^{\bb{R}}) \in \zz_2.\end{equation}

In Proposition \ref{4Arf-odd-m-n-geometric} we have shown that for a fibration $F^{4i+2} \to E^{4k} \to B^{4j+2}$, the signature mod $8$  can be expressed in terms of the Arf invariant.
In particular for a surface bundle,
\begin{equation}\label{sign-Arf-surf}\sigma(E)  = 4 \textnormal{Arf}\left( L^{\perp}/L , [\lambda],  \frac{\left[\mc{P}_2\right]}{2}  \right) \in \zz_8.\end{equation}

Combining both expressions in \eqref{sign-stiefel-surf} and \eqref{sign-Arf-surf}

$$\textnormal{Arf}\left( L^{\perp}/L , [\lambda],  \frac{\left[\mc{P}_2\right]}{2}  \right) = w_2(\beta^{\bb{R}}) \in \zz_2,$$
with $L^{\perp} =\left\{x \in H^{2k}(E;\zz_2) \vert \lambda(x,x)= 0 \in \zz_2 \right\}$ and $L = \langle v_{2k} \rangle \subset L^{\perp}.$

\subsubsection{The Arf invariant and the Todd genus}
In \cite{Atiyah-cov} Atiyah considers a $4$-manifold $E$ which arises as a complex algebraic surface with a holomorphic projection $\pi:E \to B$ for some complex structure on $B$. The fibres $F_b = \pi^{-1}(b)$ are algebraic curves but the complex structure varies with $b$.
Atiyah establishes that the Todd genera of $E$, $B$ and $F$ are related to the signature of $E$ by the equation,
$$ \frac{\sigma(E)}{4} = T(E)-T(B)T(F) \in \zz.$$
Following this equation the signature modulo $8$ of the total space is detected by the reduction mod $2$ of difference $T(E)-T(B)T(F)$, since $\sigma(E) = 4(T(E)-T(B)T(F)) \in \zz_8$ is equivalent to $ \frac{\sigma(E)}{4} = T(E)-T(B)T(F) \in \zz_2.$
Comparing this with Proposition \ref{4Arf-odd-m-n-geometric} we have that
$$\textnormal{Arf}\left( L^{\perp}/L , [\lambda],  \frac{\left[\mc{P}_2\right]}{2}  \right) = T(E)-T(B)T(F) \in \zz_2,$$
with $L^{\perp} =\left\{x \in H^{2k}(E;\zz_2) \vert \lambda(x,x)= 0 \in \zz_2 \right\}$ and $L = \langle v_{2k} \rangle \subset L^{\perp}.$

\section{Multiplicativity mod $8$ in the $\zz_4$-trivial case.}

In the previous section we have shown that in general there is an obstruction to multiplicativity modulo $8$ of a fibration  $F^{2m} \to E^{4k} \to B^{2n}$ given by the Arf invariant.

We shall now prove that by imposing the condition that  $\pi_1(B)$ acts trivially on $H^{m}(F, \bb{Z})/torsion \otimes \bb{Z}_4$, the obstruction disappears and the fibration has signature multiplicative modulo $8$. Geometrically the theorem is stated as follows.

\begin{theorem}\label{mod8-theorem} Let $F^{2m} \to E \to B^{2n}$ be an oriented Poincar\'e duality fibration. If the action of $\pi_1(B)$ on $H^m(F, \bb{Z})/torsion \otimes \bb{Z}_4$ is trivial, then
$$\sigma(E) - \sigma(F) \sigma(B) = 0\in \zz_8.$$
\end{theorem}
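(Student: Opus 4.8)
The plan is to reduce Theorem \ref{mod8-theorem} to its algebraic analogue and then to apply the machinery assembled in Parts I and II. By the chain complex model of Part II (Theorem \ref{chain-iso}, Proposition \ref{two-functors} and the surrounding discussion), the total space $E$ of the fibration $F^{2m}\to E\to B^{2n}$ is represented by the $4k$-dimensional symmetric Poincar\'e complex over $\zz$
$$(D,\Gamma)~=~(C(\wtB),\phi)\otimes_{\zz[\pi_1(B)]}\bar S^m(A,\alpha,U),$$
where $A=H^m(F;\zz)/torsion$, $(A,\alpha)$ is the middle-dimensional intersection form of the fibre, and $U:\zz[\pi_1(B)]\to H_0(\Hom_\zz(A,A))^{op}$ is the monodromy representation; likewise $B\times F$ is represented by $(D',\Gamma')=(C(\wtB),\phi)\otimes_{\zz[\pi_1(B)]}\bar S^m(A,\alpha,\epsilon)$, with $\epsilon$ the trivial action. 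So $\sigma(E)=\sigma(D,\Gamma)$ and $\sigma(F)\sigma(B)=\sigma(D',\Gamma')$, and it suffices to prove $\sigma(D,\Gamma)-\sigma(D',\Gamma')=0\in\zz_8$ under the hypothesis that $U$ becomes trivial after tensoring with $\zz_4$, i.e. the composite $\zz[\pi_1(B)]\xrightarrow{U}H_0(\Hom_\zz(A,A))\to H_0(\Hom_{\zz}(A\otimes\zz_4,A\otimes\zz_4))$ coincides with $\epsilon\otimes\zz_4$.

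Next I would invoke the results of Chapter \ref{Morita-chain-cx}. By Theorem \ref{algebraic-version-4Arf-obstruction} (the chain complex form of Theorem \ref{4Arf-general-fibration}), the difference of signatures is
$$\sigma(D,\Gamma)-\sigma(D',\Gamma')~=~4\,\textnormal{Arf}\!\left(L^\perp/L,\,[\Gamma_0\oplus-\Gamma'_0],\,\tfrac{[\mc{P}_2\oplus-\mc{P}'_2]}{2}\right)\in 4\zz_2\subset\zz_8,$$
where $(V,\lambda,q)=(H^{2k}(D;\zz_2),\Gamma_0,\mc{P}_2)$ and $(V',\lambda',q')=(H^{2k}(D';\zz_2),\Gamma'_0,\mc{P}'_2)$, and $L=\langle(v_{2k},v'_{2k})\rangle$ is the Wu sublagrangian of $(V\oplus V',\lambda\oplus-\lambda',q\oplus-q')$. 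Thus everything comes down to showing this Arf invariant vanishes. The strategy, exactly as foreshadowed in the second special case of the last example before Section \ref{Hasse-Witt}, is to exhibit a lagrangian of the maximal isotropic subquotient $(W,\mu,h)=(L^\perp/L,[\lambda\oplus-\lambda'],[q\oplus-q']/2)$. The candidate is the graph $L_0=\{(x,f(x))\mid x\in V\}/\langle(v_{2k},v'_{2k})\rangle$ of a suitable isomorphism $f:(V,\lambda,q)\to(V',\lambda',q')$; once such an $f$ is produced, $L_0\subset W$ is $\mu$-isotropic, $h$ vanishes on it, and it has half the dimension of $W$, so $\textnormal{Arf}(W,\mu,h)=0\in\zz_2$ and the theorem follows.

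The heart of the argument — and the main obstacle — is therefore the construction of the $\zz_4$-enhanced isomorphism $f:(H^{2k}(D;\zz_2),\Gamma_0,\mc{P}_2)\to(H^{2k}(D';\zz_2),\Gamma'_0,\mc{P}'_2)$ from the hypothesis that $U\otimes\zz_4=\epsilon\otimes\zz_4$. Here I would work at the level of the twisted product chain complexes, using the explicit formula (Theorem \ref{chain-iso}, following \cite{SurTransfer}) for the symmetric structure of $(C(\wtB),\phi)\otimes\bar S^m(A,\alpha,U)$ in terms of $U(\phi_s)(\alpha)$. The key point is that the Pontryagin square $\mc{P}_2$ depends algebraically only on the pieces $\Gamma_0$ and $\Gamma_1$ of the symmetric structure (Definition \ref{algebraic-Pont}), and $\Gamma_0$, $\Gamma_1$ are built from $U(\phi_0)$, $U(\phi_1)$ composed with $\alpha$; reducing mod $4$, the hypothesis $U\otimes\zz_4=\epsilon\otimes\zz_4$ forces $\mc{P}_2$ and $\mc{P}'_2$ to agree under the identification of $H^{2k}(D;\zz_2)$ with $H^{2k}(D';\zz_2)$ coming from the trivialisation of the monodromy modulo $4$. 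Care must be taken because $H^{2k}$ with $\zz_2$ coefficients differs in general from integral homology reduced mod $2$ (the torsion subtlety that made Morita's original proof delicate, as discussed after Theorem \ref{Morita-theorem-cx}); but the chain-level formulation means we only need a chain equivalence of the underlying symmetric Poincar\'e complexes $(D,\Gamma)$ and $(D',\Gamma')$ modulo suitable cobordism, after which Theorem \ref{Morita-theorem-cx} and Theorem \ref{4Arf-Algebra} translate this into the equality of Brown--Kervaire, hence vanishing of the Arf invariant. I would also record, as in the corollary stated in the introduction, that when $m=2i+1$ and $n=2j+1$ one has $\sigma(B)=\sigma(F)=0$, so the statement specialises to $\sigma(E)=0\in\zz_8$ for such bundles, in particular for surface bundles with $\zz_4$-trivial monodromy on $H^1(F;\zz)/torsion\otimes\zz_4$.
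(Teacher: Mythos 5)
Your proposal is correct and is essentially the paper's argument: the same chain-complex model $(D,\Gamma)=(C(\wtB),\phi)\otimes_{\zz[\pi_1(B)]}\bar S^m(A,\alpha,U)$ versus $(D',\Gamma')$, with the same crucial step that $\zz_4$-triviality of $U$ identifies the $\zz_4$-enhanced form $(H^{2k}(D;\zz_2),\Gamma_0,\mc{P}_2)$ with $(H^{2k}(D';\zz_2),\Gamma'_0,\mc{P}'_2)$ (the content of Lemmas \ref{lemma-mod8-algebraic} and \ref{Z4-pont}). Your finishing move --- the graph of that isomorphism as a lagrangian killing the Arf invariant of Theorem \ref{algebraic-version-4Arf-obstruction} --- is only cosmetically different from the paper's proof of Theorem \ref{mod8-algebraic}, which equates the Brown--Kervaire invariants directly via Theorem \ref{Morita-theorem-cx}; indeed the paper records exactly your lagrangian argument in the example preceding Section \ref{Hasse-Witt}.
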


\begin{remark}
Clearly by the definition of the signature, when $m$ and $n$ are odd $$\sigma(F)\sigma(B)=0 \in \zz.$$
So in this case, the theorem establishes that
$$\sigma(E) \equiv 0 \in \zz_8.$$
\end{remark}

\subsection{Tools for proving Theorem \ref{mod8-theorem}}

We shall prove Theorem \ref{mod8-theorem} by proving its algebraic analogue, which we  state as Theorem \ref{mod8-algebraic}. In this section we prove some results that will be needed in the proof of the algebraic statement of the theorem (as in \ref{mod8-algebraic}).

The \textbf{algebraic analogue} of the condition that  $\pi_1(B)$ acts trivially on $H^{m}(F, \bb{Z})/torsion \otimes \bb{Z}_4$ is defined as follows:
\begin{definition}\label{Z4-triviality}
A $(\bb{Z}, m)$-symmetric representation $(A, \alpha, U)$ of a group ring $\bb{Z}[\pi]$ \textbf{is $\bb{Z}_4$-trivial} if
$$U(r) \otimes 1= \epsilon(r)\otimes1 : A \otimes \bb{Z}_4 \to A \otimes \bb{Z}_4$$
for all $r\in \bb{Z}[\pi],$ where $\epsilon$ denotes the trivial action homomorphism $$\epsilon: \zz[\pi_1(B)] \to H_0(\textnormal{Hom}_{\zz}(A, A))^{op}.$$
\end{definition}

A weaker condition is that of \textbf{$\zz_2$-triviality}:
\begin{definition}\label{Z2-triviality} (\hspace{-1pt}\cite[chapter 8]{Korzen})
A $(\bb{Z}, m)$-symmetric representation $(A, \alpha, U)$ of a group ring $\bb{Z}[\pi]$ \textbf{is $\bb{Z}_2$-trivial} if
$$U(r) \otimes 1= \epsilon(r)\otimes1 : A \otimes \bb{Z}_2 \to A \otimes \bb{Z}_2$$
for all $r\in \bb{Z}[\pi],$ where $\epsilon$ denotes the trivial action homomorphism as in the previous definition.
\end{definition}

\begin{remark} In particular any statement which holds under assumption of a $\zz_2$-trivial action is also true for an assumption of $\zz_4$-triviality. The converse may not be true.
\end{remark}

In the statement of the algebraic analogue of Theorem \ref{mod8-theorem}  we shall let $(C, \phi)$ be a $2n$-dimensional $(-1)^n$-symmetric Poincar\'e complex over $\zz[\pi]$ and $(A, \alpha, U)$ be a $(\zz, m)$-symmetric representation with $\zz_4$-trivial $U : \zz[\pi] \to H_0 (\textnormal{Hom}_{\zz}(A, A))^{op}$. When a result is true with a $\zz_2$-trivial action we shall indicate this accordingly.

With $4k =2m+2n$, we shall write $(D, \Gamma)$ for the $4k$-dimensional symmetric Poincar\'e complex over $\zz$ given by,
$$(D, \Gamma) = (C, \phi) \otimes_{\zz[\pi]} \bar{S}^m(A, \alpha, U),$$
and $(D', \Gamma')$ for the $4k$-dimensional symmetric Poincar\'e complex over $\zz$ given by
$$ (D', \Gamma') = (C, \phi) \otimes_{\zz[\pi]}\bar{S}^m(A, \alpha, \epsilon),$$
where $(A, \alpha, \epsilon)$ is the trivial representation.

\begin{lemma} \label{lemma-mod8-algebraic}
If the representation $(A, \alpha, U)$ is $\zz_2$-trivial then $$\zz_2 \otimes_{\zz}(D, \Gamma) \cong \zz_2 \otimes_{\zz}(D', \Gamma').$$ That is,
\begin{itemize}
\item[(i)] $H^{2k}(D, \zz_2) \cong H^{2k}(D', \zz_2).$
\item[(ii)] the symmetric structure reduced mod $2$ is the same for both symmetric complexes $(D, \Gamma)$ and $(D', \Gamma').$
\end{itemize}
\end{lemma}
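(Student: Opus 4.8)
The plan is to reduce everything modulo $2$ and observe that the distinction between the twisted product $(D,\Gamma) = (C,\phi)\otimes_{\zz[\pi]}\bar{S}^m(A,\alpha,U)$ and the untwisted product $(D',\Gamma') = (C,\phi)\otimes_{\zz[\pi]}\bar{S}^m(A,\alpha,\epsilon)$ is governed entirely by the difference $U(r)-\epsilon(r)$ acting on $A$. Both chain complexes have the same underlying $\zz$-module chain groups, namely $D_r = (C(\wtB)\otimes \bar{S}^mA)_r$ built from $C_p\otimes_{\zz[\pi]}(\bar{S}^mA)$ via the representation; what differs is the chain-level differential and the symmetric structure, both of which involve $U$ where $\epsilon$ would appear for the untwisted complex. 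By hypothesis $(A,\alpha,U)$ is $\zz_2$-trivial, so $U(r)\otimes 1 = \epsilon(r)\otimes 1 : A\otimes\zz_2 \to A\otimes\zz_2$ for every $r\in\zz[\pi]$. Hence after applying $\zz_2\otimes_\zz -$ every occurrence of $U$ in the differential of $D$ is replaced by the same map as the corresponding occurrence of $\epsilon$ in the differential of $D'$, so the two mod $2$ chain complexes $\zz_2\otimes_\zz D$ and $\zz_2\otimes_\zz D'$ have literally the same differentials; this gives an isomorphism (indeed an equality) of $\zz_2$-module chain complexes, and part (i) follows by taking cohomology, $H^{2k}(D;\zz_2)\cong H^{2k}(D';\zz_2)$.

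For part (ii) I would unwind the formula for the symmetric structure of a twisted product. Recall from Theorem \ref{chain-iso} and \cite{SurTransfer} that the symmetric structure components of $(C,\phi)\otimes\bar{S}^m(A,\alpha,U)$ are obtained by combining the components $\phi_s$ of the symmetric structure on $C$ with $\alpha$ and the representation $U$, schematically of the form $C^{2n-r+s}\otimes\bar{S}^mA^* \xrightarrow{U(\phi_s)(\alpha)} C_r\otimes\bar{S}^mA$, whereas for the untwisted complex one uses $\epsilon(\phi_s)(\alpha)$ instead. Each such structure map is assembled $R$-linearly from values of $U$ (respectively $\epsilon$) on elements of $\zz[\pi]$. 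Applying $\zz_2\otimes_\zz-$ and invoking $\zz_2$-triviality once more, $U$ and $\epsilon$ induce the same maps on $A\otimes\zz_2$, so $\Gamma_s\otimes 1 = \Gamma'_s\otimes 1$ for all $s$ under the identification from part (i). Thus the mod $2$ symmetric structures agree, which is exactly (ii), and together (i) and (ii) say $\zz_2\otimes_\zz(D,\Gamma)\cong\zz_2\otimes_\zz(D',\Gamma')$.

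The one genuine subtlety I expect to have to be careful about is the bookkeeping in the identification of part (i): the isomorphism $H^{2k}(D;\zz_2)\cong H^{2k}(D';\zz_2)$ must be the canonical one coming from the equality of mod $2$ chain complexes, not merely an abstract isomorphism, so that in part (ii) it makes sense to compare $\Gamma_s\otimes 1$ and $\Gamma'_s\otimes 1$ on the nose. Concretely, I would fix once and for all a choice of basis/splitting realizing $D_r$ and $D'_r$ as the same $\zz$-module $C_r\otimes_{\zz[\pi]}\bar{S}^mA$ (this is legitimate because the functor $-\otimes(A,\alpha,U)$ and $-\otimes(A,\alpha,\epsilon)$ agree on objects, only differing on morphisms), and then the content of the lemma is precisely that after $\otimes_\zz\zz_2$ the two functors agree on all the relevant morphisms — differentials and symmetric-structure maps alike — because these are $R$-linear expressions in $U$ versus $\epsilon$ and $\zz_2$-triviality forces $U\otimes 1 = \epsilon\otimes 1$. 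Everything else is routine diagram-chasing. Note also that since $\zz_4$-triviality implies $\zz_2$-triviality (Definition \ref{Z4-triviality}, Definition \ref{Z2-triviality} and the following remark), this lemma applies in the setting of Theorem \ref{mod8-theorem}.
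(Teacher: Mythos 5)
Your proposal is correct and follows essentially the same route as the paper's own proof: reduce the chain complexes mod $2$, use $\zz_2$-triviality to identify the differentials (which are the only place $U$ versus $\epsilon$ enters) and hence the mod $2$ cohomologies for (i), and then apply the same observation to the structure maps $U(\phi_s)(\alpha)\otimes 1=\epsilon(\phi_s)(\alpha)\otimes 1$ for (ii). Your extra remark about fixing the canonical identification of the underlying modules so that the comparison of symmetric structures is made on the nose is a reasonable point of care, but it does not change the substance of the argument.
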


\begin{proof} \label{proof-iso} (i) We will first show that if the representation $(A, \alpha, U)$ is $\zz_2$-trivial then:
$$H^{2k}(D, \zz_2) \cong H^{2k}(D', \zz_2).$$

From Definition \ref{functor-homology} we know that the  representation $(A, \alpha, U)$ gives rise to the following symmetric Poincar\'e complex  $(C, \phi) \otimes \bar{S}^m(A, \alpha, U) = (D, \Gamma)$ over $\bb{Z}$,
$$D =C \otimes \bar{S}^m(A, U) : \dots \to C_n \otimes  \bar{S}^m(A,U) \xrightarrow{d_{C \otimes \bar{S}^m(A, U)}}  C_{n-1}\otimes \bar{S}^m(A,U) \to \dots$$

$H^{2k}( D; \mathbb{Z}_2)$ has coefficients in $\bb{Z}_2$ so the chain groups of the chain complex are modules over $\bb{Z}_2$ and the differentials are the mod $2$ reduction of the differentials of the chain complex $D =C \otimes \bar{S}^m(A, U)$ over $\bb{Z}$,
$$D \otimes \zz_2 = C \otimes \bar{S}^m (A, U) \otimes \zz_2 : \dots \to C_n \otimes \bar{S}^m(A,U) \otimes \zz_2 \xrightarrow{d_{C \otimes \bar{S}^m(A, U)\otimes \zz_2}}  C_{n-1} \otimes \bar{S}^m(A,U)\otimes \zz_2 \to \dots$$

Using the definition of $\zz_2$-triviality there is an isomorphism of chain complexes
$$C \otimes \bar{S}^m (A, U) \otimes \zz_2 = C \otimes \bar{S}^m(A, \epsilon) \otimes \zz_2,$$
 since taking the mod $2$ reduction of the chain complex means that the action $U$ becomes the trivial action.
It follows that
$H^{2k}( D ; \mathbb{Z}_2)$ and $H^{2k}(D'; \mathbb{Z}_2)$ are isomorphic vector spaces over $\zz_2$.
 Note that this isomorphism only holds because the coefficients are in $\bb{Z}_2$, and we have assumed $\zz_2$-triviality. The integral cohomologies need not be always isomorphic. So in general with $\zz$ coefficients, we may have:
$$H^{2k}(D ; \mathbb{Z}) \neq H^{2k}( D'; \mathbb{Z}).$$
As the result holds with a $\zz_2$-trivial assumption, this means that it also holds if we assume $\zz_4$-triviality.

(ii) Here we will prove that if the representation $(A, \alpha, U)$ is $\zz_2$-trivial then
the symmetric structure reduced mod $2$ is the same for both symmetric complexes $(D, \Gamma) \otimes_{\zz}\zz_2$ and $(D', \Gamma')\otimes_{\zz}\zz_2$ :

Note that the chain complex  $(D, \Gamma)= (C(\tilde{B}), \phi) \otimes_{\zz[\pi]} \bar{S}^m(A, \alpha, U)$  is a symmetric Poincar\'e complex over $\bb{Z}$ with symmetric structure
$$C^{(2n)-r+s} \otimes \bar{S}A^* \xrightarrow{U(\phi_s)  (\alpha)} C_r \otimes \bar{S}A. $$

The  mod $2$ reduction of this chain complex is a symmetric Poincar\'e complex over $\bb{Z}_2$. Using the definition of $\bb{Z}_2$-triviality (\ref{Z4-triviality}), the symmetric structure of the mod $2$ reduction is given by
$$C^{(2n)-r+s} \otimes \bar{S}A^* \otimes \bb{Z}_2 \xrightarrow{U(\phi_s)  (\alpha) \otimes 1= \epsilon (\phi_s)  (\alpha) \otimes 1} C_r\otimes \bar{S}A \otimes \bb{Z}_2.$$

From this we see that the mod $2$ reductions of the symmetric structures in  the trivial and the $\bb{Z}_2$-trivial cases are equal.
Hence
$$ (D, \Gamma) \otimes_{\zz}\zz_2 \cong (D', \Gamma')\otimes_{\zz}\zz_2.$$
\end{proof}

From the proof of \ref{lemma-mod8-algebraic} (i) and \ref{lemma-mod8-algebraic}(ii) we know that for a $\bb{Z}_2$-trivial twisted product and for an untwisted product, the vector spaces given by the middle dimensional cohomology with $\bb{Z}_2$ coefficients  are isomorphic and the $\bb{Z}_2$-valued symmetric structure is also the same in both cases. Clearly the results in this lemma also hold when the action $U$ is $\zz_4$-trivial

\subsubsection{Comparing Pontryagin squares for the twisted and untwisted product:}
In chapter \ref{Pontryagin-squares chapter}
 we defined algebraic Pontryagin squares depending on the symmetric structure of a symmetric complex.
A $\zz_4$-valued quadratic function such as the Pontryagin square cannot be recovered uniquely from the associated $\bb{Z}_2$-valued bilinear pairing $\Gamma_0 : H^{2k}(D;\zz_2)\times H^{2k}(D;\zz_2) \to \bb{Z}_2$. It is crucial for the proof of Theorem \ref{mod8-theorem}  to note that the Pontryagin square depends on the definition of the symmetric structure on integral cochains.
This was already described in Definition \ref{Pont-square-definition} where the Pontryagin square was defined by
\begin{align*}\mc{P}_2(\Gamma): H^{2k}(D ; \bb{Z}_2) & \to \bb{Z}_4\\
                                                 x & \mapsto \Gamma_0(x, x) + \Gamma_1(x, dx).
\end{align*}

Depending on the integral symmetric structures, we have two different lifts of the mod $2$ symmetric structure on
$$V = H^{2k}( D; \mathbb{Z}_2) = H^{2k}(D'; \mathbb{Z}_2)$$
which give rise to two different Pontryagin squares.

In other words we are considering two chain complexes which are different over $\zz$, $(D, \Gamma)$ and $(D', \Gamma')$ but are chain equivalent when reduced over $\zz_2$,
$$(D, \Gamma) \otimes_{\zz}\zz_2 = (D', \Gamma')\otimes_{\zz}\zz_2.$$
In the situation of Theorem \ref{mod8-algebraic} we know from  the proof of \ref{mod8-algebraic} (i)(a) and (i)(b) that we have the same $\zz_2$-valued symmetric bilinear form for the twisted and untwisted products,
$$(H^{2k}(D;\zz_2),  \Gamma_0) = (H^{2k}(D'; \zz_2),  \Gamma'_0),$$
where
\begin{itemize}
\item $(H^{2k}(D;\zz_2), \Gamma_0) =  \left( H^{2k}( (C, \phi) \otimes_{\zz[\pi]} \bar{S}^m(A, \alpha, U); \mathbb{Z}_2), U(\phi)  (\alpha)  \right),$
\item $(H^{2k}(D'; \zz_2),  \Gamma'_0) =\left( H^{2k}( (C, \phi) \otimes_{\zz[\pi]} \bar{S}^m(A, \alpha, \epsilon); \mathbb{Z}_2),  \epsilon (\phi)  (\alpha) \right),$
\end{itemize}
but the integral symmetric structures $ \Gamma$ and $ \Gamma'$  are different in the twisted and untwisted products, so this gives rise in general to two different Pontryagin squares of the same $\bb{Z}_2$-valued symmetric bilinear form.

\begin{remark} In what follows  we will use the notation $\mc{P}_{2}$ for the twisted Pontryagin square and $\mc{P}'_2$ for the Pontryagin square on an untwisted product.
\end{remark}

The following Proposition \ref{differ-linear-map} applies precisely to the general situation of the two Pontryagin squares $\mc{P}_2$ and $\mc{P}'_2$ that we have just described above.

\begin{proposition} \label{differ-linear-map}
Let $V$ be a $\bb{Z}_2$-valued vector space and $\lambda : V \otimes V\to \bb{Z}_2$ a non-singular symmetric bilinear pairing. Any two quadratic  enhancements $q, q' : V \to \bb{Z}_4$ over $\lambda$ differ by a linear map,
$$q'(x)-q(x) = 2 \lambda(x, t) \in \zz_4$$
for some $t \in V$.
\end{proposition}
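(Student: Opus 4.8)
The plan is to show that the difference $\delta := q' - q$ is a group homomorphism $V \to \zz_4$ whose values lie in the subgroup $2\zz_2 \subset \zz_4$, and then to invoke nonsingularity of $\lambda$ to realise the resulting $\zz_2$-linear functional as pairing against a unique vector $t \in V$. In other words, the content of the proposition is that the set of $\zz_4$-valued enhancements of a fixed nonsingular symmetric form $\lambda$ over $\zz_2$ is a torsor over $\textnormal{Hom}_{\zz_2}(V,\zz_2) \cong V$.

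First I would subtract the two defining identities for the enhancements. Writing $i = 2 : \zz_2 \to \zz_4$ as in equation \eqref{Z4-enhancement}, we have $q(x+y) = q(x) + q(y) + i\lambda(x,y)$ and $q'(x+y) = q'(x) + q'(y) + i\lambda(x,y)$ for all $x,y \in V$; subtracting, the $\lambda$-terms cancel and $\delta(x+y) = \delta(x) + \delta(y)$, so $\delta : V \to \zz_4$ is additive. Since $V$ is a $\zz_2$-vector space, $x + x = 0$ for every $x \in V$, hence $2\delta(x) = \delta(x+x) = \delta(0) = 0 \in \zz_4$, so $\delta(x) \in \{0,2\}$ for all $x$. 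Thus $\delta$ factors as $\delta = i \circ \bar\delta = 2\bar\delta$ for a unique $\zz_2$-linear map $\bar\delta : V \to \zz_2$. (This is consistent with the Remark following Definition \ref{quadratic enhancement}: both $jq$ and $jq'$ equal the function $x \mapsto \lambda(x,x) \in \zz_2$, so $\delta$ necessarily reduces to $0$ modulo $2$.)

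Finally, since $(V,\lambda)$ is nonsingular, the adjoint map $V \to V^* = \textnormal{Hom}_{\zz_2}(V,\zz_2)$, $t \mapsto \lambda(-,t)$, is an isomorphism, so there is a unique $t \in V$ with $\bar\delta(x) = \lambda(x,t)$ for all $x \in V$. Then $q'(x) - q(x) = 2\lambda(x,t) \in \zz_4$, which is precisely the asserted identity. The argument is short and presents no genuine obstacle; the only point requiring care is bookkeeping of the coefficient $i = 2$ and the distinction between the $\zz_2$-valued functional $\bar\delta$ and the $\zz_4$-valued homomorphism $\delta = 2\bar\delta$. Note that this also recovers, and will be used alongside, the earlier classification facts (Proposition \ref{decomposition}, Proposition \ref{relations-q-h}) describing how the non-uniqueness of $q$ over a fixed $\lambda$ is governed by the linear functionals on $V$.
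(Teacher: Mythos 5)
Your argument is correct, and it is complete. The paper itself does not prove this proposition: its ``proof'' consists only of references to Taylor, Deloup and Ranicki--Taylor, so there is no internal argument to compare against; what you have written is exactly the standard short argument that those sources contain, and it supplies a self-contained proof where the thesis gives none. Each step checks out: subtracting the two defining identities kills the $2\lambda(x,y)$ terms, so $\delta=q'-q$ is additive with $\delta(0)=0$; additivity plus $x+x=0$ in the $\zz_2$-vector space $V$ gives $2\delta(x)=0\in\zz_4$, so $\delta$ lands in $\{0,2\}=2\zz_2$ and equals $2\bar{\delta}$ for a $\zz_2$-linear $\bar{\delta}:V\to\zz_2$; and nonsingularity of $\lambda$ (the adjoint $V\to V^*$ being an isomorphism, as defined in the thesis) realises $\bar{\delta}$ as $x\mapsto\lambda(x,t)$ for a unique $t\in V$. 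Your torsor remark is also consistent, since adding $2\lambda(-,t)$ to any enhancement of $\lambda$ again yields an enhancement of $\lambda$, though that converse direction is not needed for the statement as posed.
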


\begin{proof}
See \cite{Taylor}, \cite{Deloup}, \cite[page 10]{Mod8}.
\end{proof}

Consequently from this proposition we know that the two Pontryagin squares $\mc{P}_2$ and $\mc{P}'_2$ differ by linear map.
Furthermore the Brown-Kervaire invariants of two quadratic enhancements as in Proposition \ref{differ-linear-map} are related by the following theorem,
\begin{theorem} \cite[Theorem 1.20 (x)]{Brown}
Let $V$ be a $\bb{Z}_2$-valued vector space and $\lambda : V \otimes V\to \bb{Z}_2$ a non-singular symmetric bilinear pairing, then any two quadratic  enhancements $q, q' : V \to \bb{Z}_4$ over $\lambda$ differ by a linear map, $q'(x)-q(x) = 2 \lambda(x, t) \in \zz_4$ and
$$\textnormal{BK}(V,\lambda,q)-\textnormal{BK}(V, \lambda,q')=2q(t) \in \zz_8$$
for some $t \in V$.
\end{theorem}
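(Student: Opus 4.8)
The plan is to deduce the identity directly from the Gauss-sum definition of the Brown--Kervaire invariant (Definition~\ref{Gauss-sum-formula}), by a change of variables in a finite exponential sum. First, the first assertion is immediate: since $\lambda$ is nonsingular, Proposition~\ref{differ-linear-map} supplies a vector $t\in V$ with $q'(x)-q(x)=2\lambda(x,t)\in\zz_4$ for all $x$, and this $t$ is unique, since otherwise $t-t'$ would lie in the radical of $\lambda$. (Because $-2=2$ as homomorphisms $\zz_2\to\zz_4$, the same $t$ also records the discrepancy with the roles of $q$ and $q'$ interchanged, a fact I would use in the consistency check below.) So the whole content of the theorem is the formula for the difference of the two Brown--Kervaire invariants.

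The key step is the following manipulation. I would re-index the Gauss sum for $q$ along the bijection $x\mapsto x+t$ of the abelian group $V$ and apply the $\zz_4$-quadratic enhancement property (Definition~\ref{quadratic enhancement}(ii)), $q(x+t)=q(x)+q(t)+2\lambda(x,t)\in\zz_4$, together with $i^{2\lambda(x,t)}=(-1)^{\lambda(x,t)}$ (valid since $i^2=-1$):
\[
\sum_{x\in V} i^{q(x)} \;=\; \sum_{x\in V} i^{q(x+t)} \;=\; i^{q(t)}\sum_{x\in V} i^{q(x)}(-1)^{\lambda(x,t)} \;=\; i^{q(t)}\sum_{x\in V} i^{q(x)+2\lambda(x,t)} \;=\; i^{q(t)}\sum_{x\in V} i^{q'(x)}.
\]
Substituting the formula $\sum_{x\in V} i^{q(x)}=\sqrt{2}^{\,\dim V}\,e^{2\pi i\,\textnormal{BK}(V,\lambda,q)/8}$ (and the analogous one for $q'$) on the two sides, the nonzero factors $\sqrt{2}^{\,\dim V}$ cancel; since $i^{q(t)}=e^{2\pi i\,q(t)/4}=e^{2\pi i\,(2q(t))/8}$, comparing arguments of the resulting roots of unity gives $\textnormal{BK}(V,\lambda,q)-\textnormal{BK}(V,\lambda,q')\equiv 2q(t)\in\zz_8$, as claimed.

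Since the argument is essentially a substitution in a finite sum, there is no serious obstacle; the points needing care are keeping the correct coefficient $2\colon\zz_2\to\zz_4$ in the enhancement identity, noting that translation by $t$ permutes $V$ so the two sums run over the same index set, and performing the consistency check that interchanging $q$ and $q'$ gives $\textnormal{BK}(V,\lambda,q')-\textnormal{BK}(V,\lambda,q)=2q'(t)$, which agrees with the displayed identity because $q'(t)=q(t)+2\lambda(t,t)$ together with $\lambda(t,t)=jq(t)$ forces $2q(t)+2q'(t)=0\in\zz_8$.
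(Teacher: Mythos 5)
Your argument is correct and complete: translation by $t$ in the Gauss sum of Definition \ref{Gauss-sum-formula}, combined with the enhancement identity $q(x+t)=q(x)+q(t)+2\lambda(x,t)$ and the relation $q(x)+2\lambda(x,t)=q'(x)$, gives exactly $\sum_x i^{q(x)}=i^{q(t)}\sum_x i^{q'(x)}$, hence $\textnormal{BK}(V,\lambda,q)-\textnormal{BK}(V,\lambda,q')=2q(t)\in\zz_8$, and your sign conventions check out (e.g.\ on $P_1$ versus $P_{-1}$). The thesis itself gives no proof of this statement --- it is quoted from Brown's Theorem 1.20(x), with the existence of $t$ covered by Proposition \ref{differ-linear-map} --- and your Gauss-sum substitution is the standard argument behind Brown's result, so there is nothing in the paper it diverges from.
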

Note that when $\textnormal{BK}(V,\lambda,q)$ and $\textnormal{BK}(V,\lambda,q')$ are divisible by $4$ we can write this relation in  terms of the Arf invariant,
$$\textnormal{Arf}(W, \mu ,h)-\textnormal{Arf}(W, \mu, h')=h(t) \in \zz_2.$$

We will now show that by setting the condition of a $\zz_4$-trivial action $U$ there is an isomorphism between the untwisted Pontryagin square and the $\zz_4$-twisted Pontryagin square, in which case $h(t)=0.$

\subsubsection{Pontryagin squares in terms of the action $U$}

For the proof of \ref{mod8-algebraic} we will need to express the Pontryagin square of the symmetric Poincar\'e complex $(D, \Gamma)$ over $\zz$ in terms of the equivariant Pontryagin square of the $(-1)^n$-symmetric Poincar\'e complex $(C, \phi)$ over $\zz[\pi]$  and symmetric structure of $(A, \alpha)$. For this we use \cite[Lemma 8.8]{Korzen} and Definition \ref{equivariant-Pont}, which gives the expression of the equivariant Pontryagin square
\begin{align*}
\mc{P}_{d_R} : H^{n}(C; \bb{Z}_2)\to Q^{2n}(B(k, d_R))= R/(I^2+2I) ;\\
(u, v) \mapsto (u, v)^\%(\phi) = \phi_0(v, v) + d_R \phi_1(v, u)
\end{align*}
with $(v, u) \in C^n \oplus C^{n+1}$ such that $d_C^*(v) = d_Ru \in C^{n+1}$, $d_{C}^*(u)=0 \in C^{n+2}$ and where  $R = \bb{Z}[\pi]$ and $I$ is the ideal $I = \textnormal{Ker}(\bb{Z}[\pi] \to \bb{Z}_2)$.

\begin{lemma} \label{Pontryagin-total-space} (\hspace{-1pt}\cite[Lemma 8.8]{Korzen}) Let $(D, \Gamma)=  (C, \phi) \otimes \bar{S}^m(A, \alpha, U)$ be a $4k$-dimensional symmetric Poincar\'e complex over $\zz$, with $U$ the action of  $\zz[\pi]$ on $(A, \alpha)$. Then the Pontryagin square on $(D, \Gamma)$ can be expressed as
        $$\mc{P}_2 (x)=\left( U\left( \mc{P}_{d_R}  \right)  \alpha \right) (x) \in \zz_4,$$
where $x \in H^{2k}(D; \zz_2)$ and $\mc{P}_{d_R}$ is the equivariant Pontryagin square on the symmetric Poincar\'e complex $(C, \phi)$ over $\zz[\pi]$.
\end{lemma}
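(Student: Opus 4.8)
The plan is to evaluate $\mc{P}_2$ on a class $x \in H^{2k}(D;\zz_2)$ at the cochain level, substitute the explicit formula for the symmetric structure of the twisted product $(D,\Gamma)$, and recognise inside the resulting expression the equivariant Pontryagin square of the base, to which $U$ and $\alpha$ are then applied.

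First I would record that, by the transfer construction of \cite{SurTransfer} reviewed in Section \ref{transfer-sec} and the identification in Theorem \ref{chain-iso}, the symmetric structure of $(D,\Gamma) = (C,\phi)\otimes_{\zz[\pi]}\bar{S}^m(A,\alpha,U)$ is obtained from that of $(C,\phi)$ by applying $U$ and pairing the fibre factors with $\alpha$, i.e. $\Gamma_s = U(\phi_s)(\alpha)$, which on the tensor description reads $C^{2n-r+s}\otimes\bar{S}^m A^* \xrightarrow{U(\phi_s)(\alpha)} C_r\otimes\bar{S}^m A$. Since $D_r = C_{r-m}\otimes A$, a chain map $(u,v):D\to B(2k,2)$ representing $x$ --- with $v\in D^{2k}$, $u\in\textnormal{Ker}(d^*:D^{2k}\to D^{2k+1})$ and $d^*v=2u$, as in Definition \ref{algebraic-Pont} --- corresponds, via the isomorphism $H^n(C;\zz_2)\cong H_0(\textnormal{Hom}_{\zz[\pi]}(C,B(n,d_R)))$ recalled before Definition \ref{equivariant-Pont} and the naturality of the transfer functor, to a chain map $(u',v'):C\to B(n,d_R)$ with $d_C^*v'=d_Ru'$, $d_C^*u'=0$, where $d_R:I\hookrightarrow R$ is the inclusion of the augmentation ideal. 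Now by Definition \ref{algebraic-Pont} and the substitution $\Gamma_s = U(\phi_s)(\alpha)$,
\[
\mc{P}_2(x) ~=~ \Gamma_0(v,v) + 2\,\Gamma_1(v,u) ~=~ U(\phi_0)(\alpha)(v',v') + 2\,U(\phi_1)(\alpha)(v',u') ~=~ \bigl(U\bigl(\phi_0(v',v') + d_R\phi_1(v',u')\bigr)\bigr)(\alpha),
\]
and the inner expression $\phi_0(v',v') + d_R\phi_1(v',u')$ is, by Definition \ref{equivariant-Pont}, exactly $\mc{P}_{d_R}(x)\in Q^{2n}(B(n,d_R))=R/(I^2+2I)$ (Proposition \ref{iso-ring-Q}); applying $U$ and then the form $\alpha$ to it is by definition $(U(\mc{P}_{d_R})\alpha)(x)\in\zz_4$, which is the claim.

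The main obstacle is the bookkeeping hidden in the word ``corresponds'': one must check that a cochain representative $(u,v)$ of a class in $H^{2k}(D;\zz_2)$ really matches, compatibly with the differential and with the higher chain $\phi_1$, a representative $(u',v')$ on $C$ (using the decomposition $D = C\otimes\bar{S}^m A$ and, for non-decomposable cochains, linearity of all the maps involved), and that the passage from the coefficient datum of $B(n,d_R)$ to that of $B(2k,2)$ --- so that $Q^{2n}(B(n,d_R))=R/(I^2+2I)$ is carried by $U$ and $\alpha$ consistently onto $Q^{4k}(B(2k,2))=\zz_4$, with the inclusion $d_R:I\hookrightarrow R$ going to the coefficient $2$ in $\mc{P}_2$ --- is compatible with the $\%$-constructions defining the two Pontryagin squares. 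The signs produced by the $m$-fold skew-suspension $\bar{S}^m$ and the $m$-duality involution must also be tracked but cause no real difficulty. Everything else is a direct comparison of the two definitions.
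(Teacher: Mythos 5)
There is nothing to compare against inside the thesis itself: Lemma \ref{Pontryagin-total-space} is quoted from \cite[Lemma 8.8]{Korzen} and no proof is given here, so your direct comparison of Definition \ref{algebraic-Pont} with Definition \ref{equivariant-Pont} via $\Gamma_s = U(\phi_s)(\alpha)$ is the natural route to try. But two of the points you set aside as bookkeeping are the actual substance of the lemma, and as written your argument has genuine gaps. First, the claimed correspondence of a representative $(u,v):D \to B(2k,2)$ of $x \in H^{2k}(D;\zz_2)$ with a representative $(u',v'):C \to B(n,d_R)$ of a class in $H^{n}(C;\zz_2)$ does not exist in general. Since $D_r = C_{r-m}\otimes_{\zz[\pi]} A$, adjunction turns $(u,v)$ into a pair of $\zz[\pi]$-module morphisms $C_{n+1}\to A^*$, $C_n \to A^*$, i.e. an $A^*$-coefficient class on $C$, not an element of $H_0(\textnormal{Hom}_{\zz[\pi]}(C,B(n,d_R)))$; the two groups do not even have the same dimension unless $A$ has rank one. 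Only decomposable classes $c\otimes a$ reduce to an evaluation of $\mc{P}_{d_R}$ on a base class, and you cannot pass to general $x=\sum_i c_i\otimes a_i$ ``by linearity of all the maps involved'': the Pontryagin square is quadratic, and the cross terms $2\,\Gamma_0(c_i\otimes a_i,\,c_j\otimes a_j)$ are precisely what the thesis has to keep track of when it uses this lemma in the proof of Lemma \ref{Z4-pont}. A correct argument must either prove the identity on decomposables and then show both sides transform the same way under sums (with matching cross terms), or carry $A^*$-coefficients through the whole comparison.

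Second, the key equality in your display, namely that $2\,U(\phi_1)(\alpha)(v',u')$ equals $U\left(d_R\,\phi_1(v',u')\right)(\alpha)$, together with the well-definedness of the right-hand side, is asserted rather than proved. The equivariant Pontryagin square takes values in $Q^{2n}(B(n,d_R)) = R/(I^2+2I)$ (Proposition \ref{iso-ring-Q}), so for $\left(U(\mc{P}_{d_R})\alpha\right)(x)$ to be a well-defined element of $\zz_4$ you must show that composing with $U$ and pairing with $\alpha$ sends the ideal $I$ to even values and $I^2+2I$ to $0 \in \zz_4$, and that this is exactly how the inclusion $d_R: I \to R$ becomes the coefficient $2$ in the classical formula $\Gamma_0(v,v)+2\Gamma_1(v,u)$. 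This is not a formal consequence of naturality of the $\%$-constructions; it has to use that $U$ is a representation by isometries of the nonsingular $(-1)^m$-symmetric form $(A,\alpha)$ (and the signs coming from $\bar S^m$ and the $m$-duality), and it is where the content of the lemma lies. Until these two points are supplied, the chain of equalities in your proposal is only a plausible outline of the argument in \cite[Lemma 8.8]{Korzen}, not a proof.
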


\begin{lemma}\label{Z4-pont} If the action $U$ is $\zz_4$-trivial then the $\zz_4$-twisted Pontryagin square
$$\mc{P}_2 (x)=\left( U\left( \mc{P}_{d_R}  \right)  \alpha \right) (x) \in \zz_4$$
is isomorphic to the untwisted Pontryagin square
$$\mc{P}'_2 (x)=\left( \epsilon \left( \mc{P}_{d_R}  \right)  \alpha \right) (x) \in \zz_4.$$
\end{lemma}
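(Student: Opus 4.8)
The plan is to reduce the comparison of the two Pontryagin squares to a short computation inside the twisted $Q$-group $\zz[\pi]/(I^2+2I)$, using Lemma \ref{Pontryagin-total-space} to strip the action off both sides simultaneously. First I would invoke Lemma \ref{Pontryagin-total-space} twice: once for the action $U$, giving $\mc{P}_2(x) = \left(U\left(\mc{P}_{d_R}\right)\alpha\right)(x) \in \zz_4$, and once for the trivial action $\epsilon$ (the lemma holds for any action), giving $\mc{P}'_2(x) = \left(\epsilon\left(\mc{P}_{d_R}\right)\alpha\right)(x) \in \zz_4$. The crucial point is that the element $\mc{P}_{d_R}(x) \in \zz[\pi]/(I^2+2I)$ is the equivariant Pontryagin square of $(C,\phi)$ over $\zz[\pi]$ and does not depend on the action; only the ring morphism applied afterwards changes. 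By Lemma \ref{lemma-mod8-algebraic} the underlying $\zz_2$-vector space $V = H^{2k}(D;\zz_2) = H^{2k}(D';\zz_2)$ and the $\zz_2$-symmetric form $\lambda = [\Gamma_0] = [\Gamma'_0]$ agree, so $\mc{P}_2$ and $\mc{P}'_2$ are two $\zz_4$-valued quadratic enhancements of the single nonsingular form $(V,\lambda)$. By Proposition \ref{differ-linear-map} there is then a $t \in V$, unique because $\lambda$ is nonsingular, with $\mc{P}_2(x) - \mc{P}'_2(x) = 2\lambda(x,t) \in \zz_4$ for all $x$, and it suffices to prove $t = 0$.

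For this I would combine the two formulas into $2\lambda(x,t) = \bigl((U(\mc{P}_{d_R})-\epsilon(\mc{P}_{d_R}))(\alpha)\bigr)(x)$, where for a fixed element $w \in \zz[\pi]$ the symbol $U(w) - \epsilon(w)$ denotes the difference in the additive group $\textnormal{Hom}_\zz(A,A)$ (it is not a ring map, which is irrelevant here). Writing $\zz[\pi] = \zz\cdot 1 + I$ with $\zz\cdot 1 \cap I = 2\zz$, choose a lift $\widetilde{\mc{P}_{d_R}(x)} = c\cdot 1 + \iota$ with $c \in \zz$ and $\iota \in I$. Since $U(1) = \epsilon(1) = \id$, and both $U$ and $\epsilon$ are ring maps, the $\zz\cdot 1$-part dies: $U\bigl(c\cdot 1 + \iota\bigr) - \epsilon\bigl(c\cdot 1 + \iota\bigr) = U(\iota) - \epsilon(\iota)$. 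Now $\zz_4$-triviality (Definition \ref{Z4-triviality}) says exactly that $U(r) \otimes 1 = \epsilon(r) \otimes 1$ on $A \otimes \zz_4$ for every $r \in \zz[\pi]$, i.e.\ that the integer matrix $U(r) - \epsilon(r)$ has all entries divisible by $4$; applying this to $r = \iota$ and composing with $\alpha$, the integer-valued bilinear form $\bigl(U(\iota)-\epsilon(\iota)\bigr)(\alpha)$ takes values in $4\zz$, hence vanishes after reduction to $\zz_4$. Thus $2\lambda(x,t) = 0 \in \zz_4$ for all $x \in V$; since $2 : \zz_2 \to \zz_4$ is injective this forces $\lambda(x,t) = 0 \in \zz_2$ for all $x$, and nonsingularity of $\lambda$ gives $t = 0$. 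Hence $\mc{P}_2 = \mc{P}'_2$ as functions $V \to \zz_4$, so in particular the $\zz_4$-enhanced forms $(V,\lambda,\mc{P}_2)$ and $(V,\lambda,\mc{P}'_2)$ are isomorphic, which is the assertion of the lemma.

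I expect the main obstacle to be bookkeeping rather than anything conceptual: one must check that the expression $\left(U\left(\mc{P}_{d_R}\right)\alpha\right)(x)$ of Lemma \ref{Pontryagin-total-space} is genuinely well defined on the quotient $\zz[\pi]/(I^2+2I)$ and that its dependence on the action is $\zz$-linear in the matrix $U(r)$, so that ``$U$ and $\epsilon$ agree mod $4$'' transfers cleanly to ``the two Pontryagin squares are equal in $\zz_4$''. This is the content I would quote from \cite[Lemma 8.8]{Korzen} together with the computation of $Q^{2n}(B(n,d_R)) = \zz[\pi]/(I^2+2I)$ in Proposition \ref{iso-ring-Q} (which also guarantees that the value in the argument above does not depend on the chosen lift of $\mc{P}_{d_R}(x)$). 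A secondary point worth recording is that the identification $V = H^{2k}(D;\zz_2) = H^{2k}(D';\zz_2)$ used throughout is the canonical one from the mod $2$ reduction in Lemma \ref{lemma-mod8-algebraic}, under which the argument in fact produces the equality $\mc{P}_2 = \mc{P}'_2$, slightly stronger than the stated isomorphism; alternatively the proof can be phrased as the commutativity of the two composite ring homomorphisms $\zz[\pi]/(I^2+2I) \to \zz_4$ induced by $U$ and by $\epsilon$, but the direct difference computation is the most transparent.
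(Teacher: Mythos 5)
Your proof is correct, and it rests on the same two pillars as the paper's own argument: the formula of Lemma \ref{Pontryagin-total-space} (quoted from \cite{Korzen}), which expresses both Pontryagin squares through the action-independent equivariant square $\mc{P}_{d_R}$, and the observation that $\zz_4$-triviality forces $U(w)$ and $\epsilon(w)$ to agree modulo $4$, so that the two $\zz_4$-valued evaluations coincide. The bookkeeping, however, is genuinely different. The paper writes $x=\sum_i c_i\otimes a_i$ and uses the quadratic-enhancement property, so the comparison splits into cross terms $2\Gamma_0(\cdot,\cdot)$, which agree because the mod $2$ symmetric structures agree (Lemma \ref{lemma-mod8-algebraic}), and diagonal terms $\left(U\left(\mc{P}_{d_R}\right)\alpha\right)(c_i\otimes a_i)$, which agree by $\zz_4$-triviality; this decomposition is exactly what makes concrete your assertion that the formula depends $\zz$-linearly on the matrices $U(g)$, and it shows that mod $4$ agreement is only needed in the diagonal terms while mod $2$ suffices for the rest. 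You instead treat general $x$, invoke Proposition \ref{differ-linear-map} to write the difference as $2\lambda(x,t)$, and then kill $t$ by the matrix-congruence computation; note that this detour is logically superfluous, since your congruence argument already shows the difference of the two formulas vanishes in $\zz_4$, which gives $\mc{P}_2=\mc{P}'_2$ outright (a slightly sharper conclusion than the stated isomorphism), and the splitting $\zz[\pi]=\zz\cdot 1+I$ is likewise unnecessary because $\zz_4$-triviality applies to every $r\in\zz[\pi]$. Both routes lean equally on \cite{Korzen} for the well-definedness of $\left(U\left(\mc{P}_{d_R}\right)\alpha\right)(x)$ on $\zz[\pi]/(I^2+2I)$ and its linearity in the structure maps, so your proof is at the same level of rigour as the paper's; what the paper's expansion buys is an explicit accounting of where $\zz_2$- versus $\zz_4$-triviality enters, while yours buys a shorter, more uniform difference argument.
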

\begin{proof}
An element $x \in H^{2k}(D; \zz_2) = H^{2k}(D'; \zz_2)$ is described in general as a linear combination of a product, $x = \sum \limits_i c_i \otimes a_i$, so that
\begin{align*}
\mc{P}_2\left(\sum \limits_i c_i \otimes a_i\right) & = \sum \limits_i \mc{P}_2(c_i \otimes a_i) + 2 \sum \limits_{i < j} \Gamma_0 (a_i \otimes c_i, a_j \otimes  c_j).
\end{align*}
As $\Gamma_0$ is isomorphic to $\Gamma'_0$, then the difference between $\mc{P}_2$ and $\mc{P}'_2$ is given by the difference between
$$\sum \limits_i \left( U\left( \mc{P}_{d_R}  \right)  \alpha \right) (c_i \otimes a_i) \in \zz_4,$$
and
$$\sum \limits_i \left( \epsilon \left( \mc{P}_{d_R}  \right)  \alpha \right) (c_i \otimes a_i) \in \zz_4. $$
As the action $U$ is trivial over $\zz_4$, then the terms in these two sums are equal, so that the two Pontryagin squares are isomorphic.
\end{proof}

\subsection{The algebraic analogue of \ref{mod8-theorem}}

We can now state and prove the algebraic analogue of \ref{mod8-theorem}. The proof of the algebraic analogue implies the proof of the geometric statement.
\begin{theorem} \label{mod8-algebraic} Let $(D, \Gamma)=  (C, \phi) \otimes \bar{S}^m(A, \alpha, U)$ be a $4k$-dimensional symmetric Poincar\'e complex over $\zz$, with $U$ the action of the $\zz[\pi]$ on $(A, \alpha)$, if the representation $(A, \alpha, U)$ is $\zz_4$-trivial then:
$$\sigma(D, \Gamma) - \sigma(D', \Gamma') = 0 \in \zz_8,$$
where $(D', \Gamma')=  (C, \phi) \otimes \bar{S}^m(A, \alpha, \epsilon)$ is the trivial product.
\end{theorem}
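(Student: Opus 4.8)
The plan is to deduce Theorem~\ref{mod8-algebraic} from the chain-complex form of Morita's theorem (Theorem~\ref{Morita-theorem-cx}) together with the two comparison lemmas \ref{lemma-mod8-algebraic} and \ref{Z4-pont}, which are already established under the $\zz_4$-triviality hypothesis. Both $(D,\Gamma)$ and $(D',\Gamma')$ are $4k$-dimensional symmetric Poincar\'e complexes over $\zz$, so Theorem~\ref{Morita-theorem-cx} applies to each and gives
$$\sigma(D,\Gamma) = \textnormal{BK}\big(H^{2k}(D;\zz_2),\Gamma_0,\mc{P}_2\big) \in \zz_8, \qquad \sigma(D',\Gamma') = \textnormal{BK}\big(H^{2k}(D';\zz_2),\Gamma'_0,\mc{P}'_2\big) \in \zz_8.$$
Hence it suffices to show that these two Brown--Kervaire invariants coincide.

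First I would invoke Lemma~\ref{lemma-mod8-algebraic}: since $\zz_4$-triviality implies $\zz_2$-triviality, the mod~$2$ reductions of $(D,\Gamma)$ and $(D',\Gamma')$ agree, so there is an isomorphism of nonsingular symmetric bilinear forms over $\zz_2$ identifying $(H^{2k}(D;\zz_2),\Gamma_0)$ with $(H^{2k}(D';\zz_2),\Gamma'_0)$. Next I would apply Lemma~\ref{Z4-pont}, whose hypothesis is precisely $\zz_4$-triviality of $U$: writing $\mc{P}_2 = U(\mc{P}_{d_R})\alpha$ and $\mc{P}'_2 = \epsilon(\mc{P}_{d_R})\alpha$ via Lemma~\ref{Pontryagin-total-space}, the triviality of the action over $\zz_4$ makes $U$ disappear, so under the above identification $\mc{P}_2$ is carried to $\mc{P}'_2$. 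Therefore $(H^{2k}(D;\zz_2),\Gamma_0,\mc{P}_2)$ and $(H^{2k}(D';\zz_2),\Gamma'_0,\mc{P}'_2)$ are isomorphic as $\zz_4$-enhanced nonsingular symmetric forms over $\zz_2$, and by Theorem~\ref{iso-BK} the Brown--Kervaire invariant depends only on the isomorphism (indeed Witt) class, so $\textnormal{BK}(H^{2k}(D;\zz_2),\Gamma_0,\mc{P}_2) = \textnormal{BK}(H^{2k}(D';\zz_2),\Gamma'_0,\mc{P}'_2)$, whence $\sigma(D,\Gamma) - \sigma(D',\Gamma') = 0 \in \zz_8$. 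The geometric statement Theorem~\ref{mod8-theorem} then follows by specialising to $(C,\phi)=(C(\wtB),\phi)$ and $(A,\alpha,U)=(H^m(F;\zz)/\textnormal{torsion},\alpha,U)$, using the model of Part~II to write $\sigma(E) = \sigma\big((C(\wtB),\phi)\otimes\bar{S}^m(A,\alpha,U)\big)$ and $\sigma(B)\sigma(F) = \sigma\big((C(\wtB),\phi)\otimes\bar{S}^m(A,\alpha,\epsilon)\big)$, and noting that triviality of the $\pi_1(B)$-action on $H^m(F;\zz)/\textnormal{torsion}\otimes\zz_4$ is exactly $\zz_4$-triviality in the sense of Definition~\ref{Z4-triviality}.

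The genuinely delicate point — and the reason $\zz_4$-triviality, rather than mere $\zz_2$-triviality, is indispensable — is the comparison of the two Pontryagin squares in Lemma~\ref{Z4-pont}. A nonsingular $\zz_2$-symmetric form admits many inequivalent $\zz_4$-enhancements, differing by linear maps $q'(x)-q(x)=2\lambda(x,t)$ as in Proposition~\ref{differ-linear-map}, and by \cite[Theorem 1.20 (x)]{Brown} such a difference shifts the Brown--Kervaire invariant by $2q(t)\in\zz_8$, which in general is nonzero; under a merely $\zz_2$-trivial action one obtains only multiplicativity modulo~$4$ (recovering the result of \cite{Korzen}). It is the passage to $\zz_4$ coefficients that forces $\mc{P}_2$ and $\mc{P}'_2$ to agree on the nose (equivalently $t=0$), annihilating this correction term; once that is in place the remainder of the argument is purely formal, built on Theorems~\ref{Morita-theorem-cx} and \ref{iso-BK}.
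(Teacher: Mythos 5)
Your proof is correct and follows essentially the same route as the paper's own argument: apply the chain-complex Morita theorem (Theorem \ref{Morita-theorem-cx}) to both $(D,\Gamma)$ and $(D',\Gamma')$, then use Lemma \ref{lemma-mod8-algebraic} and Lemma \ref{Z4-pont} to identify the two $\zz_4$-enhanced forms and hence their Brown--Kervaire invariants. Your additional remarks on invoking Theorem \ref{iso-BK} explicitly and on why $\zz_4$-triviality (not mere $\zz_2$-triviality) is needed are consistent with the paper's discussion and do not change the argument.
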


\begin{proof}
The signatures modulo $8$ of both $(D, \Gamma)$ and $(D', \Gamma')$ are given by Morita's theorem \ref{Morita-theorem-cx} to be
\begin{itemize}
\item $\sigma(D, \Gamma) \equiv \textnormal{BK}(H^{2k}(D; \zz_2), \Gamma_0, \mc{P}_2) \pmod{8},$
\item $\sigma(D', \Gamma') \equiv \textnormal{BK}(H^{2k}(D'; \zz_2), \Gamma_0, \mc{P}'_2) \pmod{8}.$
\end{itemize}

From Lemma \ref{lemma-mod8-algebraic} we know that
$$(H^{2k}(D; \zz_2), \Gamma_0) \cong (H^{2k}(D'; \zz_2), \Gamma'_0)$$
and from Lemma \ref{Z4-pont} we know that the two Pontryagin squares $\mc{P}_2$ and $\mc{P}'_2$ are isomorphic, hence
\begin{align*}
\sigma(D, \Gamma) & = \textnormal{BK}(H^{2k}(D; \zz_2), \Gamma_0, \mc{P}_2) \\
                            & =  \textnormal{BK}(H^{2k}(D'; \zz_2), \Gamma'_0, \mc{P}'_2) \\
                             & = \sigma(D', \Gamma') \in \zz_8.
\end{align*}
so the result follows.
\end{proof}

The geometric proof of Theorem \ref{mod8-theorem}, which states that $\sigma(E)- \sigma(B \times F) = 0 \in \zz_8$ for a fibration $F^{2m}\to E^{4k} \to B^{2n}$ with trivial action of $\pi_1(B)$ on $H^m(F, \zz)/torsion \otimes \zz_4$  is a consequence of the algebraic proof:

For the total space of a Poincar\'e fibration
$F^{2m} \to E^{4k} \to B^{2n}$ with $\zz_4$-trivial action  the
$\zz_4$-enhanced symmetric forms over $\zz_2$
$$(H^{2k}(E;\zz_2),\lambda_E,q_E)~,~
(H^{2n}(B;\zz_2)\otimes_{\zz_2}H^{2m}(F;\zz_2),\lambda_B\otimes \lambda_F,q_B \otimes q_F)$$
of the symmetric Poincar\'e complexes over $\zz_4$
$$(C(E;\zz_4),\phi_E)~,~(C(B;\zz_4),\phi_B) \otimes (C(F;\zz_4),\phi_F)$$
are Witt equivalent, so that
\begin{align*}
\sigma(E)&= \textnormal{BK}(H^{2k}(E;\zz_2),\lambda_E,q_E) \\[1ex]
&=\textnormal{BK}(H^n(B;\zz_2),\lambda_B,q_B)\textnormal{BK}(H^m(F;\zz_2),\lambda_F,q_F)\\[1ex]
&=\sigma(B) \sigma(F) \in \zz_8.
\end{align*}

\subsubsection{Multiplicativity modulo $8$ with a $\zz_2$-trivial action}

In \cite{KT} S. Klaus and P. Teichner conjectured that for an oriented Poincar\'e fibration  $F^{2m} \to E^{4k} \to B^{2n}$ with trivial action of $\pi_1(B)$ on $H^{m}(F; \zz_2)$,
$$\sigma(E)- \sigma(F)\sigma(B) =0 \in \zz_8.$$

Their attempt at a proof involved spectral sequences in the style of \cite{HirzebruchSerreChern}.

 In \cite{Korzen} there is another attempt of proof of this conjecture for the case when both fibre and base have dimensions multiples of $4$, i.e for a fibration $F^{4m} \to E^{4n+ 4m} \to B^{4n}$. This attempt in \cite{Korzen} already used the algebraic model of the total space that we have presented in chapter \ref{model}, so that the algebraic version of the theorem is similar to the one that we are using here.
The argument in \cite{Korzen} was to reduce a general fibration with $\zz_2$-trivial action to the direct sum of a double covers.
The double cover of a $4n$-dimensional complex is known to have multiplicative signature modulo $8$ (\cite{visible}, \cite{bluebook}).
Unfortunately the proof in \cite{Korzen} has the following gap:
On \cite[page 98]{Korzen} there is an assumption that there is always a possible choice of basis $\left\{a_i \right\}$ for $A = H^{2m}(F; \zz) / torsion$ such that the symmetric form $\alpha: A \to A^*$ is diagonal. In general it is not always possible to diagonalize a symmetric form over $\zz$, so this assumption (which is crucial for the rest of the proof) is too strong.


For this conjecture to be true with a $\zz_2$-trivial action, we would need to prove that there exists an isomorphism of the untwisted and the twisted Pontryagin squares with $U$ a $\zz_2$-trivial action. At the moment it is only clear that two such Pontryagin squares differ by a linear map as explained in Proposition \ref{differ-linear-map}. However there is no problem if the action is $\zz_4$-trivial, as shown in Theorem \ref{mod8-theorem}.


\chapter{Examples of surface bundles with non-trivial signature}\label{examples-chapter}

\begin{flushright}

\textit{``The abstractions and the examples have to go hand in hand."}
\end{flushright}

\begin{flushright}
\textit{Sir Michael Atiyah}
\end{flushright}

\vspace{10pt}

\section{Review of the example in \cite{Endo}}

 Atiyah \cite{Atiyah-cov}, Kodaira \cite{Kodaira} and Hirzebruch \cite{Ramified-Hirz} constructed surface bundles with non-trivial action of the fundamental group which have signature divisible by $8$. That these constructions have signature divisible by $8$ is clarified by Hirzebruch in \cite[page 264]{Ramified-Hirz}. Lefschetz fibrations also provide examples of fibrations with non-zero signature. Nevertheless in the case of a Lefschetz fibration, the signature does not depend solely on the action of the fundamental group, but also on the existence of singular fibres.  The example in \cite{Endo} is a construction of a surface bundle with signature $4$ depending only on the non-trivial action of $\pi_1(B)$. In \cite{Endo}, Endo gives the computation of a surface bundle with fibre an orientable surface of genus $3$ and base an orientable surface of genus $111$. Although the action is not given explicitly in \cite{Endo}, it has been possible to compute this from the information given in the paper. This action is defined by 222 matrices in $\textnormal{Sp}(6, \bb{Z})$. Rather giving all 222 matrices we shall explain the how they are constructed and will give some of them explicitly, so that it will become clear that the action of $\pi_1(B)$ is not $\zz_4$-trivial, since it is not even $\zz_2$-trivial.
The method used by Endo for the construction of this example was initially formulated by Meyer in \cite{Meyerpaper}.
\begin{example} In this example we give a description of Endo's construction and give explicitly some of the matrices of the action.

Let $F^2 \to E \to B^2$ be a surface bundle. The genus of the fibre in this example is $h=3$.
The bundle $E$ is determined by its monodromy homomorphism
$$\pi_1(B) \overset{\chi}{\longrightarrow} \mathcal{M}_h$$
where $\mathcal{M}_h$ is the mapping class group of the fibre $F_h$.

Since every element of $\mathcal{M}_h$ leaves the intersection form on $H^1(F, \zz)$ invariant and since $(H^1(F_h, \mathbb{R}), \langle . \rangle)$ is isomorphic to the standard symmetric form $(\mathbb{R}^{2h}, \omega)$, there is a natural representation
$$\mathcal{M}_h \overset{\sigma}{\rightarrow}   \textnormal{Sp}(2h, \mathbb{Z}).$$
Hence the action $U: \pi_1(B) \to \textnormal{Sp}(2h, \mathbb{Z})$ factors through
$$\pi_1(B) \overset{\chi}{\rightarrow} \mathcal{M}_h \overset{\sigma}{\rightarrow}  \textnormal{Sp}(2h, \mathbb{Z}).$$

Endo uses the presentation in \cite{Wajnryb} for the mapping class group $\mathcal{M}_h$ of the fibre.

\begin{remark} The presentation given in the paper \cite{Wajnryb} has some errata in the relations. These where corrected in a later paper
\cite{Errata}.  Although Endo only refers to \cite{Wajnryb} and not to the correction \cite{Errata}, he did not use any of the incorrect relations in \cite{Wajnryb} in his example, so fortunately \cite{Endo} is not affected by these errata.
\end{remark}

The generators of the presentation of the mapping class group $\mathcal{M}_h$ of the fibre $F_h$ are $y_i$, $u_i$ and $z_i$, and these can be interpreted as Dehn twists with respect to the curves in the following figure
\vspace{5pt}

\begin{figure} [ht]
\centering
\includegraphics[scale=0.7, trim=10 80 10 800]{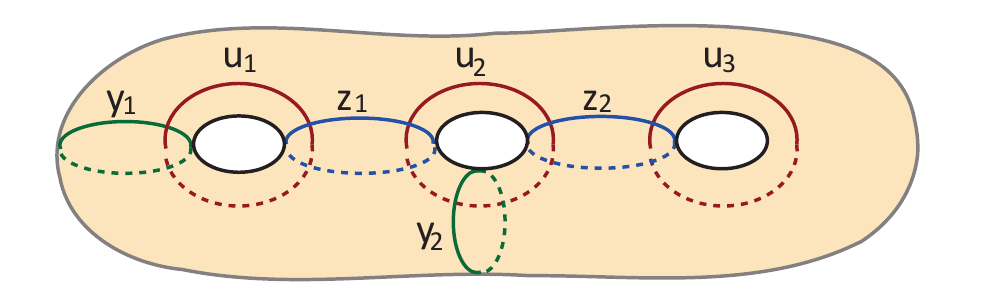}
\vspace{55pt}
\end{figure}

The relations are as given in \cite{Wajnryb} and \cite{Errata}.

The convention in \cite{Endo} is that positive twists are interpreted as right twists:

\vspace{-10pt}
\begin{figure} [ht]
\centering
\includegraphics[scale=0.6, trim=30 120 30 700]{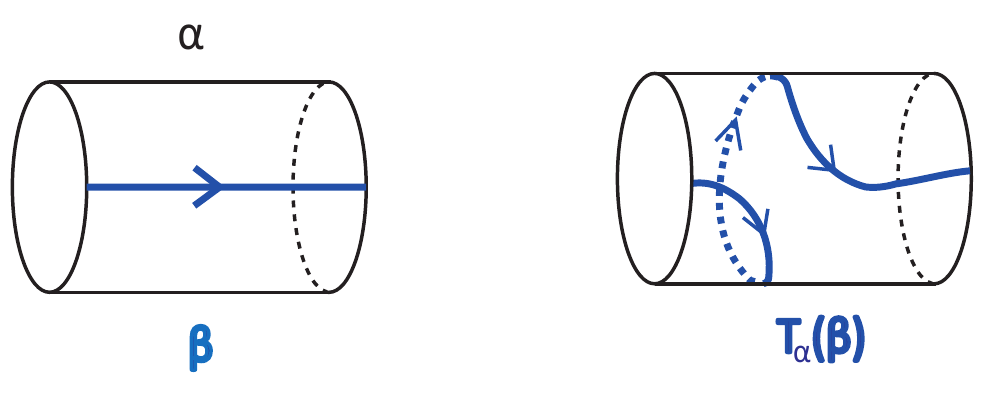}
\vspace{-10pt}
\end{figure}

\vspace{70pt}

For example the action of the generator $u_2$ on homology is given by:
\vspace{50pt}

\begin{figure} [ht]
\centering
\includegraphics[scale=0.6, trim=10 90 30 100]{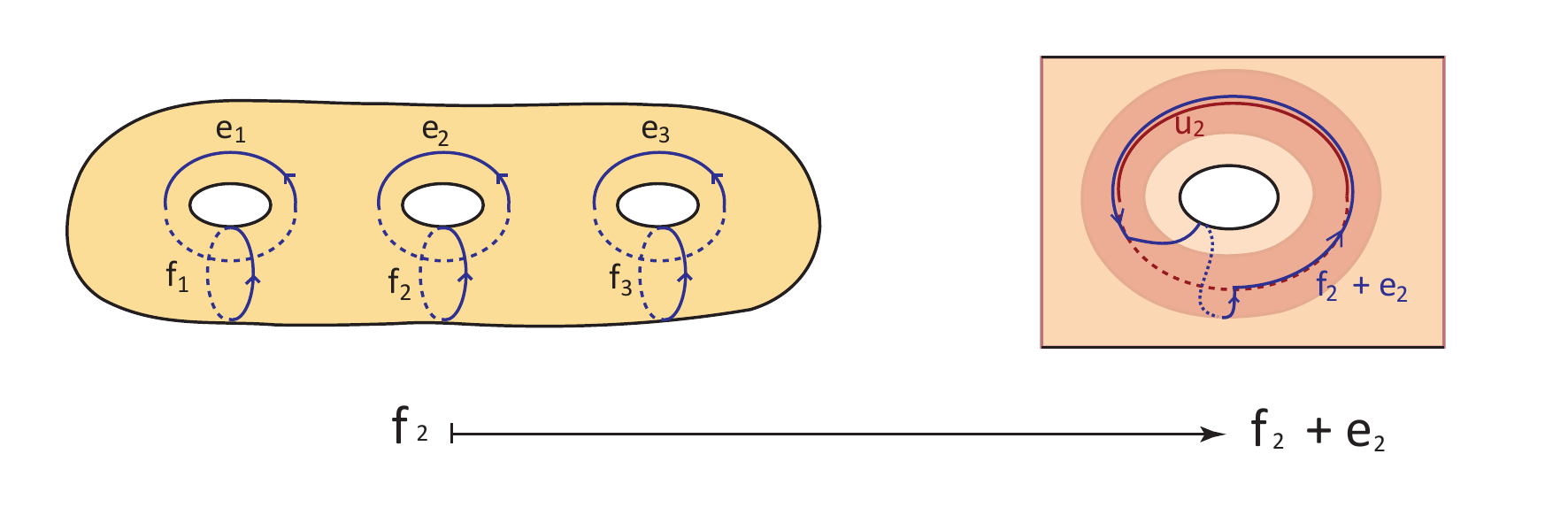}
\vspace{50pt}
\end{figure}
This map is given by a matrix in $\textnormal{Sp}(2h, \mathbb{Z})$,

$$\left( \begin{array}{cccccc}
1& 0&  0& 0& 0& 0 \\
0& 1&  0&0& 1& 0 \\
0& 0&  1&0& 0& 0 \\
0& 0&  0&1& 0& 0 \\
0& 0&  0&0& 1& 0\\
0& 0&  0&0& 0& 1 \\
 \end{array} \right) \left( \begin{array}{c}
0 \\
0 \\
0 \\
0\\
1\\
0 \\
 \end{array} \right) = \left( \begin{array}{c}
0 \\
1 \\
0 \\
0\\
1\\
0 \\
 \end{array} \right).$$
The explicit representation of all the generators is given on \cite[page 920]{Endo}.

To find a non-trivial example Endo requires a long word of generators of the mapping class group $\mathcal{M}_h$.
The construction of this word of generators is described in \cite[page 923]{Endo}. Endo then follows a commutator collection process to rearrange the word as a product of commutators, so that the commutators in $\pi_1(B)$ are mapped to commutators in $\mathcal{M}_h,$
\begin{align*}
\pi_1(B) \overset{\chi}{\longrightarrow} &\hspace{9pt} \mathcal{M}_h \hspace{5pt} \overset{\sigma}{\longrightarrow}  \textnormal{Sp}(2h, \mathbb{Z}) \\
[a_i, b_i] \hspace{1pt} \longmapsto & [A_i, B_i, ].
\end{align*}
with $A_i$ and $B_i$ products of generators of $\mathcal{M}_h.$

As the genus of the base in this example is $111$, the action is defined by 222 matrices in $\textnormal{Sp}(6, \mathbb{Z})$, such that the product of the commutator is equal to $1$. A generator of $\mathcal{M}_h$ is represented by a matrix in $\textnormal{Sp}(6, \mathbb{Z})$. After the collection process on the word of generators we have commutators $[A_i, B_i],$ where each $A_i$ or $B_i$ is the product of generators of $\mathcal{M}_h$. Using the explicit representation given in \cite[page 920]{Endo}, $\sigma: \mathcal{M}_h \to \textnormal{Sp}(6, \mathbb{Z})$, we can find the matrices corresponding to each $A_i$ or $B_i$ by taking the product of the matrices that represent each of the generators in $\textnormal{Sp}(6, \mathbb{Z}).$

Two of them are for example,

\begin{minipage}[c]{0.4\linewidth}
\vspace{-15pt}
$$\left( \begin{array}{cccccc}
1& 0&  0&-1& 0& 0 \\
0& 1&  0&0& 0& 0 \\
0& 0&  1&0& 0& 0 \\
0& 0&  0&1& 0& 0 \\
0& 0&  0&0& 1& 0\\
0& 0&  0&0& 0& 1 \\
 \end{array} \right),$$

\end{minipage}
\begin{minipage}[c]{0.40\linewidth}

\vspace{5pt}
 $$\left( \begin{array}{cccccc}
1& 0&  0&0& 0& 0 \\
0& 1&  0&0& 0& 0 \\
0& 0&  1&0& 0& 0 \\
1& -1&  0&1& 0& 0 \\
-1& 1&  0&0& 1& 0\\
0& 0&  0&0& 0& 1 \\
\end{array} \right).$$
\vspace{10pt}
\end{minipage}

These are clearly not trivial when reduced modulo $4$, as expected from the proof of Theorem \ref{mod8-theorem}. Interestingly these matrices are also trivial when reducing mod $2$, so Endo's example does not provide a counterexample for disproving the conjecture in \cite{KT}.

\end{example}

\section{Constructing further examples}\label{more-examples}

By Novikov additivity we know that if two compact oriented $4k$-dimensional manifolds are glued by an orientation reversing diffeomorphism of their boundaries, then the signature of the union is the sum of the signature of the components. Following this idea Meyer computed the signature of the fibre bundle by dividing the base $B$ into pairs of pants $X$ and then computing the signature of the lift in the total space of each of these pairs of pants.
\begin{figure}[ht!]
\centering
\includegraphics[scale=0.5]{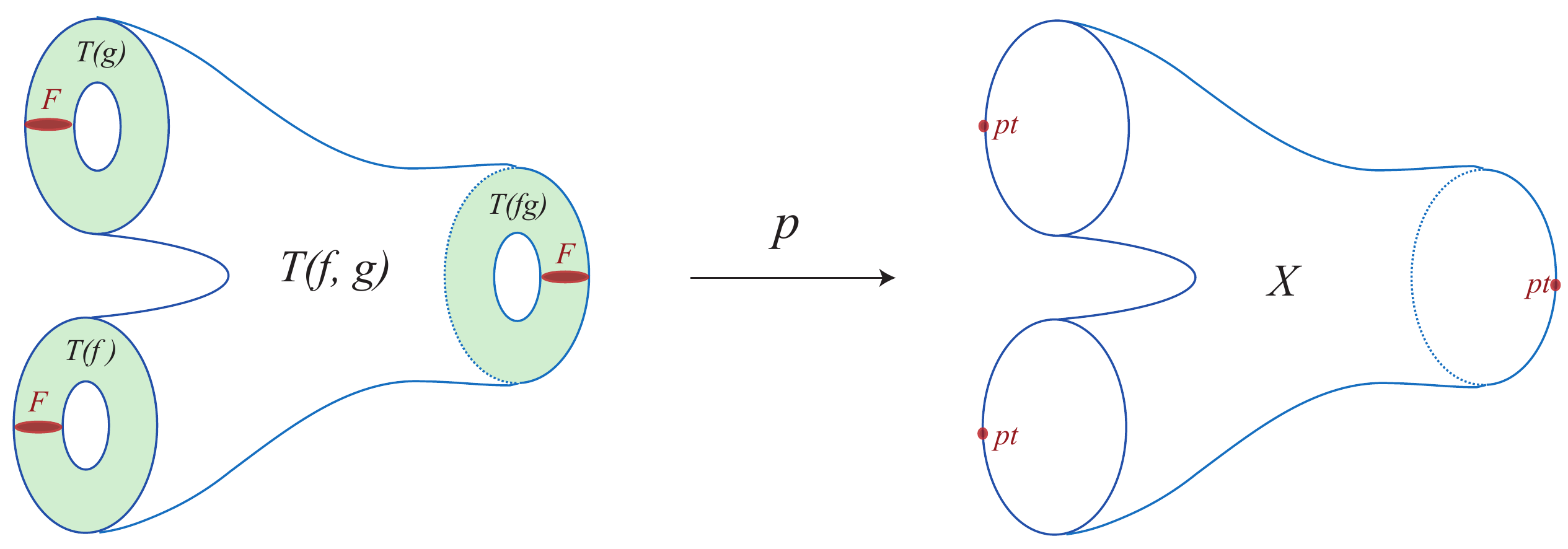}
\label{T(f, g)}
\end{figure}

The manifold on the right in figure \ref{T(f, g)}   is a $4$-dimensional manifold with three boundary components, where each boundary component is a mapping torus given by an automorphism of the fibre. We shall refer to this as $T(f, g)$ as it is the part of the total space depending on the two  symplectic automorphisms $$f, g: (H, \phi) \to (H, \phi)$$ of the skew-symmetric form $(H, \phi) = (H^1(F, \zz), \phi)$. 

Define $(V = H \oplus H, \Phi= \phi \oplus - \phi)$ and three Lagrangians
\begin{align*}
A&= \{(x, x ) \in H \oplus H \vert x \in H  \},   \\
B&= \{(y, f(y) ) \in H \oplus H \vert y \in H  \},  \\
C&= \{(z, g(z)) \in H \oplus H \vert x \in H  \}.
\end{align*}
The form $\Phi : V \times V \to \bb{Z}$ is a skew-symmetric bilinear map with $$\Phi(A\times A)=\Phi(B\times B)=\Phi(C\times C)=0.$$

As in \cite{Wall-non-add}, we consider the additive relation between $A$ and $B$ defined as $a \sim b$ if $\exists c \in C$ such that $a = b+c$. The domain of this relation is the set of $(x,x) = a \in A$ that can be expressed as $(b + c)= (y + z, f(y)+g(z))$, with $b= (y, f(y))$ and $c= (z, g(z))$. That is, the set
$$A \cap (B+C)= \{(x,x) = (y, f(y)) + (z, g(z)) |  x = y+z=f(y)+g(z)\}.$$
We define (as in \cite{Wall-non-add}) a bilinear map $\Psi: A \cap (B+C) \times A \cap (B+C) \to \bb{R}$ by
\begin{align*}
\Psi((x,x), (x', x')) & =  \Phi((x,x), (y', f(y'))) \\
                             & = \phi(x, y') - \phi(x, f(y')) \\
                            & = \phi(x, (1-f)  y').
\end{align*}
The form $\Psi$ is symmetric as we now check,
\begin{align*}
\Psi((x,x), (x', x'))-\Psi((x',x'), (x, x)) &= \phi(x, (1-f)  y') -\phi(x', (1-f)  y) \\
                                                            & = \phi(x, (1-f)  y') +\phi((1-f)  y, x')  \\
                                                         &= \phi(x, y') - \phi(x, f(y')) + \phi(y, x') - \phi(f(y), x') \\
                                                          & = \phi(x+y, x' + y') - \phi(x, x')- \phi(y, y') \\
& \hspace{30pt} - \phi(x+f(y), x'+f(y')) + \phi(x,x')+\phi(f(y), f(y')) \\
                                                      & =   \phi(z, z') - \phi(g(z), g(z')) \\
                                                 & = \Phi((z, g(z)), (z', g(z'))) = 0
\end{align*}


The signature of $(A \cap (B+ C), \Psi)$ is the nonadditivity invariant of \cite{Wall-non-add} such that $\sigma(T(f, g))= \sigma(A \cap (B+ C), \Psi) \in \zz.$

\subsection*{Constructing an isomorphism $H \xrightarrow{\cong} A \cap (B+C)$}
If $1-f$ is an automorphism, then from the equality $y+z = f(y)+g(z)$ we obtain the relation
$$y = -(1-f)^{-1}(1-g)z \hspace{20pt} \textnormal{for any } z \in H $$
and we define,
 \begin{align*}
H  \xrightarrow{\cong} &A \cap (B+C) \\
z \mapsto & (y+z , f(y)+g(z)) = (x, x) \\
 & = (-(1-f)^{-1}(1-g)z + z, f (-(1-f)^{-1}(1-g)z )+ g(z)) \\
& = \left((1-f)^{-1}(-(1-g)+(1-f))z, (1-f)^{-1}(-f(1-g)z+ (1-f)g(z)) \right) \\
& = \left((1-f)^{-1}(g-f)z, (1-f)^{-1}((-f+fg)z+ (g-fg)z) \right) \\
& = \left((1-f)^{-1}(g-f)z, (1-f)^{-1}((g-f)z) \right)
           \end{align*}

That is, the isomorphism is given by
 \begin{align*}
h: H  \xrightarrow{\cong} &A \cap (B+C) \\
z \mapsto &  \left((1-f)^{-1}(g-f)z, (1-f)^{-1}((g-f)z) \right)
\end{align*}

As $\Psi$ above is symmetric, then composition with the isomorphism $h$ gives a symmetric matrix.
$$h^* \Psi h(z, z') = \phi \left(  (1-f)^{-1}(g-f) z, -(1-g) z' \right)$$

We can now choose two arbitrary automorphisms $f$ and $g$ and find that the form $S(f, g)= \phi(1-g^{-1})(1-f)^{-1} (g-f)$ is indeed symmetric.
\vspace{5pt}
The signature of this symmetric form $S(f, g)$ is the signature of $T(f, g)$, which is the part of the total space corresponding to the pair of pants that is determined by the two automorphisms $f$ and $g$. $T(f, g)$ has boundary components three mapping tori  $T(f)$, $T(g)$ and $T(fg)$.

Now consider the space $T(f, gf^{-1}g^{-1})$. This has boundary components, $T(f)$, $T(gf^{-1}g^{-1})$ and $T([f,g])$, where $[f, g]= fgf^{-1}g^{-1}$.

It can be proved that the boundary components $T(f)$, $T(gf^{-1}g^{-1})$  are related by a homeomorphism, and identifying these two boundary components gives a bundle over a punctured torus with one boundary component, $T(fgf^{-1}g^{-1})=T([f,g])$, and this space has the same signature as $T(f, gf^{-1}g^{-1})$

\begin{example}
Our first example is a local coefficient system. The base has genus $2$ and the fibre genus $1$.

As the base has genus $2$ we need to define have four automorphisms $f_1$, $g_1$, $f_2$, $g_2$, all of them symplectic matrices.

\begin{minipage}[c]{0.4\linewidth}
\vspace{-10pt}

$$f_1 = \left( \begin{array}{cccccc}
0 & 1 \\
-1 & 0
 \end{array} \right),$$

\end{minipage}
\begin{minipage}[c]{0.30\linewidth}

$$g_1 = \left( \begin{array}{cccccc}
0 & 1 \\
-1 & 1
 \end{array} \right),$$

\vspace{10pt}
\end{minipage}

\vspace{-25pt}

\begin{minipage}[c]{0.4\linewidth}
\vspace{-10pt}

$$f_2 = \left( \begin{array}{cccccc}
0 & -1 \\
1 & -1
 \end{array} \right),$$
\end{minipage}
\begin{minipage}[c]{0.30\linewidth}

$$g_2 = \left( \begin{array}{cccccc}
0 & 1 \\
-1 & 0
 \end{array} \right),$$

\vspace{10pt}
\end{minipage}

\vspace{-10pt}

Here we have made a choice of the first two automorphisms $f_1$ and $g_1$ and then constructed the other bearing in mind that the product of the commutators has to be equal to $1$, $[f_1, g_1].[f_2, g_2] = 1$. So in this example, $f_1 = g_2$ and $g_1= -f_2$.

All four matrices are invertible. And none of $\textnormal{det}(I-f_1)$, $\textnormal{det}(I-g_1)$, $\textnormal{det}(I-f_2)$, $\textnormal{det}(I-g_2)$ is $0$.

The symmetric form for $f_1$, $g_1$ is:
$$S(f_1,g_1f_1^{-1}g_1^{-1}) =\left( \begin{array}{cccccc}
4 & -3 \\
-3 & 3.5
 \end{array} \right),$$
this has $2$ positive eigenvalues, i.e, signature $2$.

The symmetric form for $f_2$, $g_2$ is:
$$S(f_2,g_2f_2^{-1}g_2^{-1}) =\left( \begin{array}{cccccc}
-4/3 & -2/3 \\
-2/3 & -10/3
 \end{array} \right),$$
this has $2$ negative eigenvalues, i.e, signature $-2$.

Hence the local coefficient system has signature $4$.

\end{example}

\begin{example}
Another example is a local coefficient system. The base has genus $2$ and the fibre genus $1$.

As the base has genus $2$ we need to define have four automorphisms $f_1$, $g_1$, $f_2$, $g_2$, all of them symplectic matrices.

\begin{minipage}[c]{0.4\linewidth}
\vspace{-10pt}

$$f_1 = \left( \begin{array}{cccccc}
0 & 1 \\
-1 & 0
 \end{array} \right),$$

\end{minipage}
\begin{minipage}[c]{0.30\linewidth}

$$g_1 = \left( \begin{array}{cccccc}
0 & 1 \\
-1 & 1
 \end{array} \right),$$
\vspace{10pt}
\end{minipage}

\vspace{-25pt}

\begin{minipage}[c]{0.4\linewidth}
\vspace{-10pt}

$$f_2 =  \left( \begin{array}{cccccc}
4 & -3 \\
7 & -5
 \end{array} \right),$$
\end{minipage}
\begin{minipage}[c]{0.30\linewidth}


$$g_2 = \left( \begin{array}{cccccc}
-3 & 2 \\
-5 & 3
 \end{array} \right),$$
\vspace{10pt}
\end{minipage}

\vspace{-10pt}

The product of the commutators $[f_1, g_1].[f_2, g_2] = 1$
In this example we have made the same choice as in the previous example for the first two commutators $f_1$ and $g_1$ and we have constructed the other two by conjugating both by an suitable matrix. So with
$$D =  \left( \begin{array}{cccccc}
1 & 1 \\
1 & 2
 \end{array} \right),$$
we have the following relations
$$ f_2 = -D . g_1. D^{-1} \textnormal{ \hspace{2pt} and \hspace{5pt}} g_2 = D . f_1. D^{-1}$$
All four matrices are invertible. And none of $\textnormal{det}(I-f_1)$, $\textnormal{det}(I-g_1)$, $\textnormal{det}(I-f_2)$, $\textnormal{det}(I-g_2)$ is $0$.

The symmetric form for $f_1$, $g_1$ is:
$$S(f_1,g_1f_1^{-1}g_1^{-1}) =\left( \begin{array}{cccccc}
4 & -3 \\
-3 & 3.5
 \end{array} \right),$$
this has $2$ positive eigenvalues, i.e, signature $2$.

The symmetric form for $f_2$, $g_2$ is:
$$S(f_2,g_2f_2^{-1}g_2^{-1}) =\left( \begin{array}{cccccc}
-6 & 4 \\
4 & -3.33333333
 \end{array} \right),$$
this has $2$ negative eigenvalues, i.e, signature $-2$.
Hence the local coefficient system has signature $4$.

\end{example}

\appendix
\chapter{Python module for computations in section \ref{more-examples}}
 In this appendix we give the Python module used to compute the examples in section \ref{more-examples}. We shall not give any detailed explanations as this material has already been discussed in section \ref{more-examples}.
What we give here is the Python module, which we have written using the Python 3.4.1. version.

\vspace{10pt}

\lstinputlisting{signature2.py}

\newcommand{\etalchar}[1]{$^{#1}$}



\begin{thebibliography}{EKK{\etalchar{+}}02}

\bibitem[Arf41]{Arf}
C.~Arf.
\newblock Untersuchungen \"uber quadratische {F}ormen in {K}\"orpern der
  {C}harakteristik 2. {I}.
\newblock {\em J. Reine Angew. Math.}, 183:148--167, 1941.

\bibitem[Ati69]{Atiyah-cov}
M.~F. Atiyah.
\newblock The signature of fibre-bundles.
\newblock In {\em Global {A}nalysis ({P}apers in {H}onor of {K}. {K}odaira)},
  pages 73--84. Univ. Tokyo Press, Tokyo, 1969.

\bibitem[BDS01]{BDS}
J.~Bryan, R.~Donagi, and A.~Stipsicz.
\newblock Surface bundles: some interesting examples.
\newblock {\em Turkish J. Math.}, 25(1):61--68, 2001.

\bibitem[BM73]{Brum-Morg}
G.~W. Brumfiel and J.~W. Morgan.
\newblock Quadratic functions, the index modulo {$8$}, and a
  {$\mathbb{Z}_4$}-{H}irzebruch formula.
\newblock {\em Topology}, 12:105--122, 1973.

\bibitem[BM76]{brumfield-Morgan}
G.~W. Brumfiel and J.~W. Morgan.
\newblock Homotopy theoretic consequences of {N}. {L}evitt's obstruction theory
  to transversality for spherical fibrations.
\newblock {\em Pacific J. Math.}, 67(1):1--100, 1976.

\bibitem[BR06]{BanRan}
M.~Banagl and A.~Ranicki.
\newblock Generalized {A}rf invariants in algebraic {$L$}-theory.
\newblock {\em Adv. Math.}, 199(2):542--668, 2006.

\bibitem[Bro68]{Brown-book}
R.~Brown.
\newblock {\em Elements of modern topology}.
\newblock McGraw-Hill Book Co., New York-Toronto, Ont.-London, 1968.

\bibitem[Bro72a]{Browder}
W.~Browder.
\newblock {\em Surgery on simply-connected manifolds}.
\newblock Springer-Verlag, New York, 1972.
\newblock Ergebnisse der Mathematik und ihrer Grenzgebiete, Band 65.

\bibitem[Bro72b]{Brown}
E.~H. Brown, Jr.
\newblock Generalizations of the {K}ervaire invariant.
\newblock {\em Ann. of Math. (2)}, 95:368--383, 1972.

\bibitem[BW94]{Errata}
J.S. Birman and B.~Wajnryb.
\newblock Presentations of the mapping class group. ({E}rrata) ``{A} simple
  presentation of the mapping class group of an orientable surface'' [{I}srael
  {J}. {M}ath. {\bf 45} (1983), no.\ 2-3, 157--174; {MR}0719117 (85g:57007)] by
  {W}ajnryb.
\newblock {\em Israel J. Math.}, 88(1-3):425--427, 1994.

\bibitem[CHS57]{HirzebruchSerreChern}
S.~S. Chern, F.~Hirzebruch, and J-P. Serre.
\newblock On the index of a fibered manifold.
\newblock {\em Proc. Amer. Math. Soc.}, 8:587--596, 1957.

\bibitem[Die71]{Dieck}
T.~Dieck.
\newblock Partitions of unity in homotopy theory.
\newblock {\em Composito Math.}, 23:159--167, 1971.

\bibitem[DM05]{Deloup}
F.~Deloup and G.~Massuyeau.
\newblock Quadratic functions on torsion groups.
\newblock {\em J. Pure Appl. Algebra}, 198(1-3):105--121, 2005.

\bibitem[EKK{\etalchar{+}}02]{Endo-alt}
H.~Endo, M.~Korkmaz, D.~Kotschick, B.~Ozbagci, and A.~Stipsicz.
\newblock Commutators, {L}efschetz fibrations and the signatures of surface
  bundles.
\newblock {\em Topology}, 41(5):961--977, 2002.

\bibitem[End98]{Endo}
H.~Endo.
\newblock A construction of surface bundles over surfaces with non-zero
  signature.
\newblock {\em Osaka J. Math.}, 35:915 -- 930, 1998.

\bibitem[EZ53]{Eilenberg-Zilber}
S.~Eilenberg and J.~A. Zilber.
\newblock On products of complexes.
\newblock {\em Amer. J. Math.}, 75:200--204, 1953.

\bibitem[Gif87]{Giffen}
C.~H. Giffen.
\newblock The relation of quadratic {$K$}-theory to {H}ermitian {$K$}-theory.
\newblock {\em Comm. Algebra}, 15(5):971--984, 1987.

\bibitem[GM86]{GM}
L.~Guillou and A.~Marin.
\newblock Une extension d'un th\'eor\`eme de {R}ohlin sur la signature.
\newblock {\em Birkh\"auser Boston}, 62:97--118, 1986.

\bibitem[Hat02]{Hatcher}
A.~Hatcher.
\newblock {\em Algebraic topology}.
\newblock Cambridge University Press, Cambridge, 2002.

\bibitem[Hir69]{Ramified-Hirz}
F.~Hirzebruch.
\newblock The signature of ramified coverings.
\newblock {\em Univ. Tokyo Press}, Global {A}nalysis ({P}apers in {H}onor of
  {K}. {K}odaira):253--265, 1969.

\bibitem[HKR07]{modfour}
I.~Hambleton, A.~Korzeniewski, and A.~Ranicki.
\newblock The signature of a fibre bundle is multiplicative mod 4.
\newblock {\em Geom. Topol.}, 11:251--314, 2007.

\bibitem[KM63]{Ker-Mil}
M.~Kervaire and John~W. Milnor.
\newblock Groups of homotopy spheres.
\newblock {\em Ann. of Math. (2)}, 77:504--537, 1963.

\bibitem[KM04]{Kirby}
R.~Kirby and P.~Melvin.
\newblock Local surgery formulas for quantum invariants and the {A}rf
  invariant.
\newblock In {\em Proceedings of the {C}asson {F}est}, volume~7 of {\em Geom.
  Topol. Monogr.}, pages 213--233 (electronic). Geom. Topol. Publ., Coventry,
  2004.

\bibitem[Kod67]{Kodaira}
K.~Kodaira.
\newblock A certain type of irregular algebraic surfaces.
\newblock {\em J. Analyse Math.}, 19:207--215, 1967.

\bibitem[Kor05]{Korzen}
A.~Korzeniewski.
\newblock On the signature of fibre bundles and absolute whitehead torsion.
\newblock {\em Available at
  \href{www.maths.ed.ac.uk/~aar/papers/korzen.pdf}{www.maths.ed.ac.uk/~aar/papers/korzen.pdf}},
  PhD Thesis, 2005.

\bibitem[KT03]{KT}
S.~Klaus and P.~Teichner.
\newblock Private communication.
\newblock 2003.

\bibitem[LR88]{SurTransfer}
W.~L{\"u}ck and A.~Ranicki.
\newblock Surgery transfer.
\newblock {\em Springer Lecture Notes}, 1361:167--246, 1988.

\bibitem[L{\"u}c86]{Transfer-K}
W.~L{\"u}ck.
\newblock The transfer maps induced in the algebraic {$K_0$}-and {$K_1$}-groups
  by a fibration. {I}.
\newblock {\em Math. Scand.}, 59(1):93--121, 1986.

\bibitem[Mat86]{Matsumoto}
Y.~Matsumoto.
\newblock An elementary proof of {R}ochlin's signature theorem and its
  extension by {G}uillou and {M}arin.
\newblock In {\em \`{A} la recherche de la topologie perdue}, volume~62 of {\em
  Progr. Math.}, pages 119--139. Birkh\"auser Boston, Boston, MA, 1986.

\bibitem[Mey72]{Meyer-thesis}
W.~Meyer.
\newblock Die {S}ignatur von lokalen {K}oeffizientensystemen und
  {F}aserb\"undeln.
\newblock {\em Bonn. Math. Schr.}, (53):viii+59, 1972.

\bibitem[Mey73]{Meyerpaper}
W.~Meyer.
\newblock Die {S}ignatur von {F}l{\"a}chenb{\"u}ndeln.
\newblock {\em Math. Ann.}, 201:239--264, 1973.

\bibitem[MH73]{Milnhus}
J.~W. Milnor and D.~Husemoller.
\newblock {\em Symmetric bilinear forms}.
\newblock Springer-Verlag, New York, 1973.
\newblock Ergebnisse der Mathematik und ihrer Grenzgebiete, Band 73.

\bibitem[Mis71]{Mishchenko}
A.~S. Mishchenko.
\newblock Homotopy invariants of multiply connected manifolds. {III}. {H}igher
  signatures.
\newblock {\em Izv. Akad. Nauk SSSR Ser. Mat.}, 35:1316--1355, 1971.

\bibitem[Mor71]{Morita}
S.~Morita.
\newblock On the {P}ontrjagin square and the signature.
\newblock {\em J. Fac. Sci. Univ. Tokyo Sect. IA Math.}, 18:405--414, 1971.

\bibitem[MS74]{Mil-Sta}
J.~W. Milnor and J.~D. Stasheff.
\newblock {\em Characteristic classes}.
\newblock Princeton University Press, Princeton, N. J., 1974.
\newblock Annals of Mathematics Studies, No. 76.

\bibitem[MT68]{Mosher-Tangora}
R.E. Mosher and M.C. Tangora.
\newblock Cohomology operations and applications in homotopy theory.
\newblock {\em Harper \& Row Publishers}, pages x+214, 1968.

\bibitem[Ozb02]{Ozbagci}
B~Ozbagci.
\newblock Signatures of {L}efschetz fibrations.
\newblock {\em Pacific J. Math.}, 202(1):99--118, 2002.

\bibitem[Pin85]{Pink}
U.~Pinkall.
\newblock Regular homotopy classes of immersed surfaces.
\newblock {\em Topology}, 24(4):421--434, 1985.

\bibitem[Pon42]{Pontryagin}
L.~Pontrjagin.
\newblock Characteristic cycles on manifolds.
\newblock {\em C. R. (Doklady) Acad. Sci. URSS (N.S.)}, 35:34--37, 1942.

\bibitem[Qui72]{Quinn-normal}
F.~Quinn.
\newblock Surgery on {P}oincar\'e and normal spaces.
\newblock {\em Bull. Amer. Math. Soc.}, 78:262--267, 1972.

\bibitem[Ran80a]{atsI}
A.~Ranicki.
\newblock The algebraic theory of surgery {I}.
\newblock {\em Applications to topology, {P}roc. {L}ondon Math. {S}oc. (3)},
  40:87--192, 1980.

\bibitem[Ran80b]{atsII}
A.~Ranicki.
\newblock The algebraic theory of surgery {II}.
\newblock {\em Applications to topology, {P}roc. {L}ondon {M}ath. {S}oc. (3)},
  40:193--283., 1980.

\bibitem[Ran81]{exactseqRan}
A.~Ranicki.
\newblock {\em Exact sequences in the algebraic theory of surgery}.
\newblock Mathematical Notes 26, Princeton, 1981.

\bibitem[Ran92]{bluebook}
A.~Ranicki.
\newblock {\em Algebraic $L$-theory and topological manifolds}.
\newblock Cambridge University Press, 1992.

\bibitem[Ran01]{algpoinc}
A.~Ranicki.
\newblock {A}lgebraic {P}oincar\'e cobordism.
\newblock {\em AMS Contemp. Maths.}, 279:213--255, 2001.

\bibitem[RT]{Mod8}
A.~Ranicki and L.~R. Taylor.
\newblock The mod $8$ signature of normal complexes.
\newblock {\em In preparation}.

\bibitem[Sch76]{Schoen}
R.~Schoen.
\newblock Fibrations over a {CW}h-base.
\newblock {\em Proc. Amer. Math. Soc.}, 62(1):165--166 (1977), 1976.

\bibitem[Spa66]{Spanier}
E.~H. Spanier.
\newblock {\em Algebraic topology}.
\newblock McGraw-Hill Book Co., New York, 1966.

\bibitem[Spi67]{Spivak}
M.~Spivak.
\newblock Spaces satisfying {P}oincar\'e duality.
\newblock {\em Topology}, 6:77--101, 1967.

\bibitem[Sta63]{Stasheff}
J.~D. Stasheff.
\newblock A classification theorem for fibre spaces.
\newblock {\em Topology}, 2:239--246, 1963.

\bibitem[Ste47]{Steenrod}
N.~E. Steenrod.
\newblock Products of cocycles and extensions of mappings.
\newblock {\em Ann. of Math. (2)}, 48:290--320, 1947.

\bibitem[Sti02]{Stipsicz}
A.~I. Stipsicz.
\newblock Surface bundles with nonvanishing signature.
\newblock {\em Acta Math. Hungar.}, 95(4):299--307, 2002.

\bibitem[Tay]{Taylor}
L.~R. Taylor.
\newblock Gauss sums in algebra and topology.
\newblock {\em Available at
  \href{www.maths.ed.ac.uk/~aar/papers/taylorg.pdf}{www.maths.ed.ac.uk/~aar/papers/taylorg.pdf}}.

\bibitem[vdB59]{Blij}
F.~van~der Blij.
\newblock An invariant of quadratic forms mod {$8$}.
\newblock {\em Nederl. Akad. Wetensch. Proc. Ser. A 62 = Indag. Math.},
  21:291--293, 1959.

\bibitem[Waj83]{Wajnryb}
B.~Wajnryb.
\newblock A simple presentation for the mapping class group of an orientable
  surface.
\newblock {\em Israel J. Math.}, 45(2-3):157--174, 1983.

\bibitem[Wal67]{pcx-wall}
C.~T.~C. Wall.
\newblock Poincar\'e complexes i.
\newblock {\em Annals of Mathematics}, 86:213 -- 245, 1967.

\bibitem[Wal69]{Wall-non-add}
C.~T.~C. Wall.
\newblock Non-additivity of the signature.
\newblock {\em Inventiones Mathematicae}, 7:269--274, 1969.

\bibitem[Wal99]{Wall-book}
C.~T.~C. Wall.
\newblock {\em Surgery on compact manifolds}, volume~69 of {\em Mathematical
  Surveys and Monographs}.
\newblock American Mathematical Society, Providence, RI, second edition, 1970,
  1999.
\newblock Edited and with a foreword by A. A. Ranicki.

\bibitem[Wei85]{Weiss-Ker}
M.~Weiss.
\newblock Surgery and the generalized {K}ervaire invariant. {I}.
\newblock {\em Proc. London Math. Soc. (3)}, 51(1):146--192, 1985.

\bibitem[Wei92]{visible}
M.~Weiss.
\newblock Visible {$L$}-theory.
\newblock {\em Forum Math.}, 4(5):465--498, 1992.

\bibitem[Whi49]{Whitehead2}
J.~H.~C. Whitehead.
\newblock On simply connected, {$4$}-dimensional polyhedra.
\newblock {\em Comment. Math. Helv.}, 22:48--92, 1949.

\bibitem[Whi50]{Whitehead}
J.~H.~C. Whitehead.
\newblock A certain exact sequence.
\newblock {\em Ann. of Math. (2)}, 52:51--110, 1950.

\bibitem[Wit36]{Witt}
E.~Witt.
\newblock {Theorie der quadratischen Formen in beliebigen K\"orpern.}
\newblock {\em J. Reine Angew. Math.}, 176:31--44, 1936.

\end{thebibliography}
\end{document}